\documentclass[a4paper,oneside,11pt]{article}
 
\usepackage[a4paper]{geometry}
\usepackage{aeguill}               
\usepackage{graphicx}
\usepackage{amsmath,amsfonts,amssymb,amsthm}
\usepackage{pstricks,comment} 
\usepackage{epstopdf,enumerate}
\DeclareGraphicsRule{.tif}{png}{.png}{`convert #1 `dirname #1`/`basename #1 .tif`.png}
\usepackage{srcltx}
\usepackage[utf8]{inputenc} 
\usepackage[T1]{fontenc} 
\usepackage{hyperref}

\usepackage[english]{babel}

\title{Cocycle superrigidity from higher rank lattices to $\Out(F_N)$}
\author{Vincent Guirardel, Camille Horbez and Jean Lécureux}

\usepackage[all]{xy}
\usepackage{bbm}

\newcommand{\bdd}{\mathrm{bdd}}
\newcommand{\bbun}{\mathbbm{1}}

\newcommand{\Prob}{\mathrm{Prob}}
\newcommand{\baro}{\overline{\calo}}

\newcommand{\ra}{\rightarrow}
\newcommand{\xra}{\xrightarrow}
\newcommand{\m}{^{-1}}
\newcommand{\dunion}{\sqcup}
\newcommand{\eps}{\varepsilon}
\renewcommand{\epsilon}{\varepsilon}

\newcommand{\calf}{\mathcal{F}}

\newcommand{\calt}{\mathcal{T}}

\newcommand{\calp}{\mathcal{P}}

\newcommand{\calo}{\mathcal{O}}

\newcommand{\bbP}{\mathbb{P}}
\newcommand{\bbR}{\mathbb{R}}
\newcommand{\bbQ}{\mathbb{Q}}
\newcommand{\bbN}{\mathbb{N}}
\newcommand{\bbZ}{\mathbb{Z}}
\newcommand{\actson}{\curvearrowright}
\newcommand{\es}{\emptyset}
\newcommand{\grp}[1]{\langle #1 \rangle}

\makeatletter
\edef\@tempa#1#2{\def#1{\mathaccent\string"\noexpand\accentclass@#2 }}
\@tempa\rond{017}
\makeatother
\newcommand{\Comp}{\mathrm{Comp}}

\newcommand{\Stab}{\mathrm{Stab}}
\newcommand{\Mod}{\mathrm{Mod}}

\newcommand{\id}{\mathrm{id}}
\newcommand{\Out}{\mathrm{Out}}
\newcommand{\Aut}{\mathrm{Aut}}

\newcommand{\AT}{\mathcal{AT}}
\newcommand{\PAT}{\mathbb{P}\AT}

\newcommand{\bary}{\mathrm{bar}}

\newcommand{\diam}{\text{diam}}
\newcommand{\LL}{\mathrm L}

\newcommand{\FF}{\mathrm{FF}}
\newcommand{\vol}{\mathrm{covol}}

\newcommand{\bbD}{\mathbb{D}}

\newtheorem{de}{Definition} [section]
\newtheorem{theo}[de]{Theorem} 
\newtheorem{prop}[de]{Proposition}
\newtheorem{lemma}[de]{Lemma}
\newtheorem{cor}[de]{Corollary}

\newtheorem{theointro}{Theorem}
\newtheorem*{deintro}{Definition}

\theoremstyle{remark}
\newtheorem{rk}[de]{Remark}
\newtheorem{ex}[de]{Example}

\newtheorem*{rkintro}{Remark}

\newcommand{\Isom}{\text{Isom}}

\newcommand{\Maps}{\text{Maps}}
\newcommand{\Char}{\mathrm{Char}}

\newcommand{\ol}[1]{\overline{#1}}

\newcommand{\Conv}{\mathrm{Conv}}
\DeclareMathOperator{\SL}{SL}
\newcommand{\erg}{\mathrm{erg}}
\newcommand{\Lip}{\mathrm{Lip}}
\newcommand{\Pbaro}{\mathbb{P}\baro}
\newcommand{\BBT}{\mathrm{BBT}}
\newcommand{\PML}{\mathrm{PML}}
\newcommand{\Axis}{\mathrm{Axis}}
\newcommand{\FS}{\mathrm{FS}}

\newcommand{\Homeo}{\mathrm{Homeo}}

\newcommand{\R}{\mathbf R}
\usepackage[all]{xy}
\usepackage{bbm}
\begin{document}
\maketitle

\begin{abstract}
We prove a rigidity result for cocycles from higher rank lattices to $\Out(F_N)$ and more generally to the outer automorphism group of a torsion-free hyperbolic group.
More precisely, let $G$ be either a product of connected higher rank simple algebraic groups over local fields, or a lattice in such a product.
Let $G\actson X$ be an ergodic measure-preserving action on a standard probability space,  and let $H$ be a torsion-free hyperbolic group. 
We prove that every Borel cocycle $G\times X\to\Out(H)$ is cohomologous to a cocycle with values in a finite subgroup of $\Out(H)$. This provides a dynamical version of theorems of Farb--Kaimanovich--Masur and Bridson--Wade asserting that every homomorphism from $G$ to either the mapping class group of a finite-type surface or the outer automorphism group of a free group, has finite image.

The main new geometric tool is a barycenter map that associates to every triple of points in the boundary of the (relative) free factor graph
a finite set of (relative) free splittings.
\end{abstract}

\section*{Introduction}

A celebrated theorem of Farb--Kaimanovich--Masur states that every morphism from an irreducible lattice $\Gamma$ in a higher rank semisimple real Lie group to the mapping class group of a finite-type surface has finite image \cite{KM,FM}. Later, Bridson--Wade proved the analogous result with the mapping class group replaced with $\Out(F_N)$, the outer automorphism group of a finitely generated free group \cite{BW} -- their result actually applies to a more general class of groups $\Gamma$ in the source, the only assumption being that no normal subgroup of $\Gamma$ or of a finite-index subgroup of $\Gamma$ surjects onto $\mathbb{Z}$. 

These strong rigidity results have since led to further developments. Haettel recently proved that when $\Gamma$ is an irreducible lattice in a (product of) higher rank simple connected algebraic groups over local fields, any action of $\Gamma$ on a hyperbolic space is elementary, i.e.\ has either bounded orbits or fixes a finite set in the boundary \cite{Hae}. Since, on the other hand, (subgroups of) mapping class groups and $\Out(F_N)$ have many interesting actions on hyperbolic spaces, this could be used to give a new proof of the Farb--Kaimanovich--Masur and Bridson--Wade results (for this more restrictive collection of source groups $\Gamma$), and actually tackle the more general situation where the target is $\Out(G)$, where $G$ is any torsion-free hyperbolic group. Among recent developments, let us also mention Mimura's extension of the Farb--Kaimanovich--Masur and Bridson--Wade's theorems to the case where $\Gamma$ is a non-arithmetic Chevalley group \cite{Mim}.

Meanwhile, inspired by Margulis' superrigidity theorem describing the representation theory of lattices in higher-rank semisimple Lie groups \cite{Margulis}, 
Zimmer proved a cocycle rigidity theorem for ergodic actions of these lattices \cite{Zim3}. 
Indeed, given a lattice $\Gamma$ in a locally compact group $G$, a natural induction procedure turns morphisms $\Gamma \ra \Lambda$ 
into cocycles $G\times G/\Gamma \ra \Lambda$, where one can take advantage of the rich structure of $G$.
Zimmer's cocycle superrigidity theorem turned out to have important consequences, for example to measure equivalence rigidity of such lattices \cite{Fur}. This paved the way to a whole cocycle rigidity theory:
Spatzier--Zimmer \cite{SZ}, Adams \cite{Ada2}, Monod--Shalom \cite{MS,MS2} and Hamenstädt \cite{Ham} established various strong cocycle rigidity results when the target group $\Lambda$ satisfies certain negative curvature properties, and this was developed further more recently by Bader and Furman \cite{BF,BFHyp} who introduced a generalized notion of \emph{Weyl group} towards tackling rigidity problems. 

The main theorem of the present paper is the following.

\begin{theointro}\label{theo:main}
Let $G_0$ be a product of connected higher rank simple algebraic groups over local fields. Let $G$ be either $G_0$, or a lattice in $G_0$. Let $X$ be a standard probability space equipped with an ergodic measure-preserving $G$-action. Let $H$ be a torsion-free hyperbolic group. 

Then every cocycle $c:G\times X\to \Out(H)$ is cohomologous to a cocycle that takes its values in a finite subgroup of $\Out(H)$. 
\end{theointro}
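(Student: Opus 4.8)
The plan is to run an induction on a complexity invariant of pairs $(H,\mathcal{F})$, where $\mathcal{F}$ ranges over free factor systems of $H$, proving the statement simultaneously for every cocycle with values in the relative outer automorphism group $\Out(H,\mathcal{F})$. A standard induction-of-cocycles argument first reduces to the case $G=G_0$, a product of higher rank simple algebraic groups; from now on $G$ has Kazhdan's property (T), the $G$-action on $X\times B$ is amenable for $B$ the Poisson boundary, and the $G$-action on $X\times B\times B\times B$ is ergodic. The base cases of the induction are the $(H,\mathcal{F})$ for which $\Out(H,\mathcal{F})$ is virtually abelian (in particular virtually cyclic, or trivial): there, a cocycle from a property (T) group into an amenable group is cohomologous to a finite-valued one, and we are done.

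For the inductive step we use the action of $\Out(H,\mathcal{F})$ on the relative free factor graph $\FF(H,\mathcal{F})$, which is Gromov hyperbolic, and the fact that its Gromov boundary $\partial\FF(H,\mathcal{F})$ is identified with a space of equivalence classes of arational trees sitting inside a compact $\Out(H,\mathcal{F})$-space. Amenability of $G\actson X\times B$ gives a $c$-equivariant measurable map $X\times B\to\Prob(\text{that compact space})$, and an Adams/Kaimanovich--Masur-type analysis exploiting hyperbolicity of $\FF(H,\mathcal{F})$ yields a trichotomy. Either $c$ is \emph{reducible}, i.e.\ cohomologous to a cocycle valued in the stabilizer of a vertex of $\FF(H,\mathcal{F})$, namely $\Out(H,\mathcal{F}')$ for a strictly larger free factor system $\mathcal{F}'$ of strictly smaller complexity, and we conclude by induction; or $c$ is cohomologous to a cocycle preserving a finite subset of $\partial\FF(H,\mathcal{F})$ of cardinality at most $2$, whose $\Out(H,\mathcal{F})$-stabilizer is virtually cyclic (stabilizers of arational trees), and we conclude as in the base case; or else $c$ is \emph{nonelementary} and we get honest $c$-equivariant boundary maps $X\times B\to\partial\FF(H,\mathcal{F})$.

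In this last case we use higher rank: because $G$ is a product (equivalently, because its Weyl group has order greater than $2$), the order-two symmetry of the boundary of a hyperbolic space is not enough to obstruct the maps, and by combining boundary maps attached to distinct factors of $G$ (and their opposites) we produce a $c$-equivariant measurable map from a full-measure subset of $X\times B\times B\times B$ to the set of triples of pairwise distinct points of $\partial\FF(H,\mathcal{F})$. Here we invoke the main new geometric tool, the barycenter map, an $\Out(H,\mathcal{F})$-equivariant map from triples of pairwise distinct boundary points to nonempty finite sets of relative free splittings. Composing gives a $c$-equivariant measurable map from $X\times B\times B\times B$ to the countable discrete $\Out(H,\mathcal{F})$-set of finite sets of relative free splittings; by $G$-ergodicity its essential image is a single orbit, so $c$ is cohomologous to a cocycle valued in the stabilizer $\Out(H,\mathcal{F})_{\mathcal{S}}$ of a finite set $\mathcal{S}$ of relative free splittings. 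This stabilizer surjects, with kernel the free abelian group of twists, onto a product of relative outer automorphism groups of the (torsion-free hyperbolic) vertex groups of the splittings in $\mathcal{S}$, each of strictly smaller complexity; applying the induction hypothesis to each factor and property (T) to the twist kernel, and recombining, shows that $c$ is cohomologous to a finite-valued cocycle, closing the induction.

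The main obstacle is twofold. First, the construction of the barycenter map: one must attach, in a Borel and $\Out(H,\mathcal{F})$-equivariant fashion, a nonempty \emph{finite} set of relative free splittings to a triple of pairwise distinct points of $\partial\FF(H,\mathcal{F})$, using the geometry of arational trees and laminations and their pairwise compatibility, and genuinely controlling finiteness of the output. Second, in the cocycle setting one must carry out the hyperbolic-dynamics trichotomy measurably, upgrade the $\Prob$-valued map coming from amenability to a map into pairwise-distinct triples using higher rank to defeat the $\mathbb{Z}/2$-symmetry, and handle the non-local-compactness of $\FF(H,\mathcal{F})$ by working inside a genuinely compact $\Out(H,\mathcal{F})$-space and identifying its relevant part with $\partial\FF(H,\mathcal{F})$. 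By comparison, the induction-of-cocycles reduction, the property (T) inputs for the virtually abelian pieces, and the structure of stabilizers of relative free splittings are standard.
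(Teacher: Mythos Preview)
Your overall strategy matches the paper's: both use the relative free factor graph, the barycenter map on triples in its Gromov boundary, a trichotomy coming from an equivariant map $X\times B\to\Prob(K)$, the Weyl group to rule out the nonelementary branch, and an induction on a complexity of $(H,\mathcal F)$. Two points diverge from the paper, and one of them is a genuine gap.

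First, the paper does not handle your ``finite boundary set of cardinality at most $2$'' branch by quoting virtual cyclicity of stabilizers of arational trees. Instead it uses universal amenability of the $\Out(H,\mathcal F^{(t)})$-action on $\partial_\infty\FF$ (Proposition~\ref{prop:action-amenable}, derived from \cite{BGH}) together with the Spatzier--Zimmer argument (Lemma~\ref{lem:elementary}): a $c$-equivariant map $X\to\Delta$ or $X\to\Delta^{(2)}/\mathfrak S_2$ with $\Lambda\curvearrowright\Delta$ universally amenable forces $c$ to be cohomologous to a finite-valued cocycle. Relatedly, your description of the kernel when passing to vertex groups of a free splitting as ``the free abelian group of twists'' is wrong: edge groups are trivial in a free splitting, so there are no Dehn twists; the kernel is generated by partial conjugations and is (up to finite index) a product of quotients $V_i/Z(V_i)$ of the vertex groups, typically highly non-abelian. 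The paper accounts for this by building the hypothesis that $(G,H_i/Z(H_i))$ be cocycle-rigid into Theorem~\ref{free-product} and feeding it in from the hyperbolic case.

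The real gap is the freely indecomposable case. Your base case is ``$\Out(H,\mathcal F)$ virtually abelian'', but a freely indecomposable torsion-free hyperbolic group $H$ (for instance a closed surface group) has no proper free factors, so the free factor graph is empty and your inductive step never fires, yet $\Out(H)$ is a mapping class group and not virtually abelian. The paper treats this case separately (Theorem~\ref{theo:rel-hyp}): the canonical elementary JSJ decomposition of $H$ yields a map from a finite-index subgroup of $\Out(H)$ onto a product of mapping class groups, and cocycle rigidity for mapping class groups is established independently via the curve graph (Theorem~\ref{mcg}, Proposition~\ref{prop:ast-mcg}). Only after this does the general case reduce, via the Grushko decomposition, to the free-product induction (Theorem~\ref{free-product}). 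Without a replacement for the JSJ/mapping class group input, your induction does not terminate.
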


Here, we recall that a Borel map $c:G\times X\to \Out(H)$ is a \emph{cocycle} if for every $g_1,g_2\in G$ and a.e.\ $x\in X$, one has $c(g_1g_2,x)=c(g_1,g_2x)c(g_2,x)$. Two cocycles $c_1,c_2:G\times X\to \Out(H)$ are \emph{cohomologous} if there exists a Borel map $f:X\to \Out(H)$ such that for all $g\in G$ and a.e.\ $x\in X$, one has $c_2(g,x)=f(g x)^{-1}c_1(g,x)f(x)$. 

We would like to make a few comments on the statement. First, notice that the target can be in particular a mapping class group or $\Out(F_N)$, so this yields yet another proof of the Farb--Kaimanovich--Masur and Bridson--Wade theorems, though with certain restrictions on the source group.
Second, we would like to mention that our main theorem actually follows from a version for cocycles with values in the outer automorphism group of a free product (see Theorem~\ref{free-product}), which is used to carry on inductive arguments -- even to prove the result for $\Out(F_N)$.  

The proof of our main theorem follows a strategy initiated by Bader and Furman in \cite{BF,BFHyp}. We set up a geometric setting for the target group $\Lambda$ which enables us to derive rigidity results for cocycles $G\times X\to \Lambda$, where $G$ is as in the statement of the theorem. We now describe this geometric setting.

\paragraph*{Geometric setting for cocycle superrigidity and inductive argument.}

Let $\Lambda$ be a countable discrete group, and let $\bbD$ be a countable discrete space equipped with a $\Lambda$-action. The following definition, which is central in the present work, is a form of negative curvature for $\Lambda$ with respect to $\bbD$. One should think that stabilizers of points in $\bbD$ will be simpler subgroups of $\Lambda$, which will enable us to carry inductive arguments. We denote by $\calp_{<\infty}(\bbD)$ the (countable) set of nonempty finite subsets of $\bbD$.

\begin{deintro}
Let $\bbD$ be a countable discrete space, let $K$ be a compact metrizable space equipped with a $\Lambda$-action by homeomorphisms, and let $\Delta$ be a Polish space equipped with a $\Lambda$-action by Borel automorphisms. We say that the triple $(\bbD,K,\Delta)$ is \emph{geometrically rigid} if
the following properties hold:
\begin{itemize}
\item\textbf{Compatibility}:  
The space $K$ has a $\Lambda$-invariant Borel partition $K=K_{\bdd}\sqcup K_\infty$ coming with $\Lambda$-equivariant Borel maps $K_{\bdd}\ra\calp_{<\infty}(\bbD)$ and $K_\infty\to\Delta$.
\item\textbf{Barycenter map}: There is a $\Lambda$-equivariant Borel \emph{barycenter map}, assigning to every triple of pairwise distinct points of $\Delta$ a nonempty finite subset of $\bbD$. 
\item\textbf{Amenability}: The $\Lambda$-action on $\Delta$ is universally amenable.
\end{itemize}
We say that $\Lambda$ is \emph{geometrically rigid with respect to $\bbD$} if there exists a geometrically rigid triple $(\bbD,K,\Delta)$.
\end{deintro}

\begin{rkintro}
  In most examples we have in mind, $\bbD$ will be the vertex set of a hyperbolic graph and $\Delta$ will be its Gromov boundary (see however Section \ref{subsec_RAAG}
for a different setting concerning right-angled Artin groups).
\end{rkintro}

This geometric setting is ubiquitous in geometric group theory. It applies for example when $\Lambda$ is a hyperbolic group (with $\bbD=\Lambda$, with $K=\Lambda\dunion\partial_\infty\Lambda$ and $\Delta=\partial_\infty\Lambda$), or more generally a relatively hyperbolic group using a construction of Bowditch \cite{Bow}. It also applies to the mapping class group of a finite-type surface acting on the vertex set $\bbD$ of the curve graph -- whose point stabilizers are related to mapping class groups of simpler subsurfaces: here $K$ is the Thurston compactification of the Teichmüller space, $K_\infty$ is the subset of arational projective measured foliations, and $\Delta$ is the Gromov boundary of the curve graph.
A crucial part of the present paper is to check that outer automorphisms of free groups, or more generally free products (or \emph{relative outer automorphism groups}), also satisfy the above geometric setting in a natural way; we will say a word about this later in this introduction.

\begin{theointro}\label{theo:intro-cocycle-geometry}
Let $\Lambda$ be a countable discrete group, and let $\bbD$ be a countable discrete $\Lambda$-set. 
Assume that $\Lambda$ is geometrically rigid with respect to $\bbD$.

Let $G_0$ be a product of connected higher rank simple algebraic groups over local fields. Let $G$ be either $G_0$, or a lattice in $G_0$. 
Let $G\actson X$ be an ergodic measure-preserving action on a standard probability space.

Then every cocycle $G\times X\to\Lambda$ is cohomologous to a cocycle that takes its values in a subgroup that virtually fixes an element of $\bbD$.  
\end{theointro}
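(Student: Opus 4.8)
The plan is to follow the Bader–Furman strategy, using the geometric rigidity data $(\bbD, K, \Delta)$ attached to $\Lambda$. Fix a geometrically rigid triple and a cocycle $c \colon G \times X \to \Lambda$. The first step is to upgrade the action $G \actson X$ to a richer "boundary" setting: one passes to a (strong/universal) Furstenberg boundary $B$ of $G$, so that $G \actson B$ is amenable and the $G$-action on $B \times B$ is, by the theory of these boundaries for higher-rank groups, suitably ergodic (in fact one wants metric ergodicity, or the ergodicity of $G$ with coefficients needed below). By amenability of $G \actson B$ and universal amenability of $\Lambda \actson \Delta$, the cocycle $c$, viewed over $X \times B$, admits a $c$-equivariant Borel map $\varphi \colon X \times B \to \Prob(K)$; decomposing $K = K_{\bdd} \sqcup K_\infty$ one then argues — using that point stabilizers in $\calp_{<\infty}(\bbD)$ virtually fix an element of $\bbD$, so that if $\varphi$ charged $K_{\bdd}$ with positive mass on a positive-measure set we would already be done after a reduction — that one may push $\varphi$ to a $c$-equivariant map $\Phi \colon X \times B \to \Prob(\Delta)$, and then, by amenability again applied to the $\Lambda$-action on $\Delta$ together with a standard argument eliminating atomic parts, to a genuine $c$-equivariant Borel map $\Phi \colon X \times B \to \Delta$ (or to a finitely-supported measure, i.e. a map to $\calp_{<\infty}(\Delta)$, which the barycenter will also handle).

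The heart of the argument is then to produce, from the single map $\Phi \colon X \times B \to \Delta$, a $c$-equivariant map $X \to \Delta^{(3)}$ into triples of pairwise distinct points, to which the barycenter map applies. For this one considers the map $(x, b_1, b_2, b_3) \mapsto (\Phi(x,b_1), \Phi(x,b_2), \Phi(x,b_3))$ on $X \times B^3$. Using the double (and triple) ergodicity of $G \actson B$ with respect to the relevant coefficient spaces — this is exactly where higher rank enters, via the Howe–Moore property / the structure of $G_0$ as a product of higher-rank simple factors, which guarantees that the $G$-action on $B \times B$, and on $B^3$, is ergodic enough that a $G$-invariant measurable assignment of "pairs of distinct points" cannot be avoided — one shows that for a.e.\ $x$ the measure $(\Phi_x)_*(\text{class of } B)$ on $\Delta$ is either a single atom, or is non-atomic (or supported on $\le 2$ points) in a $G$-equivariant way. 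In the non-atomic case the triple $(\Phi(x,b_1),\Phi(x,b_2),\Phi(x,b_3))$ lands in $\Delta^{(3)}$ for a.e.\ $(b_1,b_2,b_3)$, and composing with the barycenter map yields a $c$-equivariant Borel map $X \times B^3 \to \calp_{<\infty}(\bbD)$; invariance/ergodicity on the $B^3$ factor then descends this to a $c$-equivariant map $X \to \calp_{<\infty}(\bbD)$. In the atomic cases ($1$ or $2$ atoms) one gets directly a $c$-equivariant map $X \to \calp_{<\infty}(\Delta)$, and since stabilizers of finite subsets of $\Delta$ of size $\le 2$ are handled by the same boundary theory (they are "elementary", hence virtually fix a point of $\bbD$ via the compatibility map, or one iterates), one again lands in $\calp_{<\infty}(\bbD)$.

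Once a $c$-equivariant Borel map $\psi \colon X \to \calp_{<\infty}(\bbD)$ is in hand, the cocycle reduction is formal: the map $\psi$ exhibits $c$ as cohomologous (after the usual measurable-selection step choosing, over a fundamental domain for the equivalence relation generated by the finitely many elements of $\psi(x)$, a transversal) to a cocycle $c'$ whose values lie in $\mathrm{Stab}_\Lambda(\psi(x))$, and each such stabilizer is a finite-index subgroup of the stabilizer of a single element of $\bbD$, hence virtually fixes an element of $\bbD$. This is the desired conclusion. The main obstacle I expect is precisely the middle step: ruling out, in a $G$-equivariant way, the "bad" configurations where $\Phi_x$ fails to give three distinct boundary points — this requires the correct ergodicity-with-coefficients statements for $G \actson B^3$ (metric ergodicity of $B$, and that $G$ has no "intermediate" equivariant quotients), which is where the hypothesis that $G_0$ is a product of higher-rank simple algebraic groups is used in an essential way, and where one must be careful that the argument is insensitive to replacing $G$ by a lattice (handled by the standard induction $G \actson G/\Gamma \times X$ and the fact that cohomology classes are preserved under induction/restriction).
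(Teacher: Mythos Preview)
Your overall architecture is right and matches the paper: produce a $c$-equivariant map $X\times B\to\Prob(K)$ by amenability of the strong boundary, split according to $K_{\bdd}$ versus $K_\infty$, and aim to land in $\calp_{<\infty}(\bbD)$ using the barycenter. But two of your case analyses contain genuine gaps.

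First, your treatment of the ``atomic'' case is incorrect. When you obtain a $c$-equivariant map $X\to\Delta$ (or $X\to\Delta^{(2)}/\mathfrak{S}_2$), you assert that stabilizers of finite subsets of $\Delta$ are ``elementary, hence virtually fix a point of $\bbD$ via the compatibility map''. There is no such statement in the hypotheses: the compatibility map goes from $K_{\bdd}$ to $\calp_{<\infty}(\bbD)$, not from $\Delta$. The paper instead uses that the $\Lambda$-action on $\Delta$ (and hence on $\Delta^{(2)}/\mathfrak{S}_2$) is universally amenable, together with Property~(T) for $G$, and invokes the Spatzier--Zimmer argument (Lemma~\ref{lem:elementary}) to conclude that $c$ is cohomologous to a cocycle with finite image. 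Property~(T) is essential here and you never use it.

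Second, and more seriously, your plan to pass to $X\times B^3$ and then ``descend via ergodicity on the $B^3$ factor'' does not work: the $G$-action on $B^3$ is not isometrically ergodic (already for $\SL_3$, triples of flags have continuous moduli). The strong boundary axioms only give isometric ergodicity on $B$ and $B\times B$, and the paper is careful never to leave $B\times B$. What actually happens in the residual case --- a $c_1$-equivariant map $X\times B\to\Delta$ genuinely depending on $B$, with an essentially unique induced map $X\times B\times B\to\Delta^{(2)}/\mathfrak{S}_2$ --- is the Weyl group argument you never give: the set of $c_2$-equivariant maps $X\times B\times B\to\Delta^{(2)}$ has exactly two elements, the Weyl group $W_{G,B}$ permutes them, and the kernel of the resulting map $W_{G,B}\to\mathbb{Z}/2\mathbb{Z}$ is a \emph{special} subgroup of index $2$. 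Proposition~\ref{prop:WeylAlg} shows this is impossible when every simple factor has rank $\ge 2$. This is precisely where higher rank enters; your gesture toward ``no intermediate equivariant quotients'' is in the right spirit but is not an argument.
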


As already mentioned above, this will be the basis for inductive arguments: in all situations we have in mind, stabilizers of points in $\bbD$ will be simpler than the ambient group $\Lambda$. 

Let us say a word about our proof of Theorem~\ref{theo:intro-cocycle-geometry}. 
By a standard argument of Zimmer, proving that a cocycle $c$ is cohomologous to a cocycle with values in a subgroup having a finite orbit in $\bbD$
amounts to constructing a $c$-equivariant Borel map $X\ra \calp_{<\infty}(\bbD)$ with values in the set $\calp_{<\infty}(\bbD)$ of nonempty finite subsets of $\bbD$.

Compactness of $K$ is crucial to construct probability measures. 
More precisely, let $B$ be a Poisson--Furstenberg boundary of $G$ and $\Prob(K)$ be the space of probability measures on $K$. 
As the $G$-action on $B$ is amenable in the sense of Zimmer, and $K$ is compact and metrizable, there exists a $c$-equivariant map $p:X\times B\to\Prob(K)$. 

Assume first that for almost every $(x,b)\in X\times B$, the probability measure $p(x,b)$ is supported on $K_{\bdd}$.
 Pushing it through the  provided map  $K_{\bdd}\ra \calp_{<\infty}(\bbD)$, one gets a $c$-equivariant map  $X\times B\ra\Prob(\calp_{<\infty}(\bbD))\ra \calp_{<\infty}(\bbD)$, where the rightmost map $\Prob(\calp_{<\infty}\bbD)\to\calp_{<\infty}(\bbD)$ consists in taking the 
union of all points with highest mass. 
Then, a strong ergodicity property of the Poisson boundary ensures that this map does not depend on the $B$-coordinate and 
gives the desired map $X\ra\calp_{<\infty}(\bbD)$.

Assume now that for almost every $(x,b)\in X\times B$, the support of $p(x,b)$ contains at least three points of $K_\infty$,
then we can use the barycenter map to get a map $X\ra \calp_{<\infty}(\bbD)$ as above.

In the remaining cases, the map $p$ yields an essentially unique equivariant map $p':X\times B\to \Delta$ (or with values in the set of pairs of points in $\Delta$).
It may happen that $p'$ does not depend on the $B$-coordinate in which case we have an equivariant map $X\ra \Delta$.
Using the fact that the $G$-action on $\Delta$ is universally amenable while $G$ has property $(T)$ and acts on $X$ preserving a probability measure, an argument of Spatzier--Zimmer \cite{SZ} then ensures that the cocycle $c$ is cohomologous to a cocycle with values in a finite subgroup of $\Lambda$.

If $p'$ depends on the $B$-coordinate, we get a non-constant equivariant Borel map $q:B\ra L^0(X,\Delta)$, where
$L^0(X,\Delta)$ denotes the space of all equivalence classes of Lebesgue-measurable maps from $X$ to $\Delta$. This map $q$ defines an equivariant quotient of $B$. Denoting by $\Delta^{(2)}$ the complement of the diagonal in $\Delta^2$, one then shows that there are exactly two Borel maps  $F,F^*:B\times B\ra L^0(X,\Delta^{(2)})$,
 namely 
 $$F:(b,b')\mapsto (x\mapsto (p'(x,b),p'(x,b')))\text{ and } F^*:(b,b')\mapsto (x\mapsto (p'(x,b'),p'(x,b))).$$

We now use an argument of Bader--Furman relying on the analysis of the \emph{Weyl group} of $G$, defined as the group $W$ of all $G$-equivariant automorphisms of $B\times B$. It turns out that
the Weyl group $W$ is a Coxeter group, whose special subgroups (i.e.\ subgroups generated by subsets of the standard generating set) 
are related to measurable equivariant quotients $q:B\ra Q$ as follows.
From $q$ and the first projection $\pi_1:B\times B\ra B$, one gets a map $q\circ \pi_1:B\times B\ra Q$, 
and the subgroup of all $\sigma\in W$ such that $q\circ \pi_1\circ \sigma=q\circ \pi_1$
happens to be a special subgroup of $W$.
In our situation $W$ acts nontrivially on the $2$-element set $\{F,F^*\}$; the kernel of this action is then a special subgroup of $W$ of index $2$, contradicting the assumption
 that all factors of $G$ have rank at least $2$.

\paragraph*{The case of outer automorphism groups.} We conclude this introduction by saying a few words about why $\Out(F_N)$ has a geometrically rigid triple
$(\bbD,K,\Delta)$. 

Here, $\bbD$ is the set of conjucacy classes of proper free factors of $F_N$,
the space $K$ is the compactification of Culler--Vogtmann's Outer space, 
and $\Delta$ is the Gromov boundary of the free factor graph. 
This boundary was described by Bestvina--Reynolds \cite{BR} and Hamenstädt \cite{Ham2} in terms of so-called \emph{arational trees}, giving the compatibility property. Amenability of the $\Out(F_N)$-action on the boundary of this graph was essentially established by Bestvina and the first two named authors in \cite{BGH}. 

To prove Theorem \ref{theo:main}, one not only needs to prove that the triple $(\bbD,K,\Delta)$ defined above is geometrically rigid for $\Out(F_N)$, but 
to carry out an inductive argument, 
one also needs to work in a \emph{relative setting} 
and prove that stabilizers of elements of $\bbD$
(i.e.\ conjugacy classes of free factors) also have a geometrically rigid triple.

The key point is to construct the barycenter map. There are two main difficulties.

The first difficulty comes from the lack of local finiteness of the free factor graph: hyperbolicity of this graph allows to associate to every triple of points in the boundary a bounded region of the graph, but this bounded region is still infinite. 
When working in the non-relative setting  (i.e.\ with $\Out(F_N)$ and not with the stabilizer of the conjugacy class of a  free factor),
we can take advantage of the structure of Outer space as a locally finite simplicial complex (with missing faces), and build the \emph{barycenter} of three pairwise inequivalent arational trees in the boundary by looking at pencils of geodesics between them, as studied by Bestvina and Reynolds in \cite{BR}. 

The second difficulty arises in the relative setting, because in this case, the corresponding Outer space itself is a locally infinite simplicial complex (with missing faces). As before, given a triple of pairwise inequivalent relative arational trees,  
the pencils of geodesics between them allow
to construct a compact subset of the relative unprojectivized Outer space.
In general, a compact set $K$ may intersect infinitely many simplices, but
a covolume argument enables us to extract a non-empty canonical finite set of simplices: more precisely, we show that the subset of $K$ made of trees of maximal covolume and minimal number of $F_N$-orbits of edges always meets only finitely many simplices.

In fact, our arguments are phrased in the following more general setting. Let $H_1,\dots,H_p$ be countable groups, let $F_r$ be a free group of rank $r$, and let $$H:=H_1\ast\dots\ast H_p\ast F_r.$$ 
Let $\calf=\{[H_1],\dots,[H_p]\}$ be the set of conjugacy classes of these subgroups in $H$.
The group $\Out(H,\calf)$ made of outer automorphisms of $H$ that preserve the conjugacy classes of each subgroup $H_i$ acts on a \emph{relative free factor graph} $\FF$, whose Gromov boundary $\partial_\infty\FF$ was described in \cite{GH} in terms of a notion of \emph{relatively arational trees}. 

\begin{theointro}
Let $H_1,\dots,H_p$ be countable groups, let $F_r$ be a free group of rank $r$, and let $$H:=H_1\ast\dots\ast H_p\ast F_r.$$ There exists a Borel $\Out(H,\calf)$-equivariant map assigning to every triple of pairwise distinct points in $\partial_\infty\FF$
a nonempty finite set of free splittings of $(H,\calf)$. 
\end{theointro}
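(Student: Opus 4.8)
The plan is to construct the barycenter geometrically inside the closure $\overline{\calo}$ of the relative Outer space $\calo=\calo(H,\{H_i\})$, combining the description of $\partial_\infty\FF$ obtained in \cite{GH} with an analysis of folding lines between relatively arational trees in the spirit of Bestvina--Reynolds \cite{BR}.

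Recall first from \cite{GH} that $\partial_\infty\FF$ is the quotient of the subspace $\PAT\subseteq\overline{\calo}$ of projective classes of relatively arational $(H,\{H_i\})$-trees by the relation identifying two such trees whenever they lie in a common closed simplex of $\overline{\calo}$. Given pairwise distinct points $\xi_1,\xi_2,\xi_3\in\partial_\infty\FF$, one chooses relatively arational trees $T_1,T_2,T_3$ representing them; these are pairwise non-equivalent, and in particular each is indecomposable. For each pair $i\ne j$ one then considers the \emph{pencil} of bi-infinite folding lines in $\overline{\calo}$ joining $T_i$ to $T_j$. Such lines exist, and by hyperbolicity of $\FF$ together with the fact (part of the package of \cite{GH}) that folding lines between arational trees project to uniform unparametrized quasigeodesics of $\FF$, any two lines in a pencil stay at bounded Hausdorff distance in $\FF$, and the three pencils associated to $T_1,T_2,T_3$ come uniformly close together over a bounded ``central'' region $C\subseteq\FF$, which is non-empty precisely because $\xi_1,\xi_2,\xi_3$ are pairwise distinct.

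From $C$ one reads off the output. In the classical non-relative case the portion of the pencils lying over $C$ is precompact in Outer space by local compactness, hence meets only finitely many open simplices; the corresponding finitely many marked graphs, after collapsing forests, give the desired finite set of free splittings. Since the relative Outer space is no longer locally compact, this step must be replaced by a direct argument bounding the combinatorics: one determines which free splittings of $(H,\{H_i\})$ can be traversed by a folding line that converges to a relatively arational tree and passes near $C$, and shows — using that these splittings have trivial edge groups and that the peripheral subgroups $H_i$ stay elliptic — that only finitely many conjugacy classes occur. Some care is needed to treat uniformly the different types of relatively arational trees appearing in \cite{GH}, but in all cases taking the union of these finite sets over the boundedly many relevant choices and over the three pencils produces a finite non-empty set $\calf(\xi_1,\xi_2,\xi_3)$ of free splittings, independent of all auxiliary choices. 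Equivariance under $\Out(H,\{H_i\})$ is immediate from the naturality of each step, and Borel measurability follows because $\overline{\calo}$, $\PAT$, the space of folding lines and the countable set of free splittings are standard Borel spaces carrying Borel $\Out(H,\{H_i\})$-actions, and every step above is either a Borel operation or a Borel choice of representative.

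The main obstacle is exactly this replacement of the local-compactness argument in the relative setting: without it, the bounded region $C\subseteq\FF$ pulls back to a non-precompact family of trees in $\calo$, and one has to prove by hand that the set of free splittings visible along that family is nonetheless finite. Isolating and establishing the correct finiteness statement for folding paths between relatively arational trees — and controlling the effect of the peripheral structure $\{H_i\}$, which is precisely what destroys local compactness — is the heart of the argument.
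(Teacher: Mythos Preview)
Your framework is right --- pencils of folding paths between pairwise inequivalent arational trees, projected to uniform quasigeodesics in $\FF$, giving a bounded central region --- and you correctly locate the difficulty. But the proposal stops exactly where the content is, and the mechanism you sketch for the finiteness step (``bounding the combinatorics'' using that edge groups are trivial and peripherals stay elliptic) is not a proof: those facts hold for \emph{every} free splitting of $(H,\{H_i\})$, and there are infinitely many of them over any bounded region of $\FF$, so no direct combinatorial bound of that kind is available.

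The paper's argument fills the gap with two concrete ideas you are missing. First, contrary to your expectation, the set $B(T_1,T_2,T_3)\subseteq\calo$ of trees arising along such pencils over the central region \emph{is} compact in (unprojectivized) $\calo$, even in the relative case. This is not a local-compactness argument: it comes from a ``u-turn'' lemma (generalizing Bestvina--Reynolds) asserting that if $U_n$ lies on an optimal folding path from $S_n$ to $T_n$ with $S_n,T_n$ converging to inequivalent arational trees, and $U_n$ does not escape to an arational limit, then $U_n$ converges \emph{non-projectively} in $\calo$. The proof of this lemma requires a strengthened unique-duality statement for arational trees. Second, compactness in $\calo$ is still not enough, since a compact set can meet infinitely many simplices when $\calo$ is not locally finite. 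The trick is: inside the compact set, pass to the subset of trees of \emph{maximal covolume} (nonempty by upper semicontinuity), then among those to the ones with the \emph{minimal number of orbits of edges}; a short argument with $(1+\epsilon)$-Lipschitz maps shows this last set meets only finitely many simplices. Neither of these two steps appears in your outline, and together they are the heart of the proof.
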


\paragraph*{Remark.} In the appendix of \cite{Hae}, the first two named authors exploited the rigidity of actions of higher-rank lattices on hyperbolic spaces to prove a weak form of Theorem~1, dealing only with homomorphisms instead of cocycles. Our proof in the present paper has some similarities: both proofs use an induction argument to transfer the rigidity problem from the lattice to the ambient algebraic group, both proofs exploit some form of negative curvature of the target group, and both proofs require working in the context of free products and an inductive argument on the complexity of the decomposition. But the heart of the argument is different: Haettel's rigidity theorem cannot be directly applied for cocycles, and it is replaced in the present paper by arguments from the work of Bader and Furman. These new techniques require establishing some extra geometric properties like the existence of the barycenter map, which is the core technical novelty of the paper in the context of outer automorphism groups. 

\paragraph*{Organization of the paper.} This article has two parts. 
The first part aims at proving the geometric rigidity criterion (Theorem~\ref{theo:intro-cocycle-geometry}). 
After a short preliminary section (Section \ref{sec_prelim}), 
we compare various notions of amenability of group actions in Section~\ref{sec:amenability-variations}. 
In Section~\ref{sec:stability}, we establish various stability properties of cocycle rigidity, viewed as a property of the target group $\Lambda$; we also explain that cocycle rigidity when the source is a lattice is equivalent to cocycle rigidity for the ambient algebraic group.
Finally, we establish the geometric rigidity criterion in Section~\ref{sec:criterion}, where we also explain how this criterion is used when $\Lambda$ is a relatively hyperbolic group or a right-angled Artin group.

The second part of the paper is concerned with cocycle rigidity in the case of outer automorphism groups. Section~\ref{sec:background} reviews some background about outer automorphisms of free products. In Section~\ref{sec:amenability}, we derive the amenability of the action on the boundary of the free factor graph from \cite{BGH}. The barycenter map is constructed in Section~\ref{sec-barycenter}. Finally, the proofs of our main results are completed in Section~\ref{sec:conclusion}.
 
\paragraph*{Acknowledgments.} We would like to thank Uri Bader and Roman Sauer for conversations regarding cocycles and in particular the stability properties established in Section~\ref{sec:stability}. We are particularly indebted to Uri Bader for explaining us an alternative viewpoint on cocycles, described in Section~\ref{sec:developed-actions}, and how to use it to derive some stability properties.  We are also grateful to Bruno Duchesne and Cyril Houdayer for useful discussions. Finally, we thank the referee for their useful comments.

We acknowledge support from the Agence Nationale de la Recherche under Grants ANR-16-CE40-0006 DAGGER,
ANR-14-CE25-0004 GAMME and ANR-16-CE40-0022-01 AGIRA. 
The first author thanks the Centre Henri Lebesgue ANR-11-LABX-0020-01 for creating an attractive mathematical environment.

\setcounter{tocdepth}{1}
\tableofcontents

\part{Cocycle rigidity: general setting}

The goal of this first part is to establish a general geometric framework for proving cocycle rigidity statements. 

\section{Preliminaries}\label{sec_prelim}

Let $G$ be a locally compact group, equipped with its Borel $\sigma$-algebra. Let $\Delta$ be a standard Borel space.
A \emph{Borel action} of $G$ on $\Delta$ is an action by Borel automorphisms of $\Delta$ such that the map $G\times \Delta\ra \Delta$ is Borel.

When $(X,\mu)$ is a standard measured space, actions are understood in the following weaker sense. A \emph{measure-class preserving action} of $G$ on $(X,\mu)$ is a Borel map 
$$
\begin{array}{ccl}
G\times X&\to& X\\
(g,x)&\mapsto& g\cdot x
\end{array}
$$ 
such that for all $g,h\in G$ and $\mu$-almost every $x\in X$, one has $(gh)\cdot x=g\cdot (h\cdot x)$ and $1\cdot x=x$,
and for every null set $A\subseteq X$, the set $g\cdot A$ is a null set. It is a \emph{measure-preserving action} if for every Borel subset $A\subseteq X$ and every $g\in G$, one has $\mu(g\cdot A)=\mu(A)$.

Equivariance of maps between $G$-spaces has to be understood $\mu$-almost everywhere.
A Borel subset $A\subseteq X$ is \emph{$G$-invariant} if for every $g\in G$, the sets $g\cdot A$ and $A$ are equal up to null sets.
The action is \emph{ergodic} if any $G$-invariant Borel subset is null or conull. 

We say that an action is \emph{strict} if for all $g,h\in G$ and \textbf{all} $x\in X$, one has $(gh)\cdot x=g\cdot (h\cdot x)$ and $1\cdot x=x$. Forgetting the measure, a strict action on $(X,\mu)$ defines a Borel action on $X$.

By \cite[Theorem B.10]{Zim}, if $G$ has a measure-class preserving action on $(X,\mu)$, there exists a standard Borel space $(X',\mu')$
with a strict action of $G$ and an isomorphism $\phi:X_0\ra X'_0$ between full measure subsets $X_0\subseteq X$ and $X'_0\subseteq X'$
such that for all $g\in G$ and almost every $x\in X_0$, one has $\phi(g\cdot x)=g\cdot \phi(x)$.
Many results (including \cite{Zim}) are stated for strict actions, but this fact allows to use them in this broader context.

Let $\Lambda$ be a countable discrete group.
A Borel map $c:G\times X\to \Lambda$ is a \emph{cocycle} if for every $g_1,g_2\in G$ and a.e.\ $x\in X$, one has $c(g_1g_2,x)=c(g_1,g_2x)c(g_2,x)$. Two cocycles $c_1,c_2:G\times X\to \Lambda$ are \emph{cohomologous} if there exists a Borel map $f:X\to \Lambda$ such that for all $g\in G$
and a.e.\ $x\in X$, one has $c_2(g,x)=f(g x)^{-1}c_1(g,x)f(x)$. 

Given a standard Borel space $\Delta$ equipped with a $\Lambda$-action by Borel automorphisms, we say that a Borel map $f:X\to\Delta$ is \emph{$c$-equivariant} if for all $g\in G$ and a.e.\ $x\in X$, one has $f(g x)=c(g,x)f(x)$.

\section{Various notions of amenability of a group action}\label{sec:amenability-variations}

\subsection{Zimmer amenability of an action on a probability space}

Let $G$ be a locally compact group, and let $(\Omega,\mu)$ be a standard probability space equipped with a measure-class preserving action of $G$.

Let $B$ be a separable real Banach space. We denote by $\Isom(B)$ the group of all linear isometries of $B$, equipped with the strong operator topology. We denote by $B_1^\ast$ the unit ball of the dual Banach space $B^\ast$, equipped with the weak-$\ast$ topology. Given a linear isometry $T$ of $B$, we denote by $T^\ast$ the restriction to $B_1^\ast$ of the adjoint operator of $T$. The \emph{adjoint cocycle} of a Borel cocycle $\rho:G\times\Omega\to\Isom(B)$ is the cocycle $\rho^{\ast}:G\times\Omega\to\Homeo(B_1^{\ast})$ defined by letting $\rho^{\ast}(g,\omega):=\rho(g^{-1},g\omega)^\ast$. We denote by $\Conv(B_1^*)$ the set of all nonempty convex compact subsets of $B_1^\ast$. 
A \emph{Borel convex field over $\Omega$} is a map $K:\Omega\to\Conv(B_1^*)$ such that $$\{(\omega,k)\in \Omega\times B_1^\ast| k\in K(\omega)\}$$ is a Borel subset of $\Omega\times B_1^\ast$. A \emph{Borel section} of a Borel convex field $K$ over $\Omega$ is a Borel map $s:\Omega\to B_1^\ast$ such that for a.e.\ $\omega\in \Omega$, one has $s(\omega)\in K(\omega)$. 

\begin{de}[\textbf{\emph{Zimmer amenability of a group action}} \cite{Zim2}]
Let $G$ be a locally compact group, and let $\Omega$ be a standard probability space equipped with a measure-class preserving $G$-action. The $G$-action on $\Omega$ is \emph{Zimmer amenable} if for every separable real Banach space $B$ and every cocycle $\rho:G\times\Omega\to\Isom(B)$, every $\rho^{\ast}$-equivariant Borel convex field over $\Omega$ admits a $\rho^{\ast}$-equivariant Borel section.
\end{de}

Amenability of group actions will mostly be used through the following fact, which is established as in \cite[Proposition~4.3.9]{Zim} (see e.g.\ the last paragraph of \cite[p.~103]{Zim}).

\begin{lemma}[{Zimmer \cite{Zim}}]\label{lemma:invariant-proba}
Let $G$ be a locally compact group and let $(\Omega,\mu)$ be a standard probability space equipped with an ergodic measure-preserving $G$-action. 
Assume that the $G$-action on $(\Omega,\mu)$ is Zimmer amenable.

Let $\Lambda$ be a countable group, and let $K$ be a compact metrizable space equipped with a $\Lambda$-action by homeomorphisms. Equip $\Prob(K)$ with the weak-$\ast$ topology.

Then for every cocycle $c:G\times \Omega\to\Lambda$, there exists a $c$-equivariant Borel map $\Omega\to\Prob(K)$. 
\end{lemma}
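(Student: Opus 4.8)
The plan is to deduce Lemma~\ref{lemma:invariant-proba} from the definition of Zimmer amenability by realizing $\Prob(K)$ as an equivariant convex field inside the unit ball of the dual of a suitable separable Banach space, and then invoking the hypothesis on the existence of an equivariant Borel section. Concretely, since $K$ is compact metrizable, the space $C(K)$ of real-valued continuous functions on $K$ is a separable real Banach space, and by the Riesz representation theorem its dual is the space of signed Radon measures on $K$. Under this identification, $\Prob(K)$ sits inside the unit ball $C(K)^\ast_1$ equipped with the weak-$\ast$ topology, and it is a weak-$\ast$ compact convex subset. The $\Lambda$-action on $K$ by homeomorphisms induces a $\Lambda$-action on $C(K)$ by linear isometries (via $\lambda\cdot\varphi = \varphi\circ\lambda^{-1}$), whose adjoint action on $C(K)^\ast_1$ is the natural push-forward action on measures and preserves $\Prob(K)$.

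The next step is to turn the cocycle $c:G\times\Omega\to\Lambda$ into a cocycle into $\Isom(C(K))$. Composing $c$ with the isometric $\Lambda$-representation on $C(K)$ described above yields a Borel cocycle $\rho:G\times\Omega\to\Isom(C(K))$; the associated adjoint cocycle $\rho^\ast:G\times\Omega\to\Homeo(C(K)^\ast_1)$ then acts on measures by the push-forward twisted by $c$. One then checks that the constant field $\omega\mapsto \Prob(K)$ is a $\rho^\ast$-equivariant Borel convex field over $\Omega$: it is Borel because $\Prob(K)$ is a fixed Borel (indeed closed) subset of $C(K)^\ast_1$, it takes values in $\Conv(C(K)^\ast_1)$ since $\Prob(K)$ is nonempty convex weak-$\ast$ compact, and it is $\rho^\ast$-equivariant because push-forward by a homeomorphism sends probability measures to probability measures, so $\rho^\ast(g,\omega)\Prob(K)=\Prob(K)$ for all $g,\omega$. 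By Zimmer amenability of the $G$-action on $(\Omega,\mu)$, this convex field admits a $\rho^\ast$-equivariant Borel section $s:\Omega\to C(K)^\ast_1$, i.e.\ a Borel map with $s(\omega)\in\Prob(K)$ almost everywhere and $s(g\omega)=\rho^\ast(g,\omega)s(\omega)$ for all $g$ and a.e.\ $\omega$.

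It remains to unwind the equivariance of $s$ into $c$-equivariance of the resulting map $\Omega\to\Prob(K)$. Writing out $\rho^\ast(g,\omega)=\rho(g^{-1},g\omega)^\ast$ and using that $\rho$ is the composition of $c$ with the isometric representation, one obtains that $s(g\omega)=c(g,\omega)_\ast s(\omega)$ for all $g\in G$ and a.e.\ $\omega\in\Omega$, where the right-hand side denotes the push-forward of the measure $s(\omega)$ under the homeomorphism of $K$ given by $c(g,\omega)$; this is precisely the statement that $s$ is $c$-equivariant as a map $\Omega\to\Prob(K)$. This is the argument outlined in \cite[Proposition~4.3.9]{Zim} and the last paragraph of \cite[p.~103]{Zim}; the only points requiring care are the measurability of the adjoint cocycle (standard, since the weak-$\ast$ topology on $C(K)^\ast_1$ is Polish by separability of $C(K)$) and the careful bookkeeping of inverses in the definition of $\rho^\ast$ so that the twist comes out as $c(g,\omega)$ rather than $c(g,\omega)^{-1}$ or $c(g^{-1},g\omega)$. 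Note that ergodicity is not strictly needed for the conclusion as stated; it is recorded here because it is the standing hypothesis under which the lemma is applied, where $\Prob(K)$ is the relevant target in the subsequent boundary-map construction.
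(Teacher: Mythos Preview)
Your proof is correct and is precisely the argument the paper is pointing to: the paper does not give its own proof but simply cites \cite[Proposition~4.3.9]{Zim} and the discussion on \cite[p.~103]{Zim}, and what you have written is exactly that argument unpacked --- realize $\Prob(K)$ as a compact convex subset of the unit ball of $C(K)^\ast$, compose $c$ with the isometric $\Lambda$-representation on $C(K)$ to get a cocycle $\rho$ into $\Isom(C(K))$, apply Zimmer amenability to the constant $\rho^\ast$-equivariant convex field $\omega\mapsto\Prob(K)$, and check via the cocycle identity $c(g^{-1},g\omega)=c(g,\omega)^{-1}$ that $\rho^\ast$-equivariance of the resulting section is $c$-equivariance. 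Your remark that ergodicity plays no role in this lemma is also correct.
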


\subsection{Universal and Borel amenability of an action on a Borel space}

\begin{de}[\textbf{\emph{Universal amenability of a group action}}]
Let $\Lambda$ be a countable group, and let $\Delta$ be a standard Borel space equipped with a Borel action of $\Lambda$. 
The $\Lambda$-action on $\Delta$ is \emph{universally amenable} if for every probability measure $\mu$ on $\Delta$ whose measure class is $\Lambda$-invariant, the $\Lambda$-action on $(\Delta,\mu)$ is Zimmer amenable. 
\end{de}

 We denote by $\Prob(\Lambda)$ the space of all probability measures on $\Lambda$, equipped with the topology of pointwise convergence. The left action of $\Lambda$ on itself induces an action of $\Lambda$ on $\Prob(\Lambda)$ by
$\lambda_*\nu(\lambda')=\nu(\lambda\m \lambda')$ for all $\lambda,\lambda'\in \Lambda$.

\begin{de}[\textbf{\emph{Borel amenability of a group action}}] 
Let $\Lambda$ be a countable group, and let $\Delta$ be a standard Borel space equipped with a Borel $\Lambda$-action.
The $\Lambda$-action on $\Delta$ is \emph{Borel amenable} if there exists a sequence of Borel maps 
\begin{displaymath}
\begin{array}{cccc}
\nu_n: &\Delta &\to &\Prob(\Lambda)\\
& \delta & \mapsto & \nu_n^{\delta}
\end{array}
\end{displaymath}
such that for every $\delta\in\Delta$, the norm $||\nu^{\lambda.\delta}_n-\lambda_*\nu_n^\delta||_1$ converges to $0$ as $n$ goes to $+\infty$.
\end{de}

 A sequence $(\nu_n)_{n\in\mathbb{N}}$ as in the above definition is said to be \emph{asymptotically $\Lambda$-equivariant}.

\begin{prop}\label{prop:borel-universally}
Let $\Lambda$ be a countable group, and let $\Delta$ be a standard Borel space equipped with a Borel $\Lambda$-action. If the $\Lambda$-action on $\Delta$ is Borel amenable, then it is universally amenable.
\end{prop}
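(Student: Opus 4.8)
The plan is to unfold the definition of Zimmer amenability and show that the asymptotically equivariant sequence $(\nu_n)$ furnished by Borel amenability allows one to build an equivariant Borel section of any equivariant convex field. So fix a probability measure $\mu$ on $\Delta$ with $\Lambda$-invariant measure class, a separable real Banach space $B$, a cocycle $\rho\colon\Lambda\times\Delta\to\Isom(B)$, and a $\rho^\ast$-equivariant Borel convex field $K\colon\Delta\to\Conv(B_1^\ast)$. We must produce a $\rho^\ast$-equivariant Borel section. First I would pick \emph{some} Borel section $s_0\colon\Delta\to B_1^\ast$ of $K$ (this exists by a standard measurable selection argument, since $K$ has a Borel graph and $B_1^\ast$ with the weak-$\ast$ topology is a compact metrizable space). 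The section $s_0$ need not be equivariant, but we will average it against the measures $\nu_n^\delta$, suitably twisted by the cocycle, to fix this up in the limit.

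Concretely, for each $n$ define $s_n\colon\Delta\to B_1^\ast$ by the barycenter
\begin{equation*}
s_n(\delta):=\int_{\Lambda}\rho^\ast(\lambda^{-1},\lambda\delta)\bigl(s_0(\lambda^{-1}\delta)\bigr)\, d\nu_n^\delta(\lambda),
\end{equation*}
where the integral is taken in the weak-$\ast$ sense (so $s_n(\delta)$ is the unique element of $B_1^\ast$ pairing with each $v\in B$ as the corresponding integral of scalars). One checks that $s_n(\delta)$ lies in $K(\delta)$: each summand $\rho^\ast(\lambda^{-1},\lambda\delta)(s_0(\lambda^{-1}\delta))$ lies in $K(\delta)$ by the $\rho^\ast$-equivariance of the field $K$, and $K(\delta)$ is convex and weak-$\ast$ compact, hence closed under such barycenters. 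Borel measurability of $\delta\mapsto s_n(\delta)$ follows from Borel measurability of $s_0$, of the cocycle, and of $\delta\mapsto\nu_n^\delta$, together with the fact that only countably many $\lambda$ contribute. Then I would use the almost-equivariance $\|\nu_n^{\lambda\cdot\delta}-\lambda_*\nu_n^\delta\|_1\to0$ together with the cocycle identity for $\rho^\ast$ to show that $\|s_n(\lambda\delta)-\rho^\ast(\lambda,\delta)s_n(\delta)\|$, measured weak-$\ast$ against a fixed countable dense subset of $B$, tends to $0$ as $n\to\infty$, for every $\lambda\in\Lambda$ and every $\delta\in\Delta$ — this is the standard computation in which the index shift in the integral absorbs the discrepancy between $\nu_n^{\lambda\delta}$ and $\lambda_*\nu_n^\delta$, and the remaining error is bounded by the $L^1$-distance of these two measures. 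Thus $(s_n)$ is an \emph{asymptotically $\rho^\ast$-equivariant} sequence of Borel sections of $K$.

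To extract an honestly equivariant section from the asymptotically equivariant sequence, I would take a weak-$\ast$ cluster point along a non-principal ultrafilter $\omega$ on $\mathbb N$: set $s(\delta):=\operatorname*{w-*lim}_{n\to\omega}s_n(\delta)$. For each fixed $\delta$ this limit exists since $B_1^\ast$ is weak-$\ast$ compact, and $s(\delta)\in K(\delta)$ since $K(\delta)$ is weak-$\ast$ closed. Measurability of $s$ needs a small argument: the ultrafilter limit of a sequence of Borel functions into a compact metrizable space is Borel, because for a countable family of weak-$\ast$ continuous functionals $v_k$ separating points, $\langle s(\delta),v_k\rangle=\lim_{n\to\omega}\langle s_n(\delta),v_k\rangle$ is Borel in $\delta$ as an ultrafilter limit of Borel functions, and these determine $s(\delta)$. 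Finally, passing to the ultrafilter limit in the asymptotic equivariance relation gives $s(\lambda\delta)=\rho^\ast(\lambda,\delta)s(\delta)$ for every $\lambda\in\Lambda$ and every $\delta$; in particular this holds $\mu$-almost everywhere, so $s$ is the desired $\rho^\ast$-equivariant Borel section. Since $B$, $\rho$, $K$ and the invariant measure class of $\mu$ were arbitrary, the $\Lambda$-action on $\Delta$ is universally amenable.

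I expect the main obstacle to be the measurability bookkeeping rather than the analytic heart of the argument: one must be careful that the barycenter construction $\delta\mapsto s_n(\delta)$ and the ultrafilter limit $\delta\mapsto s(\delta)$ genuinely land in the Borel category (hence the appeal to a countable separating family of functionals and to the Borelness of $\delta\mapsto\nu_n^\delta$), and that the initial selection of $s_0$ is legitimate. A secondary technical point is to verify the cocycle computation for $\rho^\ast$ carefully — it is convenient here to phrase everything directly in terms of $\rho^\ast$ using its own cocycle identity $\rho^\ast(\lambda_1\lambda_2,\delta)=\rho^\ast(\lambda_1,\lambda_2\delta)\rho^\ast(\lambda_2,\delta)$ and the isometry property, so that the estimate $\|s_n(\lambda\delta)-\rho^\ast(\lambda,\delta)s_n(\delta)\|\le\|\nu_n^{\lambda\delta}-\lambda_*\nu_n^\delta\|_1$ (tested against the unit ball of $B$) drops out cleanly.
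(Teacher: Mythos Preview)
Your averaging step and the overall strategy match the paper's: pick a Borel section $s_0$, average against the asymptotically equivariant measures $\nu_n^\delta$ twisted by the cocycle, and pass to a limit. The genuine gap is in the extraction of the limit. You take pointwise ultrafilter limits $s(\delta)=\operatorname*{w*lim}_{n\to\omega}s_n(\delta)$ and assert that $\delta\mapsto\langle s(\delta),v\rangle$ is Borel because ``an ultrafilter limit of Borel functions is Borel''. This is false: for a free ultrafilter $\omega$ on $\mathbb{N}$, the ultrafilter limit of the coordinate projections $f_n(x)=x_n$ on $\{0,1\}^{\mathbb{N}}$ is the indicator of $\{x:\{n:x_n=1\}\in\omega\}$, which is not even Lebesgue measurable (it is a tail set, so by Kolmogorov's $0$--$1$ law it would have measure $0$ or $1$, yet bit-flipping symmetry forces measure $1/2$). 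So pointwise ultrafilter limits destroy measurability in general, and your section $s$ need not be Borel or even $\mu$-measurable.

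The paper fixes exactly this point by taking the limit in the \emph{function space} rather than pointwise: one regards the $s_n$ as elements of the closed unit ball of $L^\infty(\Delta,B^\ast)\cong (L^1(\Delta,B))^\ast$, uses weak-$\ast$ compactness of that ball to get an accumulation point $s_\infty$, checks $\rho^\ast$-equivariance by pairing against $L^1(\Delta,B)$ and dominated convergence, and then invokes a representability result (Edwards) to realize $s_\infty$ by a genuine Borel map $\Delta\to B_1^\ast$. You should replace your pointwise ultrafilter argument with this duality argument; everything else in your outline is essentially correct (modulo a small bookkeeping slip in the cocycle inside your definition of $s_n$: the term $\rho^\ast(\lambda^{-1},\lambda\delta)$ should be $\rho^\ast(\lambda,\lambda^{-1}\delta)$ so that it carries $K(\lambda^{-1}\delta)$ to $K(\delta)$).
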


This proposition is probably well-known to the experts. However, we could not find this precise statement in the literature, so we provide a proof. We mention that a similar statement is proved in \cite[Theorem~3.3.7]{ADR} for locally compact groups, under the extra assumption that the space $\Delta$ on which $\Lambda$ is acting is also a locally compact topological space. Similar facts are also established in \cite{JKL}.

\begin{proof}
We will follow Zimmer's proof of \cite[Proposition~2.2]{Zim2}. Let $\mu$ be a probability measure on $\Delta$ whose measure class is $\Lambda$-invariant. Let $B$ be a separable real Banach space; we denote by $B_1^\ast$ the closed unit ball of the dual Banach space $B^\ast$ of $B$. Let $\rho:\Lambda\times\Delta\to\Isom(B)$ be a cocycle, and let $K:\Delta\to\Conv(B_1^*)$ be a $\rho^\ast$-equivariant Borel convex field over $\Delta$. Starting from a Borel section $s:\Delta\to B_1^\ast$ of $K$ (see \cite[Lemma~1.7]{Zim2} for the existence of such a section), we will build one which is $\rho^{\ast}$-equivariant.

Since the $\Lambda$-action on $\Delta$ is Borel amenable, we can find an asymptotically $\Lambda$-equivariant sequence of Borel maps 
\begin{displaymath}
\begin{array}{cccc}
\nu_n: &\Delta &\to &\Prob(\Lambda)\\
& \delta & \mapsto & \nu_n^{\delta}
\end{array}
\end{displaymath}
 Notice that $\Lambda$ acts on the set of all Borel sections of $K$ by letting $$\lambda\cdot s'(\delta):=\rho^{\ast}(\lambda,\lambda^{-1}\delta)s'(\lambda^{-1}\delta)$$ (notice indeed that $\lambda\cdot s'(\delta)\in K(\delta)$ by $\rho^{\ast}$-equivariance of $K$). Our goal is to find a $\Lambda$-invariant section, i.e.\ a section $s'$ such that for every $\lambda\in\Lambda$, the maps $s'$ and $\lambda\cdot s'$ coincide almost everywhere. For every $n\in\mathbb{N}$ and every $\delta\in \Delta$, we then let $$s_n(\delta):=\sum_{\lambda\in\Lambda}\nu_n^\delta(\lambda)\lambda\cdot s(\delta),$$ which belongs to $K(\delta)$ by convexity. 
 
 Let $L^1(\Delta,B)$ be the space of all Lebesgue-measurable functions $f:\Delta\to B$ such that $$\int_{\Delta}||f(\delta)||d\mu(\delta)<+\infty,$$
 where as usual we identify two functions if their difference vanishes on a full measure subset. This is a Banach space, whose dual we now describe. A map $f:\Delta\to B^\ast$ is \emph{scalarwise measurable} if for every $v\in B$, the function $\langle f(\cdot),v\rangle$ is Lebesgue-measurable. We denote by $L^\infty(\Delta,B^\ast)$ the space of all scalarwise measurable functions $f:\Delta\to B^\ast$ such that $||f(\cdot)||$ is bounded. Given $f\in L^1(\Delta,B)$ and $g\in L^\infty(\Delta,B^\ast)$, we let $$\langle f,g\rangle:=\int_{\Delta}\langle f(\delta),g(\delta)\rangle d\mu(\delta).$$ This determines a map $L^\infty(\Delta,B^\ast)\to (L^1(\Delta,B))^\ast$, which is an isomorphism \cite[Proposition~8.18.2]{Edw}. The functions $s_n$ all belong to the unit ball of $L^\infty(\Delta,B^\ast)$, which is compact for the weak-$\ast$ topology. We will show that every accumulation point $s_\infty$ of the sequence $(s_n)_{n\in\mathbb{N}}$ is $\rho^\ast$-equivariant.
 Since $B$ is separable, it then follows from \cite[Proposition~8.15.3]{Edw} that $s_\infty$ is a.e.\ equal to some Borel map $\Delta\to B_1^{\ast}$. 

To prove that $s_\infty$ is $\rho^\ast$-equivariant, given $f\in L^1(\Delta,B)$ and $h\in \Lambda$, we compute
$$\langle h\cdot s_n-s_n,f\rangle  = \int_{\Delta}\langle (h\cdot s_n-s_n)(\delta),f(\delta)\rangle d\mu(\delta),$$ and we aim to prove that this converges to $0$. A quick computation shows that for every $\delta\in\Delta$, the difference $h\cdot s_n(\delta)-s_n(\delta)$ is equal to $$\sum_{\lambda\in\Lambda}(\nu_n^{h^{-1}\delta}(h^{-1}\lambda)-\nu_n^\delta(\lambda))\rho^{\ast}(\lambda,\lambda^{-1}\delta)s(\lambda^{-1}\delta),$$ whose norm is bounded by $2$. As $f$ is integrable, by applying Lebesgue's dominated convergence theorem, we deduce that $\langle h\cdot s_n-s_n,f\rangle$ converges to $0$ as $n$ goes to $+\infty$, as desired. 
\end{proof}

\section{Stabilities}\label{sec:stability}

The goal of the present section is to establish some stability properties of cocycle rigidity. As a property of the target group, we will show in Section~\ref{sec:stability-target} that it is invariant under finite-index subgroups or overgroups and under group extensions. As a property of the source, we will show in Section~\ref{sec:stability-source} that cocycle rigidity for the lattice or its ambient algebraic group are equivalent. The proofs rely on an interpretation of cocycles in terms of their \emph{developed actions} given in Section~\ref{sec:developed-actions}, which was explained to us by Uri Bader (see the MathOverflow answer \cite{MO}). We make the following definition.

\begin{de}[Cocycle-rigid pair] Let $G$ be a locally compact group, and let $\Lambda$ be a countable discrete group. We say that the pair $(G,\Lambda)$ is \emph{cocycle-rigid} if for every standard probability space $(X,\mu)$ equipped with an ergodic measure-preserving $G$-action, every cocycle $G\times X\to \Lambda$ is cohomologous to a cocycle that takes  
its values in some finite subgroup of $\Lambda$. 
\end{de}

\begin{rk}\label{rk_non-ergodic}
Using a decomposition into ergodic components \cite[Theorem~4.4]{Varadarajan}, a standard argument allows one to consider non-ergodic actions as follows.
The pair $(G,\Lambda)$ is cocycle-rigid if and only if for every action of $G$ on a standard probability space $X$ and every cocycle $c:G\times X\to \Lambda$, the space $X$ has a countable almost $G$-invariant Borel partition
$X=\dunion_{i\in\mathbb{N}} X_i$ such that for every $i\in\mathbb{N}$, the cocycle $c_{|G\times X_i}$ is cohomologous to a cocycle with values in a finite subgroup $\Lambda_i$ of $\Lambda$. 
\end{rk}

\subsection{Cocycles and developed actions}\label{sec:developed-actions}

The goal of the present section is to characterize cocycle-rigidity of the pair $(G,\Lambda)$ in terms of commuting actions of $G$ and $\Lambda$: this is the contents of Corollary~\ref{cor:caracterisation_rigid} below. The key notion towards this goal is the following.

\begin{de}[\textbf{\emph{Developed action}}]
Let $G$ be a locally compact group and let $(X,\mu)$ be a standard probability space equipped with a measure-preserving $G$-action. Let $\Lambda$ be a countable discrete group. Let $c:G\times X\to \Lambda$ be a cocycle. Then $G\times \Lambda$ acts on $X\times \Lambda$ via $$(g,\lambda)\cdot (x,\lambda'):=(gx,c(g,x)\lambda'\lambda^{-1}).$$ 
We equip $X\times \Lambda$ with the product measure $\nu$ of $\mu$ and of the counting measure on $\Lambda$. We call $G\times \Lambda\actson X\times \Lambda$ the \emph{developed action} of the cocycle $c$.
\end{de}

Notice that the developed action of $G\times \Lambda$ on $X\times \Lambda$ preserves $\nu$.
Moreover, the $\Lambda$-action on $X\times \Lambda$ is free and has a finite measure fundamental domain, namely $X\times\{1\}$. We make the following definition, which is adapted to measure-preserving actions defined as in Section~\ref{sec_prelim} (i.e.\ where the action relation is only required to be satisfied almost everywhere).

\begin{de}\label{de:fundamental-domain}
Let $(Y,\nu)$ be a standard measured space 
equipped with a measure-preserving action of a countable group $\Lambda$. 
We say that the $\Lambda$-action on $Y$ is \emph{essentially free with a finite measure fundamental domain} if
there exist a conull subset $Y_0\subseteq Y$ and a Borel subset $X\subseteq Y$ of finite measure such that $Y_0$ decomposes as the countable disjoint union
$$Y_0=\bigsqcup_{\lambda\in \Lambda} \lambda  X.$$
\end{de}

\begin{prop}\label{prop:interpretation-cocycle}
Let $G$ be a locally compact group, and let $\Lambda$ be a countable discrete group. Let $(Y,\nu)$ be a standard measured space 
equipped with a measure-preserving action of $G\times \Lambda$. 
Assume that the $\Lambda$-action on $Y$ is essentially free with a finite measure fundamental domain $X$.

Then there exist a measure-preserving $G$-action on $X$  and a cocycle $c:G\times X\to \Lambda$ whose developed action is isomorphic to the $(G\times \Lambda)$-action on $Y$. 
\end{prop}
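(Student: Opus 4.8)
The plan is to reverse the construction of the developed action: given the abstract $(G\times\Lambda)$-space $(Y,\nu)$ with the $\Lambda$-action essentially free with finite-measure fundamental domain $X$, I would use $X$ itself as the base space, define the $G$-action on $X$ by "acting and then pushing back into the fundamental domain", and record the necessary $\Lambda$-correction as the cocycle.

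First I would set up notation. By Definition~\ref{de:fundamental-domain} there is a conull $Y_0\subseteq Y$ with $Y_0=\bigsqcup_{\lambda\in\Lambda}\lambda X$, so (shrinking to a conull subset if needed) every $y\in Y_0$ can be written uniquely as $y=\lambda\cdot x$ with $\lambda\in\Lambda$ and $x\in X$; write $x=\pi(y)\in X$ for this "return to the fundamental domain" map and $\lambda=\kappa(y)\in\Lambda$ for the label, so that $y=\kappa(y)\cdot\pi(y)$ and both $\pi,\kappa$ are Borel. Both restrict nicely under the $G$-action since $G$ commutes with $\Lambda$: for $g\in G$ and $x\in X$, the point $g\cdot x$ lies in $Y_0$ for a.e.\ $x$, so I can define the $G$-action on $X$ by $g\star x:=\pi(g\cdot x)$ and the candidate cocycle by $c(g,x):=\kappa(g\cdot x)\in\Lambda$, i.e.\ $g\cdot x=c(g,x)\cdot(g\star x)$ in $Y$.

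The next step is to verify that these are well-defined (a.e.) and satisfy the required identities. That $g\star x$ defines a measure-preserving action up to null sets, and that $c$ satisfies the cocycle relation $c(g_1g_2,x)=c(g_1,g_2\star x)c(g_2,x)$, both follow by applying the two decompositions of $(g_1g_2)\cdot x = g_1\cdot(g_2\cdot x)$ inside $Y$ and using uniqueness of the expression $y=\lambda\cdot x'$ with $x'\in X$: writing $g_2\cdot x = c(g_2,x)\cdot(g_2\star x)$ and then $g_1\cdot(g_2\star x)=c(g_1,g_2\star x)\cdot(g_1\star(g_2\star x))$, and using that the $G$- and $\Lambda$-actions commute, one gets $(g_1g_2)\cdot x = c(g_1,g_2\star x)c(g_2,x)\cdot(g_1\star(g_2\star x))$; comparing with $(g_1g_2)\cdot x = c(g_1g_2,x)\cdot((g_1g_2)\star x)$ and invoking uniqueness yields simultaneously the cocycle identity and the fact that $\star$ is an action (up to null sets), so it can be strictified by \cite[Theorem B.10]{Zim}. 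Measure-preservation of $\star$ on $(X,\mu:=\nu|_X)$ follows because $G$ preserves $\nu$ and $\pi$ is essentially the quotient map by the $\Lambda$-action. That $c$ is Borel follows from Borel-ness of $\pi,\kappa$ and of the $G$-action on $Y$.

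Finally I would exhibit the isomorphism with the developed action. Equipping $X\times\Lambda$ with the product of $\mu$ and counting measure, define $\Phi:X\times\Lambda\to Y$ by $\Phi(x,\lambda')=\lambda'^{-1}\cdot x$ (using the $\Lambda$-action on $Y$). This is a measure-preserving Borel isomorphism onto $Y_0$ since $Y_0=\bigsqcup_{\lambda}\lambda X$. One checks it intertwines the developed action $(g,\lambda)\cdot(x,\lambda')=(g\star x,\,c(g,x)\lambda'\lambda^{-1})$ with the given $(G\times\Lambda)$-action on $Y$: indeed $\Phi((g,\lambda)\cdot(x,\lambda'))=(c(g,x)\lambda'\lambda^{-1})^{-1}\cdot(g\star x)=\lambda\lambda'^{-1}c(g,x)^{-1}\cdot(g\star x)=\lambda\lambda'^{-1}\cdot(g\cdot x)=(g,\lambda)\cdot(\lambda'^{-1}\cdot x)=(g,\lambda)\cdot\Phi(x,\lambda')$, using $c(g,x)^{-1}\cdot(g\star x)=g^{-1}\cdot\bigl(c(g,x)\cdot(g\star x)\bigr)$... more directly, $g\cdot x = c(g,x)\cdot(g\star x)$ and $G,\Lambda$ commute. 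I expect the main obstacle to be bookkeeping the "almost everywhere" issues carefully — ensuring the conull sets on which $\pi,\kappa$ behave well can be chosen $G$-invariantly (up to null sets) so that $\star$ is a genuine a.e.\ action and $c$ a genuine cocycle, rather than something satisfying the identities only on a $g$-dependent conull set; this is handled by a standard Fubini argument together with the strictification result cited in Section~\ref{sec_prelim}.
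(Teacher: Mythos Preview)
Your approach is exactly the paper's: use the fundamental domain $X$, push $g\cdot x$ back into $X$ via the $\Lambda$-action to define both the $G$-action $\star$ on $X$ and the cocycle $c$, then verify that $\Phi(x,\lambda')=\lambda'^{-1}\cdot x$ intertwines the developed action with the given one.

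There is, however, a sign error in your definition of $c$. With your convention $g\cdot x = c(g,x)\cdot(g\star x)$, the computation of $(g_1g_2)\cdot x$ actually gives
\[
(g_1g_2)\cdot x \;=\; g_1\cdot\bigl(c(g_2,x)\cdot(g_2\star x)\bigr) \;=\; c(g_2,x)\cdot g_1\cdot(g_2\star x) \;=\; c(g_2,x)\,c(g_1,g_2\star x)\cdot\bigl(g_1\star(g_2\star x)\bigr),
\]
so the factors appear in the order $c(g_2,x)\,c(g_1,g_2\star x)$, not $c(g_1,g_2\star x)\,c(g_2,x)$ as you wrote. Thus your $c$ satisfies the \emph{anti}-cocycle identity $c(g_1g_2,x)=c(g_2,x)c(g_1,g_2\star x)$, and with such a $c$ the formula $(g,\lambda)\cdot(x,\lambda')=(g\star x,\,c(g,x)\lambda'\lambda^{-1})$ does not define a $G$-action. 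The same error resurfaces in your intertwining check, where you need $c(g,x)^{-1}\cdot(g\star x)=g\cdot x$, which is false under your convention. The fix is simply to set $c(g,x):=\kappa(g\cdot x)^{-1}$, i.e.\ $g\cdot x = c(g,x)^{-1}\cdot(g\star x)$ (this is precisely the paper's choice); with this change all your computations go through verbatim.
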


\begin{proof}
Let $Y_0\subseteq Y$ be a conull subset provided by Definition~\ref{de:fundamental-domain}. Let $\phi:X\times \Lambda\to Y_0$ be the Borel isomorphism defined by $\phi(x,\lambda)=\lambda^{-1} x$. 
Let $\alpha:G\times X\ra X$ and $c:G\times X\ra \Lambda$ be such that for all $g\in G$ and a.e.\ $x\in X$, one has $\phi^{-1}(gx)=(\alpha_g(x),c(g,x))$. 
Thus we have $gx=c(g,x)^{-1}\alpha_g(x)$. Therefore, for all $g_1,g_2\in G$ and a.e.\ $x\in X$, one has $g_1(g_2 x)=g_1 c(g_2,x)^{-1}\alpha_{g_2}(x)$. Since the actions of $G$ and $\Lambda$ on $Y$ commute, we deduce that for all $g_1,g_2\in G$ and a.e.\ $x\in X$, one has  
$$g_1g_2 x=c(g_2,x)^{-1} g_1\alpha_{g_2}(x)=c(g_2,x)^{-1} c(g_1,\alpha_{g_2}(x))^{-1}\alpha_{g_1}(\alpha_{g_2}(x)),$$ and on the other hand $$(g_1g_2)x=c(g_1g_2,x)^{-1}\alpha_{g_1g_2}(x).$$
Thus we get that for all $g_1,g_2\in G$ and a.e.\ $x\in X$, one has $\alpha_{g_1}(\alpha_{g_2}(x))=\alpha_{g_1g_2}(x)$, which means that $g\mapsto \alpha_g$ is indeed an action of $G$ on $X$, and it is measure-preserving. Furthermore we also get that $c(g_1g_2,x)^{-1}=c(g_2,x)^{-1} c(g_1,\alpha_{g_2}(x))^{-1}$. Hence $c$ is a cocycle for the $\alpha$-action of $G$ on $X$.

Now let $\beta$ be the developed action of $c$, which is an action of $G\times \Lambda$ on $X\times \Lambda$. We claim that $\phi \beta \phi^{-1}$ coincides almost everywhere 
with the original action of $G\times \Lambda$ on $Y$. Indeed, for almost every $y\in Y$, letting $\phi(x,\lambda_1)=y$ we get 
\begin{displaymath}
\begin{array}{rl}
\phi \beta_{g,\lambda} \phi^{-1}(y)&=\phi \beta_{g,\lambda}(x,\lambda_1)\\
&=\phi(\alpha_g(x),c(g,x)\lambda_1\lambda^{-1})\\
&= \lambda\lambda_1^{-1}c(g,x)^{-1}\alpha_g(x)\\
&=\lambda\lambda_1^{-1}gx\\
&=\lambda gy
\end{array}
\end{displaymath}
 (where the last equality uses the fact that the actions of $G$ and $\Lambda$ on $Y$ commute). 
\end{proof}

\begin{rk}
We can be more precise regarding the correspondence between cocycles and their developed actions. Let $X$ be a standard probability space equipped with a measure-preserving $G$-action, and let $\Lambda$ be a countable group.
Identifying the fundamental domain $X$ with $\ol Y=Y/\Lambda$ in Proposition \ref{prop:interpretation-cocycle} yields a cocycle
 $c:G\times \ol Y\ra \Lambda$. If one changes the fundamental domain $X$ to another fundamental domain $X'$,
the resulting cocycle $G\times \ol Y\ra \Lambda$ is cohomologous to $c$.

In fact the following holds.
Define a \emph{developing space $(Y,\pi)$ over $X$} as a standard measure space $Y$ with a measure-preserving action $G\times \Lambda\actson Y$ together with a $G$-equivariant map $\pi:Y\ra X$ such that 
$\Lambda$ acts essentially freely with a finite measure fundamental domain, and $\pi$ induces an isomorphism $Y/\Lambda\ra X$.
Say that two developing spaces $(Y,\pi)$, $(Y',\pi')$ over $X$ are \emph{isomorphic} if there is an equivariant isomorphism $\phi:Y\ra Y'$ such 
that $\pi'\circ\phi=\pi$.
Then the set of all cohomology classes of cocycles $G\times X\to\Lambda$
is in 1-1 correspondence with the set of isomorphism classes of developing spaces over $X$. 
\end{rk}

\begin{lemma}\label{interpretation-cohomologous}
Let $G$ be a locally compact group and let $(X,\mu)$ be a probability space equipped with an ergodic measure-preserving $G$-action. Let $\Lambda$ be a countable discrete group. Let $c:G\times X\to \Lambda$ be a cocycle. Equip $X\times \Lambda$ with the developed action of $G\times\Lambda$. 
Let $\mathbb{D}$ be a countable $\Lambda$-set. Then the following assertions are equivalent.
  \begin{enumerate}\renewcommand{\theenumi}{(\roman{enumi})}
  \item There exists $d_0\in \mathbb{D}$ such that $c$ is cohomologous to a cocycle which takes its values in $\Stab_\Lambda(d_0)$ (the $\Lambda$-stabilizer of $d_0$).
  \item There exists a $c$-equivariant Borel map $f:X\ra \mathbb{D}$.
  \item There exists a Borel map $F:X\times \Lambda\to \bbD$ 
which is $G$-invariant and $\Lambda$-equivariant.
\end{enumerate}

 If $G\actson X$ is not ergodic, the statement
  remains true if one replaces $(i)$ by
  \begin{itemize}\renewcommand{\theenumi}{(\roman{enumi})}
  \item[$(i').$] There exists a finite or countable almost $G$-invariant Borel partition $X=\dunion X_i$, and a finite or countable subset $\{d_i\}\subseteq\mathbb{D}$, such that $c$ is cohomologous to a cocycle $c'$ such that for every $i$, the restriction $c'_{|G\times X_i}$ takes its values in $\Stab_\Lambda(d_i)$. 
  \end{itemize}

\end{lemma}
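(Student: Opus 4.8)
The plan is to prove the equivalence of the three assertions (together with the non-ergodic variant) by a cyclic chain of implications, exploiting the dictionary between cocycles and developed actions established in Proposition~\ref{prop:interpretation-cocycle}.

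\textbf{From $(iii)$ to $(ii)$.} Given a $G$-invariant, $\Lambda$-equivariant Borel map $F:X\times\Lambda\to\bbD$, I would restrict it to the fundamental domain $X\times\{1\}\subseteq X\times\Lambda$ and set $f(x):=F(x,1)$. The point is that $G$-invariance of $F$ for the developed action says precisely that $F(gx,c(g,x)\lambda^{-1})=F(x,\lambda)$ up to the identification; unwinding the formula $(g,1)\cdot(x,\lambda')=(gx,c(g,x)\lambda')$ gives $F(gx,c(g,x))=F(x,1)$, and then $\Lambda$-equivariance converts the argument $c(g,x)$ back to $1$ at the cost of applying $c(g,x)$ to the target value; so $f(gx)=c(g,x)f(x)$, i.e.\ $f$ is $c$-equivariant. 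One must be slightly careful about the "almost everywhere'' quantifiers: the identities defining the developed action only hold a.e., so I would invoke the conull subset $Y_0$ from Definition~\ref{de:fundamental-domain} and Fubini to ensure the equivariance identity for $f$ holds for all $g$ and a.e.\ $x$.

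\textbf{From $(ii)$ to $(iii)$.} Conversely, given a $c$-equivariant $f:X\to\bbD$, I would define $F:X\times\Lambda\to\bbD$ by $F(x,\lambda):=\lambda\m f(x)$ (or $\lambda\cdot f(x)$; the correct twist is dictated by the formula for the developed action and I would pin it down by direct substitution). Then $\Lambda$-equivariance is built in, and $G$-invariance reduces exactly to the $c$-equivariance of $f$: $F(gx,c(g,x)\lambda^{-1})=\lambda c(g,x)\m f(gx)=\lambda c(g,x)\m c(g,x) f(x)=\lambda f(x)$, matching $F(x,\lambda)$ up to the chosen convention. Again I would track the a.e.\ issues through Fubini.

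\textbf{Equivalence of $(i)$ and $(ii)$.} This is the classical Zimmer correspondence, and I would spell it out in the usual way. If $c$ is cohomologous to $c'$ with values in $\Stab_\Lambda(d_0)$, say $c'(g,x)=f_0(gx)\m c(g,x)f_0(x)$, then $x\mapsto f_0(x)\m d_0$ is $c$-equivariant: indeed $f_0(gx)\m d_0 = f_0(gx)\m c(g,x) f_0(x) \big(f_0(x)\m d_0\big)$ and since $c'(g,x)=f_0(gx)\m c(g,x) f_0(x)$ fixes $d_0$, one gets $f_0(gx)\m d_0 = c(g,x)\cdot (f_0(x)\m d_0)$ after a short manipulation — here I would be careful to write the computation so the $\Stab$-condition is used correctly. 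Conversely, given a $c$-equivariant $f:X\to\bbD$, ergodicity implies the image of $f$ is essentially contained in a single $\Lambda$-orbit $\Lambda d_0$ (the preimages of the orbits form a $G$-invariant partition, and there are countably many orbits, so one of them is conull); then a measurable selection theorem produces a Borel $f_0:X\to\Lambda$ with $f(x)=f_0(x)\m d_0$, and $c'(g,x):=f_0(gx)\m c(g,x)f_0(x)$ is readily checked to preserve $d_0$, hence lands in $\Stab_\Lambda(d_0)$.

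\textbf{The non-ergodic variant.} For $(i')$, the only place ergodicity was used above is in the step "the image of $f$ lies in a single orbit''. Without ergodicity, the countable partition of $X$ by the $f$-preimages of the $\Lambda$-orbits meeting the essential image is an almost $G$-invariant Borel partition $X=\dunion X_i$; on each piece $X_i$ the image of $f$ lies in one orbit $\Lambda d_i$, and the argument above applies piecewise to produce $f_0$ globally Borel (choosing the selection on each $X_i$) with $c'(g,x)=f_0(gx)\m c(g,x)f_0(x)$ restricting to $\Stab_\Lambda(d_i)$ on each $X_i$. The main obstacle I anticipate is purely bookkeeping: getting the left/right conventions and inverses in the developed action and in the cohomology relation to line up consistently, and handling the everywhere-versus-almost-everywhere subtleties when transporting identities between $X$ and $X\times\Lambda$ — the conceptual content is entirely standard.
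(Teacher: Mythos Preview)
Your proposal is correct and follows essentially the same approach as the paper: the paper runs the cycle $(i')\Rightarrow(ii)\Rightarrow(iii)\Rightarrow(i')$ using exactly your constructions $f(x)=h(x)d_i$ (your sign in $f_0(x)^{-1}d_0$ is off, as you anticipated---the paper's convention gives $f(x)=h(x)d_0$) and $F(x,\lambda)=\lambda^{-1}f(x)$, and for $(iii)\Rightarrow(i')$ it goes through the intermediate step $f(x)=F(x,1_\Lambda)$ just as in your $(iii)\Rightarrow(ii)$, then applies the orbit-partition and section argument you describe. The only cosmetic difference is that the paper folds your $(iii)\Rightarrow(ii)$ and $(ii)\Rightarrow(i)$ into a single direct computation.
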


\begin{proof}
If the action is ergodic, then $(i')$ is equivalent to $(i)$ so we prove the lemma without assuming ergodicity.

Assume $(i')$. Let $h:X\ra \Lambda$ be a Borel map such that for every $g\in G$, every $i$ and a.e.\ $x\in X_i$, one has $h(gx)\m c(g,x)h(x)\in \Stab_\Lambda(d_i)$.
We claim that the Borel map $f:X\ra \mathbb{D}$ defined by $f(x)=h(x)\cdot d_i$ for $x\in X_i$ is $c$-equivariant.
Indeed, for all $g\in G$ and a.e.\ $x\in X_i$, one has (using the fact that $h(gx)\m c(g,x)h(x)\in \Stab_\Lambda(d_i)$ for the second equality):
$$f(gx)=h(gx)d_i= c(g,x)h(x) d_i= c(g,x)f(x),$$
so $(ii)$ holds.

Now assume that $(ii)$ holds and define $F:X\times \Lambda\ra \mathbb{D}$ by $F(x,\lambda)=\lambda\m f(x)$.
Then for every $(g,\lambda)\in G\times \Lambda$ and a.e.\ $(x,\lambda')\in X\times \Lambda$, one has
$$F((g,\lambda).(x,\lambda'))=F(gx,c(g,x)\lambda'\lambda\m)=\lambda\lambda'^{-1} c(g,x)\m f(gx)=\lambda\lambda'^{-1}f(x)=\lambda F(x,\lambda')$$
and $(iii)$ follows.

We now prove that $(iii)$ implies $(i')$. 
Let $\{d_i\}$ be a family of representatives of the $\Lambda$-orbits of $\bbD$.
Let $X_i$ be the set of all $x\in X$ such that for every $\lambda\in\Lambda$, one has $F(x,\lambda)\in \Lambda d_i$.
Note that $X_i$ is $G$-invariant.
Using sections $\Lambda d_i \ra \Lambda$ 
one can define a Borel map $h:X\ra \Lambda$ such that for almost every $x\in X_i$, one has
$F(x,1_\Lambda)=h(x)d_i$.
Let $c'(g,x)=h(gx)\m c(g,x)h(x)$.
Then for all $g\in G$ and a.e.\ $x\in X_i$, one has
\begin{align*}
c'(g,x)d_i&=h(gx)\m c(g,x) h(x)d_i\\
&=h(gx)\m c(g,x) F(x,1_\Lambda) \\
&=h(gx)\m  F(x,c(g,x)\m).
\end{align*}
Since $F$ is $G$-invariant, and since $g(x,c(g,x)\m)=(gx,1_\Lambda)$, one gets
\begin{align*}
c'(g,x)d_i&=h(gx)\m  F(gx,1_\Lambda)=d_i.
\end{align*}
This concludes the proof.
\end{proof}

As a consequence of Lemma~\ref{interpretation-cohomologous}, we get the following characterization of cocycle rigidity.

\begin{cor}\label{cor:caracterisation_rigid}
  Let $G$ be a locally compact group and $\Lambda$ be a countable group.
Then the following assertions are equivalent.
\begin{itemize}
\item[(i)] The pair $(G,\Lambda)$ is cocycle-rigid. 
\item[(ii)] For every measure-preserving action $G\times\Lambda\actson Y$ on a standard measured space $Y$
such that the action of $\Lambda$ is essentially free with a fundamental domain of finite measure,
there exists a countable $\Lambda$-set $\bbD$ with finite point stabilizers and
a $G$-invariant $\Lambda$-equivariant Borel map $F:Y\ra \bbD$.
\end{itemize}
\end{cor}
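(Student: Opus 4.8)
The plan is to deduce the corollary directly from Proposition~\ref{prop:interpretation-cocycle} and Lemma~\ref{interpretation-cohomologous}, which together translate back and forth between the cocycle picture and the developed-action picture, combined with the reduction to ergodic components recorded in Remark~\ref{rk_non-ergodic}. No new ideas are needed beyond organizing these ingredients.

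For $(i)\Rightarrow(ii)$, I would start from a measure-preserving action $G\times\Lambda\actson Y$ whose $\Lambda$-part is essentially free with a finite measure fundamental domain $X$; rescaling the measure (which affects none of the relevant Borel/equivariance properties) we may assume $X$ has measure $1$. By Proposition~\ref{prop:interpretation-cocycle}, $Y$ is isomorphic, as a $G\times\Lambda$-space, to the developed action of a cocycle $c:G\times X\to\Lambda$ for some measure-preserving $G$-action on the probability space $X$. Since $(G,\Lambda)$ is cocycle-rigid, Remark~\ref{rk_non-ergodic} gives a countable almost $G$-invariant Borel partition $X=\bigsqcup_i X_i$ and finite subgroups $\Lambda_i$ of $\Lambda$ such that $c_{|G\times X_i}$ is cohomologous to a cocycle valued in $\Lambda_i$. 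I would then set $\bbD:=\bigsqcup_i \Lambda/\Lambda_i$: this is a countable $\Lambda$-set, its point stabilizers are conjugates of the $\Lambda_i$ hence finite, and the base points $d_i:=\Lambda_i\in\Lambda/\Lambda_i\subseteq\bbD$ satisfy $\Stab_\Lambda(d_i)=\Lambda_i$. The data $(\{X_i\},\{d_i\})$ is exactly condition $(i')$ of Lemma~\ref{interpretation-cohomologous}, so by the equivalence $(i')\Leftrightarrow(iii)$ of that lemma there is a $G$-invariant, $\Lambda$-equivariant Borel map $X\times\Lambda\to\bbD$, which I transport through the isomorphism of developed actions to obtain the required $F:Y\to\bbD$.

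For $(ii)\Rightarrow(i)$, I would take a standard probability space $X$ with an ergodic measure-preserving $G$-action and an arbitrary cocycle $c:G\times X\to\Lambda$. Its developed action $G\times\Lambda\actson X\times\Lambda$ is measure-preserving, and, as noted right after the definition of developed actions, the $\Lambda$-action on it is essentially free with fundamental domain $X\times\{1\}$ of measure $1$. Applying hypothesis $(ii)$ with $Y=X\times\Lambda$ yields a countable $\Lambda$-set $\bbD$ with finite point stabilizers and a $G$-invariant, $\Lambda$-equivariant Borel map $F:X\times\Lambda\to\bbD$, i.e.\ condition $(iii)$ of Lemma~\ref{interpretation-cohomologous}. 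By the equivalence $(iii)\Leftrightarrow(i)$ of that lemma (this is where ergodicity of $X$ is used) there is $d_0\in\bbD$ such that $c$ is cohomologous to a cocycle valued in $\Stab_\Lambda(d_0)$; since $\Stab_\Lambda(d_0)$ is finite by hypothesis, $c$ is cohomologous to a cocycle with values in a finite subgroup of $\Lambda$. As $c$ was arbitrary, $(G,\Lambda)$ is cocycle-rigid.

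I do not expect a real obstacle here: the mathematical content sits entirely in Proposition~\ref{prop:interpretation-cocycle} and Lemma~\ref{interpretation-cohomologous}. The only points requiring care are bookkeeping ones — checking that the isomorphism of Proposition~\ref{prop:interpretation-cocycle} is an honest $G\times\Lambda$-equivariant isomorphism on conull sets, so that a $G$-invariant $\Lambda$-equivariant Borel map survives transport in either direction, and verifying that the passage between a finite measure fundamental domain and a probability measure on $X$ is harmless. These are routine and I would treat them briefly.
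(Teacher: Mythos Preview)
Your proposal is correct and follows essentially the same route as the paper's own proof: in both directions you invoke Proposition~\ref{prop:interpretation-cocycle} to pass between $(G\times\Lambda)$-actions and cocycles, then apply Lemma~\ref{interpretation-cohomologous} with $\bbD=\bigsqcup_i\Lambda/\Lambda_i$ (together with Remark~\ref{rk_non-ergodic} for the non-ergodic direction). The only cosmetic difference is that for $(ii)\Rightarrow(i)$ the paper works directly with a possibly non-ergodic $X$ and lands in condition~$(i')$, whereas you start from an ergodic $X$ and land in condition~$(i)$; both are fine.
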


\begin{proof}
  Assume that $(G,\Lambda)$ is cocycle-rigid. Let $G\times \Lambda\actson Y$ be an action 
such that the action of $\Lambda$ is essentially free with a fundamental domain $X$ of finite measure.
By Proposition~\ref{prop:interpretation-cocycle}, the action $G\times\Lambda\actson Y$ is the developed action of a cocycle $c:G\times X\ra \Lambda$.
By cocycle rigidity (as in Remark \ref{rk_non-ergodic}), the space $X$ has a countable almost $G$-invariant Borel partition $X=\sqcup_i X_i$
such that  $c_{|G\times X_i}$ is cohomologous to a cocycle with values in a finite group $\Lambda_i\subseteq \Lambda$.
Applying Lemma \ref{interpretation-cohomologous} with $\bbD=\dunion_i \Lambda/\Lambda_i$,
one gets a Borel map $F:Y\ra \bbD$ which is $G$-invariant and $\Lambda$-equivariant.

Conversely, consider a cocycle $c:G\times X\ra \Lambda$, and let $Y=X\times \Lambda$ be its developed action.
Our assumption gives us a countable set $\bbD$ to which one can apply Lemma~\ref{interpretation-cohomologous} 
and deduce that $X$ has a countable almost $G$-invariant Borel partition 
$X=\sqcup_i X_i$ such that $c_{|G\times X_i}$ is cohomologous to a cocycle with values in a finite group $\Lambda_i\subseteq \Lambda$.
This concludes the proof.
\end{proof}

\subsection{Stabilities in the target: finite index overgroups and extensions}\label{sec:stability-target}

We now prove that, when viewed as a property of the target group $\Lambda$, cocycle rigidity of $(G,\Lambda)$ is stable under finite-index subgroups, finite-index overgroups and group extensions.

\begin{prop}\label{finite-index}
Let $G$ be a locally compact group, let $\Lambda$ be a countable discrete group, and let $\Lambda'$ be a finite index subgroup of $\Lambda$.

Then $(G,\Lambda)$ is cocycle-rigid if and only if $(G,\Lambda')$ is cocycle-rigid.
\end{prop}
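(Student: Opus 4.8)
\textbf{Proof plan for Proposition~\ref{finite-index}.} The plan is to exploit the characterization of cocycle rigidity from Corollary~\ref{cor:caracterisation_rigid} in terms of developed actions, which handles both directions symmetrically and avoids the bookkeeping of induced/restricted cocycles. Throughout, a developed action is a measure-preserving action $G\times\Lambda\actson Y$ (resp.\ $G\times\Lambda'\actson Y'$) in which the normal-subgroup factor acts essentially freely with a finite-measure fundamental domain, and the goal in each direction is to produce the required $G$-invariant, equivariant Borel map to a countable set with finite point stabilizers.

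\emph{First direction: $(G,\Lambda)$ cocycle-rigid $\Rightarrow$ $(G,\Lambda')$ cocycle-rigid.} Suppose $G\times\Lambda'\actson Y'$ is a developed action with $\Lambda'$-fundamental domain $X'$ of finite measure. I would ``induce'' this to a $\Lambda$-action: set $Y:=Y'\times_{\Lambda'}\Lambda$ (equivalently $Y'\times(\Lambda'\backslash\Lambda)$ with a suitable twisted $\Lambda$-action), with $G$ acting diagonally (trivially on the finite coordinate). Then the $\Lambda$-action on $Y$ is still essentially free, and $X'$ (sitting inside one copy of $Y'$) is a finite-measure fundamental domain since $[\Lambda:\Lambda']<\infty$ makes the total measure finite. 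Applying the hypothesis to $G\times\Lambda\actson Y$ yields a countable $\Lambda$-set $\bbD$ with finite point stabilizers and a $G$-invariant $\Lambda$-equivariant Borel map $F:Y\to\bbD$. Restricting $F$ to the copy of $Y'$ inside $Y$ (i.e.\ composing with the $\Lambda'$-equivariant inclusion $Y'\hookrightarrow Y$) gives a $G$-invariant $\Lambda'$-equivariant Borel map $Y'\to\bbD$; and $\bbD$, viewed as a $\Lambda'$-set, still has finite point stabilizers since $\Stab_{\Lambda'}(d)=\Stab_\Lambda(d)\cap\Lambda'$. By Corollary~\ref{cor:caracterisation_rigid}, $(G,\Lambda')$ is cocycle-rigid.

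\emph{Second direction: $(G,\Lambda')$ cocycle-rigid $\Rightarrow$ $(G,\Lambda)$ cocycle-rigid.} Now let $G\times\Lambda\actson Y$ be a developed action with $\Lambda$-fundamental domain $X$ of finite measure. Restricting the $\Lambda$-action to $\Lambda'$, the same space $Y$ becomes a developed action for $\Lambda'$: a fundamental domain for $\Lambda'$ is $\bigsqcup_{i} \gamma_i X$, where $\gamma_1,\dots,\gamma_n$ are coset representatives for $\Lambda'\backslash\Lambda$, and this set still has finite measure. Applying the hypothesis to $G\times\Lambda'\actson Y$ produces a countable $\Lambda'$-set $\bbD'$ with finite point stabilizers and a $G$-invariant $\Lambda'$-equivariant Borel map $F':Y\to\bbD'$. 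To upgrade $F'$ to something $\Lambda$-equivariant, I would induce the target: let $\bbD:=\bbD'\times_{\Lambda'}\Lambda$ be the corresponding co-induced (or induced) countable $\Lambda$-set, whose point stabilizers are conjugates of finite-index subgroups of the $\Stab_{\Lambda'}$, hence still finite; and define $F:Y\to\bbD$ using $F'$ together with the $\Lambda'$-coset data recording, for $y\in Y$, ``which $\Lambda'$-translate of the fundamental domain region $y$ sits over.'' One checks directly that this $F$ is Borel, $G$-invariant (as $G$ commutes with $\Lambda$ and hence preserves the coset data), and $\Lambda$-equivariant. Corollary~\ref{cor:caracterisation_rigid} then gives that $(G,\Lambda)$ is cocycle-rigid.

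\emph{Main obstacle.} The routine parts are checking Borel measurability and the finite-measure fundamental domain conditions, which are immediate from $[\Lambda:\Lambda']<\infty$. The real point requiring care is the second direction: making precise the ``induction of the target countable set'' so that $F'$ genuinely extends to a $\Lambda$-equivariant map, i.e.\ correctly splicing the $\Lambda'$-equivariant datum $F'$ together with the $\Lambda'\backslash\Lambda$-coordinate tracking where a point of $Y$ lies relative to the $\Lambda$-fundamental domain, and verifying that the $G$-invariance survives this construction. (An equivalent, perhaps cleaner, route is to phrase both directions via restriction and induction of cocycles at the level of Lemma~\ref{interpretation-cohomologous}, but the developed-action formulation keeps the two implications visibly parallel.)
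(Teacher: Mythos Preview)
Your overall strategy---working through Corollary~\ref{cor:caracterisation_rigid} and handling both directions by inducing/restricting developed actions---is exactly the paper's approach, and your first direction matches it.

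In the second direction, however, there is a genuine gap. You propose the target $\bbD:=\bbD'\times_{\Lambda'}\Lambda$ (the \emph{induced} $\Lambda$-set) together with ``coset data'' recording which $\Lambda'$-translate of the fundamental domain a point lies over, and you justify $G$-invariance by saying ``$G$ commutes with $\Lambda$ and hence preserves the coset data.'' That claim is false: commutation of $G$ with $\Lambda$ means $G$ preserves $\Lambda$-orbits, but it does \emph{not} preserve the slices $\lambda X$---in the developed-action picture $Y\simeq X\times\Lambda$, the $G$-action shifts the $\Lambda$-coordinate by the cocycle. So any map built from such coset data will not be $G$-invariant. Note also that for permutation actions the induced and co-induced sets are genuinely different objects (of cardinality $[\Lambda:\Lambda']\cdot|\bbD'|$ versus $|\bbD'|^{[\Lambda:\Lambda']}$), so your parenthetical ``co-induced (or induced)'' does not rescue the argument: only co-induction is right adjoint to restriction, which is the universal property you need here.

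The fix---and what the paper does---is to take the \emph{co-induced} set $\hat\bbD:=\Maps_{\Lambda'}(\Lambda,\bbD')$ (the paper writes $\Maps_{\Lambda'}(\Lambda,\Lambda\times\bbD)$, with a harmless extra factor), with $\Lambda$ acting by $(\lambda_0\cdot\phi)(\lambda)=\phi(\lambda\lambda_0)$, and to set $\hat F(y)(\lambda):=F'(\lambda y)$. This is $\Lambda$-equivariant by construction, and $G$-invariant because $F'(\lambda gy)=F'(g\lambda y)=F'(\lambda y)$: here the commutation of $G$ and $\Lambda$ is used correctly, applied to $F'$ itself rather than to any coset data. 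Finite stabilizers follow since if $\lambda'_0\in\Lambda'$ fixes $\phi$, then $\phi(1)=\phi(\lambda'_0)=\lambda'_0\phi(1)$, so $\lambda'_0\in\Stab_{\Lambda'}(\phi(1))$.
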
 

\begin{proof} 
We use the criterion given in Corollary~\ref{cor:caracterisation_rigid}. 
Assume that the pair $(G,\Lambda)$ is cocycle-rigid, and let $G\times\Lambda'\actson Y$ be a measure-preserving action which is essentially free with a finite measure fundamental domain. 
We construct a larger space $\hat Y$ endowed with a $(G\times \Lambda)$-action so that
$Y$ embeds $(G\times\Lambda')$-equivariantly in $\hat Y$ as follows. 
We define $\hat{Y}:=(Y\times\Lambda)/{\sim_{\Lambda'}}$, where $\sim_{\Lambda'}$ is the equivalence relation where $(y,\lambda_1)\sim_{\Lambda'} (\lambda'y,\lambda'\lambda_1)$ for every $\lambda'\in\Lambda'$.
We will denote by $[y,\lambda_1]$ the equivalence class of $(y,\lambda_1)$. 
Then $G\times\Lambda$ acts on $\hat Y$ via $(g,\lambda)\cdot [y,\lambda_1]:=[gy,\lambda_1\lambda^{-1}]$
and the embedding $\iota:Y\ra \hat Y$ defined by $\iota(y)=[y,1_\Lambda]$ is $(G\times \Lambda')$-equivariant.
The space $\hat{Y}$ is isomorphic to $Y\times (\Lambda/\Lambda')$ so the measure on $Y$ defines 
a measure on $\hat Y$ which is $(G\times \Lambda)$-invariant, and the action of $\Lambda$ is essentially free with a finite measure fundamental domain.

Since $(G,\Lambda)$ is cocycle-rigid, there exists a countable $\Lambda$-set $\bbD$ with finite point stabilizers 
and a $G$-invariant $\Lambda$-equivariant Borel map $\hat{F}:\hat{Y}\to\bbD$. 
The map $\hat F\circ\iota:Y\ra \bbD$ is $G$-invariant and $\Lambda'$-equivariant.
This shows that $(G,\Lambda')$ is cocycle-rigid. 

Conversely, we now prove that cocycle-rigidity of $(G,\Lambda')$ implies cocycle-rigidity of $(G,\Lambda)$, using again the characterization given in Corollary~\ref{cor:caracterisation_rigid}.
Consider an action $G\times \Lambda\actson Y$ such that the action of $\Lambda$ is essentially free with a fundamental domain of finite measure. 
Since $(G,\Lambda')$ is cocycle-rigid, and since $\Lambda'$ acts on $Y$ essentially freely with a fundamental domain of finite measure,
there exists a countable $\Lambda'$-set $\bbD$ and a Borel map $F:Y\ra \bbD$ which is $G$-invariant and $\Lambda'$-equivariant.

We define $\hat \bbD=\Maps_{\Lambda'}(\Lambda,\Lambda\times \bbD)$ as the countable set of $\Lambda'$-equivariant maps from $\Lambda$ to $\Lambda\times \bbD$
for the standard left actions, i.e.\ the set of maps $\phi=(\phi_1,\phi_2)$ such that 
$$\phi_1(\lambda'\lambda)=\lambda'\phi_1(\lambda)\text{ and }\phi_2(\lambda'\lambda)=\lambda'\phi_2(\lambda).$$
The group $\Lambda$ acts on $\hat \bbD$ by 
$$\lambda_0\cdot (\phi_1,\phi_2)=\left [\lambda\mapsto (\phi_1(\lambda \lambda_0)\lambda_0\m,\phi_2(\lambda\lambda_0))\right].$$
To check that the action of $\Lambda$ on $\hat \bbD$ has finite stabilizers, it suffices to check that the $\Lambda'$-stabilizer of any $(\phi_1,\phi_2)$ is finite.
If $\lambda'_0\in \Lambda'$ fixes $(\phi_1,\phi_2)$ then
$$(\phi_1(1_\Lambda),\phi_2(1_\Lambda))=[\lambda'_0\cdot (\phi_1,\phi_2)](1_\Lambda)=(\lambda'_0\phi_1(1_\Lambda)(\lambda'_0)\m,\lambda'_0\phi_2(1_\Lambda))$$
so $\lambda'_0$ fixes the point $\phi_2(1_\Lambda)\in\bbD$, and we conclude since $\bbD$ has finite stabilizers.

We finally define $\hat F:Y\ra\hat\bbD$ by 
$$\hat F(y)=[\lambda\mapsto (\lambda,F(\lambda y))].$$
This map is $\Lambda$-equivariant and $G$-invariant because the actions of $G$ and $\Lambda$ on $Y$ commute.
This concludes the proof.
\end{proof} 

\begin{prop}\label{extension}
Let $G$ be a locally compact group, and let $\Lambda,N,Q$ be countable discrete groups sitting in a short exact sequence $$1\to N\to \Lambda\to Q\to 1.$$ If $(G,N)$ and $(G,Q)$ are cocycle-rigid, then so is $(G,\Lambda)$.
\end{prop}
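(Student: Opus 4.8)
The plan is to use the characterization of cocycle rigidity given in Corollary~\ref{cor:caracterisation_rigid}, and to proceed in two stages, first quotienting by $N$ and then dealing with the $N$-part. So let $G\times\Lambda\actson Y$ be a measure-preserving action on a standard measured space $Y$ such that the $\Lambda$-action is essentially free with a finite measure fundamental domain $X$; I want to produce a countable $\Lambda$-set with finite point stabilizers together with a $G$-invariant $\Lambda$-equivariant Borel map $Y\ra\bbD$. Restricting the action, $N$ acts on $Y$ essentially freely with a finite measure fundamental domain (since $[\Lambda:N]$ may be infinite, the fundamental domain for $N$ is a countable union of translates of $X$, but it still has a natural $\sigma$-finite structure; more precisely one restricts to the $G\times N$-action and notes $\Lambda/N = Q$ acts on $Y/N$). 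Since $(G,N)$ is cocycle-rigid, Corollary~\ref{cor:caracterisation_rigid} gives a countable $N$-set $\bbD_N$ with finite point stabilizers and a $G$-invariant $N$-equivariant Borel map $F_N:Y\ra\bbD_N$.

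Next I would pass to the quotient. Consider the space $Y':=Y\times_N\bbD_N$, i.e. the quotient of $Y\times\bbD_N$ by the diagonal $N$-action $n\cdot(y,d)=(ny,nd)$; equivalently one may view $F_N$ as picking out, over $Y/N$, a bundle with countable fibers. The point is that $Q=\Lambda/N$ acts on $Y/N$, and the $G$-action descends; moreover the finiteness of $N$-point-stabilizers in $\bbD_N$ together with essential freeness of the $\Lambda$-action should let me arrange that, after this construction, the residual group acting is $Q$ acting essentially freely (up to finite stabilizers) on an appropriate space with a finite measure fundamental domain. Concretely: $\Lambda$ acts on $Y\times_N\bbD_N$ (since $N$ acts diagonally and $N\triangleleft\Lambda$, the $\Lambda$-action on $Y\times\bbD_N$ where $\Lambda$ acts trivially-enough on the second coordinate... this is the delicate point), and the quotient carries a commuting $G\times Q$-type action. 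I would then apply cocycle rigidity of $(G,Q)$ to this new developing space to extract a countable $Q$-set $\bbD_Q$ with finite stabilizers and an equivariant map, and finally combine $\bbD_N$ and $\bbD_Q$ into a single countable $\Lambda$-set: the total space of the bundle $\bbD_N \to \bbD_N/(\text{stuff})$ over $\bbD_Q$, whose point stabilizers are finite because they are extensions of a finite $Q$-stabilizer by a finite $N$-stabilizer.

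The main obstacle I anticipate is bookkeeping the induced action precisely enough so that the intermediate space is a legitimate "developing space" in the sense of Corollary~\ref{cor:caracterisation_rigid}: one must check that after forming $Y\times_N\bbD_N$ the residual group is genuinely $Q$ (not $\Lambda$) acting essentially freely modulo finite stabilizers, that a finite measure fundamental domain persists, and that the measure-class/standard-Borel hypotheses survive the quotient (here one invokes the passage between measure-class-preserving and strict actions recalled in Section~\ref{sec_prelim}, and Varadarajan's ergodic decomposition as in Remark~\ref{rk_non-ergodic} since these induced actions need not be ergodic). An alternative, perhaps cleaner, route that I would fall back on if the bundle bookkeeping gets unwieldy is to argue directly with cocycles: given $c:G\times X\ra\Lambda$, compose with $\Lambda\ra Q$ to get $\bar c:G\times X\ra Q$, apply cocycle rigidity of $(G,Q)$ to reduce $\bar c$ (after a partition) to a finite subgroup of $Q$, which means $c$ itself is cohomologous to a cocycle valued in the preimage in $\Lambda$ of a finite subgroup of $Q$ — a group commensurable with $N$ — and then invoke cocycle rigidity of $(G,N)$ together with the finite-index stability already proved in Proposition~\ref{finite-index} to finish. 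The delicate point in that route is that ``reducing $\bar c$'' only gives a measurable reduction over a partition, so one works componentwise and uses Remark~\ref{rk_non-ergodic} throughout.
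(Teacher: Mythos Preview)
Your ``fallback'' route is exactly the paper's proof, and it goes through cleanly in two lines: compose $c$ with $\Lambda\to Q$, use cocycle-rigidity of $(G,Q)$ to make the resulting cocycle land in a finite subgroup of $Q$, so $c$ is cohomologous to a cocycle valued in a finite-index overgroup $N^1$ of $N$; then Proposition~\ref{finite-index} applied to $N\le N^1$ together with cocycle-rigidity of $(G,N)$ finishes. Since the action on $X$ is ergodic by hypothesis, no partition is needed and the worry you flag at the end does not arise.

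Your primary approach, by contrast, has a genuine gap at the very first step. You want to apply Corollary~\ref{cor:caracterisation_rigid} to the restricted $G\times N$-action on $Y$, but that corollary requires the $N$-action to have a fundamental domain of \emph{finite} measure. A fundamental domain for $N$ on $Y$ is $\bigsqcup_{q\in Q}\lambda_q X$ (one lift $\lambda_q$ per coset), which has infinite measure as soon as $Q$ is infinite; your parenthetical acknowledges this but then proceeds as if $\sigma$-finiteness were enough, which it is not --- Proposition~\ref{prop:interpretation-cocycle} produces a cocycle over a \emph{probability} space. The order matters: doing $Q$ first is precisely what cuts $\Lambda$ down to a finite extension of $N$, after which the $N$-fundamental domain \emph{is} finite measure. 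So drop the bundle construction and lead with the argument you called the fallback.
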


\begin{proof}
Let $(X,\mu)$ be a standard probability space equipped with an ergodic measure-preserving action of $G$, and let $c:G\times X\to \Lambda$ be a cocycle. Using the projection map $\Lambda\to Q$, we get a cocycle $G\times X\to Q$, which is cohomologous by assumption to a cocycle that takes its values in a finite subgroup of $Q$. In other words $c$ is cohomologous to a cocycle that takes its values in a finite extension $N^1$ of $N$ (inside $\Lambda$). Since $(G,N)$ is cocycle-rigid, Proposition~\ref{finite-index} implies that $(G,N^1)$ is cocycle-rigid. Therefore $c$ is cohomologous to a cocycle that essentially takes its values in a finite subgroup of $\Lambda$.
\end{proof}

\subsection{Stabilities in the source: groups and their lattices}\label{sec:stability-source}

The goal of the present section is to prove the following proposition. This is probably well-known to the experts, but we could not find the statement given as such in the literature, so we include a proof.

\begin{prop}\label{induction}
Let $G$ be a locally compact second countable group, let $\Gamma$ be a lattice in $G$, and let $\Lambda$ be a countable discrete group.

Then $(G,\Lambda)$ is cocycle-rigid if and only if $(\Gamma,\Lambda)$ is cocycle-rigid.
\end{prop}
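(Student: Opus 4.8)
The plan is to prove the two implications separately, in each case reducing to the characterisation of cocycle-rigidity in terms of developed actions (Corollary~\ref{cor:caracterisation_rigid}) together with Lemma~\ref{interpretation-cohomologous}. Throughout I would use second countability of $G$ to ensure that all the Borel/measured spaces below are standard and that Borel fundamental domains and sections exist, and I would use that $G$ is unimodular (being a group that admits a lattice).

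For the first implication (that cocycle-rigidity of $(G,\Lambda)$ implies that of $(\Gamma,\Lambda)$) I would argue by \emph{co-induction}. Given a measure-preserving action $\Gamma\times\Lambda\actson Y$ for which $\Lambda$ acts essentially freely with a finite-measure fundamental domain $X$, the goal, by Corollary~\ref{cor:caracterisation_rigid}, is to produce a countable $\Lambda$-set $\bbD$ with finite point stabilisers and a $\Gamma$-invariant, $\Lambda$-equivariant Borel map $Y\to\bbD$. I would form $\hat Y:=(G\times Y)/\Gamma$, where $\Gamma$ acts freely by $\gamma\cdot(g,y):=(g\gamma^{-1},\gamma y)$; since the $\Gamma$- and $\Lambda$-actions on $Y$ commute, $G$ and $\Lambda$ then act on $\hat Y$ by $g_0\cdot[g,y]:=[g_0g,y]$ and $\lambda\cdot[g,y]:=[g,\lambda y]$, and these actions commute. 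Picking a Borel fundamental domain $\Sigma\subseteq G$ of finite Haar measure for the right $\Gamma$-action, the product $m_G\times\nu_Y$ (which is $\Gamma$-invariant, using unimodularity of $G$) descends to a $(G\times\Lambda)$-invariant measure on $\hat Y\cong\Sigma\times Y$ for which $\Lambda$ acts essentially freely with finite-measure fundamental domain $\Sigma\times X$. Cocycle-rigidity of $(G,\Lambda)$ then yields $\bbD$ as wanted together with a $G$-invariant, $\Lambda$-equivariant Borel map $\hat F:\hat Y\to\bbD$. The last step is to descend $\hat F$ to $Y$: the pullback $\Phi:=\hat F\circ q:G\times Y\to\bbD$ along the quotient map $q$ satisfies $\Phi(g_0g,y)=\Phi(g,y)$ for every $g_0$ and a.e.\ $(g,y)$, so, since the left-translation action of $G$ on itself is ergodic, a Fubini argument run on each level set $\Phi^{-1}(d)$ shows that $\Phi(g,y)=F(y)$ a.e.\ for a Borel map $F:Y\to\bbD$; one then checks (again keeping track of null sets under right translations) that $F$ inherits $\Gamma$-invariance from $\Phi$, which factors through $\hat Y$, and $\Lambda$-equivariance from $\hat F$.

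For the second implication (that cocycle-rigidity of $(\Gamma,\Lambda)$ implies that of $(G,\Lambda)$) it suffices to treat an ergodic measure-preserving action $G\actson(X,\mu)$ and a cocycle $c:G\times X\to\Lambda$, and I would combine restriction to $\Gamma$ with induction of cocycles and a fibrewise averaging. Restricting gives $c|_\Gamma:\Gamma\times X\to\Lambda$; as $\Gamma\actson X$ need not be ergodic, Remark~\ref{rk_non-ergodic} provides a countable almost $\Gamma$-invariant partition $X=\bigsqcup_i X_i$ and finite subgroups $\Lambda_i\leq\Lambda$ with $c|_\Gamma$ cohomologous on $X_i$ to a cocycle into $\Lambda_i$, and then Lemma~\ref{interpretation-cohomologous} (applied to the $\Gamma$-action, with the countable $\Lambda$-set $\bbD_0:=\bigsqcup_i\Lambda/\Lambda_i$, which has finite point stabilisers) gives a $c|_\Gamma$-equivariant Borel map $\phi_0:X\to\bbD_0$. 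Next I would induce: fix a Borel section $s:G/\Gamma\to G$ with canonical cocycle $\alpha:G\times G/\Gamma\to\Gamma$ (so $g\,s(\bar h)=s(g\bar h)\alpha(g,\bar h)$), set $X_{\mathrm{ind}}:=G/\Gamma\times X$ with the $G$-action $g\cdot(\bar h,x):=(g\bar h,\alpha(g,\bar h)x)$ preserving the product of the invariant probability on $G/\Gamma$ with $\mu$, and note that $\pi:X_{\mathrm{ind}}\to X$, $\pi(\bar h,x):=s(\bar h)x$, is $G$-equivariant while $\mu$ is $G$-invariant, so $\pi$ pushes the measure of $X_{\mathrm{ind}}$ onto $\mu$. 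A direct cocycle computation shows that $\psi:X_{\mathrm{ind}}\to\bbD_0$, $\psi(\bar h,x):=c(s(\bar h),x)\phi_0(x)$, satisfies $\psi(g\cdot\xi)=c(g,\pi(\xi))\psi(\xi)$ for all $g$ and a.e.\ $\xi$. Disintegrating along $\pi$ then produces a $G$-equivariant family of fibre probability measures $(m_x)_{x\in X}$, and pushing $\psi|_{\pi^{-1}(x)}$ forward by $m_x$ yields a $c$-equivariant Borel map $X\to\Prob(\bbD_0)$. Since $\bbD_0$ is countable, every such measure has a nonempty finite set of atoms of maximal mass, and sending a measure to that set defines a $\Lambda$-equivariant Borel map $\Prob(\bbD_0)\to\calp_{<\infty}(\bbD_0)$; composing gives a $c$-equivariant Borel map $X\to\calp_{<\infty}(\bbD_0)$. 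The target is a countable $\Lambda$-set with finite point stabilisers (a stabiliser of a finite set $S$ maps to $\mathrm{Sym}(S)$ with kernel contained in some point stabiliser of $\bbD_0$), so Lemma~\ref{interpretation-cohomologous} together with ergodicity of $G\actson X$ shows that $c$ is cohomologous to a cocycle valued in a finite subgroup of $\Lambda$.

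The step I expect to be the main obstacle is the \emph{descent} in each direction. In the first implication the delicate point is that a merely $\Gamma$-invariant object on $\hat Y$ becomes an honestly $G$-invariant object on $G\times Y$ only after a Fubini-plus-ergodicity argument which, the target $\bbD$ being discrete, must be carried out level set by level set, and one must keep careful track of null sets under right translations — this is precisely where unimodularity of $G$ (a consequence of $\Gamma$ being a lattice) enters. In the second implication the delicate part is the induced-cocycle bookkeeping — pinning down the $G$-action on $X_{\mathrm{ind}}$, the equivariant map $\pi$, and the exact formula for $\psi$ — together with the legitimacy of the fibrewise disintegration, for which the standing second-countability hypothesis (Borel sections and fundamental domains, standardness of all the spaces) is used.
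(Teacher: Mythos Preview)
Your proposal is correct. The implication $(G,\Lambda)\Rightarrow(\Gamma,\Lambda)$ follows exactly the paper's route (Lemma~\ref{lemma:induction-2}): co-induce to $\hat Y=(G\times Y)/\Gamma$, apply rigidity of $(G,\Lambda)$, then use Fubini and ergodicity of left translation on $G$ to descend the resulting $G$-invariant map to $Y$; your remark about unimodularity is on point and implicit in the paper as well.

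For the implication $(\Gamma,\Lambda)\Rightarrow(G,\Lambda)$ you reach the same endpoint as the paper (a $c$-equivariant map $X\to\calp_{<\infty}(\bbD_0)$ obtained by a ``majority vote'' over $G/\Gamma$) but via a different formalism. The paper (Lemma~\ref{lemma:induction-1}) stays entirely on the developed-action side: given $G\times\Lambda\actson Y$, it restricts to $\Gamma\times\Lambda$, obtains a $\Gamma$-invariant $\Lambda$-equivariant $F:Y\to\bbD$, packages the family $\Gamma h\mapsto F(hy)$ as a single map $\tilde F:Y\to\Maps(\Gamma\backslash G,\bbD)$, and then applies the max-mass map $\Psi$ using the Haar measure on $\Gamma\backslash G$. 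You instead stay on the cocycle side: you produce a $c|_\Gamma$-equivariant $\phi_0:X\to\bbD_0$, build the induced $G$-space $X_{\mathrm{ind}}=G/\Gamma\times X$ together with the equivariant $\psi$, and then average fibrewise via disintegration along the $G$-equivariant projection $\pi:X_{\mathrm{ind}}\to X$. Under the dictionary of Proposition~\ref{prop:interpretation-cocycle} and Lemma~\ref{interpretation-cohomologous} these are the same argument: your fibre push-forward $\psi_*m_x\in\Prob(\bbD_0)$ is precisely the distribution of the paper's map $\Gamma h\mapsto F(hy)$ under Haar measure. The paper's packaging is a bit lighter (no section $s$, no disintegration theorem, no explicit induced action), while yours makes the equivariance checks more transparent at the cost of more bookkeeping; both are valid and neither yields extra generality.
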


The proof of Proposition~\ref{induction} will be carried through Lemmas~\ref{lemma:induction-1} and~\ref{lemma:induction-2} below.

\subsubsection{From the lattice to the ambient group}

\begin{lemma}\label{lemma:induction-1}
Let $G$ be a locally compact second countable group, let $\Gamma$ be a lattice in $G$, and let $\Lambda$ be a countable discrete group.

If $(\Gamma,\Lambda)$ is cocycle-rigid, then $(G,\Lambda)$ is cocycle-rigid.
\end{lemma}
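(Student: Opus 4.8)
The plan is to use the characterization of cocycle-rigidity via developing spaces from Corollary~\ref{cor:caracterisation_rigid}, and the standard induction procedure that turns a $\Gamma$-action into a $G$-action. Suppose $(\Gamma,\Lambda)$ is cocycle-rigid, and let $G\times\Lambda\actson Y$ be a measure-preserving action with the $\Lambda$-action essentially free and admitting a finite measure fundamental domain $X$. I want to produce a countable $\Lambda$-set $\bbD$ with finite point stabilizers and a $G$-invariant, $\Lambda$-equivariant Borel map $F:Y\to\bbD$.

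First I would restrict the $G$-action to $\Gamma$. Since $\Gamma$ is a lattice in $G$, choosing a Borel fundamental domain $D\subseteq G$ for the right $\Gamma$-action on $G$ (with $G=\bigsqcup_{\gamma\in\Gamma}D\gamma$) gives the usual cocycle-type bookkeeping: for $g\in G$ and $h\in D$ write $gh=\sigma(g,h)\,\alpha(g,h)$ with $\alpha(g,h)\in D$ and $\sigma(g,h)\in\Gamma$. The point is that a $G$-space $Y$ equipped with a commuting $\Lambda$-action can be ``de-induced'': set $Y_\Gamma:=D\times Y$ (or equivalently the subspace of $G\times_\Gamma Y$ lying over $D$) with $\Gamma$ acting on $Y$ via this cocycle and $\Lambda$ acting only on the $Y$-coordinate, so that $\Lambda$ still acts essentially freely with a finite measure fundamental domain (namely $D\times X$, after normalizing the measure on $D$). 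Then cocycle-rigidity of $(\Gamma,\Lambda)$ applied to $Y_\Gamma$ yields a countable $\Lambda$-set $\bbD_0$ with finite stabilizers and a $\Gamma$-invariant, $\Lambda$-equivariant Borel map $F_0:Y_\Gamma\to\bbD_0$, i.e.\ (after the identification $Y_\Gamma\cong D\times Y$) a Borel map $f:D\times Y\to\bbD_0$ that is $\Lambda$-equivariant in the $Y$-variable and invariant under the induced $\Gamma$-action.

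Next I would promote $f$ to a genuinely $G$-invariant map into a possibly larger $\Lambda$-set, mimicking the co-induction trick already used in the proof of Proposition~\ref{finite-index}. The natural candidate is $\hat\bbD:=\Maps(G/\Gamma,\bbD_0)$ made a countable $\Lambda$-set via the $\Lambda$-action on $\bbD_0$ coordinatewise — wait, $G/\Gamma$ is uncountable, so instead I would argue measurably: consider the space $L^0(G/\Gamma,\bbD_0)$ of (classes of) Borel maps, on which $\Lambda$ acts, and define $\hat F:Y\to L^0(G/\Gamma,\bbD_0)$ by $\hat F(y):=\big(g\Gamma\mapsto f(\text{representative in }D\text{ of }g\Gamma,\; g^{-1}y)\big)$. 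One checks using the cocycle identity for $\sigma$ and the $\Gamma$-invariance of $f$ that this is well-defined independently of the representative, is $G$-invariant (translating $y$ by $g_0$ is absorbed into reindexing $G/\Gamma$), and is $\Lambda$-equivariant. Finally, ergodic decomposition / a measurable selection reduces $L^0(G/\Gamma,\bbD_0)$ to an honest countable $\Lambda$-set with finite stabilizers: one passes to an ergodic component, uses that the $G$-invariance forces $\hat F$ to be a.e.\ constant along $G$-orbits, and invokes Lemma~\ref{interpretation-cohomologous} (its non-ergodic form $(i')$) to land in a countable disjoint union of coset spaces $\Lambda/\Lambda_i$ with each $\Lambda_i$ finite. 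Then Corollary~\ref{cor:caracterisation_rigid} gives that $(G,\Lambda)$ is cocycle-rigid.

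The main obstacle I anticipate is the measure-theoretic care needed in the ``de-induction'' and ``promotion'' steps: one must check that the induced $\Gamma$-action on $D\times Y$ is measure-preserving and that $\Lambda$ still has a finite measure fundamental domain there (this is where local compactness and second countability of $G$, giving a Borel fundamental domain $D$ with a $G$-quasi-invariant probability measure, are used), and conversely that the $G$-invariant map $\hat F$ one builds genuinely descends, after restricting to an ergodic component, to a countable target with finite point stabilizers rather than merely to $L^0(G/\Gamma,\bbD_0)$. The cocycle algebra (verifying $\Gamma$-invariance of $f$ transports correctly through $\sigma(g,h)$) is routine but needs to be written out carefully; the conceptual content is entirely contained in Corollary~\ref{cor:caracterisation_rigid} and the co-induction construction already appearing in Proposition~\ref{finite-index}.
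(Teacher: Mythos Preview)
Your overall strategy---use Corollary~\ref{cor:caracterisation_rigid}, pass to a $\Gamma$-action, then promote the resulting map back to a $G$-invariant one---is the same as the paper's, but both halves of your execution have problems.

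First, the ``de-induction'' is unnecessary and confused. You already have $G\times\Lambda\actson Y$; simply restrict the $G$-action to $\Gamma$. The $\Lambda$-action still has the \emph{same} finite-measure fundamental domain $X$, so Corollary~\ref{cor:caracterisation_rigid} applies directly and yields a $\Gamma$-invariant $\Lambda$-equivariant Borel map $F:Y\to\bbD$. There is no need to introduce $D\times Y$ or a section cocycle $\sigma$.

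Second, and more seriously, your final step---getting from $L^0(G/\Gamma,\bbD_0)$ to a \emph{countable} $\Lambda$-set---is a genuine gap. Your proposal to ``pass to an ergodic component'' and invoke Lemma~\ref{interpretation-cohomologous} does not address the issue: the target $L^0(G/\Gamma,\bbD_0)$ is uncountable, and nothing you wrote explains how to replace it by a countable set. The paper's fix is a concrete trick you are missing: given the $\Gamma$-invariant map $F:Y\to\bbD$, define $\tilde F:Y\to\Maps(\Gamma\backslash G,\bbD)$ by $\tilde F(y)(\Gamma h)=F(hy)$, which is $(G\times\Lambda)$-equivariant; then compose with the map $\Psi:\Maps(\Gamma\backslash G,\bbD)\to\calp_{<\infty}(\bbD)$ that sends $\theta$ to the finite set of elements of $\bbD$ with maximal $\theta_*\mu$-measure (where $\mu$ is the finite Haar measure on $\Gamma\backslash G$). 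This $\Psi$ is $G$-invariant because $\mu$ is $G$-invariant, and $\Lambda$-equivariant, so $\Psi\circ\tilde F:Y\to\calp_{<\infty}(\bbD)$ is exactly what Corollary~\ref{cor:caracterisation_rigid} requires.
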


\begin{proof}
  We use the characterization of cocycle rigidity given in Corollary \ref{cor:caracterisation_rigid}.

  Consider an action of  $G\times \Lambda$  on a standard measure space $Y$ such that $\Lambda$ acts essentially freely
  with a fundamental domain of finite measure.
  Restricting the action to $\Gamma\times\Lambda$, cocycle rigidity of $(\Gamma,\Lambda)$ provides a countable $\Lambda$-set $\bbD$
  with finite point stabilizers and a
  $\Gamma$-invariant $\Lambda$-equivariant map $F:Y\ra \bbD$.

  We denote by $\Maps(\Gamma\backslash G,\mathbb{D})$ the collection of all Borel maps from $\Gamma\backslash G$ to $\mathbb{D}$: this is endowed with a $(G\times\Lambda)$-action given by
  $$(g,\lambda)\cdot \theta=(\Gamma h\mapsto \lambda\theta(\Gamma hg)).$$
  The map $\tilde{F}:Y\to\Maps(\Gamma\backslash G,\mathbb{D})$ defined by letting $$\tilde{F}(y):=(\Gamma h\mapsto F(h y))$$ is
  well-defined (because $F$ is $\Gamma$-invariant) and $(G\times\Lambda)$-equivariant. 

  Let $\mu$ be a (finite) Haar measure on $\Gamma\backslash G$. We denote by $\calp_{<\infty}(\mathbb{D})$ the countable set of all nonempty finite subsets of $\mathbb{D}$.
  Consider the map $$\Psi:\Maps(\Gamma\backslash G,\mathbb{D})\to\calp_{<\infty}(\mathbb{D}),$$
  sending a map $\theta$ to the collection of all elements of $\mathbb{D}$ with maximal $\theta_\ast\mu$-measure (which is finite because $\mathbb{D}$ is countable and $\mu$ is a finite measure). The map $\Psi$ is $G$-invariant and $\Lambda$-equivariant.
The composition $\Psi\circ \tilde F$ is a $G$-invariant $\Lambda$-equivariant Borel map $Y\to\calp_{<\infty}(\mathbb{D})$. The space $\calp_{<\infty}(\mathbb{D})$ is countable, and the $\Lambda$-action on $\calp_{<\infty}(\mathbb{D})$ has finite point stabilizers (because the $\Lambda$-action on $\mathbb{D}$ has finite point stabilizers). This concludes the proof.
\end{proof}

\subsubsection{From the ambient group to the lattice}

We now prove the converse to Lemma~\ref{lemma:induction-1}. 

\begin{lemma}\label{lemma:induction-2}
Let $G$ be a locally compact second countable group, let $\Gamma$ be a lattice in $G$, and let $\Lambda$ be a countable discrete group.

If $(G,\Lambda)$ is cocycle-rigid, then $(\Gamma,\Lambda)$ is cocycle-rigid.
\end{lemma}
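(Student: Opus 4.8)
The plan is to prove the converse implication by an induction procedure: given an ergodic measure-preserving action of $\Gamma$ on a standard probability space $(Z,\mu_Z)$ and a cocycle $d:\Gamma\times Z\to\Lambda$, I would induce it to a cocycle for $G$, apply cocycle-rigidity of $(G,\Lambda)$, and push the conclusion back down to $\Gamma$. Concretely, since $\Gamma$ is a lattice in $G$, there is a Borel fundamental domain $D\subseteq G$ for the right $\Gamma$-action, giving a Borel cocycle $\alpha:G\times(\Gamma\backslash G)\to\Gamma$ for the right $G$-action on $(\Gamma\backslash G,\mu)$ ($\mu$ the finite invariant measure). Then $X:=(\Gamma\backslash G)\times Z$ carries the diagonal-type $G$-action $g\cdot(\Gamma h,z)=(\Gamma hg,\alpha(g,\Gamma h)^{-1}z)$ (or whichever sign convention makes it an action), which is measure-preserving for $\mu\otimes\mu_Z$; and $c(g,(\Gamma h,z)):=d(\alpha(g,\Gamma h),z)$ is a cocycle $G\times X\to\Lambda$ — this is the classical Zimmer induction.

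Next I would record that $c$ ``remembers'' $d$: restricting to suitable subsets recovers $d$ up to cohomology. The cleanest way to organize this, given the machinery already set up in the excerpt, is via Corollary~\ref{cor:caracterisation_rigid} and the developed-action picture. Let $Y=X\times\Lambda$ with the developed $(G\times\Lambda)$-action of $c$; cocycle-rigidity of $(G,\Lambda)$ supplies a countable $\Lambda$-set $\bbD$ with finite point stabilizers and a $G$-invariant $\Lambda$-equivariant Borel map $F:Y\to\bbD$. Restricting the $G$-action to $\Gamma$, the fibre $Y_0:=(\{\Gamma 1\}\times Z)\times\Lambda$ over the base point $\Gamma 1\in\Gamma\backslash G$ is (up to measure zero, and after the usual adjustment of the fundamental domain) $\Gamma\times\Lambda$-invariant in the appropriate sense — more precisely, $\Gamma$ stabilizes the base point for the right multiplication action, so $\alpha(\gamma,\Gamma 1)=\gamma$ and the $\Gamma$-action on $Y_0$ is exactly the developed action of $d$. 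Hence $F$ restricted to $Y_0$ is a $\Gamma$-invariant $\Lambda$-equivariant Borel map $Y_0\to\bbD$, and Corollary~\ref{cor:caracterisation_rigid} applied to $\Gamma$ yields cocycle-rigidity of $(\Gamma,\Lambda)$.

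An alternative, more self-contained route avoiding the developed-action formalism: use the characterization of rigidity via $c$-equivariant maps. Cocycle-rigidity of $(G,\Lambda)$ gives, for a countable $\Lambda$-set $\bbD$ with finite stabilizers (a disjoint union of coset spaces $\Lambda/\Lambda_i$ over finite $\Lambda_i$), a $c$-equivariant Borel map $f:X\to\bbD$; here one should be slightly careful about ergodicity, invoking Remark~\ref{rk_non-ergodic} to pass to ergodic components, and noting that $G\actson X$ is ergodic whenever $\Gamma\actson Z$ is, since the base $\Gamma\backslash G$ action is ergodic. Then $z\mapsto f(\Gamma 1,z)$ is a $d$-equivariant Borel map $Z\to\bbD$, and Lemma~\ref{interpretation-cohomologous} (with $\bbD$) shows $d$ is cohomologous to a cocycle valued in $\Stab_\Lambda$ of a point of $\bbD$, i.e.\ in a finite subgroup; assembling over the countable partition of $Z$ and using Remark~\ref{rk_non-ergodic} finishes it.

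The main obstacle I expect is bookkeeping rather than a genuine difficulty: one must (i) verify that the induced $G$-action on $X=(\Gamma\backslash G)\times Z$ is genuinely a measure-preserving action with the right cocycle identities (including that $\alpha$ is a well-defined Borel cocycle, which uses second countability of $G$ and Borel-ness of the fundamental domain $D$), and (ii) handle the ``almost everywhere'' subtleties when restricting to the null fibre $\{\Gamma 1\}\times Z$ — strictly one cannot restrict a measure to a null set, so the honest argument is to use that the $G$-invariant map $F$ on all of $Y$, being $G$-invariant, is determined by its values along any $\Gamma$-orbit transversal, and to choose the fundamental domain $D$ to contain a neighbourhood of $1$ so that the restriction is an honest $\Gamma$-equivariant object on a positive-measure set; alternatively one phrases everything through Corollary~\ref{cor:caracterisation_rigid} as above, where no restriction to a null set is needed because $Y_0$ there is $(\{\text{transversal}\}\times Z)\times\Lambda$ with the transversal carrying positive measure after rescaling. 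Modulo this standard care, the proof is routine.
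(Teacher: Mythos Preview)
Your approach is essentially the same as the paper's: both induce from $\Gamma$ to $G$ and use the developed-action characterization (Corollary~\ref{cor:caracterisation_rigid}). The paper phrases the induction directly at the level of $(G\times\Lambda)$-actions --- given $\Gamma\times\Lambda\actson Y$, it builds $\hat Y=(G\times Y)/{\sim_\Gamma}$ with $(g,\gamma y)\sim_\Gamma(g\gamma,y)$ --- while you first perform classical Zimmer induction of the cocycle and then pass to the developed action; these are the same construction in different language.

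The one point where your write-up is imprecise is exactly the null-set issue you flag. Your proposed fixes (choosing the fundamental domain to contain a neighbourhood of $1$, or invoking a ``positive-measure transversal'') do not actually resolve it: the fibre over the basepoint in $\Gamma\backslash G$ is still null regardless of how $D$ is chosen, and there is no positive-measure transversal for the ergodic right $\Gamma$-action on $\Gamma\backslash G$. The paper's clean resolution is Fubini: since $F:\hat Y\to\bbD$ is $G$-invariant and $G$ acts by left translation on the $G$-coordinate, for almost every $y\in Y$ the map $g\mapsto F([g,y])$ is essentially constant; this constant defines $\tilde F(y)$, and $\tilde F:Y\to\bbD$ is then $\Gamma$-invariant and $\Lambda$-equivariant. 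This is indeed the ``standard care'' you allude to, so your overall assessment that the argument is routine is correct --- just replace your workarounds by the Fubini step.
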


\begin{proof}
  We use the characterization of cocycle rigidity given in Corollary \ref{cor:caracterisation_rigid}.

Consider a measure preserving action of $\Gamma\times \Lambda$ on a standard measured space $Y$ such that
the action of $\Lambda$ is essentially free with a fundamental domain of finite measure.
Let $\hat Y=(G\times Y)/{\sim_\Gamma}$ where $\sim_\Gamma$ is the equivalence relation defined by 
$$(g,\gamma y)\sim_\Gamma (g\gamma,y)$$
for $g\in G$, $y\in Y$ and $\gamma\in \Gamma$.
We denote by $[g,y]$ the class of $(g,y)$ in $\hat Y$.
The space $\hat Y$ can also be viewed as $(G/\Gamma)\times Y$,
and thus be endowed with the product measure of the Haar measure with the measure of $Y$.
The group $G\times \Lambda$ acts on $\hat Y$ by 
$$(g_0,\lambda_0)\cdot [h,y]=[g_0h,\lambda_0 y]$$
and preserves the measure of $\hat Y$.
Denote by $X$ a fundamental domain for the action of $\Lambda$ on $Y$.
Then the image $(G\times X)/{\sim_\Gamma}$  is isomorphic to $G/\Gamma\times X$
and is a fundamental domain of finite measure for the action of $\Lambda$ on $\hat Y$.

By cocycle rigidity for $(G,\Lambda)$, there exists a countable $\Lambda$-set $\bbD$ with finite point stabilizers and a
$G$-invariant $\Lambda$-equivariant map $F:\hat Y\ra \bbD$.
Recall that this means that for all $(g_0,\lambda_0)\in G\times \Lambda$ and a.e.\ $y\in \hat Y$, one has
$F((g_0,\lambda_0)\cdot y)=\lambda_0 F(y)$.
It would be natural to consider the restriction of $F$ to the subset $\{1\}\times Y\subseteq \hat Y$
but since this is a null set, we do not know that this restriction is $\Gamma$-invariant and $\Lambda$-equivariant.

By Fubini's Theorem, for almost every $y\in Y$, the map 
$$\begin{array}{cccc}
G & \to & \bbD\\
g&\mapsto & F([g,y])
\end{array}$$ is essentially constant.
Denote by $\tilde F(y)\in \bbD$ the corresponding constant. Extending this definition arbitrarily on a null set, we get a Borel  map 
$\tilde F:Y\ra \bbD$. 
This map is clearly $\Gamma$-invariant and $\Lambda$-equivariant, thus concluding our proof.
\end{proof}

\section{Cocycle rigidity: a general criterion}\label{sec:criterion}

In this section, we use an argument due to Bader and Furman (see e.g.\ \cite{BF} or the preprint \cite{BFHyp}) to establish a general criterion ensuring rigidity phenomena (Theorem~\ref{theo:abstract} below, which is Theorem~\ref{theo:intro-cocycle-geometry} from the introduction).

\subsection{A digression on measurable functions}\label{sec-stuff}

We start by recalling some facts about convergence in measure that we will need throughout the section. Let $(X,\mu)$ be a standard probability space. Let $Y$ be a Polish space, and let $d_Y$ be a metric on $Y$ which is compatible with the topology. Let $\LL^0(X,Y)$ be the space of equivalence classes of Borel-measurable functions from $X$ to $Y$, where two functions are equivalent if they coincide on a full measure subset. We equip $\LL^0(X,Y)$ with the topology of convergence in measure: this is the topology for which a sequence $(f_n)_{n\in\mathbb{N}}$ converges to $f$ if and only if for every $\epsilon>0$, the $\mu$-measure of $\{x\in X|d_Y(f_n(x),f(x))> \epsilon\}$ converges to $0$ as $n$ goes to $+\infty$. This topology can be metrized through the metric $\delta$ defined by letting $$\delta(\phi,\phi')=\int_X \min\{d_Y(\phi(x),\phi'(x)),1\} d \mu(x).$$ 
The set $\LL^0(X,Y)$, with this topology, is again a Polish space. 
The following basic lemma is taken from \cite[VII.1.3]{Margulis}; it is essentially an application of Fubini's theorem.
\begin{lemma}\label{lem:L(X,Y)}
Let $X$ and $\Omega$ be  standard probability spaces, and let $Y$ be a Polish space. Let $q:\Omega\times X\to Y$ be a Borel map. Then the formula $\overline q(\omega)(x)=q(\omega,x)$ defines a map $\overline q\in\LL^0(\Omega,\LL^0(X,Y))$.
\end{lemma}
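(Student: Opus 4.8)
The plan is to unwind the definitions and reduce the statement to Fubini's theorem applied to the Borel map $q$. Recall that $\LL^0(X,Y)$ is itself a Polish space, so that $\LL^0(\Omega,\LL^0(X,Y))$ makes sense; the content of the lemma is that the "curried" map $\overline q$ is an honest measurable map into this space, i.e.\ that $\omega\mapsto\overline q(\omega)$ is Borel from $\Omega$ to $\LL^0(X,Y)$, and that $\overline q(\omega)$ is a well-defined element of $\LL^0(X,Y)$ (a Borel-measurable function $X\to Y$) for a.e.\ $\omega$.

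First I would check the pointwise claim: for each fixed $\omega\in\Omega$, the section $x\mapsto q(\omega,x)$ is a Borel map $X\to Y$, simply because it is the composition of $x\mapsto(\omega,x)$ with the Borel map $q$. Hence $\overline q(\omega)$ is a genuine point of $\LL^0(X,Y)$ for every $\omega$, not just almost every $\omega$. Second, I would verify measurability of $\overline q:\Omega\to\LL^0(X,Y)$. Since $\LL^0(X,Y)$ is a separable metric space under the metric $\delta$ defined above, it suffices to show that for every $\phi\in\LL^0(X,Y)$ the map $\omega\mapsto\delta(\overline q(\omega),\phi)$ is Borel (this characterizes Borel maps into a separable metric space, as the Borel $\sigma$-algebra is generated by closed balls). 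Now
\[
\delta(\overline q(\omega),\phi)=\int_X\min\{d_Y(q(\omega,x),\phi(x)),1\}\,d\mu(x),
\]
and the integrand $(\omega,x)\mapsto\min\{d_Y(q(\omega,x),\phi(x)),1\}$ is a bounded Borel function on $\Omega\times X$ (composition of the Borel map $(\omega,x)\mapsto(q(\omega,x),\phi(x))$ into $Y\times Y$ with the continuous map $d_Y$, then truncation). By Fubini's theorem the partial integral $\omega\mapsto\int_X(\cdots)\,d\mu(x)$ is Borel-measurable, which is exactly what we need.

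The remaining point is a mild subtlety about equivalence classes: $\phi$ above is an equivalence class, and one should pick a Borel representative, observing that $\delta$ does not depend on the choice; similarly one should note that if $q$ is modified on a $(\mu\otimes\nu_\Omega)$-null set then $\overline q$ changes only on a $\nu_\Omega$-null set, so the construction descends to equivalence classes. None of this is a real obstacle. The main (and only) genuine step is the invocation of Fubini to pass from joint measurability of the integrand on $\Omega\times X$ to measurability of the partial integral in $\omega$; everything else is formal bookkeeping with the definition of the topology of convergence in measure. I would conclude by remarking that this is the same argument as in \cite[VII.1.3]{Margulis}.
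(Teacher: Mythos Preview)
Your argument is correct and matches the paper's treatment: the paper does not give a proof but simply cites \cite[VII.1.3]{Margulis} and remarks that it is essentially an application of Fubini's theorem, which is precisely what you carry out.
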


Assume now that a locally compact group $G$ acts on $X$ and $\Omega$ in a measure-preserving way, that a countable group $\Lambda$ acts on $Y$, and that we have a cocycle $c:G\times X\to \Lambda$. 
Let $c_1:G\times X\times\Omega\to \Lambda$ be the cocycle defined by letting $c_1(g,x,\omega):=c(g,x)$. Then $G$ acts on $\LL^0(X,Y)$ via $g\cdot f(x)=c(g,g\m x)f(g^{-1}x)$. Under the identification provided by Lemma~\ref{lem:L(X,Y)}, $c_1$-equivariant Borel maps $X\times \Omega\to Y$ correspond to $G$-equivariant Borel maps $\Omega\to \LL^0(X,Y)$. Note also that if $\Lambda$ acts isometrically on $Y$ and $G$ preserves the measure on $X$ then $G$ acts isometrically on $\LL^0(X,Y)$.

\subsection{Strong boundaries and Weyl groups}

A useful tool is the notion of a \emph{strong boundary}. A slightly weaker notion first appeared in the work of
Burger--Monod \cite{BM} (see also \cite[Definition 2.3]{MS}). A stronger definition was then coined by Bader and Furman in \cite[Definition~2.3]{BF}.  We use the following version. 

\begin{de}[\textbf{Isometrically ergodic action}]
Let $G$ be a locally compact group, and let $(B,\mu)$ be a standard probability space equipped with a measure class preserving $G$-action. The action of $G$ on $(B,\mu)$ is \emph{isometrically ergodic} if for every separable metric space $Y$ equipped with a continuous isometric $G$-action, every $G$-equivariant Borel map $B\to Y$ is essentially constant.
\end{de}

\begin{rk}\label{ergodicities}
If the action of $G$ on $(B,\mu)$ is isometrically ergodic, then it is ergodic. Indeed, it suffices to consider $Y=\{0,1\}$ 
with the trivial action of $G$. 
More generally, if $(X,\nu)$ is a standard probability space, and $G$ acts ergodically on $X$ in a measure-preserving way, then the diagonal action of $G$ on $B\times X$ is again ergodic. Indeed, $G$ acts isometrically on  the  separable metric space $Y=\LL^0(X,\mathbb{R})$,
and if $A\subseteq B\times X$ is $G$-invariant, then the function 
$f:B \to \LL^0(X,\mathbb{R})$ defined by 
$f(\omega)(x)=\bbun_{A}(\omega,x)$ 
is a  $G$-equivariant function. Isometric ergodicity implies that $f$ must be essentially constant, which means that $A$ is null or conull. Note that the fact that $G$ preserves the measure $\nu$ on $X$ is crucial to ensure that $G$ acts by isometries on $\LL^0(X,\mathbb{R})$.
\end{rk}

\begin{de}[\textbf{Strong boundary}]
Let $G$ be a locally compact group, and let $(B,\mu)$ be a standard probability space equipped with a measure class preserving $G$-action. The space $(B,\mu)$ is a \emph{strong boundary} for $G$ if 
\begin{itemize}
\item[(1)] the $G$-action on $(B,\mu)$ is Zimmer amenable, and
\item[(2)] the $G$-action on $(B\times B,\mu\times\mu)$ is isometrically ergodic.
\end{itemize}
\end{de}

\begin{rk}
Notice that if the $G$-action on $B\times B$ is isometrically ergodic, then so is the $G$-action on $B$.
\end{rk}

We mention that every locally compact second countable group $G$ admits a strong boundary, which can be obtained as the Poisson boundary of $(G,\mu)$ for a suitable measure $\mu$ on $G$ (see \cite{Kaimanovich} or \cite[Theorem 2.7]{BF}). The following definition first
appeared in a preprint of Bader--Furman--Shaker \cite{BFS}, see also \cite[\S4]{BF}. 

\begin{de}[\textbf{Weyl group}] \label{dfn_Weyl}
Let $G$ be a locally compact group, and let $(B,\mu)$ be a strong boundary for $G$. The \emph{Weyl group} $W_{G,B}$ is the group of all measure class preserving Borel automorphisms of $(B\times B,\mu\times\mu)$ which commute with the $G$-action. 

A subgroup $W'\subseteq W_{G,B}$ is \emph{special} if there exists a standard Borel space $C$ equipped with a Borel $G$-action, and a $G$-equivariant Borel map $\pi:B\to C$ such that $W'=\{w\in W_{G,B}|\pi\circ p_1\circ w=\pi\circ p_1\}$, where $p_1:B\times B\to B$ denotes the first projection (we write $W'=W_{G,B}(\pi)$ in this situation).
\end{de} 

In other words $W_{G,B}(\pi)$ is the subgroup of $W_{G,B}$ made of all automorphisms $w$ of $B\times B$ that commute with the $G$-action, and make the following diagram commute:
$$\xymatrix@C=1cm@R=0.02cm{
w&&\\
\actson&&\\
B\times B\ar[r]^{p_1}&B\ar[r]^{\pi}&C}
$$

The main example of boundaries and Weyl groups comes from the realm of simple Lie groups. The following proposition was established by Bader--Furman \cite[Theorem~2.5]{BF} in the context of real Lie groups, and by Bader--Furman--Shaker \cite[Lemma~2.8 and Proposition~2.13]{BFS} in the more general context of algebraic groups over local fields but with a slightly different notion of strong boundary. We therefore include a proof for completeness of our argument. The reader less familiar with the formalism of algebraic groups is invited to read the example of $\SL_3(\R)$ (Example \ref{ex:SL3}) before Proposition \ref{prop:WeylAlg}.

Before stating the proposition, let us introduce a few notations. Let $I$ be a finite set. For each $i\in I$, choose a local field $k_i$, a connected simple algebraic group $\mathbf G_i$ defined over $k_i$ of $k_i$-rank at least 2.  Let $\mathbf S_i$ be a maximal $k_i$-split torus, let $\mathbf Z_i$ the centralizer of $\mathbf S_i$, and $\mathbf P_i$ be a minimal parabolic subgroup containing $\mathbf Z_i$. Let $G_i=\mathbf{G}_i(k_i)$, $P_i=\mathbf P_i(k_i)$, $Z_i=\mathbf Z_i(k_i)$, $S_i=\mathbf S_i(k_i)$ and $B_i=G_i/P_i$. Let $W_i=N_{G_i}(S_i)/Z_i$.

Finally, let $G=\prod_{i\in I} G_i$, $P=\prod_{i\in I}P_i$, $W=\prod_{i\in I} W_i$, $S=\prod_{i\in I}S_i$ and $Z=\prod_{i\in I}Z_i$. The group $W=N_G(S)/Z$ is the so-called \emph{restricted Weyl group} of $G$. It is a Coxeter group, generated by some set $R=\dunion_{i\in I} R_i$ of simple reflections.

\begin{prop}\label{prop:WeylAlg}
The space $B:=G/P$ (with the Haar measure class) is a strong boundary for $G$. The Weyl group of $(G,G/P)$ is isomorphic to $W$. Furthermore, $W_{G,G/P}$ does not have any nontrivial special normal subgroup of index 2.
\end{prop}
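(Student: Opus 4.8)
The plan is to establish the three assertions in order, leaning on Zimmer's amenability theory, Mautner-type arguments, and the theory of Coxeter groups.

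\medskip

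\textbf{Step 1: $B=G/P$ is a strong boundary.} First I would recall that each $P_i$ is amenable (being a minimal parabolic of a semisimple algebraic group over a local field), hence $P=\prod P_i$ is amenable; by Zimmer's theorem the $G$-action on $G/P$ with the Haar measure class is then amenable in the sense of Zimmer. For isometric ergodicity of $G\actson B\times B$, I would use the fact that $G/P\times G/P$ contains a single open dense $G$-orbit (the ``big cell'', via the Bruhat decomposition), whose complement is null; the stabilizer of a point in this open orbit is conjugate to $Z=\prod Z_i$. So it suffices to show that the $Z$-action on $G$ is isometrically ergodic, or rather that any $Z$-invariant equivariant map from (a full-measure piece of) $G$ to a separable metric space with isometric $G$-action is essentially constant. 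The standard tool here is a Mautner phenomenon argument: $Z$ contains the split torus $S$, and conjugation by suitable one-parameter subgroups of $S$ contracts the unipotent radicals, forcing invariance of any equivariant map under larger and larger subgroups, eventually under all of $G$; combined with ergodicity of $G\actson B\times B$ (which follows from density of the big cell plus ergodicity of $Z\actson G$, itself a consequence of Howe--Moore / mixing) this gives isometric ergodicity. This is the part where I would mostly cite \cite{BF,BFS} and only sketch, as the statement explicitly says a proof is included ``for completeness.''

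\medskip

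\textbf{Step 2: $W_{G,G/P}\cong W$.} The Weyl group consists of measure-class-preserving Borel automorphisms of $B\times B$ commuting with $G$. On the open dense $G$-orbit $\mathcal{O}\subseteq B\times B$, which is $G$-equivariantly isomorphic to $G/Z$, any $G$-automorphism is right translation by an element normalizing $Z$, i.e.\ comes from $N_G(S)/Z = W$ (using $N_G(Z)=N_G(S)$ for these groups). Conversely every $w\in W$ acts on $\mathcal{O}\cong G/Z$ and this action extends measurably to all of $B\times B$ since the complement is null; I would check this extension is well-defined and injective, and that distinct elements of $W$ give distinct automorphisms (again because they already differ on the conull set $\mathcal{O}$). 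The map $W\to W_{G,G/P}$ is thus an isomorphism.

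\medskip

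\textbf{Step 3: no nontrivial special normal subgroup of index $2$.} Recall $W=\prod_{i\in I}W_i$ is a Coxeter group with generating set $R=\bigsqcup_i R_i$ of simple reflections, and each $W_i$ is an irreducible Coxeter group of rank $\geq 2$ (as $k_i$-rank $\mathbf{G}_i\geq 2$). By the Bader--Furman theory, a special subgroup $W_{G,B}(\pi)$ is always a \emph{standard parabolic} subgroup of the Coxeter group $W$, i.e.\ generated by a subset $R'\subseteq R$. So I must show: no standard parabolic subgroup $W_{R'}=\langle R'\rangle$ with $R'\subsetneq R$ is both normal and of index $2$. A standard parabolic $W_{R'}$ is normal in $W$ iff $R'$ is a union of connected components of the Coxeter diagram of $W$ that are isolated in the sense that no generator outside $R'$ fails to commute with all of $R'$ — more precisely, $W_{R'}\trianglelefteq W$ forces $W=W_{R'}\times W_{R\setminus R'}$ with $R'$ a union of irreducible components. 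Then $[W:W_{R'}] = |W_{R\setminus R'}|$, which is $2$ only if $W_{R\setminus R'}$ is a single $A_1$ component, i.e.\ a single reflection commuting with everything else. But $R\setminus R'$ must then be a whole irreducible component $R_i$ of some factor $W_i$, and $R_i$ has cardinality $\geq 2$ since $\mathbf{G}_i$ has $k_i$-rank $\geq 2$ — contradiction. Hence no such subgroup exists. The main obstacle, and the step I would be most careful about, is Step 3's input that special subgroups are exactly the standard parabolics: I would either invoke it from \cite{BF} or, if needed, reprove it by analyzing $G$-equivariant quotients $\pi:B\to C$ via the orbit structure, matching them with $G$-invariant subsets of the ``roots'' / boundary combinatorics — but since the excerpt permits citing Bader--Furman, I would take that as given and keep the focus on the clean Coxeter-theoretic contradiction above.
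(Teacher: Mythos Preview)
Your proposal is correct and follows essentially the same route as the paper. Two minor differences worth noting: the paper establishes isometric ergodicity by direct citation (to Bader--Gelander, with Bader--Furman as an alternative) rather than sketching a Mautner argument, and the paper actually proves on the spot that the special subgroups are exactly the standard parabolics $W_T$ (via the classification of parabolics $P_T = PW_TP$ containing $P$ and a Bruhat-decomposition disjointness check), rather than invoking it from \cite{BF} as you suggest; your Step~3 Coxeter contradiction is then the same as the paper's.
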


\begin{proof}
For each $i$, the group $P_i$ is an extension of its unipotent radical (which is amenable) by a $k_i$-anisotropic reductive group, which is therefore compact (see \cite{Rousseau} or \cite{Prasad}). Hence $P_i$ is amenable, and $P=\prod P_i$ also is. Therefore $B$ is an amenable $G$-space.
 
 The Bruhat decomposition \cite[5.15]{BorelTits} says that every element of $g$ can be written as $b n_w b'$, for some $b,b'\in P$, and $n_w$ is a lift of some $w\in W$ in $G$. Furthermore, the element $w\in W$ is unique. This means that $P\backslash G/P$ is in bijection with $W$. The $G$-orbits of the space $G/P\times G/P$ are in bijection (via the map $G.(hP,gP)\mapsto Ph^{-1}gP$) with $P\backslash G/P$, hence with $W$.
 
Among these orbits, there is one which is open and dense (in the Zariski and the Hausdorff topology): 
it consists in the set of opposite parabolic points (the longest element of $W$) \cite[21.26, 21.28]{Borel}. 
The stabilizer of a point in this orbit is conjugate to $Z$ \cite[4.8]{BorelTits}. All the other orbits are of lower dimension, and therefore of Haar measure 0.  It follows that $B\times B$ is $G$-equivariantly measurably isomorphic to $G/Z$, and we can (and shall) choose this isomorphism so that the composition with the first projection $G/Z\simeq G/P\times G/P\to G/P$ is the map $G/Z\to G/P$ induced by the inclusion $Z\subseteq P$. 
The isometric ergodicity of $B\times B$ follows from \cite[Theorem~6.6]{BaderGelander} (see  \cite[Lemma~2.6]{BF} for an alternative argument). 
 
 The Weyl group of $B$ is $\Aut_G(B\times B)=\Aut_G(G/Z)$. An automorphism of $G/Z$ is uniquely determined by the image of $eZ$, which is $g_0Z$ for some $g_0\in N_G(Z)$, where $N_G(Z)$ is the normalizer of $Z$. Hence we have $\Aut_G(G/Z)\simeq N_G(Z)/Z$ (see \cite[Lemma~2.14]{BFS} for more details). So the Weyl group of $B$ is exactly $W$. 
 
 We can also describe the special subgroups of $W$. A quotient of $G/P$ is of the form $G/Q$, with $P<Q$. By \cite[5.18,5.20]{BorelTits} the groups containing $P$ are in bijection with the subsets $T\subseteq R$, the bijection being given by $T\mapsto P_T=P W_T P$, where $W_T=\langle T\rangle$, $T\subseteq R$. Then the special subgroup associated to the quotient $\pi_T:G/P\to G/P_T$ is exactly $W_T$. Indeed, if $w\in W_T$, lift $w$ to $n_w\in N_G(S)$. Then $n_w\in P_T$ and therefore for every $g\in G$, $gP$ and $gn_wP$ have the same image in $G/P_T$. Hence $W_T\subseteq W_{G,B}(\pi_T)$. Conversely, if $w\not \in W_T$, lift  again $w$ to an element $n_w$ of $N_G(S)$. By disjointness in the Bruhat decomposition we have $P_T\cap n_wP_T=\varnothing$. Hence $n_w P_T$ and $P_T$ are distinct cosets, which means that $w\not \in W_{G,B}(\pi_T)$. Therefore $W_T=W_{G,B}(\pi_T)$ as claimed.
  
  Since all $G_i$ are simple, each $W_i$ is irreducible, hence no non-trivial proper special subgroup $W_T\subseteq W_i$ is normal.
Hence the special subgroups of $W$ are all products of the form $\prod_{i\in J} W_i$, for some $J\subseteq I$. Since each $G_i$ has rank at least 2, we get that none of these subgroups have index 2.  
\end{proof}

\begin{ex}\label{ex:SL3}
Let $G=\SL_3(\R)$. Then the previous definitions unfold as follows: $P$ is the subgroup of upper-triangular matrices, $Z=S$ is the subgroup of diagonal matrices, and $W$ is the group of permutations $S_3$. The space $B=G/P$ is the set of flags (line $\subset$ plane) in $\R^3$. The set $B\times B$ of pairs of flags has a $G$-orbit of full measure: the set of  pairs of flags which are transverse (in the sense that the line of one of them is in direct sum with the plane of the other). A pair of transverse flags $(d\subset P)$ and $(d'\subset P')$ determines a frame (i.e. a triple of lines $(d_1,d_2,d_3)$ such that $d_1\oplus d_2\oplus d_3=\R^3$): take for example $d_1=d$, $d_2=P\cap P'$, $d_3=d'$. The group $W$ acts by permuting the lines of the frame. 
If an element of $B\times B$ is represented by the frame $(d_1,d_2,d_3)$ then the first projection $p_1:B\times B\ra B$ is given by the flag $d_1\subset d_1\oplus d_2$, and the second projection by $d_3\subset d_3\oplus d_2$.

There are exactly two non-trivial proper $G$-quotients of $G/P$, namely $G/Q_1$ and $G/Q_2$, where $Q_1$ is the stabilizer of a line and $Q_2$ is the stabilizer of a plane; they correspond to maps $(d\subset P)\mapsto d$ and $(d\subset P)\mapsto P$. The special subgroup associated with the quotient $G/P\to G/Q_1$ is the group $\{\id,(2,3)\}$ whereas the special subgroup associated to the quotient $G/P\to G/Q_2$ is the group $\{\id,(1,2)\}$. None of them is normal in $W$.
\end{ex}

\subsection{Geometrically rigid actions}

Given a standard Borel space $\Delta$ and $i\in\mathbb{N}$, we denote by $\Delta^{(i)}$ the subset of $\Delta^i$ made of all tuples having pairwise distinct coordinates, which is again a standard Borel space. Given a set $\bbD$, we denote by $\calp_{<\infty}(\bbD)$ the set of all nonempty finite subsets of $\bbD$. The following property is the central definition of the present work; it will be used to derive rigidity statements regarding cocycles ranging in the group $\Lambda$.

\begin{de}[\textbf{Geometrically rigid}]\label{de-ast}
Let $\Lambda$ be a countable discrete group. Let $\bbD$ be a countable discrete $\Lambda$-space, let $K$ be a compact metric space equipped with a $\Lambda$-action by homeomorphisms, and let $\Delta$ be a Polish space equipped with a $\Lambda$-action by Borel automorphisms. We say that the triple $(\bbD,K,\Delta)$ is \emph{geometrically rigid} if the following hold:
\begin{itemize}
\item[$(1)$]
The space $K$ admits a Borel $\Lambda$-invariant partition $K=K_{\bdd}\sqcup K_\infty$ such that
\begin{itemize}
\item there exists a Borel $\Lambda$-equivariant map $\theta_{\bdd}:K_{\bdd}\to\calp_{<\infty}(\bbD)$,
\item there exists a Borel $\Lambda$-equivariant map $\theta_{\infty}:K_\infty\to\Delta$.
\end{itemize}
\item[$(2)$] There exists a Borel $\Lambda$-equivariant map $\text{bar}:\Delta^{(3)}\to\calp_{<\infty}(\bbD)$.
\item[$(3)$] The $\Lambda$-action on $\Delta$ is universally amenable.
\end{itemize}
We say that $\Lambda$ is \emph{geometrically rigid with respect to $\mathbb{D}$} if there exists a compact metric $\Lambda$-space $K$ and a Polish $\Lambda$-space $\Delta$ such that the triple $(\bbD,K,\Delta)$ is geometrically rigid.
\end{de}

\paragraph{Some examples.} A first example is the following: every Gromov hyperbolic group $\Lambda$ is geometrically rigid with respect to itself. Indeed, let $\Delta$ be the Gromov boundary of $\Lambda$, and $K:=\Lambda\cup\Delta$. Then Assumption~$(1)$ clearly holds true by letting $K_{\bdd}:=\Lambda$ and $K_{\infty}:=\Delta$, and letting $\theta_{\bdd}$ and $\theta_{\infty}$ be the identity maps. Assertion~$(2)$ states that we can associate a `barycenter' (which is a finite subset of $\Lambda$) to any triple of pairwise distinct points in $\Delta=\partial_\infty \Lambda$: this was established by Adams in \cite[Section~6]{Ada3}. The Borel amenability of the $\Lambda$-action on $\Delta$ was also established by Adams in \cite{Ada3}.

More generally, we will see in Section~\ref{sec:rel-hyp} that if $\Lambda$ is hyperbolic relative to a finite set $\calp$ of subgroups, then $\Lambda$ is geometrically rigid with respect to the vertex set of a hyperbolic graph whose point stabilizers are all parabolic. 

We will also see (Proposition~\ref{prop:ast-mcg}) that the mapping class group $\Mod(\Sigma)$ of a connected, orientable hyperbolic surface $\Sigma$ of finite type is geometrically rigid with respect to the collection $\bbD$ of isotopy classes of essential simple closed curves, by taking for $K$ the Thurston compactification of the Teichmüller space, and for $\Delta$ the Gromov boundary of the curve graph. 

Finally, a key point of the present paper is to show that $\Out(F_N)$, or more generally the group $\Out(G,\calf^{(t)})$ in the context of free products (see Section~\ref{sec:background} for definitions), is geometrically rigid with respect to the countable collection $\bbD$ of all conjugacy classes of proper free factors (taking for $K$ the compactification of the projectivized Outer space, and for $\Delta$ the boundary of the free factor graph): this will be completed in Proposition~\ref{prop:ast-out}. A crucial task in the sequel of the paper will be to build the barycenter map in this setting (in particular for free products).

\subsection{A Bader--Furman type argument}

Our goal is now to prove the following proposition, following a method developed by Bader and Furman in \cite{BF} or \cite{BFHyp}. 

\begin{prop}\label{abstract-2}
Let $G$ be a locally compact group, and let $B$ be a strong boundary for $G$. Let $X$ be a standard probability space equipped with an ergodic measure-preserving $G$-action. Let $\Lambda$ be a countable discrete group, let $\bbD$ be a countable discrete $\Lambda$-space, and assume that $\Lambda$ is geometrically rigid with respect to $\bbD$. 

Then there exists a Polish space $\Delta$ equipped with a universally amenable $\Lambda$-action, such that for every cocycle $c:G\times X\to \Lambda$, either 
\begin{enumerate}
\item the cocycle $c$ is cohomologous to a cocycle that takes its values in the setwise stabilizer of a finite subset of $\bbD$, or
\item there exists a $c$-equivariant Borel map $X\to\Delta$ or a $c$-equivariant Borel map $X\to\Delta^{(2)}/\mathfrak{S}_2$, or else
\item there exists a nontrivial homomorphism $W_{G,B}\to\mathbb{Z}/2\mathbb{Z}$ whose kernel is a special subgroup of the Weyl group $W_{G,B}$.  
\end{enumerate}
\end{prop}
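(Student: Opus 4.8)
The plan is to run the Bader--Furman machine step by step, feeding the geometrically rigid triple $(\bbD,K,\Delta)$ into it. First I would fix a geometrically rigid triple $(\bbD,K,\Delta)$ witnessing that $\Lambda$ is geometrically rigid with respect to $\bbD$; this $\Delta$ is the Polish space announced in the statement, with its universally amenable $\Lambda$-action. Since $B$ is a strong boundary, the $G$-action on $B$ is Zimmer amenable, so by Lemma~\ref{lemma:invariant-proba} applied to the diagonal $G$-action on $X\times B$ (which is ergodic by Remark~\ref{ergodicities}, as $G$ preserves the measure on $X$), and to the $\Lambda$-action on the compact metrizable space $K$, there is a $c$-equivariant Borel map $p\colon X\times B\to\Prob(K)$. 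Now decompose according to the support of $p$, using the $\Lambda$-invariant partition $K=K_\bdd\sqcup K_\infty$.

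\emph{Case A: $p$ is a.e.\ supported on $K_\bdd$.} Push forward by $\theta_\bdd\colon K_\bdd\to\calp_{<\infty}(\bbD)$ to get a $c$-equivariant map $X\times B\to\Prob(\calp_{<\infty}(\bbD))$, then take the (finite) union of atoms of maximal mass to land in $\calp_{<\infty}(\bbD)$. Isometric ergodicity of $B\times B$ (hence of $B$) forces this map, as a $G$-equivariant map from $B$ to $\LL^0(X,\calp_{<\infty}(\bbD))$ with the $\ell^1$-type metric, to be essentially constant in the $B$-variable — here I would invoke the discussion in Section~\ref{sec-stuff} identifying $c$-equivariant maps $X\times B\to\calp_{<\infty}(\bbD)$ with $G$-equivariant maps $B\to\LL^0(X,\calp_{<\infty}(\bbD))$, the latter being a separable metric space on which $G$ acts isometrically. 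So we obtain a $c$-equivariant Borel map $X\to\calp_{<\infty}(\bbD)$, and by Lemma~\ref{interpretation-cohomologous} (via the developed action, with $\bbD$ replaced by $\calp_{<\infty}(\bbD)$) the cocycle $c$ is cohomologous to one valued in the setwise stabilizer of a finite subset of $\bbD$: conclusion~(1).

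\emph{Case B: for a.e.\ $(x,b)$, $\mathrm{supp}\,p(x,b)$ contains at least three points of $K_\infty$.} Push forward $p$ by $\theta_\infty$ (defined on the $K_\infty$ part); the hypothesis lets us build, measurably and $c$-equivariantly, a triple of pairwise distinct points of $\Delta$ out of the support — for instance via a measurable selection that picks three distinct atoms of the push-forward measure on $\Delta$, or by barycentering the push-forward of $p|_{K_\infty}$ (I would argue the support in $\Delta$ has at least three points, then select, say, the three of largest mass with a Borel tie-breaking rule). Apply the barycenter map $\mathrm{bar}\colon\Delta^{(3)}\to\calp_{<\infty}(\bbD)$ to get a $c$-equivariant Borel map $X\times B\to\calp_{<\infty}(\bbD)$, and conclude exactly as in Case~A: essential constancy in $B$, then conclusion~(1).

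\emph{Remaining cases.} Outside A and B, the support of $p(x,b)$ meets $K_\infty$ in one or two points, and meets $K_\bdd$ in a way that, after the finitely-many-atoms-of-maximal-mass trick, either produces a finite subset of $\bbD$ (again conclusion~(1)) or produces instead an essentially unique $c$-equivariant Borel map $p'\colon X\times B\to\Delta$ or $p'\colon X\times B\to\Delta^{(2)}/\mathfrak S_2$ (a careful bookkeeping of the mass on $K_\bdd$ versus $K_\infty$ shows the measure reduces canonically to one or two boundary points; the point-mass and two-point-set cases are where the $\mathfrak S_2$-quotient enters). Via the identification of Section~\ref{sec-stuff}, $p'$ corresponds to a $G$-equivariant Borel map $q\colon B\to\LL^0(X,\Delta)$ (resp.\ into $\LL^0(X,\Delta^{(2)}/\mathfrak S_2)$). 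If $q$ is essentially constant we get a $c$-equivariant map $X\to\Delta$ (or $X\to\Delta^{(2)}/\mathfrak S_2$): conclusion~(2). If $q$ is not essentially constant, it defines a nontrivial $G$-equivariant measurable quotient of $B$, hence a special subgroup $W':=W_{G,B}(q)\subsetneq W_{G,B}$. Following Bader--Furman, consider the two maps $F,F^\ast\colon B\times B\to\LL^0(X,\Delta^{(2)})$ given by $F(b,b')=(x\mapsto(p'(x,b),p'(x,b')))$ and $F^\ast(b,b')=(x\mapsto(p'(x,b'),p'(x,b)))$; one shows using isometric ergodicity of $B\times B$ and the case hypothesis (which guarantees, roughly, that generic pairs of boundary points give distinct unordered pairs in $\Delta$) that these are the only two $G$-equivariant maps $B\times B\to\LL^0(X,\Delta^{(2)})$ refining $q$, so $W_{G,B}$ permutes the set $\{F,F^\ast\}$; since $q$ is non-constant, $F\ne F^\ast$, and the action is nontrivial (the longest Weyl element swaps them), whence a nontrivial homomorphism $W_{G,B}\to\mathbb Z/2\mathbb Z$ whose kernel $\{w:w\cdot F=F\}=W_{G,B}(q)$ is a special subgroup: conclusion~(3).

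The main obstacle I anticipate is the bookkeeping in the ``remaining cases'': extracting from the measure $p(x,b)\in\Prob(K)$ — whose support may straddle $K_\bdd$ and $K_\infty$ with between one and two points in $K_\infty$ — a canonical and genuinely $c$-equivariant reduction to either a finite subset of $\bbD$ or a (ordered or unordered) pair of boundary points, in a Borel way uniform in $(x,b)$, and then verifying that the resulting $p'$ is \emph{essentially unique}, which is what makes the $\{F,F^\ast\}$-dichotomy rigid. A secondary technical point is the repeated passage ``$G$-equivariant map $B\to\LL^0(X,-)$ valued in a separable metric $G$-space is essentially constant'', which requires checking that $\Delta$, $\calp_{<\infty}(\bbD)$, $\Delta^{(2)}/\mathfrak S_2$ all carry $G$-invariant (indeed $\Lambda$-invariant) separable metrics compatible with the relevant topology so that isometric ergodicity of $B$ (and of $B\times B$) applies — and that the counting-measure/maximal-atom operations are Borel.
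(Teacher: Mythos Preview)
Your architecture matches the paper's exactly, but there are three concrete gaps that the paper handles and you do not.

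\textbf{The initial case split.} You allow the support of $p(x,b)$ to straddle $K_\bdd$ and $K_\infty$, and this is what creates the ``bookkeeping'' mess you flag. The paper avoids this entirely: the sets $V_\bdd=\{(x,b):\nu_{x,b}(K_\infty)=0\}$ and $V_\infty=\{(x,b):\nu_{x,b}(K_\bdd)=0\}$ are $G$-invariant and Borel, so by ergodicity of $G$ on $X\times B$ one of them is null. Restricting and renormalizing $\nu_{x,b}$ to $K_\bdd$ or $K_\infty$ accordingly, one may assume the measure lives entirely on one side. After that, in the $K_\infty$ case one pushes forward to $\Prob(\Delta)$ and splits again (by ergodicity) according to whether the support \emph{in $\Delta$} has $1$, $2$, or at least $3$ points. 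Note the split is on the support in $\Delta$, not in $K_\infty$: since $\theta_\infty$ need not be injective, three points in $K_\infty$ can collapse.

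\textbf{Case B.} Your proposed ``select three atoms of largest mass'' fails when $(\theta_\infty)_*\nu_{x,b}$ is atomless. The paper's device is to form the triple product $(\theta_\infty)_*\nu_{x,b}^{\otimes 3}$ on $\Delta^3$; the hypothesis that the support has at least three points is exactly what makes this give positive mass to $\Delta^{(3)}$. Renormalize, push through $\mathrm{bar}$, and you land in $\Prob(\calp_{<\infty}(\bbD))$, then take the maximal-mass trick to reach $\calp_{<\infty}(\bbD)$.

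\textbf{The remaining case and ``essential uniqueness'' of $p'$.} You assert uniqueness of $p'$ but give no mechanism; in fact it is not proved directly. The paper's key move, which you are missing, is to compare values at two boundary points: given $f_1:X\times B\to\Delta^{(i)}/\mathfrak S_i$, look at the $G$-invariant sets $V_{=}=\{(x,b,b'):f_1(x,b)=f_1(x,b')\}$ and $V_{\neq}$ on $X\times B\times B$. Ergodicity forces one to be conull. If $V_{=}$ is conull, $f_1$ factors through $X$ and you are in conclusion~(2). If $V_{\neq}$ is conull and $i=2$, the union $f_1(x,b)\cup f_1(x,b')$ lands in $\Delta^{(3)}/\mathfrak S_3\cup\Delta^{(4)}/\mathfrak S_4$ and the barycenter argument on $X\times B^2$ gives conclusion~(1). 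If $V_{\neq}$ is conull and $i=1$, one gets $f_2:X\times B^2\to\Delta^{(2)}/\mathfrak S_2$; if $f_2$ is \emph{not} essentially unique, a second such map $f_2'$ differs from $f_2$ a.e.\ (ergodicity), so $f_2\cup f_2'$ again has at least three elements and the barycenter gives conclusion~(1). Only when $f_2$ \emph{is} essentially unique does one enter the Weyl-group step, and this uniqueness is precisely what guarantees that $F$ and $F^*$ are the only two $c_2$-equivariant maps $X\times B^2\to\Delta^{(2)}$, making the $W_{G,B}$-action on $\{F,F^*\}$ well defined. Your outline of the Weyl-group endgame is otherwise correct; the identification $\ker m=W_{G,B}(F_1)$ is argued by checking both inclusions, using that if $F_1\circ p_1=F_1\circ p_1\circ w$ and $w\notin\ker m$ then $F_1\circ p_1=F_1\circ p_2$, forcing $F_1$ constant, a contradiction.
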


In the sequel of this section, we will adopt the notations from Proposition~\ref{abstract-2}. We let $K$ be a compact metrizable space with a $\Lambda$-action by homeomorphisms, and $\Delta$ be a Polish space equipped with a universally amenable $\Lambda$-action, both coming from the definition of geometric rigidity of $\Lambda$ with respect to $\bbD$. We let $\Prob(\Delta)$ be the set of all Borel probability measures on $\Delta$ equipped with the topology generated by the maps $\mu\mapsto\int fd\mu$, where $f$ varies over the set of bounded continuous real-valued functions on $\Delta$, again a Polish space \cite[Theorem~17.23]{Kec}. We denote by $\text{Prob}_{\ge 3}(\Delta)$ the space of all probability measures on $\Delta$ whose support contains at least $3$ points: this is a Borel subset of $\Prob(\Delta)$. For every $i\in\mathbb{N}$, we let 
\begin{displaymath}
\begin{array}{cccc}
c_i:& G\times X\times B^i&\to & \Lambda\\
& (g,x,b_1,\dots,b_i)&\mapsto & c(g,x)
\end{array}
\end{displaymath}
which is again a cocycle.

\begin{lemma}\label{cases}
One of the following holds.
\begin{enumerate}
\item There exists a $c_1$-equivariant Borel map $X\times B\to\calp_{<\infty}(\bbD)$.
\item There exists a $c_1$-equivariant Borel map $X\times B\to\Prob_{\ge 3}(\Delta)$.
\item There exist $i\in\{1,2\}$ and a $c_1$-equivariant Borel map $X\times B\to\Delta^{(i)}/\mathfrak{S}_i$.
\end{enumerate}
\end{lemma}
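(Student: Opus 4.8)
The plan is to start from the $c_1$-equivariant Borel map $X \times B \to \Prob(K)$ provided by Lemma~\ref{lemma:invariant-proba}, applied with the amenable $G$-space $B$, the measure-preserving ergodic $G$-action on $X$ (so $X\times B$ carries a $c_1$-equivariant structure since $B$ is amenable and $K$ is compact metrizable). Call this map $\omega \colon X\times B \to \Prob(K)$. Using the $\Lambda$-invariant Borel partition $K = K_{\bdd}\sqcup K_\infty$, for a.e.\ $(x,b)$ we can decompose $\omega(x,b) = \omega_{\bdd}(x,b) + \omega_\infty(x,b)$ into its restrictions to $K_{\bdd}$ and $K_\infty$. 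The key dichotomy is driven by how the mass splits and, within the $K_\infty$-part, by the size of the support. Concretely, I would distinguish three regimes and show each leads to one of the three conclusions; by ergodicity of the $G$-action on $X\times B$ (Remark~\ref{ergodicities}, using that $G$ preserves the probability measure on $X$), the set of $(x,b)$ falling into a fixed regime is null or conull, so one regime occurs for a.e.\ $(x,b)$ after passing to the appropriate $G$-invariant Borel set; a small amount of care is needed because the three ``regimes'' should be set up as a genuine Borel partition of the space of probability measures that is $\Lambda$-invariant, so that $G$-invariance of the corresponding subsets of $X\times B$ is automatic.

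First regime: the measure $\omega_\infty(x,b)$ has support with at most $2$ points. Then I claim we can extract a $c_1$-equivariant map to $\calp_{<\infty}(\bbD)$, giving conclusion~(1). Indeed, if $\omega_{\bdd}(x,b)$ has positive mass, push $\omega_{\bdd}(x,b)$ forward via the equivariant map $\theta_{\bdd}\colon K_{\bdd}\to \calp_{<\infty}(\bbD)$ to get a (sub-probability) measure on the countable set $\calp_{<\infty}(\bbD)$, renormalize, and take the (finite) union of all atoms of maximal mass: this is a $\Lambda$-equivariant Borel assignment $\Prob(\calp_{<\infty}(\bbD)) \to \calp_{<\infty}(\bbD)$ (union of highest-mass atoms), hence yields a $c_1$-equivariant map to $\calp_{<\infty}(\bbD)$. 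If instead $\omega_{\bdd}(x,b)$ vanishes, then $\omega_\infty(x,b)$ is a full probability measure supported on $1$ or $2$ points of $K_\infty$; pushing forward through $\theta_\infty$ gives a probability measure on $\Delta$ supported on $1$ or $2$ points, which is either a single point of $\Delta$ (conclusion~(3) with $i=1$) or an unordered pair, i.e.\ a point of $\Delta^{(2)}/\mathfrak{S}_2$ (conclusion~(3) with $i=2$) — with the proviso that the two pushed-forward points might coincide, in which case we are again in the single-point case. One has to be slightly careful that ``has at most $2$ atoms after pushforward'' is Borel and $\Lambda$-invariant, and that the map reading off the atoms is Borel; these are standard facts about Polish spaces and the space of probability measures on a countable set.

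Second regime: $\omega_{\bdd}(x,b) = 0$ and $\omega_\infty(x,b) = \omega(x,b)$ is a probability measure whose support, pushed through $\theta_\infty$, has at least $3$ points in $\Delta$. Then $(\theta_\infty)_*\omega(x,b) \in \Prob_{\ge 3}(\Delta)$, and this assignment is $c_1$-equivariant and Borel, giving conclusion~(2). The mixed case — $\omega_{\bdd}(x,b)$ has positive but not full mass and $\omega_\infty(x,b)$ has $\geq 3$ support points — can be routed to conclusion~(1) exactly as in the first paragraph by using only the $\omega_{\bdd}$ part; so the honest Borel partition should be: (a) $\omega_{\bdd}$ has positive mass; (b) $\omega_{\bdd}=0$ and $(\theta_\infty)_*\omega$ has at least $3$ atoms; (c) $\omega_{\bdd}=0$ and $(\theta_\infty)_*\omega$ has at most $2$ atoms. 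Case (a) $\Rightarrow$ (1), case (b) $\Rightarrow$ (2), case (c) $\Rightarrow$ (3).

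The main obstacle I anticipate is not any single regime but the bookkeeping that makes the trichotomy clean: one must verify that the subsets (a), (b), (c) of $\Prob(K)$ are Borel and $\Lambda$-invariant (so that their preimages in $X\times B$ are $G$-invariant Borel sets and ergodicity applies), that the various ``read off the atoms of maximal mass'' and ``pushforward then count the support'' operations are genuinely Borel maps between the relevant Polish / standard Borel spaces, and that pushing a measure on $K_\infty$ through $\theta_\infty$ and then through $\theta_{\bdd}$-type union-of-maximal-atoms operations preserves $\Lambda$-equivariance on the nose (not just up to null sets). All of these are routine in descriptive set theory but need to be stated carefully. A secondary subtlety is the possibility that after pushforward the number of support points \emph{drops} (two distinct points of $K_\infty$ mapping to the same point of $\Delta$), so the partition should really be phrased in terms of $(\theta_\infty)_*\omega$ rather than $\omega$ itself; once that is done, each implication is short.
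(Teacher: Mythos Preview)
Your proposal is correct and follows essentially the same approach as the paper. Both start from the $c_1$-equivariant map $X\times B\to\Prob(K)$ given by amenability, use the $\Lambda$-invariant partition $K=K_{\bdd}\sqcup K_\infty$ together with ergodicity of the $G$-action on $X\times B$ to reduce to a fixed regime, push through $\theta_{\bdd}$ or $\theta_\infty$, and in the $K_\infty$ case split further according to the cardinality of the support of the resulting measure on $\Delta$.

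The only organizational difference is that the paper performs two sequential binary ergodicity splits (first showing one can assume the measure is supported entirely on $K_{\bdd}$ or entirely on $K_\infty$ after renormalization, then in the latter case splitting on support size), whereas you set up a single three-way partition (a)/(b)/(c) from the outset, privileging the $K_{\bdd}$ part whenever it carries positive mass. Both are fine; the paper's version is slightly cleaner to write because each step is a plain dichotomy. Your observation that the support count must be taken after pushing forward through $\theta_\infty$ (since $\theta_\infty$ need not be injective) is exactly right and matches what the paper does. The descriptive-set-theoretic checks you flag (Borelness of the regimes, of the ``union of atoms of maximal mass'' map, etc.) are indeed routine.
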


\begin{proof}
  Since $B$ is a strong boundary for $G$, the $G$-action on $B$ is Zimmer amenable, so by \cite[Proposition~4.3.4]{Zim}, the $G$-action on $X\times B$ is also Zimmer amenable. As $K$ is a compact metric space,
  Lemma~\ref{lemma:invariant-proba} ensures that there exists a $c_1$-equivariant Borel map
  $$
  \begin{array}{rcl}
    \nu:X\times B&\to&\text{Prob}(K)\\
    (x,b)&\mapsto& \nu_{x,b}
  \end{array}
$$
 Let $$V_\bdd:=\{(x,b)\in X\times B| \nu_{x,b}(K_{\infty})=0\}$$ and $$V_\infty:=\{(x,b)\in X\times B|\nu_{x,b}(K_{\bdd})=0\}.$$ Then $V_\bdd$ and $V_\infty$ are Borel subsets of $X\times B$, see e.g.\ \cite[Theorem~17.25]{Kec}. In addition, as $K_{\bdd}$ and $K_{\infty}$ are both $\Lambda$-invariant, and $\nu$ is $c_1$-equivariant, the sets $V_\bdd$ and $V_\infty$ are both $G$-invariant. Since $B$ is a strong boundary for $G$, the $G$-action on $X\times B$ is ergodic (Remark~\ref{ergodicities}). Therefore, either $V_{\bdd}$ or $V_{\infty}$ (or both) has measure $0$. By replacing $\nu$ by (the renormalization of) its restriction to either $K_\infty$ or $K_{\bdd}$, we can thus assume that either
\begin{enumerate}
\item[(1)] the probability measure $\nu_{x,b}$ gives full measure to $K_{\bdd}$ for a.e.\ $(x,b)\in X\times B$, or 
\item[(2)] the probability measure $\nu_{x,b}$ gives full measure to $K_\infty$ for a.e.\ $(x,b)\in X\times B$.
\end{enumerate}

We first assume that (1) holds. Since there is a $\Lambda$-equivariant Borel map $\theta_{\bdd}:K_{\bdd}\to \calp_{<\infty}(\bbD)$ (by the first assumption from Definition~\ref{de-ast} of geometric rigidity), we deduce that there exists a $c_1$-equivariant Borel map $X\times B\to\text{Prob}(\calp_{<\infty}(\bbD))$. Since $\calp_{<\infty}(\bbD)$ is countable, there is a $\Lambda$-equivariant Borel map $\text{Prob}(\calp_{<\infty}(\bbD))\to\calp_{<\infty}(\bbD)$, sending a probability measure $\lambda$ on $\calp_{<\infty}(\bbD)$ to the union of all finite subsets of $\bbD$ having maximal $\lambda$-measure. Hence the first conclusion of the lemma holds.

We now assume that (2) holds. Since there is a Borel $\Lambda$-equivariant map $\theta_{\infty}:K_\infty\to \Delta$, we deduce that there exists a $c_1$-equivariant Borel map $X\times B\to\text{Prob}(\Delta)$. Since the cardinality of the support of the measure $\nu_{x,b}$ is $G$-invariant, using again the ergodicity of the $G$-action on $X\times B$, we deduce that one of the last three conclusions of the lemma holds. 
\end{proof}
The following lemma deals with the first case of Lemma~\ref{cases}: when applied to $\hat{\mathbb{D}}=\calp_{<\infty}(\bbD)$ and $i=1$, it shows that the first case from Lemma~\ref{cases} yields the first conclusion of Proposition~\ref{abstract-2}.

\newcommand{\hD}{\hat{\mathbb{D}}}

\begin{lemma}\label{case-1}
Let $\hD$ be a countable discrete $\Lambda$-space. Assume that for some $i\in\{1,2\}$, there exists a $c_i$-equivariant Borel map $f:X\times B^i\to\hD$. 

Then $c$ is cohomologous to a cocycle that takes its values in the stabilizer of an element of $\hD$. 
\end{lemma}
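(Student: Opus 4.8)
The plan is to reduce to the ergodicity-of-the-boundary trick, treating the cases $i=1$ and $i=2$ uniformly. First I would recall that $c$-equivariant maps $X\to\hD$ correspond, via Lemma~\ref{interpretation-cohomologous}, to cocycles cohomologous to one ranging in a point stabilizer of $\hD$; so it suffices to produce a $c$-equivariant Borel map $X\to\hD$. We are handed a $c_i$-equivariant map $f:X\times B^i\to\hD$. Using the identification from Section~\ref{sec-stuff} (Lemma~\ref{lem:L(X,Y)}), rewrite $f$ as a $G$-equivariant Borel map $\overline f: B^i\to\LL^0(X,\hD)$, where $G$ acts on $\LL^0(X,\hD)$ through the cocycle $c$ and the measure-preserving action on $X$; since $\hD$ is countable and $G$ preserves the measure on $X$, the space $\LL^0(X,\hD)$ carries a $G$-invariant metric (e.g.\ $\delta(\phi,\phi')=\mu\{x:\phi(x)\neq\phi'(x)\}$) for which $G$ acts by isometries.

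Next, the key step: I want $\overline f$ to be essentially constant, i.e.\ to descend to a single element of $\LL^0(X,\hD)$, which is exactly a $c$-equivariant map $X\to\hD$. For $i=1$ this is immediate from isometric ergodicity of the $G$-action on $B$ (which holds since $B$ is a strong boundary, by the remark following Definition~\ref{dfn_Weyl}, or directly because isometric ergodicity of $B\times B$ implies that of $B$): a $G$-equivariant Borel map from $B$ to the separable metric $G$-space $\LL^0(X,\hD)$ is essentially constant. For $i=2$, apply isometric ergodicity of the $G$-action on $B\times B=B^2$ directly to $\overline f:B^2\to\LL^0(X,\hD)$: again $\overline f$ is essentially constant, giving a $c$-equivariant map $X\to\hD$.

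Finally, conclude: the essentially constant value $\phi_0\in\LL^0(X,\hD)$ is, by definition, (an equivalence class of) a Borel map $X\to\hD$, and the equality $f(gx,g b)=c(g,x)f(x,b)$ passing to the constant value of $\overline f$ shows that $\phi_0$ satisfies $\phi_0(gx)=c(g,x)\phi_0(x)$ for all $g$ and a.e.\ $x$, i.e.\ $\phi_0$ is $c$-equivariant. Then Lemma~\ref{interpretation-cohomologous} (implication $(ii)\Rightarrow(i)$, applied with $\bbD=\hD$) shows that $c$ is cohomologous to a cocycle with values in $\Stab_\Lambda(d_0)$ for some $d_0\in\hD$. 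The only mild subtlety — and the step I would be most careful about — is checking that the $G$-action on $\LL^0(X,\hD)$ is genuinely by isometries for a metric compatible with its Polish topology, which is where one uses crucially that $\hD$ is \emph{discrete} (so that the natural $0$--$1$ valued metric on $\hD$ is $\Lambda$-invariant) and that $G$ \emph{preserves} the measure on $X$; with that in hand the isometric ergodicity of the strong boundary does all the work.
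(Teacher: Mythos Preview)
Your proposal is correct and follows essentially the same approach as the paper: reduce via Lemma~\ref{interpretation-cohomologous} to constructing a $c$-equivariant map $X\to\hD$, pass from $f$ to a $G$-equivariant map $\overline f:B^i\to\LL^0(X,\hD)$ via Lemma~\ref{lem:L(X,Y)}, and then use isometric ergodicity of $B$ (for $i=1$) or $B\times B$ (for $i=2$) together with the $\Lambda$-invariant discrete metric on $\hD$ to conclude that $\overline f$ is essentially constant. The paper's proof is the same, including the observation that the discrete metric on $\hD$ and the measure-preserving hypothesis on $X$ are what make the $G$-action on $\LL^0(X,\hD)$ isometric.
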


\begin{proof}
In view of Lemma~\ref{interpretation-cohomologous}, it is enough to construct a $c$-equivariant Borel map $X\to\hD$.

We equip $\hD$ with the discrete metric (i.e.\ $d_{\hD}(x,y)=1$ if $x\neq y$), which is $\Lambda$-invariant. Let $\LL:=\LL^0(X,\hD)$, equipped with the metric $\delta$ and the isometric action of $G$ given in Section~\ref{sec-stuff}. 

By Lemma~\ref{lem:L(X,Y)}, the map $f$ gives rise to a Lebesgue-measurable map $\overline f:B^i\to \LL$, defined by $\overline f(b)(x)=f(x,b)$ for all $b\in B^i$ and all $x\in X$. Furthermore, the fact that $f$ is $c_i$-equivariant translates into the fact that $\overline f$ is $G$-equivariant.

Since $B$ is a strong boundary for $G$, the $G$-actions on both $B$ and $B\times B$ are isometrically ergodic. It follows that the map $\overline f$ is essentially constant. Its essential image is therefore a $G$-invariant measurable function $s_0\in \LL$. In other words $s_0:X\to \hD$ satisfies that, for a.e.\ $x\in X$ and all $g\in G$, we have $s_0(g x)=c(g,x)s_0(x)$. That is, $s_0$ is a $c$-equivariant measurable function, and it is therefore almost everywhere equal to some $c$-equivariant Borel map $X\to\hD$, as desired.
\end{proof}

The following lemma deals with the second case from Lemma~\ref{cases}.

\begin{lemma}\label{case-2}
  Assume that for some $i\in\{1,2\}$, there exists a $c_i$-equivariant Borel map $\nu:X\times B^i\to\text{Prob}_{\ge 3}(\Delta)$.

Then $c$ is cohomologous to a cocycle that takes its values in the setwise stabilizer of a finite subset of $\bbD$.
\end{lemma}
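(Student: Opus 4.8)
The plan is to feed the measure $\nu$ into the barycenter map from clause $(2)$ of geometric rigidity, thereby producing a $c_i$-equivariant map into $\calp_{<\infty}(\bbD)$, and then conclude by Lemma~\ref{case-1} applied with $\hD=\calp_{<\infty}(\bbD)$.

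\emph{Step 1: positivity on $\Delta^{(3)}$.} First I would check that for a.e.\ $(x,b)\in X\times B^i$ the measure $\nu_{x,b}^{\otimes 3}$ gives positive mass to $\Delta^{(3)}$. Since the support of $\nu_{x,b}$ contains at least three points $p_1,p_2,p_3$ and $\Delta$ is metrizable, one can choose pairwise disjoint open sets $U_1\ni p_1$, $U_2\ni p_2$, $U_3\ni p_3$; each has positive $\nu_{x,b}$-measure (the $p_j$ being support points), while $U_1\times U_2\times U_3\subseteq\Delta^{(3)}$ by disjointness, so $\nu_{x,b}^{\otimes 3}(\Delta^{(3)})\geq\nu_{x,b}(U_1)\nu_{x,b}(U_2)\nu_{x,b}(U_3)>0$. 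The set $\Delta^{(3)}$ is Borel in $\Delta^3$, the map $\mu\mapsto\mu^{\otimes 3}$ is Borel on spaces of probability measures, and $\mu\mapsto\mu(A)$ is Borel for a fixed Borel set $A$ (\cite[Theorem~17.25]{Kec}); composing with the Borel map $(x,b)\mapsto\nu_{x,b}$ shows that $(x,b)\mapsto\nu_{x,b}^{\otimes 3}(\Delta^{(3)})$ is Borel and a.e.\ positive.

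\emph{Step 2: pushing through the barycenter map.} I would then set
$$\mu_{x,b}:=\frac{1}{\nu_{x,b}^{\otimes 3}(\Delta^{(3)})}\,\mathrm{bar}_*\bigl(\nu_{x,b}^{\otimes 3}|_{\Delta^{(3)}}\bigr)\in\Prob(\calp_{<\infty}(\bbD)),$$
i.e.\ the pushforward along the Borel $\Lambda$-equivariant map $\mathrm{bar}:\Delta^{(3)}\to\calp_{<\infty}(\bbD)$ of the normalized restriction of $\nu_{x,b}^{\otimes 3}$ to $\Delta^{(3)}$. By the same measurability facts, $(x,b)\mapsto\mu_{x,b}$ is a Borel map $X\times B^i\to\Prob(\calp_{<\infty}(\bbD))$. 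Since $\nu$ is $c_i$-equivariant, $\mathrm{bar}$ is $\Lambda$-equivariant for the diagonal $\Lambda$-action on $\Delta^{(3)}$, and pushforward preserves total mass (so it commutes with the normalization), one gets $\mu_{g\cdot(x,b)}=c(g,x)_*\mu_{x,b}$ for all $g\in G$ and a.e.\ $(x,b)$; that is, $(x,b)\mapsto\mu_{x,b}$ is $c_i$-equivariant for the pushforward action of $\Lambda$ on $\Prob(\calp_{<\infty}(\bbD))$. Now, exactly as in the proof of Lemma~\ref{cases} (and of Lemma~\ref{lemma:induction-1}), since $\calp_{<\infty}(\bbD)$ is countable there is a $\Lambda$-equivariant Borel map $\Prob(\calp_{<\infty}(\bbD))\to\calp_{<\infty}(\bbD)$ sending a probability measure to the union of its (finitely many, and at least one) atoms of maximal mass — a finite union of finite subsets of $\bbD$, hence a finite subset of $\bbD$. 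Composing yields a $c_i$-equivariant Borel map $X\times B^i\to\calp_{<\infty}(\bbD)$.

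\emph{Step 3: conclusion.} Applying Lemma~\ref{case-1} with $\hD:=\calp_{<\infty}(\bbD)$, which is a countable discrete $\Lambda$-space, shows that $c$ is cohomologous to a cocycle taking its values in the stabilizer in $\Lambda$ of an element of $\calp_{<\infty}(\bbD)$, i.e.\ in the setwise stabilizer of a finite subset of $\bbD$, as desired. The argument is essentially formal; the only points requiring a little care are the measurability bookkeeping in Steps~1 and~2 and the a.e.\ positivity $\nu_{x,b}^{\otimes 3}(\Delta^{(3)})>0$ that legitimizes (and makes measurable) the normalization — and this is precisely where the hypothesis $\nu\colon X\times B^i\to\Prob_{\ge 3}(\Delta)$, rather than merely $\Prob(\Delta)$, is used.
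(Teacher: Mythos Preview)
Your proof is correct and follows essentially the same approach as the paper: form the triple product measure, note it charges $\Delta^{(3)}$ positively, renormalize and push through $\mathrm{bar}$, pass from $\Prob(\calp_{<\infty}(\bbD))$ to $\calp_{<\infty}(\bbD)$ via the maximal-mass trick, and invoke Lemma~\ref{case-1} with $\hD=\calp_{<\infty}(\bbD)$. You have simply supplied more detail on the measurability and positivity points than the paper does.
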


\begin{proof}
  For a.e.\ $(x,b)\in X\times B^i$, the product measure $\nu_{x,b}\times \nu_{x,b}\times \nu_{x,b}$ on $\Delta^3$ gives positive measure to $\Delta^{(3)}$, so it induces a probability measure on $\Delta^{(3)}$ (after renormalizing).  Pushing it forward through the $\Lambda$-equivariant map 
$bar:\Delta^{(3)}\to\calp_{<\infty}(\bbD)$, this yields a $c_i$-equivariant Borel map $X\times B^i\to\Prob(\calp_{<\infty}(\bbD))$. Since $\calp_{<\infty}(\bbD)$ is countable, there is a Borel $\Lambda$-equivariant map $\Prob(\calp_{<\infty}(\bbD))\to\calp_{<\infty}(\bbD)$ (sending a measure $\lambda$ to the union of all elements of $\calp_{<\infty}(\bbD)$ with maximal $\lambda$-measure). The conclusion then follows from Lemma~\ref{case-1} applied to $\hat{\bbD}:=\calp_{<\infty}(\bbD)$.
\end{proof}

In order to tackle the third case from Lemma~\ref{cases}, we start with the following lemma.

\begin{lemma}\label{case-3}
Assume that for some $i\in\{1,2\}$, there exists a $c_i$-equivariant Borel map $X\times B\to\Delta^{(i)}/\mathfrak{S}_i$. Then either
\begin{enumerate}
\item $c$ is cohomologous to a cocycle that takes its values in the stabilizer of a finite subset of $\bbD$, or
\item there exist $i\in\{1,2\}$ and a $c$-equivariant Borel map $X\to\Delta^{(i)}/\mathfrak{S}_i$, or else
\item there is a $c_1$-equivariant Borel map $X\times B\to\Delta$, and an essentially unique $c_2$-equivariant Borel map $X\times B\times B\to\Delta^{(2)}/\mathfrak{S}_2$.
\end{enumerate}
\end{lemma}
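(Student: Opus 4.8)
The plan is to start from the given $c_i$-equivariant Borel map $\varphi:X\times B\to\Delta^{(i)}/\mathfrak{S}_i$ and to unfold its behavior with respect to one extra $B$-coordinate, following the standard Bader--Furman bootstrapping scheme. First I would treat the two cases $i=1$ and $i=2$ in parallel but note that the $i=2$ case reduces quickly: a $c_2$-equivariant map $X\times B\to\Delta^{(2)}/\mathfrak{S}_2$ is exactly the kind of object appearing in conclusion (3) once we check essential uniqueness, or, if the map is in a suitable sense degenerate, it descends. So the heart of the matter is $i=1$: we are handed a $c_1$-equivariant Borel map $\varphi:X\times B\to\Delta$. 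Using Lemma~\ref{lem:L(X,Y)} I would repackage $\varphi$ as a $G$-equivariant Borel map $\overline\varphi:B\to\LL^0(X,\Delta)$, where $G$ acts on $\LL^0(X,\Delta)$ isometrically for a bounded metric on $\Delta$ (after remetrizing) and the cocycle $c$; here the fact that $G$ preserves the measure on $X$ is what makes the action isometric, as recalled in Section~\ref{sec-stuff}.

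Next I would introduce the map on two boundary coordinates. Consider the $c_2$-equivariant Borel map $\Phi:X\times B\times B\to\Delta^2$, $\Phi(x,b,b')=(\varphi(x,b),\varphi(x,b'))$, repackaged as a $G$-equivariant Borel map $\overline\Phi:B\times B\to \LL^0(X,\Delta^2)$. The dichotomy is governed by whether $\overline\Phi$ essentially lands in the image of the diagonal $L^0(X,\Delta)\hookrightarrow L^0(X,\Delta^2)$ or not, i.e.\ whether for a.e.\ $(b,b')$ one has $\varphi(\cdot,b)=\varphi(\cdot,b')$ a.e.\ on $X$. If it does, then $\overline\varphi:B\to\LL^0(X,\Delta)$ is a $G$-equivariant Borel map which, by isometric ergodicity of the $G$-action on $B\times B$ (hence on $B$), is essentially constant: its value is a $G$-invariant element of $\LL^0(X,\Delta)$, i.e.\ a $c$-equivariant Borel map $X\to\Delta$, giving conclusion (2) (with $i=1$). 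If instead $\overline\Phi$ does not essentially land on the diagonal, then for a.e.\ $(b,b')$ the set $\{x: \varphi(x,b)\neq\varphi(x,b')\}$ is non-null; I would then want to say that in fact $\varphi(x,b)\neq\varphi(x,b')$ for a.e.\ $x$ and a.e.\ $(b,b')$, so that $\Phi$ actually takes values in $\Delta^{(2)}$ a.e., producing the $c_2$-equivariant Borel map $X\times B\times B\to\Delta^{(2)}$ (and after quotienting, $\to\Delta^{(2)}/\mathfrak{S}_2$) promised in conclusion (3); similarly the restriction to a single $B$ is the required $c_1$-equivariant Borel map $X\times B\to\Delta$.

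The intermediate step---upgrading ``non-null for a.e.\ pair'' to ``conull for a.e.\ pair''---is where ergodicity is used again: the function $(b,b')\mapsto \mathbbm{1}_{\{x:\varphi(x,b)=\varphi(x,b')\}}\in\LL^0(X,\{0,1\})$ is $G$-equivariant for the isometric $G$-action on $\LL^0(X,\mathbb{R})$, so by isometric ergodicity of $G\actson B\times B$ it is essentially constant, equal either to the constant function $1$ (the diagonal case handled above) or to the constant function $0$ (the case at hand), with no measurable in-between. I would also record here that in the $i=2$ case the same argument applies verbatim after first extracting from a $c_2$-equivariant map $X\times B\to\Delta^{(2)}/\mathfrak{S}_2$ its two ``coordinate'' $c_1$-equivariant maps $X\times B\to\Delta$ (lifting locally using that $B\times B$, hence the relevant spaces, are standard Borel and the $\mathfrak S_2$-action is free on $\Delta^{(2)}$), and either those coordinate maps agree a.e.\ in which case we are reduced to the $i=1$ analysis, or they differ and we are already in conclusion (3).

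Finally I would establish \emph{essential uniqueness} of the $c_2$-equivariant Borel map $X\times B\times B\to\Delta^{(2)}/\mathfrak{S}_2$: any such map is, via Lemma~\ref{lem:L(X,Y)}, a $G$-equivariant Borel map $B\times B\to\LL^0(X,\Delta^{(2)}/\mathfrak S_2)$, so uniqueness amounts to isometric ergodicity of $G\actson B\times B$ applied to the (bounded, remetrized) separable metric space $\LL^0(X,\Delta^{(2)}/\mathfrak S_2)$; thus any two such maps agree. The main obstacle I anticipate is precisely the bookkeeping around the $\mathfrak S_2$-quotient: passing between a map into $\Delta^{(2)}/\mathfrak S_2$ and its unordered pair of ``coordinates'' into $\Delta$ requires a measurable section argument, and one has to be careful that isometric ergodicity is being applied to genuinely separable metric $G$-spaces with isometric actions (which forces the remetrization of $\Delta$ and the use of the measure-preservation of $G\actson X$ throughout). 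Everything else is a routine combination of Lemma~\ref{lem:L(X,Y)}, the definition of strong boundary, and Remark~\ref{ergodicities}.
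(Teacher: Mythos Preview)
Your argument has two genuine gaps, both stemming from the fact that you never invoke conclusion~(1) or the barycenter map (Lemma~\ref{case-2}); in the paper these are precisely what make the argument close.

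\textbf{The uniqueness step is incorrect.} You claim that isometric ergodicity of $G\actson B\times B$, applied to $\LL^0(X,\Delta^{(2)}/\mathfrak S_2)$, forces any two $c_2$-equivariant maps $X\times B\times B\to\Delta^{(2)}/\mathfrak S_2$ to agree. This fails on two counts. First, isometric ergodicity asserts that a single equivariant map is \emph{constant}, not that two equivariant maps coincide; if it applied, the map $f_2$ you built would descend to a $c$-equivariant map $X\to\Delta^{(2)}/\mathfrak S_2$, landing you in conclusion~(2) rather than~(3). Second, to apply isometric ergodicity you need $G$ to act isometrically on $\LL^0(X,\Delta^{(2)}/\mathfrak S_2)$, which requires $\Lambda$ to act isometrically on $\Delta$; but $\Lambda$ is only assumed to act by Borel automorphisms, and there is no reason a compatible $\Lambda$-invariant metric exists. (Contrast Lemma~\ref{case-1}, where the target is countable and the discrete metric is trivially $\Lambda$-invariant.) The paper does \emph{not} prove uniqueness: instead, if a second $c_2$-equivariant map $f_2'$ exists, ergodicity forces $f_2\neq f_2'$ a.e., so $(x,b,b')\mapsto f_2(x,b,b')\cup f_2'(x,b,b')$ lands in $\Delta^{(3)}/\mathfrak S_3\cup\Delta^{(4)}/\mathfrak S_4$, and Lemma~\ref{case-2} (the barycenter) gives conclusion~(1).

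\textbf{The $i=2$ reduction does not work.} You propose to extract from a $c_1$-equivariant map $f_1:X\times B\to\Delta^{(2)}/\mathfrak S_2$ two ``coordinate'' $c_1$-equivariant maps $X\times B\to\Delta$ via a measurable section of $\Delta^{(2)}\to\Delta^{(2)}/\mathfrak S_2$. But no such section is $\Lambda$-equivariant (an element of $\Lambda$ may swap the two points of an unordered pair), so the resulting coordinate maps are not $c_1$-equivariant and the reduction to $i=1$ collapses. The paper handles this case directly: when $V_{\neq}$ is conull, the map $(x,b,b')\mapsto f_1(x,b)\cup f_1(x,b')$ takes values in $\Delta^{(3)}/\mathfrak S_3\cup\Delta^{(4)}/\mathfrak S_4$ and Lemma~\ref{case-2} again yields conclusion~(1).

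Your treatment of the diagonal/off-diagonal dichotomy via the indicator map into $\LL^0(X,\{0,1\})$ is fine (and equivalent to the paper's direct use of ergodicity of $G\actson X\times B\times B$), but the proof cannot be completed without routing the non-unique and $i=2$ branches through the barycenter.
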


\begin{proof}
Let $f_1:X\times B\to\Delta^{(i)}/\mathfrak{S}_i$ be a $c_i$-equivariant Borel map. Then the sets $$V_{=}:=\{(x,b,b')\in X\times B\times B|f_1(x,b)=f_1(x,b')\}$$ and $$V_{\neq}:=\{(x,b,b')\in X\times B\times B| f_1(x,b)\neq f_1(x,b')\}$$ are $G$-invariant (up to null sets). By ergodicity of the $G$-action on $X\times B\times B$, it follows that one of these sets is null and the other is conull.

If $V_{=}$ is conull, then the map $f_1$ essentially does not depend on its second coordinate. Therefore it yields a $c$-equivariant Borel map $X\to\Delta^{(i)}/\mathfrak{S}_i$, so the second conclusion of the lemma holds.

We now assume that $V_{\neq}$ is conull. If $i=2$, then there is a $c_2$-equivariant Borel map $f'$ from a subset of full measure of $X\times B\times B$ to $\Delta^{(3)}/\mathfrak{S}_3\cup\Delta^{(4)}/\mathfrak{S}_4$, defined by letting $f'(x,b,b')=f_1(x,b)\cup f_1(x,b')$. In this case Lemma~\ref{case-2} shows that the first conclusion of the lemma holds.

We now assume that $i=1$. As above,
we have a $c_2$-equivariant map
$$
\begin{array}{rccl}
f_2:&X\times B\times B &\to& \Delta^{(2)}/\mathfrak{S}_2\\
&(x,b,b') &\mapsto & \{f_1(x,b),f_1(x,b')\}
\end{array}
$$
If such a map is essentially unique, then the third conclusion of the lemma holds.

We can therefore assume that there exists another $c_2$-equivariant map $f_2':X\times B\times B\to \Delta^{(2)}/\mathfrak{S}_2$ which is not a.e.\ equal to $f_2$. By ergodicity of the $G$-action on $X\times B\times B$, we have $f_2(x,b,b')\neq f'_2(x,b,b')$ for a.e.\ $(x,b,b')\in X\times B\times B$. Hence the map $(x,b,b')\mapsto f_2(x,b,b')\cup f'_2(x,b,b')$ yields a $c_2$-equivariant Borel map from $X\times B\times B$ to $\Delta^{(3)}/\mathfrak{S}_3\cup \Delta^{(4)}/\mathfrak{S}_4$. Lemma~\ref{case-2} therefore implies that $c$ is cohomologous to a cocycle that takes its values in the stabilizer of a finite subset of $\mathbb{D}$, so the first conclusion of the lemma holds.
\end{proof}

The following lemma completes our proof of Proposition~\ref{abstract-2}.

\begin{lemma}
Assume that there is no $c$-equivariant Borel map $X\to\Delta$. Assume in addition that there exist a $c_1$-equivariant Borel map $f_1:X\times B\to\Delta$ and an essentially unique $c_2$-equivariant Borel map $X\times B\times B\to\Delta^{(2)}/\mathfrak{S}_2$.

Then there exists a nontrivial homomorphism $W_{G,B}\to\mathbb{Z}/2\mathbb{Z}$ whose kernel is a special subgroup of the Weyl group $W_{G,B}$.
\end{lemma}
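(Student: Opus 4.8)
The plan is to run the Bader--Furman argument, reinterpreting the data through the space $\LL^0(X,-)$ and the Weyl group $W:=W_{G,B}$. By Lemma~\ref{lem:L(X,Y)} and the discussion of Section~\ref{sec-stuff}, the $c_1$-equivariant map $f_1$ corresponds to a $G$-equivariant Borel map $q:=\overline{f_1}\colon B\to\LL^0(X,\Delta)$ (with $q(b)(x)=f_1(x,b)$), and the essentially unique $c_2$-equivariant map $X\times B\times B\to\Delta^{(2)}/\mathfrak{S}_2$ corresponds to an essentially unique $G$-equivariant Borel map $\bar q_2\colon B\times B\to\LL^0(X,\Delta^{(2)}/\mathfrak{S}_2)$. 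First I would record two elementary consequences of the hypotheses and of the ergodicity of $G\actson X\times B\times B$ (Remark~\ref{ergodicities}): the set where $f_1(x,b)=f_1(x,b')$ is $G$-invariant, and if it were conull then $f_1(x,\cdot)$ would be essentially constant, yielding a $c$-equivariant Borel map $X\to\Delta$, contrary to assumption; hence $f_1(x,b)\neq f_1(x,b')$ for a.e.\ $(x,b,b')$, and in particular $q$ is not essentially constant. One can then define two $G$-equivariant Borel maps $F,F^*\colon B\times B\to\LL^0(X,\Delta^{(2)})$ by $F(b,b')(x)=(q(b)(x),q(b')(x))$ and $F^*(b,b')(x)=(q(b')(x),q(b)(x))$; both project to $\bar q_2$ along the $G$-equivariant map $\LL^0(X,\Delta^{(2)})\to\LL^0(X,\Delta^{(2)}/\mathfrak{S}_2)$ induced by $\Delta^{(2)}\to\Delta^{(2)}/\mathfrak{S}_2$, and $F\neq F^*$ precisely because $q$ is nonconstant.

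The key step is to show that $F$ and $F^*$ are the \emph{only} $G$-equivariant Borel lifts of $\bar q_2$. Given another such lift $\widetilde F$, for a.e.\ $(b,b',x)$ the value $\widetilde F(b,b')(x)$ is one of the two orderings of the unordered pair $\bar q_2(b,b')(x)$, so one may define a Borel map $h\colon B\times B\to\LL^0(X,\{0,1\})$ by $h(b,b')(x)=0$ if $\widetilde F(b,b')(x)=F(b,b')(x)$ and $h(b,b')(x)=1$ otherwise. Since $c(g,\cdot)$ acts on $\Delta^{(2)}$ by bijections, a direct computation shows that $h$ is $G$-equivariant when $G$ acts on $\LL^0(X,\{0,1\})$ through its measure-preserving action on $X$ alone (so that $G$ acts isometrically, as in Remark~\ref{ergodicities}); by isometric ergodicity of the $G$-action on $B\times B$ (part of the definition of a strong boundary), $h$ is essentially constant, whence $\widetilde F\in\{F,F^*\}$.

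It remains to bring in the Weyl group. For $\sigma\in W$, the map $F\circ\sigma$ is again a $G$-equivariant Borel map $B\times B\to\LL^0(X,\Delta^{(2)})$, and it lifts $\bar q_2\circ\sigma$; but $\bar q_2\circ\sigma$ is a $G$-equivariant map into $\LL^0(X,\Delta^{(2)}/\mathfrak{S}_2)$, hence equals $\bar q_2$ by essential uniqueness, so $F\circ\sigma\in\{F,F^*\}$ by the previous paragraph. This gives a homomorphism $\chi\colon W\to\mathbb{Z}/2\mathbb{Z}$ recording whether $\sigma$ exchanges $F$ and $F^*$. It is nontrivial: the flip $\tau(b,b')=(b',b)$ is a measure-class preserving Borel automorphism of $(B\times B,\mu\times\mu)$ commuting with the $G$-action, hence $\tau\in W$, and $F\circ\tau=F^*$. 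Finally I would identify $\ker\chi$ with the special subgroup $W_{G,B}(q)$ attached to $q\colon B\to\LL^0(X,\Delta)$ (a standard Borel $G$-space, being Polish): writing $p_1,p_2\colon B\times B\to B$ for the projections and $\mathrm{pr}_1\colon\LL^0(X,\Delta^{(2)})\to\LL^0(X,\Delta)$ for the first coordinate, one has $\mathrm{pr}_1\circ F=q\circ p_1$ and $\mathrm{pr}_1\circ F^*=q\circ p_2$; thus $\sigma\in\ker\chi$ forces $q\circ p_1\circ\sigma=\mathrm{pr}_1\circ(F\circ\sigma)=q\circ p_1$, i.e.\ $\sigma\in W_{G,B}(q)$, while conversely $\sigma\in W_{G,B}(q)$ with $F\circ\sigma=F^*$ would give $q\circ p_1=q\circ p_2$, i.e.\ $q(b)=q(b')$ for a.e.\ $(b,b')$, forcing $q$ essentially constant --- a contradiction. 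Hence $\ker\chi=W_{G,B}(q)$ is a special subgroup of index $2$, and $\chi$ is the required nontrivial homomorphism $W_{G,B}\to\mathbb{Z}/2\mathbb{Z}$.

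The main obstacle is the key step: proving that $\bar q_2$ has exactly two $G$-equivariant Borel lifts. The delicate points are the measurability of the comparison map $h$ and the verification that it is equivariant for the correct (isometric, $\Lambda$-trivial) $G$-action on $\LL^0(X,\{0,1\})$, so that isometric ergodicity of $B\times B$ can be applied; the remaining bookkeeping --- matching the coordinate projections of $F,F^*$ with $q\circ p_1,q\circ p_2$ and checking that the flip $\tau$ acts nontrivially on $\{F,F^*\}$ --- is routine but must be carried out with care.
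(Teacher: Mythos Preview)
Your proposal is correct and follows essentially the same route as the paper: pass to $\LL^0(X,-)$, show there are exactly two $G$-equivariant maps $B\times B\to\LL^0(X,\Delta^{(2)})$, let $W_{G,B}$ act on this two-element set, check the flip acts nontrivially, and identify the kernel with the special subgroup $W_{G,B}(q)$ for $q=\overline{f_1}:B\to\LL^0(X,\Delta)$. The only cosmetic difference is in the key step: the paper invokes ergodicity of $G\actson X\times B\times B$ (from Remark~\ref{ergodicities}) directly to conclude that any $c_2$-equivariant map to $\Delta^{(2)}$ agrees a.e.\ with one of $f_1^2,(f_1^2)^\ast$, whereas you unpack this by building the comparison map $h:B\times B\to\LL^0(X,\{0,1\})$ and applying isometric ergodicity of $B\times B$ --- but that is exactly how Remark~\ref{ergodicities} is proved, so the arguments coincide.
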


\begin{proof}
  Consider the $c_2$-equivariant Borel maps $X\times B\times B\to\Delta^{(2)}$ defined by
  $$f_1^2:(x,b,b')\mapsto (f_1(x,b),f_1(x,b'))$$ and $$(f_1^2)^\ast:(x,b,b')\mapsto (f_1(x,b'),f_1(x,b)).$$
  We note that $f_1^2$ and $(f_1^2)^\ast$ essentially take their values in $\Delta^{(2)}$: otherwise, by ergodicity,
  one has $f_1(x,b)=f_1(x,b')$ for almost every $(x,b,b')\in X\times B\times B$ so $f_1$ defines a $c$-equivariant map $f:X\ra \Delta$, contradicting our hypothesis.

  We claim that the maps $f_1^2$ and $(f_1^2)^\ast$ are the only $c_2$-equivariant Borel maps $X\times B\times B\to\Delta^{(2)}$.  
  Indeed, if $\psi:X\times B\times B\to \Delta^{(2)}$ is a $c_2$-equivariant Borel map, by uniqueness of the map $X\times B\times B\to \Delta^{(2)}/\mathfrak{S}_2$, we get that for a.e.\ $(x,b,b')$, either $\psi(x,b,b')=f_1^2(x,b,b')$ or $\psi(x,b,b')=(f_1^2)^\ast(x,b,b')$. By ergodicity of the $G$-action on $X\times B\times B$, one of these equalities holds almost everywhere. This proves our claim.

Let $F_1:B\to  \LL^0(X,\Delta)$ be the map induced by $f_1$, and let $F_1^2,(F_1^2)^\ast:B\times B\to\LL^0(X,\Delta^{(2)})$ be the maps induced by $f_1^2$ and $(f_1^2)^\ast$.  Thus, the set of $G$-equivariant Borel maps $B\times B\ra \LL^0(X,\Delta^{(2)})$ is a set consisting of exactly two elements, namely $F_1^2$ and $(F_1^2)^*$.
The Weyl group $W_{G,B}$ acts by precomposition on this set, thus defining a homomorphism $m:W_{G,B}\to \bbZ/2\bbZ$.
The flip $(b,b')\mapsto (b',b)$ is not in the kernel of $m$,  as otherwise $F_1$ would be essentially constant, contradicting that there is no $c$-equivariant Borel map $X\to\Delta$. 
Thus $m$ is nontrivial and $\ker m$ is a normal subgroup of index $2$ of $W_{G,B}$.

There remains to prove that $\ker m$ is a special subgroup of $W_{G,B}$ (see Definition \ref{dfn_Weyl}).
Taking $C=L^0(X,\Delta)$ and $\pi=F_1:B\ra C$, it suffices to check that $\ker m=W_{G,B}(\pi)$
i.e.\ that
$w\in \ker m$ if and only if $F_1\circ p_1=F_1\circ p_1\circ w$.
Write $w\in W_{G,B}$ as $w(b,b')=(w_1(b,b'),w_2(b,b'))$.
If $w\in \ker m$, then $F_1^2\circ w=F_1^2$ so
$F_1(b)=F_1(w_1(b,b'))$ for almost every $(b,b')\in B^2$, which precisely means that $F_1\circ p_1=F_1\circ p_1\circ w$.

Conversely, assume that $F_1\circ p_1=F_1\circ p_1\circ w$. If
$w\notin \ker m$, then $F_1^2\circ w=(F_1^2)^*$ so
$F_1(b')=F_1(w_1(b,b'))$ for almost every $(b,b')\in B^2$, i.e.\ $F_1\circ p_2=F_1\circ p_1\circ w$.
If follows that $F_1\circ p_1=F_1\circ p_2$, hence $F_1$ is essentially constant. Its image is therefore a $G$-invariant function in $\LL^0(X,\Delta)$, in other words, a $c$-equivariant measurable map from $X$ to $\Delta$, a contradiction. 
This proves that $\ker m=W_{G,B}(\pi)$ is a special subgroup of $W_{G,B}$. 
\end{proof}

\subsection{Property (T) versus amenability of the action}

In the case where the group $G$ has property (T), and in fact more generally as soon as every cocycle from $G\times X$ to an amenable group is cohomologous to one with finite values, the second conclusion from Proposition~\ref{abstract-2} yields cocycle superrigidity, as was established by Spatzier and Zimmer in \cite{SZ}.

\begin{lemma}[Spatzier--Zimmer \cite{SZ}]\label{lem:elementary}
Let $G$ be a locally compact  second countable group with property $(T)$, and let $X$ be a standard Borel probability space equipped with an ergodic measure-preserving $G$-action. Let $\Lambda$ be a countable group acting on a standard Borel space $\Delta$, so that the $\Lambda$-action on $\Delta$ is universally amenable.

Let $c:G\times X\to \Lambda$ be a measurable cocycle, and assume that there exists a $c$-equivariant Borel map from $X$ to $\Delta$.

Then $c$ is cohomologous to a cocycle with values in a finite subgroup of $\Lambda$.  
\end{lemma}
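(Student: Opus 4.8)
The strategy is the one from Spatzier--Zimmer, realized in the present language via the developed action and the geometric input provided by universal amenability of the $\Lambda$-action on $\Delta$. Let $f:X\to\Delta$ be the given $c$-equivariant Borel map, and let $\nu$ be the measure on $\Delta$ obtained as the push-forward $f_*\mu$. Since $f$ is $c$-equivariant and $\mu$ is $G$-invariant, the measure class of $\nu$ is $\Lambda$-invariant: indeed, for $\lambda$ in the (countable) image of $c$, the set $\lambda_*\nu$ is absolutely continuous with respect to $\nu$ because preimages under $f$ of $\nu$-null sets are $\mu$-null and $G$ moves them to $\mu$-null sets. By universal amenability of the $\Lambda$-action on $\Delta$, the $\Lambda$-action on $(\Delta,\nu)$ is Zimmer amenable.

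The heart of the argument is then to combine this amenability with property $(T)$ of $G$. I would pass to the developed action: $G\times\Lambda$ acts on $Y:=X\times\Lambda$ with the product of $\mu$ and counting measure, the $\Lambda$-action is essentially free with finite-measure fundamental domain $X\times\{1\}$, and the quotient $G$-action is the original one on $X$. The $c$-equivariant map $f:X\to\Delta$ induces (as in Lemma~\ref{interpretation-cohomologous}, $(ii)\Rightarrow(iii)$ in spirit) a $G$-invariant $\Lambda$-equivariant Borel map $F:Y\to\Delta$, $F(x,\lambda)=\lambda^{-1}f(x)$. Equip $\Delta$ with the measure $\nu$; then $F$ lets us compare the $G$-action on $Y$ with the $\Lambda$-action on $(\Delta,\nu)$, which is amenable. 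The key point is that $G$ acting on $Y$ with an invariant probability measure on the quotient $Y/\Lambda = X$, together with property $(T)$, forces the cocycle into a finite subgroup: this is exactly the mechanism of \cite[Theorem~4.4 and its corollaries]{SZ} (alternatively one may phrase it through a $c$-equivariant map to the space of means on $L^\infty$ of the amenable action, combined with a fixed-point/finite-orbit argument coming from property $(T)$ and invariance of $\mu$). More concretely, amenability of the $\Lambda$-action on $(\Delta,\nu)$ provides a sequence of Borel maps $\Delta\to\mathrm{Prob}(\Lambda)$ that is asymptotically $\Lambda$-equivariant up to a controlled error; composing with $f$ gives a nearly $c$-equivariant sequence $X\to\mathrm{Prob}(\Lambda)$, and property $(T)$ (applied to the associated cocycle into the unitary group of $\ell^2(\Lambda)$, which almost has invariant vectors) yields a genuine $c$-equivariant map $X\to\mathrm{Prob}(\Lambda)$. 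A $c$-equivariant map into $\mathrm{Prob}(\Lambda)$ whose values have full support of bounded size, or more robustly the barycenter/maximal-weight trick, then produces a $c$-equivariant map $X\to\calp_{<\infty}(\Lambda)$ for the \emph{conjugation} action, which by Lemma~\ref{interpretation-cohomologous} means $c$ is cohomologous to a cocycle with values in a finite subgroup of $\Lambda$.

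The step I expect to be the main obstacle is passing from ``almost invariant'' to ``genuinely invariant'': producing from the asymptotically equivariant sequence $X\to\mathrm{Prob}(\Lambda)$ an honest $c$-equivariant finite-valued map. This is precisely where property $(T)$ of $G$ must be invoked, and one must be careful that the relevant cocycle into a unitary (or isometry) group has almost-invariant vectors in the right sense so that property $(T)$ delivers an invariant one; the ergodicity and measure-preservation of the $G$-action on $X$ are both essential here (measure-preservation so that $G$ acts isometrically on the relevant $L^2$-space, ergodicity so that the resulting invariant vector is essentially unique up to scaling and descends to a well-defined equivariant object). Once a genuine $c$-equivariant measure-valued map is in hand, extracting a finite subset (e.g.\ the set of atoms of maximal mass) is routine, and the conclusion follows from Lemma~\ref{interpretation-cohomologous} exactly as in the earlier arguments of this section.
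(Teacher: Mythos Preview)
Your high-level plan—developed action, amenability of $\Delta$, property $(T)$—is the right skeleton, but the concrete realization diverges from the paper's argument and contains gaps.

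The paper proceeds via the \emph{Mackey range} $M$ of $c$ (the space of $G$-ergodic components of $Y=X\times\Lambda$). The map $F$ factors through a $\Lambda$-equivariant map $M\to\Delta$, and by \cite[Corollary~C]{AEG} universal amenability of $\Lambda\curvearrowright\Delta$ forces $\Lambda\curvearrowright M$ to be Zimmer amenable. If this action is essentially transitive, then $M\cong\Lambda/\Lambda^1$ with $\Lambda^1$ amenable, so (Lemma~\ref{interpretation-cohomologous}) $c$ is cohomologous into $\Lambda^1$ and \cite[Theorem~9.1.1]{Zim} finishes. If it is properly ergodic, then by \cite{OW} it is orbit-equivalent to a $\mathbb{Z}$-action; the resulting cocycle $\beta:G\times X\to\mathbb{Z}$ has Mackey range orbit-equivalent to $M$, hence properly ergodic, contradicting that \cite[Theorem~9.1.1]{Zim} makes $\beta$ cohomologically trivial.

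Your direct route has several specific issues. The argument that $\nu=f_*\mu$ has $\Lambda$-quasi-invariant class is incorrect as written: $c$-equivariance gives $f(gx)=c(g,x)f(x)$ with $c(g,x)$ varying in $x$, so nothing follows about $\lambda_*\nu$ for a fixed $\lambda$ (this is repairable by replacing $\nu$ with a weighted sum $\sum_\lambda 2^{-n(\lambda)}\lambda_*\nu$, but you did not do this). More importantly, the hypothesis is \emph{universal} amenability; Zimmer amenability of $(\Delta,\nu)$ does not directly hand you asymptotically equivariant Borel maps $\Delta\to\Prob(\Lambda)$—that is what Borel amenability provides, and Section~\ref{sec:amenability-variations} only establishes the implication in the other direction. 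The property-$(T)$ upgrade from almost to genuinely $c$-equivariant maps is precisely the nontrivial content and is left as a sketch (which unitary representation, why it has almost-invariant vectors, how to extract a positive invariant vector). Finally, the resulting map $X\to\calp_{<\infty}(\Lambda)$ would be equivariant for the \emph{left translation} action of $\Lambda$ (whose finite-set stabilizers are indeed finite), not the conjugation action you name. The Mackey-range argument sidesteps all of this by reducing to a single black-box invocation of \cite[Theorem~9.1.1]{Zim}.
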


\begin{proof}
The argument appears in the proof of \cite[Lemma~4.1]{SZ}, we recall it here. 
Let $G\times \Lambda\actson Y=X\times \Lambda$ be the developed action. The $c$-equivariant Borel map $X\to\Delta$ 
yields a $G$-invariant $\Lambda$-equivariant map $F:Y\ra \Delta$.
Let $M$ be the \emph{Mackey range} of $c$, i.e.\ the space of ergodic components of $G\actson Y$. 
It is naturally equipped with a $\Lambda $-action, and the map $F$ factors through a $\Lambda$-equivariant map $M\ra \Delta$.
As $\Delta$ is universally amenable, it follows that the $\Lambda $-action on $M$ is amenable in the sense of Zimmer, see e.g.\ \cite[Corollary~C]{AEG}. 

We first assume that the $\Lambda$-action on $M$ is essentially transitive. By \cite[Proposition 4.3.2]{Zim}, there exists an amenable subgroup $\Lambda^1\subseteq \Lambda$ and a $\Lambda$-equivariant isomorphism $M\ra \Lambda/\Lambda^1$. 
By Lemma~\ref{interpretation-cohomologous}, this implies that $c$ is cohomologous to a cocycle with values in $\Lambda^1$. The conclusion then follows from Zimmer's cocycle rigidity theorem \cite[Theorem~9.1.1]{Zim}.

We now assume that the $\Lambda$-action on $M$ is properly ergodic. By \cite{OW}, it is orbit-equivalent to an action of $\mathbb{Z}$. Let $\alpha:\Lambda\times M\to\mathbb{Z}$ be a cocycle coming from the orbit equivalence. Consider the cocycle $\beta:G\times X\to\mathbb{Z}$ defined by letting $\beta(g,x):=\alpha(c(g,x)^{-1},[gx,e])$ (where $[gx,e]$ denotes the image of $(gx,e)\in X\times \Lambda$ in $M$).
Then the Mackey range $\bbZ\actson M'$ of $\beta$ is orbit equivalent to $\Lambda\actson M$, and is properly ergodic in particular.
But since $G$ has Property~(T) and $\mathbb{Z}$ is amenable, it follows from  \cite[Theorem~9.1.1]{Zim} that the cocycle $\beta$ is cohomologically trivial, so that $\bbZ\actson M'$ has trivial orbits. This yields a contradiction. 
\end{proof}

\subsection{Specialization to the case where $G$ is an algebraic group}\label{sec-lie}

We now specialize Proposition~\ref{abstract-2} to the case where $G$ is a product of higher rank simple algebraic groups. The following is Theorem~\ref{theo:intro-cocycle-geometry} from the introduction.

\begin{theo}\label{theo:abstract}
Let $\Lambda$ be a countable group. Let $\bbD$ be a countable $\Lambda$-set. Assume that $\Lambda$ is geometrically rigid with respect to $\bbD$.  

Let $G$ be a product of connected higher rank simple algebraic groups over local fields. Let $X$ be a standard probability space equipped with an ergodic measure-preserving $G$-action. 

Then every cocycle $c:G\times X\to \Lambda$ is cohomologous to a cocycle that takes its values in a subgroup of $\Lambda$ that virtually fixes an element of $\bbD$.
\end{theo}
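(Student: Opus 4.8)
The plan is to deduce Theorem~\ref{theo:abstract} by feeding Proposition~\ref{abstract-2} to the analysis of strong boundaries and Weyl groups of higher rank algebraic groups carried out in Proposition~\ref{prop:WeylAlg}, and then to kill the remaining alternative via property~(T) and the Spatzier--Zimmer Lemma~\ref{lem:elementary}. First I would record the inputs: since $G=\prod_{i\in I}G_i$ is a product of connected higher rank simple algebraic groups over local fields, $B:=G/P$ is a strong boundary for $G$ by Proposition~\ref{prop:WeylAlg}; moreover $G$ has property~(T) (each factor has $k_i$-rank $\ge 2$, hence property~(T), and property~(T) is stable under finite products). As $\Lambda$ is geometrically rigid with respect to $\bbD$, we may apply Proposition~\ref{abstract-2} to the cocycle $c:G\times X\to\Lambda$, obtaining a Polish space $\Delta$ with a universally amenable $\Lambda$-action such that one of the three listed alternatives holds.

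The three cases are then disposed of in turn. In case~(1), $c$ is already cohomologous to a cocycle valued in the setwise stabilizer of a finite subset $S\subseteq\bbD$; such a stabilizer virtually fixes each element of $S$ (it fixes $S$ setwise, so a finite-index subgroup fixes $S$ pointwise), so it virtually fixes an element of $\bbD$, and we are done. In case~(3), we would have a nontrivial homomorphism $W_{G,B}\to\mathbb{Z}/2\mathbb{Z}$ whose kernel is a special subgroup of $W_{G,B}$; but Proposition~\ref{prop:WeylAlg} says precisely that $W_{G,B}\cong W$ has no nontrivial proper special subgroup of index~$2$, a contradiction, so case~(3) cannot occur. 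In case~(2), there is a $c$-equivariant Borel map $X\to\Delta$ or $X\to\Delta^{(2)}/\mathfrak{S}_2$. In the first sub-case, since $G$ has property~(T), acts ergodically and measure-preservingly on $X$, and the $\Lambda$-action on $\Delta$ is universally amenable, Lemma~\ref{lem:elementary} gives that $c$ is cohomologous to a cocycle with values in a finite subgroup of $\Lambda$; a finite subgroup certainly virtually fixes any element of $\bbD$ (it has finite index in itself), so the conclusion holds.

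It remains to handle the second sub-case of~(2), a $c$-equivariant map $X\to\Delta^{(2)}/\mathfrak{S}_2$, which is the one point needing a small extra argument. Here I would pass to the diagonal action of $\Lambda$ on $\Delta^{(2)}/\mathfrak{S}_2$: since the $\Lambda$-action on $\Delta$ is universally amenable, so is the $\Lambda$-action on $\Delta^{(2)}$ and hence on the quotient $\Delta^{(2)}/\mathfrak{S}_2$ (universal amenability passes to products with the diagonal action and to quotients by a finite group acting by Borel automorphisms commuting with $\Lambda$; this uses the same Borel-amenability arguments as in Section~\ref{sec:amenability-variations} together with \cite[Proposition~4.3.4]{Zim} and stability of Zimmer amenability under finite group extensions). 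Then Lemma~\ref{lem:elementary}, applied with $\Delta$ replaced by $\Delta^{(2)}/\mathfrak{S}_2$, again yields that $c$ is cohomologous to a cocycle with finite image, finishing the proof.

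I expect the main obstacle to be entirely bookkeeping rather than conceptual: the conceptual content is already packaged in Propositions~\ref{abstract-2} and~\ref{prop:WeylAlg} and Lemma~\ref{lem:elementary}. The one genuinely non-formal point is justifying that the second sub-case of alternative~(2) (a map into $\Delta^{(2)}/\mathfrak{S}_2$ rather than into $\Delta$ itself) can be absorbed, i.e.\ checking that universal amenability of the $\Lambda$-action on $\Delta$ is inherited by $\Delta^{(2)}/\mathfrak{S}_2$; this is where one has to be slightly careful, using that Zimmer amenability is stable under passing to $X\times B$-type extensions and under finite extensions of the acting group, exactly as recorded in Sections~\ref{sec:amenability-variations} and~\ref{sec:criterion}.
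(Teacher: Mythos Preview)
Your proposal is correct and follows essentially the same route as the paper: apply Proposition~\ref{abstract-2}, rule out alternative~(3) via Proposition~\ref{prop:WeylAlg}, and handle alternative~(2) by invoking property~(T) and Lemma~\ref{lem:elementary} once one knows the $\Lambda$-action on $\Delta^{(2)}/\mathfrak{S}_2$ is universally amenable. The only cosmetic difference is that for this last amenability fact the paper simply cites \cite[Lemma~5.2]{Ada}, whereas you sketch it via stability under products and finite quotients; either is adequate.
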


\begin{proof}
We apply Proposition~\ref{abstract-2} to the cocycle $c$. If the first conclusion of Proposition~\ref{abstract-2} holds, then $c$ is cohomologous to a cocycle with values in the setwise stabilizer of a nonempty finite subset of $\bbD$. Assume that the second conclusion of Proposition~\ref{abstract-2} holds, i.e.\ there is a $c$-equivariant Borel map from $X$ to either $\Delta$ or $\Delta^{(2)}/\mathfrak{S}_2$. The $\Lambda$-action on $\Delta$ is universally amenable by assumption, and therefore so is the $\Lambda$-action on $\Delta^{(2)}/\mathfrak{S}_2$ (see the argument in \cite[Lemma~5.2]{Ada}). Since $G$ has Property~$(T)$, Lemma~\ref{lem:elementary} therefore ensures that $c$ is cohomologous to a cocycle taking its values in a finite subgroup of $\Lambda$ and the conclusion holds. Finally, by Proposition~\ref{prop:WeylAlg}, the Weyl group $W_{G,B}$ does not contain any normal special subgroup of index 2, which excludes the third conclusion from Proposition~\ref{abstract-2}.
\end{proof}

\section{First examples of cocycle rigidity results}

\subsection{Cocycle rigidity with target a relatively hyperbolic group}\label{sec:rel-hyp}

The following theorem was announced by Bader--Furman, see the unpublished paper \cite{BFHyp}. Since, to our knowledge, its statement does not appear elsewhere in the literature, we include a proof in the present section.  

\begin{theo}[{Bader--Furman \cite{BFHyp}}]\label{theo:rhyp}
Let $\Lambda$ be a group which is hyperbolic relative to a finite collection $\calp$ of subgroups. 
Let $G_0$ be a product of connected higher rank simple algebraic groups defined over local fields. Let $G$ be either $G_0$, or a lattice in $G_0$. Let $X$ be a standard probability space equipped with an ergodic measure-preserving action of $G$. 

Then every cocycle $G\times X\to\Lambda$ is cohomologous to a cocycle that takes its values in a parabolic or in a finite subgroup of $\Lambda$.
\end{theo}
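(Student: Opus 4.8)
The strategy is to verify that a relatively hyperbolic group $\Lambda$ fits the geometric rigidity framework of Definition~\ref{de-ast}, and then invoke Theorem~\ref{theo:abstract} together with the source-stability Proposition~\ref{induction} to pass between $G_0$ and its lattices. Concretely, I would take $\bbD$ to be the vertex set of Bowditch's hyperbolic \emph{coned-off} graph (or the associated fine hyperbolic graph) $\hat{X}$ associated to the relatively hyperbolic pair $(\Lambda,\calp)$; the vertex set naturally splits into the set of vertices coming from $\Lambda$ (whose $\Lambda$-stabilizers are finite) and the set of ``parabolic'' vertices, one for each conjugate of a subgroup in $\calp$ (whose stabilizers are exactly the maximal parabolic subgroups). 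For $\Delta$ I would take the Gromov boundary $\partial_\infty\hat{X}$, and for $K$ the Bowditch boundary $\partial(\Lambda,\calp)$ (which compactifies $\hat{X}$); the partition $K=K_{\bdd}\sqcup K_\infty$ is given by $K_{\bdd}=$ the parabolic points of $\partial(\Lambda,\calp)$ together with the image of $\bbD$, and $K_\infty=$ the conical limit points, which correspond to $\partial_\infty\hat{X}$.

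The three conditions of geometric rigidity then need to be checked. \textbf{Compatibility:} the map $\theta_\infty\colon K_\infty\to\Delta$ is the (equivariant, Borel) identification of conical limit points with boundary points of the coned-off graph, and $\theta_{\bdd}\colon K_{\bdd}\to\calp_{<\infty}(\bbD)$ sends a parabolic point to the corresponding parabolic vertex and a vertex of $\bbD$ to itself. \textbf{Barycenter map:} since $\hat{X}$ is a Gromov hyperbolic graph and $\Delta=\partial_\infty\hat{X}$, a triple of pairwise distinct boundary points determines a bounded ``center'' region of $\hat{X}$ (a coarse median/Gromov-tripod argument) — and, crucially, $\hat{X}$ is \emph{fine} in the sense of Bowditch, so this bounded region meets only finitely many vertices; equivariance and Borel measurability are routine. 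This uses Adams' argument \cite{Ada3} in the hyperbolic case but now transported to $\hat{X}$, exactly as indicated for the hyperbolic-group example following Definition~\ref{de-ast}. \textbf{Amenability:} the $\Lambda$-action on $\partial_\infty\hat{X}$ is (Borel, hence universally) amenable; this is a known consequence of topological amenability of the action of a relatively hyperbolic group on its Bowditch boundary — one can combine amenability of the $\Lambda$-action on $\partial(\Lambda,\calp)$ (when the peripheral groups are amenable, or more generally via Ozawa's / Bowditch's results and the argument of \cite{Ada3}) with the fact that $\partial_\infty\hat{X}$ is a Borel $\Lambda$-equivariant quotient of a full-measure subset; alternatively one constructs the asymptotically equivariant measures $\nu_n$ directly from quasi-geodesic rays in $\hat{X}$.

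Granting geometric rigidity of $\Lambda$ with respect to $\bbD$, Theorem~\ref{theo:abstract} applies when $G=G_0$: every cocycle $G_0\times X\to\Lambda$ is cohomologous to one taking values in a subgroup that virtually fixes a vertex of $\bbD$. A subgroup virtually fixing a vertex of $\bbD$ is either virtually finite — hence finite, as $\Lambda$ is torsion-free-by-finite in the relevant cases, or in any case such a subgroup is finite — when the fixed vertex comes from $\Lambda$, or virtually parabolic when the fixed vertex is a parabolic vertex; a finite-index overgroup of a parabolic subgroup inside $\Lambda$ is again parabolic (parabolics are their own commensurators, or one absorbs the finite extension using Proposition~\ref{finite-index} / malnormality), so in both cases the cocycle is cohomologous to one with values in a parabolic or finite subgroup. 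For $G$ a lattice in $G_0$, Proposition~\ref{induction} reduces the statement to the case $G=G_0$ just treated. \textbf{Main obstacle:} the delicate point is the barycenter construction — one must genuinely use fineness of Bowditch's graph to guarantee that the coarse center of a boundary triple is a \emph{finite} vertex set (local finiteness fails for $\hat{X}$ in general), and one must check that this assignment is Borel; the amenability statement, while ``known'', also requires care to phrase at the level of the coned-off boundary $\partial_\infty\hat{X}$ rather than the full Bowditch boundary.
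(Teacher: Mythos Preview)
Your approach is essentially the same as the paper's: verify geometric rigidity of $\Lambda$ with respect to the vertex set $\bbD$ of Bowditch's fine hyperbolic graph (with $\Delta=\partial_\infty\bbD$ and $K=\bbD\cup\Delta$, the Bowditch compactification), cite fineness for the barycenter map (the paper references Bowditch and Kida's measurability argument) and Ozawa for universal amenability on $\Delta$, then apply Theorem~\ref{theo:abstract} and use almost malnormality of parabolics to upgrade ``virtually contained in a parabolic'' to ``contained in a parabolic''. Two small points to clean up: your description of $K$ and its partition is muddled (in the paper's model $K=\bbD\cup\Delta$ with $K_{\bdd}=\bbD$, and the Bowditch boundary is exactly this set, so there is no separate ``image of $\bbD$'' to adjoin), and the conclusion step is not that $K$ is a ``finite-index overgroup of a parabolic'' but rather that $K$ has a finite-index subgroup inside some parabolic $P$, whence almost malnormality of $P$ forces $K\subseteq P$.
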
 

We recall here that a subgroup of $G$ is \emph{parabolic} if it is conjugate into one of the subgroups in $\calp$. In particular, if we assume that for every $P\in\calp$, the pair $(G,P)$ is cocycle-rigid, then $(G,\Lambda)$ is cocycle-rigid. In the particular case where $\calp=\emptyset$, we recover the following result originally due to Adams.

\begin{cor}[Adams \cite{Ada2}]
Let $\Lambda$ be a hyperbolic group. 
Let $G_0$ be a product of connected higher rank simple algebraic groups defined over local fields. Let $G$ be either $G_0$, or a lattice in $G_0$. Let $X$ be a standard probability space equipped with an ergodic measure-preserving action of $G$.

Then every cocycle $G\times X\to\Lambda$ is cohomologous to a cocycle that takes its values in a finite subgroup of $\Lambda$.
\end{cor}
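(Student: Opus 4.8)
The final statement (the corollary attributed to Adams) follows immediately by combining Theorem~\ref{theo:abstract}, Proposition~\ref{induction}, and the first ``some examples'' paragraph following Definition~\ref{de-ast}. First I would recall that a Gromov hyperbolic group $\Lambda$ is geometrically rigid with respect to $\bbD := \Lambda$ itself: taking $K := \Lambda \cup \partial_\infty\Lambda$ with $K_{\bdd} := \Lambda$, $K_\infty := \Delta := \partial_\infty\Lambda$ and both maps $\theta_{\bdd}, \theta_\infty$ the identity, the compatibility axiom $(1)$ is immediate; the barycenter map $\bary: \Delta^{(3)} \to \calp_{<\infty}(\Lambda)$ is Adams' construction from \cite[Section~6]{Ada3}; and the universal amenability of the $\Lambda$-action on $\partial_\infty\Lambda$ is also Adams \cite{Ada3} (equivalently one invokes Borel amenability of the boundary action together with Proposition~\ref{prop:borel-universally}).

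\medskip
\noindent Next, suppose first that $G = G_0$ is a product of connected higher rank simple algebraic groups over local fields. Then Theorem~\ref{theo:abstract} applies verbatim with $\bbD = \Lambda$: every cocycle $c: G \times X \to \Lambda$ is cohomologous to one taking values in a subgroup $\Lambda_0 \le \Lambda$ that virtually fixes a point $d_0 \in \bbD = \Lambda$. But the $\Lambda$-action on $\bbD = \Lambda$ is by left translation, which is free, so the stabilizer of $d_0$ is trivial; hence a subgroup that \emph{virtually} fixes $d_0$ is simply a finite subgroup of $\Lambda$. This yields the conclusion when $G = G_0$.

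\medskip
\noindent Finally, for the case where $G$ is a lattice $\Gamma$ in $G_0$, I would invoke Proposition~\ref{induction}: since $G_0$ is locally compact second countable and $\Gamma$ is a lattice, the pair $(G_0, \Lambda)$ is cocycle-rigid if and only if $(\Gamma, \Lambda)$ is. By the previous paragraph $(G_0, \Lambda)$ is cocycle-rigid (every cocycle is cohomologous to one with finite values, which is precisely the definition of cocycle-rigid pair), so $(\Gamma, \Lambda)$ is cocycle-rigid as well, and the conclusion follows for $G = \Gamma$. Combining the two cases gives the corollary.

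\medskip
\noindent The only point requiring care — and the place where one is really relying on nontrivial input rather than bookkeeping — is the verification that a hyperbolic group is geometrically rigid with respect to itself, i.e.\ producing the barycenter map and the universal amenability of the boundary action; both are classical results of Adams that we quote. Everything else is a formal deduction from the machinery of Part~I.
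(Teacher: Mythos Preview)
Your proof is correct and follows essentially the same approach as the paper. The paper obtains the corollary as the special case $\calp=\emptyset$ of Theorem~\ref{theo:rhyp}, whose proof in that case unwinds to exactly your argument: geometric rigidity of $\Lambda$ with respect to $\bbD=\Lambda$ (the ``Some examples'' paragraph), then Theorem~\ref{theo:abstract}, then the observation that point stabilizers in $\bbD=\Lambda$ are trivial; your direct invocation of Proposition~\ref{induction} for the lattice case is likewise what the paper does implicitly when stating Theorem~\ref{theo:rhyp} for both $G_0$ and its lattices.
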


\begin{proof}[Proof of Theorem~\ref{theo:rhyp}]
By work of Bowditch \cite{Bow}, there exists a \emph{uniformly fine} hyperbolic graph $Y$ on countably many vertices equipped with an isometric $\Lambda$-action whose nontrivial point stabilizers are all conjugate to subgroups in $\calp$. Here, we recall that uniform fineness means that for every $n\in\mathbb{N}$, there exists $C_n>0$ such that for every edge $e\subseteq Y$, there are at most $C_n$ closed circuits of length $n$ that contain $e$. 

We let $\bbD$ be the vertex set of $Y$. We will prove that $\Lambda$ is geometrically rigid with respect to $\bbD$; 
Theorem~\ref{theo:abstract} then shows that the cocycle is cohomologous to a cocycle with values in a subgroup $K\subseteq\Lambda$
where some finite index subgroup of $K$ fixes a point in $\bbD$.
Thus if $K$ is infinite, then $K$ has a finite index subgroup which is contained in a parabolic subgroup $P$.
Since parabolic subgroups are almost malnormal (i.e.~$P^g\cap P$ is finite for $g\notin P$), it follows that $H$ is in fact contained in $P$.

Let $\Delta$ be the Gromov boundary of $Y$, and let $K:=\bbD\cup\Delta$. Then $K$ has a natural structure of a compact metric space (usually called the \emph{Bowditch boundary} of $(\Lambda,\calp)$) in which $\Delta$ is a Borel subset, and $\Lambda$ acts on $K$ by homeomorphisms \cite[Sections~8 and~9]{Bow}. In particular $\Delta$ is a Polish space.

The `barycenter map' was essentially constructed by Bowditch in \cite[Section~8]{Bow} (see e.g.\ the comment right after \cite[Lemma~8.2]{Bow}), with an argument of Kida \cite[Section~4.1.2]{Kid} for its measurability. More precisely, using \cite[Lemma~8.2]{Bow} (which relies on the fineness on $Y$), one sees that the graph $Y$ has the following two properties:
\begin{enumerate}
\item for every $r>0$, there exists $C_1>0$ such that given any two points $x,y\in Y$, and any point $z$ lying on a geodesic from $x$ to $y$, there are at most $C_1$ points that lie on some geodesic from $x$ to $y$ and are contained in $B(z,r)$; 
\item for every $r>0$, there exist $C_2>0$ and $\kappa>0$ such that for every $r'>0$, given any two points $x,y\in Y$, and any point $z\in Y$ lying on a geodesic from $x$ to $y$ and such that $d(z,\{x,y\})\ge r+\kappa$, the intersection of $B(z,r)$ with the collection of all geodesics in $Y$ with origin in $B(x,r')$ and endpoint in $B(y,r')$ has cardinality at most $C_2$.
\end{enumerate}
These are precisely the conditions required by Kida to ensure the existence of a $\Lambda$-equivariant Borel map $\Delta^{(3)}\to\calp_{<\infty}(\bbD)$, see \cite[Section~4.1.2]{Kid}.

Finally, universal amenability of the $\Lambda$-action on $\Delta$ was established by Ozawa in \cite[Lemma~8]{Oza}.
\end{proof}

\subsection{Cocycle rigidity with target a right-angled Artin group}\label{subsec_RAAG}

In order to illustrate the ubiquity of our notion of geometric rigidity in geometric group theory, we will now use it to prove cocycle rigidity with target a right-angled Artin group. The results of the present section will not be used later in the paper. The analogous rigidity statement for homomorphisms from higher-rank lattices to right-angled Artin groups was first established by Behrstock and Charney in \cite{BC}. We also mention that  rigidity in the realm of group actions on cube complexes has been extensively studied by Chatterji, Fern\'os and Iozzi in \cite{CFI} using bounded cohomology methods, with applications to orbit equivalence rigidity through work of Monod and Shalom \cite{MS3}.

Given a finite simple graph $\Gamma$ (i.e.\ $\Gamma$ has no loop-edge and no multiple edges joining two vertices), recall that the \emph{right-angled Artin group} $A_\Gamma$ associated to $\Gamma$ is the group defined by the following presentation: it has one generator per vertex of $\Gamma$, and two generators commute whenever the corresponding vertices of $\Gamma$ are joined by an edge. 
 
\begin{theo}\label{theo:raag}
Let $\Gamma$ be a finite simple graph, and let $A_{\Gamma}$ be the associated right-angled Artin group.

Let $G_0$ be a product of connected higher rank simple algebraic groups defined over local fields. Let $G$ be either $G_0$, or a lattice in $G_0$. Let $X$ be a standard probability space equipped with an ergodic measure-preserving action of $G$. 

Then every cocycle $G\times X\to A_{\Gamma}$ is cohomologous to the trivial cocycle.
\end{theo}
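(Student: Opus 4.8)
The plan is to reduce cocycle rigidity for $A_\Gamma$ to the rigidity criterion already established (Theorem~\ref{theo:abstract}), then bootstrap through the structure of right-angled Artin groups. First, recall that $A_\Gamma$ is torsion-free, so the target of any cocycle furnished by Theorem~\ref{theo:abstract} that ``virtually fixes'' something must genuinely fix it; and any finite subgroup of $A_\Gamma$ is trivial, so whenever Theorem~\ref{theo:abstract} produces a cocycle with values in a finite subgroup we are in fact done (the cocycle is cohomologous to the trivial one). So the two tasks are: (i) exhibit a geometrically rigid triple $(\bbD, K, \Delta)$ for $A_\Gamma$ whose point stabilizers in $\bbD$ are proper ``parabolic-type'' subgroups (namely subgroups of the form conjugates of standard sub-RAAGs $A_{\Gamma'}$ for a proper induced subgraph $\Gamma'$, or more generally subgroups arising as vertex stabilizers of $A_\Gamma$ acting on a suitable hyperbolic complex), and (ii) run an induction on the number of vertices of $\Gamma$.

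For step (i), the natural hyperbolic object is the \emph{extension graph} or, more robustly, the action of $A_\Gamma$ on the Bass--Serre tree or CAT(0) cube complex associated to a nontrivial \emph{join decomposition} or \emph{free product with amalgamation / HNN decomposition} of $A_\Gamma$. If $\Gamma$ is disconnected, $A_\Gamma$ splits as a free product of the RAAGs on its components, and one can use the Bass--Serre tree directly: $\bbD$ is the vertex set, point stabilizers are conjugates of the component RAAGs (fewer vertices), $K = \bbD \sqcup \partial_\infty T$, $\Delta = \partial_\infty T$ is the boundary of the tree, the barycenter map is elementary (the median/center of three ends of a tree), and amenability of the boundary action of a group acting on a tree with amenable--here, arbitrary countable, hence the action on the boundary is amenable by the standard tree argument, e.g.\ via \cite{Ada3}-type constructions, or more carefully since stabilizers need not be amenable one should instead use that the action of $A_\Gamma$ on the \emph{hyperbolic} graph is what matters). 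If $\Gamma$ is connected but not a single vertex, one uses that $A_\Gamma$ acts on the hyperbolic graph dual to a separating ``clique'' (the link of a vertex), giving a splitting of $A_\Gamma$ over a standard sub-RAAG; here $\bbD$ is again a vertex set of a Bass--Serre tree (hyperbolic), $\Delta$ its boundary, and amenability is the tree-boundary amenability. The delicate point is that stabilizers in $\bbD$ must be \emph{strictly simpler} RAAGs so that the induction terminates, and that we must handle the base case $\Gamma$ a single vertex (so $A_\Gamma \cong \mathbb Z$) and $\Gamma$ a complete graph (so $A_\Gamma \cong \mathbb Z^n$); for those, $A_\Gamma$ is amenable and, since $G$ has Property~(T), every cocycle $G \times X \to A_\Gamma$ is cohomologous to a trivial one by Zimmer's cocycle superrigidity theorem \cite[Theorem~9.1.1]{Zim}.

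For step (ii), I would argue by induction on $|V(\Gamma)|$. The base case ($|V(\Gamma)| \le 1$) is the amenable case just discussed. For the inductive step, if $A_\Gamma$ is amenable (i.e.\ $\Gamma$ complete) we again invoke Zimmer; otherwise $A_\Gamma$ admits a nontrivial splitting as above and hence (by step (i)) is geometrically rigid with respect to a $\bbD$ whose point stabilizers are conjugates of $A_{\Gamma'}$ with $|V(\Gamma')| < |V(\Gamma)|$. By Theorem~\ref{theo:abstract}, the cocycle $c$ is cohomologous to one taking values in a subgroup $K$ that virtually fixes a point of $\bbD$; since $A_\Gamma$ is torsion-free and parabolic-type subgroups of RAAGs are malnormal enough that a finite-index subgroup of $K$ lying in a conjugate of $A_{\Gamma'}$ forces $K$ itself into that conjugate (using that $A_{\Gamma'}$ is its own normalizer, or an almost-malnormality statement for these vertex groups), we conclude $c$ is cohomologous to a cocycle with values in a conjugate of $A_{\Gamma'}$. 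Conjugating by a fixed element, we may assume the values lie in $A_{\Gamma'}$ itself, and the inductive hypothesis applied to $A_{\Gamma'}$ finishes the argument: $c$ is cohomologous to the trivial cocycle.

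The main obstacle I expect is step (i): verifying that a chosen hyperbolic complex for $A_\Gamma$ genuinely yields a geometrically rigid triple, and in particular (a) choosing the splitting so that the vertex groups are RAAGs on \emph{strictly smaller} graphs while the edge groups do not cause the induction to stall, (b) establishing the barycenter map---this is easy when the hyperbolic object is a tree (use the center of a tripod), so the cleanest route is to always realize $A_\Gamma$ via a \emph{tree} action (Bass--Serre tree of a nontrivial graph-of-groups decomposition of $A_\Gamma$), which exists whenever $\Gamma$ is not complete, and (c) the amenability of the $A_\Gamma$-action on the boundary $\partial_\infty T$ of that tree, which does hold (the action of any countable group on the boundary of a tree it acts on is Borel amenable, by an explicit construction of asymptotically invariant measures---see \cite{Ada3} or \cite{Kai}-style arguments), and (d) the almost-malnormality/normalizer control needed to upgrade ``virtually contained in a conjugate of $A_{\Gamma'}$'' to ``contained in a conjugate of $A_{\Gamma'}$.'' Once these are in place the induction is routine.
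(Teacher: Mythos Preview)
Your inductive skeleton is the same as the paper's, and the reductions you identify (torsion-freeness kills the ``virtually'' and the ``finite subgroup'' issues; induction on $|V(\Gamma)|$; Zimmer for the abelian base case) are all correct. The genuine gap is in step (i)(c): the claim that ``the action of any countable group on the boundary of a tree it acts on is Borel amenable'' is false. The action of $G$ on the boundary of a Bass--Serre tree is amenable if and only if the edge stabilizers are amenable. For the splittings you propose when $\Gamma$ is connected (over the link or star of a vertex), the edge groups are sub-RAAGs on proper subgraphs, which are typically non-amenable (e.g.\ already contain $F_2$). So the geometric rigidity criterion cannot be fed with these trees, and your induction stalls precisely at the connected case. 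Relatedly, your case analysis misses the nontrivial joins: if $\Gamma$ is a join but not complete (say $A_\Gamma\cong F_2\times F_2$), then $A_\Gamma$ is non-amenable, yet in any nontrivial tree action one factor acts elliptically and the edge stabilizers contain it, so again no tree with amenable boundary action is available.

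The paper avoids this by using a different geometrically rigid triple. Instead of a Bass--Serre tree, it takes $K$ to be the Roller boundary of the universal cover $\widetilde{S}_\Gamma$ of the Salvetti complex, with $\Delta=K_\infty$ the set of \emph{regular} boundary points in the sense of Fern\'os; amenability of the action on the Roller boundary of a CAT(0) cube complex is known (Duchesne, Nevo--Sageev), the barycenter map comes from Fern\'os--L\'ecureux--Math\'eus, and the map $K_{\bdd}\to\bbD_\Gamma$ sending a non-regular point to a proper standard subcomplex exists precisely when $\Gamma$ is not a join. The join case is then handled separately and trivially, since $A_{\Gamma_1\ast\Gamma_2}\cong A_{\Gamma_1}\times A_{\Gamma_2}$ and cocycle rigidity passes to direct products via Proposition~\ref{extension}. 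With this dichotomy (join versus non-join) replacing your dichotomy (disconnected versus connected), the induction goes through, and your worry (d) about malnormality is unnecessary: the stabilizer of a standard subcomplex is exactly a conjugate of some $A_\Lambda$ with $\Lambda\subsetneq\Gamma$, and Proposition~\ref{finite-index} handles the passage from ``virtually fixes'' to ``fixes''.
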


Given a full subgraph $\Lambda\subseteq\Gamma$ (i.e.\ two vertices of $\Lambda$ are joined by an edge if and only if they are joined by an edge in $\Gamma$), the Salvetti complex $S_\Lambda$ naturally embeds as a subcomplex of $S_\Gamma$. A \emph{standard subcomplex} of type $\Lambda$ of the universal cover $\widetilde{S}_\Gamma$ is a connected component of the preimage of $S_\Lambda$ under the covering map. Notice that its stabilizer is a conjugate of $A_\Lambda$. In particular, the vertices of $\widetilde{S}_\Gamma$ are the standard subcomplexes associated to $\Lambda=\emptyset$. We denote by $\mathbb{D}_\Gamma$ the countable set of all proper (i.e.\ with $\Lambda\neq\Gamma$) standard subcomplexes of $\widetilde{S}_\Gamma$. 

\begin{lemma}\label{lemma:rigid-raag}
Let $\Gamma$ be a finite simple graph that does not split nontrivially as a join. Then $A_\Gamma$ is geometrically rigid with respect to $\bbD_\Gamma$.
\end{lemma}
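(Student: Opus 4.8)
The plan is to exhibit a geometrically rigid triple $(\bbD_\Gamma,K,\Delta)$ in the sense of Definition~\ref{de-ast}, built from the cubical geometry of $A_\Gamma$. Let $X:=\widetilde S_\Gamma$ be the universal cover of the Salvetti complex; this is a finite-dimensional, locally finite CAT(0) cube complex on which $A_\Gamma$ acts freely, cocompactly and by cubical automorphisms, and since $\Gamma$ does not split nontrivially as a join, $X$ is \emph{irreducible}, i.e.\ it admits no nontrivial decomposition as a product of subcomplexes. Let $\overline X=X\sqcup\partial_R X$ be the Roller compactification of $X$: as $X$ has countably many halfspaces, $\overline X$ is a compact metrizable space (a closed subspace of $\{0,1\}^{\mathcal H}$, where $\mathcal H$ is the set of halfspaces), carrying an $A_\Gamma$-invariant topological median algebra structure, with median map $m\colon\overline X^3\to\overline X$ given by coordinatewise majority vote. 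We set $K:=\overline X$, we let $K_\infty\subseteq\partial_R X$ be the Borel $A_\Gamma$-invariant subset of \emph{regular} boundary points (those whose orienting ultrafilter is non-principal in every direction, in the sense of Fern\'os and Nevo--Sageev), $K_{\bdd}:=\overline X\setminus K_\infty=X\sqcup(\partial_R X\setminus K_\infty)$, and finally $\Delta:=K_\infty$, equipped with a Polish topology with the same Borel sets.

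We first check conditions (1) and (3). For amenability: since $X$ is locally finite and $A_\Gamma$ acts on it freely and cocompactly, the action of $A_\Gamma$ on $\overline X$ is topologically amenable; restricting to the invariant Borel subset $\Delta$ of regular points yields a Borel amenable action, so by Proposition~\ref{prop:borel-universally} the $A_\Gamma$-action on $\Delta$ is universally amenable. For compatibility, we take $\theta_\infty:=\mathrm{id}\colon K_\infty\to\Delta$; to define $\theta_{\bdd}\colon K_{\bdd}\to\calp_{<\infty}(\bbD_\Gamma)$, we send a point of $X$ (a vertex, hence an element of $A_\Gamma$, which as a standard subcomplex of type $\emptyset$ is a proper standard subcomplex) to the corresponding singleton, and we send a non-regular boundary point $\xi$ --- which is at infinity only ``along'' a proper sub-cube-complex of $X$ --- to the finite nonempty set of proper standard subcomplexes obtained by intersecting that sub-cube-complex with the standard subcomplexes through a fixed basepoint. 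Borelness and $A_\Gamma$-equivariance of $\theta_{\bdd}$ follow from the structure theory of the Roller boundary; the only point requiring care is well-definedness of the assignment on non-regular points.

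It remains to construct the barycenter map (condition (2)). Given $(\xi_1,\xi_2,\xi_3)\in\Delta^{(3)}$, we consider the median $m(\xi_1,\xi_2,\xi_3)\in\overline X$. If $m\in X$, it is a vertex, and we output the corresponding singleton of $\bbD_\Gamma$. If $m\in\partial_R X$, one shows, using that the $\xi_i$ are pairwise distinct and regular and that $X$ is irreducible, that $m$ still canonically determines a finite nonempty family of proper standard subcomplexes: the idea is to approximate the $\xi_i$ by sequences of vertices, to record the hyperplanes that ultimately separate the corresponding finite medians from all three approximating sequences, and to use local finiteness of $X$ together with irreducibility to confine the relevant combinatorial data to finitely many standard subcomplexes. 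The resulting map $\mathrm{bar}\colon\Delta^{(3)}\to\calp_{<\infty}(\bbD_\Gamma)$ is $A_\Gamma$-equivariant and Borel, which completes the verification that $(\bbD_\Gamma,\overline X,\Delta)$ is a geometrically rigid triple.

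The main obstacle is condition (2): the combinatorial median of three regular Roller-boundary points need not lie in $X$, and the actual work is to extract a finite family of proper standard subcomplexes in the ``median at infinity'' case. This is the cubical counterpart of the difficulty already present for $\Out(F_N)$ --- that a bounded region of the free factor graph is still infinite --- and it is resolved here by exploiting the local finiteness of the cube complex $X$, in direct analogy with the use of local compactness of Outer space in that setting.
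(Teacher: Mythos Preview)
Your overall setup is close to the paper's (Roller compactification, regular points as $\Delta$, amenability via topological amenability of the action on the Roller boundary), but there is a genuine gap in your barycenter argument, and a secondary problem with $\theta_{\bdd}$.

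\textbf{The barycenter map.} You write that ``the combinatorial median of three regular Roller-boundary points need not lie in $X$'' and then attempt to handle the ``median at infinity'' case by an unspecified construction. In fact the opposite is true: the median of three \emph{pairwise distinct regular} points of the Roller boundary is always a vertex of $X$. This is precisely the content of the lemma the paper invokes (Fern\'os--L\'ecureux--Math\'eus), and it is what makes the barycenter map immediate: $\mathrm{bar}(\xi_1,\xi_2,\xi_3)=\{m(\xi_1,\xi_2,\xi_3)\}\subseteq X^{(0)}\subseteq\bbD_\Gamma$. The reason is that regularity forces any two distinct $\xi_i,\xi_j$ to be separated by hyperplanes in every ``parallelism class'', so no infinite descending chain of halfspaces can contain two of the three points simultaneously; hence the majority-vote ultrafilter is principal. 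Your proposed workaround for the boundary case is not a construction but a hope, and the analogy with $\Out(F_N)$ is misleading: there the difficulty is real because the relevant space is not locally compact, whereas here the cube complex is locally finite and the median already lands in $X$.

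\textbf{The map $\theta_{\bdd}$.} Your description for non-regular boundary points (``intersecting that sub-cube-complex with the standard subcomplexes through a fixed basepoint'') cannot be $A_\Gamma$-equivariant as stated, since it depends on a basepoint. What is actually needed---and this is where the hypothesis that $\Gamma$ is not a join enters---is a canonical, equivariant assignment of a proper standard subcomplex to each non-regular point. The paper obtains this from a structural result (Huang--Hagen type) about the Roller boundary: a non-regular point lies in the Roller boundary of a canonically associated proper standard subcomplex, and irreducibility of $X$ guarantees properness. You should either cite such a result or give an explicit equivariant recipe.
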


\begin{proof}
Let $\widetilde{S}_\Gamma$ be the universal cover of the Salvetti complex of $A_\Gamma$. Let $K$ be the Roller boundary of $\widetilde{S}_\Gamma$, a compact metrizable space. Let $K_\infty\subseteq K$ be the Borel subset that consists of all \emph{regular} points as defined by Fern\'os in \cite[Definition~3.7]{Fer}, and we let $K_{\bdd}=K\setminus K_\infty$ and $\Delta=K_\infty$. 

By \cite[Theorem~2.1]{HH}, as $\Gamma$ does not split nontrivially as a join, there exists a Borel $A_\Gamma$-equivariant map $K_\bdd\to\mathbb{D}_\Gamma$ that associates a proper standard subcomplex to every nonregular point of the Roller boundary.

The barycenter map, associating a vertex in $\tilde{S}_\Gamma$ to every triple of pairwise distinct points in $K_\infty$, is provided by \cite[Lemma~5.14]{FLM}. 

Finally, as the $A_\Gamma$-action on the Roller boundary of $\tilde{S}_\Gamma$ is Borel amenable \cite[Theorem~5.11]{Duc} whence universally amenable, so is the $A_\Gamma$-action on $K_\infty$ (alternatively, we may use \cite[Theorem~7.2]{NS} and
the fact that regular points of the Roller boundary correspond to non-terminating ultrafilters). 
\end{proof}

We are now in position to complete our proof of Theorem~\ref{theo:raag}.

\begin{proof}[Proof of Theorem~\ref{theo:raag}]
The proof is by induction on the number of vertices of the defining graph $\Gamma$. When $\Gamma$ is reduced to a point, the group $A_\Gamma$ is isomorphic to $\mathbb{Z}$, so the conclusion follows from \cite[Theorem~9.1.1]{Zim}.

If $\Gamma$ is a join of two subgraphs $\Gamma_1$ and $\Gamma_2$, then $A_\Gamma$ decomposes as a direct product $A_{\Gamma_1}\times A_{\Gamma_2}$, and the conclusion follows by induction using the fact that cocycle superrigidity is stable under direct products as a property of the target group. 

We now assume that $\Gamma$ is not a join. As $A_\Gamma$ is geometrically rigid with respect to $\bbD_\Gamma$ (Lemma~\ref{lemma:rigid-raag}), it follows from Theorem~\ref{theo:abstract} that every cocycle $G\times X\to A_\Gamma$ is cohomologous to a cocycle that takes its values in the stabilizer of a proper standard subcomplex of $\tilde{S}_\Gamma$, which is a conjugate of some proper parabolic subgroup $A_\Lambda$. The theorem follows by induction.
\end{proof}

\part{Outer automorphism groups as targets}

\section{Background on outer automorphisms of free products}\label{sec:background}

\subsection{General context} Let $G_1,\dots,G_k$ be countable groups, let $F_N$ be a free group of rank $N$, and let $$G:=G_1\ast\dots\ast G_k\ast F_N.$$ We denote by $\calf=\{[G_1],\dots,[G_k]\}$ the set of conjugacy classes of the subgroups $G_i$ in $G$. We define the \emph{complexity} of $(G,\calf)$ as the pair $\xi(G,\calf)=(k+N,N)$; complexities will be ordered lexicographically. We say that $(G,\calf)$ is \emph{sporadic} if $\xi(G,\calf)\le (2,1)$, and \emph{nonsporadic} otherwise. More concretely, sporadic cases correspond to the following five possibilities:
\begin{itemize}
\item $G=\{1\}$ and $\calf=\emptyset$ (equivalently $\xi(G,\calf)=(0,0)$),
\item $G=G_1$ and $\calf=\{[G_1]\}$ (equivalently $\xi(G,\calf)=(1,0)$),
\item $G=\mathbb{Z}$ and $\calf=\emptyset$ (equivalently $\xi(G,\calf)=(1,1)$),
\item $G=G_1\ast G_2$ and $\calf=\{[G_1],[G_2]\}$ (equivalently $\xi(G,\calf)=(2,0)$),
\item $G=G_1\ast\mathbb{Z}$ and $\calf=\{[G_1]\}$ (equivalently $\xi(G,\calf)=(2,1)$).
\end{itemize}
Subgroups of $G$ that are conjugate into one of the subgroups in $\calf$ are called \emph{$\calf$-peripheral}. We denote by $\Out(G,\calf)$ the subgroup of $\Out(G)$ made of all outer automorphisms that preserve the conjugacy class of each subgroup $G_i$. We denote by $\Out(G,\calf^{(t)})$ the subgroup made of all outer automorphisms which have a representative in $\Aut(G)$ whose restriction to each of the factors $G_i$ is the conjugation by an element $g_i\in G$. 

\subsection{Trees, free splittings, free factors of $(G,\calf)$} A \emph{$(G,\calf)$-tree} is an $\mathbb{R}$-tree $T$ equipped with a minimal $G$-action by isometries, such that every subgroup in $\calf$ fixes a point in $T$ (we recall that the $G$-action on $T$ is said to be \emph{minimal} if $T$ does not contain any proper $G$-invariant subtree). It is \emph{relatively free} if every subgroup of $G$ that fixes a point is $\calf$-peripheral. A minimal $(G,\calf)$-tree is \emph{trivial} if it is reduced to one point. Given a $(G,\calf)$-tree $T$ and an element $g\in G$, the \emph{translation length} of $g$ in $T$ is defined as $||g||_T=\inf_{x\in T}d(x,gx)$. The infimum in this definition is always achieved: one has $||g||_T=0$ if and only if $g$ fixes a point in $T$, and otherwise $g$ has an invariant axis homeomorphic to a line in $T$ on which it acts as a translation of length $||g||_T$.

A \emph{$(G,\calf)$-free splitting} is a simplicial (non-metric) $(G,\calf)$-tree in which all edge stabilizers are trivial. We will always consider $(G,\calf)$-free splittings up to $G$-equivariant homeomorphisms. We denote by $\FS$ the countable set of all (equivariant homeomorphism classes of) $(G,\calf)$-free splittings. 

A \emph{$(G,\calf)$-free factor} is a subgroup of $G$ which is equal to a point stabilizer in some $(G,\calf)$-free splitting. A $(G,\calf)$-free factor is \emph{proper} if it is nontrivial, non-$\calf$-peripheral and not equal to $G$. More generally, a collection of conjugacy classes of proper $(G,\calf)$-free factors is a \emph{$(G,\calf)$-free factor system} if it coincides with the collection of conjugacy classes of nontrivial point stabilizers in some $(G,\calf)$-free splitting. 

 Every proper $(G,\calf)$-free factor $A$ inherits a decomposition as a free product $A=A_1\ast\dots\ast A_l\ast F_r$ from the splitting of $G$: the factors $A_i$ that arise in this decomposition form a set of representatives of the nontrivial point stabilizers in the minimal $A$-invariant subtree of any relatively free $(G,\calf)$-free splitting. We will denote by $\calf_{|A}$ the collection of all the $A$-conjugacy classes of the subgroups $A_i$ from the above decomposition of $A$: this is a free factor system of $A$.

\subsection{Outer space and its closure}

A \emph{Grushko $(G,\calf)$-tree} is a minimal simplicial metric relatively free $(G,\calf)$-tree (in the sequel, when $\calf$ is clear from context, we will simply say `Grushko tree'). The \emph{unprojectivized relative Outer space} $\calo$ is the space of all $G$-equivariant isometry classes of Grushko $(G,\calf)$-trees \cite{CV,GL}, and
we denote by $\mathbb{P}\calo$ the projectivized Outer space, where trees are considered up to homothety instead of isometry. 
The set $\bbP\calo$ has a natural structure of a simplicial complex (with missing faces)
but this complex is in general locally infinite and we will not use
the corresponding weak topology.
Instead, we equip $\calo$ with the equivariant Gromov--Hausdorff topology introduced by Paulin in \cite{Pau}, which coincide with the translation lengths topology \cite{Pau2}.
We equip $\bbP\calo$ with the quotient topology.

The closure $\baro$ of $\calo$ in the space of all minimal nontrivial $(G,\calf)$-trees (equipped with the equivariant Gromov--Hausdorff topology, or equivalently with the translation lengths topology \cite{Pau2}) is equal to the space of all \emph{very small} $(G,\calf)$-trees, i.e.\ those trees $T$ for which stabilizers of nondegenerate tripods are trivial, and stabilizers of nondegenerate arcs are either trivial, or isomorphic to $\mathbb{Z}$, root-closed, and nonperipheral \cite{CL,BeF2,Hor2}. We denote by $\mathbb{P}\baro$ the projectivization of $\baro$, equipped with the quotient topology. This is a compact space, as was proved by Culler and Morgan in \cite{CM} (see also \cite[Theorem~1]{Hor2} in this setting). 
We let $\partial\calo:=\overline{\calo}\setminus\calo$. We warn the reader that $\partial\calo$ is not closed in $\overline{\calo}$ in general, see \cite[Figure 2]{Gui00} for an explicit example of a sequence of trees in $\partial\calo$ converging to a point in $\calo$. Similarly in the projective setting, $\bbP\calo$ is not open in $\bbP\baro$.

\subsection{Morphisms and folding paths}\label{sec_morphisms} 
 
Given two trees $S,T\in\overline{\calo}$, a $G$-equivariant map $f:S\to T$ is \emph{piecewise linear} if each segment of $S$ can be subdivided into finitely many
subsegments in restriction to which $f$ is a (maybe constant) homothety.
It is a \emph{morphism} if each segment of $S$ can be subdivided into finitely many
subsegments in restriction to which $f$ is isometric.
A piecewise linear map $f:S\to T$ is \emph{optimal} if every point $x\in S$ is contained in a segment $[u,v]\subset S$ with $x\notin \{u,v\}$ and such that $f_{|[u,v]}$ is injective.

For any $S\in\calo$ and $T\in \bar\calo$, there exists an optimal
piecewise linear map $f:S\ra T$ \cite[Corollary 6.8]{FrMa}, and by changing lengths of edges of $S$ (allowing some edge lengths to be set to $0$), one can construct $S'$
with  an optimal morphism $f:S'\ra T$.

Given a compact interval $[a,b]\subset\bbR$,
a \emph{folding path} 
$(T_t)_{t\in [a,b]}$ 
is a continuous family of trees in $\calo$ with morphisms $f_{t_1,t_2}:T_{t_1}\ra T_{t_2}$ for every $a\leq t_1\leq t_2\leq b$, 
such that $f_{t_1,t_3}=f_{t_2,t_3}\circ f_{t_1,t_2}$ for every
$a\leq t_1\le t_2\le t_3\leq b$.

A folding path is \emph{optimal} if the morphism $f_{a,b}$ is optimal (this implies that all morphisms $f_{t_1,t_2}$ are optimal). Given any optimal morphism $f:T\to T'$, there exists a folding path $(T_t)_{t\in [a,b]}$ with $f_{a,b}=f$ (in particular $T_a=T$ and $T_b=T'$), see e.g.\ \cite[Section~3.1]{GL}.

\subsection{Arational trees} 

The notion of an arational tree was first introduced by Reynolds in the context of free groups \cite{Rey}. In the context of free products, a tree $T\in\overline{\calo}$ is \emph{arational} if $T\in\partial\calo$, no proper $(G,\calf)$-free factor $A$ fixes a point in $T$, and for every proper $(G,\calf)$-free factor $A$, the $A$-minimal invariant subtree $T_A\subseteq T$ is a Grushko $(A,\calf_{|A})$-tree. We denote by $\AT$ the subspace of $\overline{\calo}$ made of all arational $(G,\calf)$-trees, and by $\mathbb{P}\AT$ its projectivized version (a subspace of $\mathbb{P}\overline{\calo}$). These are Borel subsets of $\baro$ and $\mathbb{P}\baro$, respectively \cite[Lemma~5.5]{Hor}.

Two arational trees $T$ and $T'$ are \emph{equivalent} (denoted by $T\sim T'$) if they are $G$-equivariantly homeomorphic when equipped with the \emph{observers' topology} introduced in \cite{CHL3}: this is the topology on a tree $T$ which is generated (as a subbasis of open sets) by the connected components of complements of points in $T$.  Equivalently $T$ and $T'$ are equivalent if there exist $G$-equivariant alignment-preserving bijections from $T$ to $T'$ and from $T'$ to $T$ (here a map is \emph{alignment-preserving} if it sends every segment to a segment). A third equivalent characterization is that $T$ and $T'$ are equivalent if they are compatible, i.e.\ there exists a $G$-tree $T''$ coming with alignment-preserving $G$-equivariant maps to $T$ and $T'$ (notice that in this case $T''$ can always be chosen in $\overline{\calo}$ because $T$ and $T'$ have trivial arc stabilizers). See \cite[Corollary~13.4]{GH1} for the equivalence of these definitions.

\subsection{The free factor graph and its boundary}

We adopt the definition of the free factor graph from \cite{GH}, see \cite[Section~2.2]{GH} for a discussion of its quasi-isometry with other models of the graph found in the literature. We recall that two free splittings $S,S'$ of $(G,\calf)$ are \emph{compatible} if there exists a free splitting $S''$ of $(G,\calf)$ such that both $S$ and $S'$ are obtained from $S''$ by collapsing every edge from a $G$-invariant subset of edges to a point. The \emph{free factor graph} $\FF$ is the graph whose vertices are the $G$-equivariant homeomorphism classes of free splittings of $(G,\calf)$, where two vertices are joined by an edge if the corresponding splittings are either compatible or have a common nonperipheral elliptic element.

With this definition, there is a natural $\Out(G,\calf)$-equivariant map $\pi:\calo\to\FF$, which simply consists in forgetting the metric on the trees.

The free factor graph is hyperbolic whenever $(G,\calf)$ is nonsporadic: this was first proved by Bestvina--Feighn \cite{BeF} for free groups (i.e.\ when $G=F_N$ and $\calf=\emptyset$), and extended by Handel--Mosher in \cite{HM} to the context of free products, see also \cite[Section~2.2]{GH}. The Gromov boundary of $\FF$ was described by Bestvina--Reynolds \cite{BR} and Hamenstädt \cite{Ham} in the context of free groups, with an extension to free products in \cite{GH}, as the space of equivalence classes of arational $(G,\calf)$-trees; in fact these descriptions provide a continuous extension of the map $\pi$ to the set of arational $(G,\calf)$-trees in $\partial\calo$.  

\subsection{Algebraic laminations}

The notion of algebraic laminations for free groups was introduced and extensively studied by Coulbois, Hilion and Lustig in \cite{CHL,CHL2}; it was generalized to the context of free products in \cite{GH1}.

Given a Grushko tree $R$, we denote by $\partial_\infty R$ the Gromov boundary of $R$ and by $V_\infty(R)$ the set of vertices of infinite valence in $R$. We then let $\partial R:=\partial_\infty R\cup V_\infty(R)$. The space $R\cup\partial R$ is compact when equipped with the observers' topology \cite[Proposition~1.13]{CHL3}. The boundary $\partial R$ is closed in $R\cup\partial R$ (see the discussion in the opening paragraph of \cite[Section~2]{GH1}). Up to natural identifications (see \cite[Section 1]{GH1}), the spaces $\partial_\infty R, V_\infty(R)$ and $\partial R$ do not depend on the choice of a Grushko tree $R$; we thus denote them by $\partial_\infty(G,\calf)$, $V_\infty(G,\calf)$ and $\partial(G,\calf)$. We let $\partial^2(G,\calf):=\partial (G,\calf)\times\partial (G,\calf)\setminus\Delta$, where $\Delta$ denotes the diagonal. Given any nonperipheral element $g\in G$, we denote by $(g^{-\infty},g^{+\infty})_R$ the oriented axis of $g$ in $R$, which determines a pair $(g^{-\infty},g^{+\infty})\in\partial^2(G,\calf)$. An element $(\alpha,\omega)\in\partial^2(G,\calf)$ is \emph{simple} if it is the limit of a sequence $(g_n^{-\infty},g_n^{+\infty})$, where for every $n\in\mathbb{N}$, the element $g_n$ is simple, i.e.\ 
contained in 
a proper free factor of $(G,\calf)$.

We let $i:\partial^2(G,\calf)\to\partial^2(G,\calf)$ be the flip map, defined by $i(x,y)=(y,x)$. An \emph{algebraic lamination} is a closed $G$-invariant $i$-invariant subset of $\partial^2 (G,\calf)$. 

Given a tree $T\in\overline{\calo}$ and $\epsilon>0$, we let $\Lambda^2_\epsilon(T)$ be the closure in $\partial^2(G,\calf)$ of the set $\{(g^{-\infty},g^{+\infty})\mid \Vert g\Vert_T<\epsilon\}.$ The \emph{dual lamination} of $T$ is then defined as $$\Lambda^2(T):=\bigcap_{\epsilon>0} \Lambda^2_\epsilon(T).$$ The \emph{one-sided dual lamination} $\Lambda^1(T)$ is defined to be the set of all $\xi\in\partial_\infty(G,\calf)$ such that for every Grushko tree $R$, every $G$-equivariant Lipschitz map $f:R\to T$, and every basepoint $x_0\in R$, the image $f([x_0,\xi)_R)$ has bounded diameter in $T$ (this property does not actually depend on the choices of $R$, $f$ and $x_0$).

The following statement gives a unique duality property for arational trees. An analogous statement for free groups, which relies on work of Coulbois, Hilion and Reynolds \cite{CHR}, was a crucial ingredient in the description by Bestvina and Reynolds of the boundary of the free factor graph \cite{BR}. The version below for free products was also a key ingredient when extending their description to this setting \cite{GH}. For us, it will be essential in the construction of the barycenter map in Section~\ref{sec-barycenter}.

\begin{theo}[{\cite[Theorem~13.1]{GH1}}]\label{gh}
Let $T,T'\in\overline{\calo}$, with $T$ arational.
\\ Assume that there exists a simple pair $(\alpha,\omega)\in\partial^2(G,\calf)$ which belongs to $\Lambda^2(T)\cap \Lambda^2(T')$. 
\\ Then $T'$ is arational and $T'\sim T$. 
\end{theo}

\section{Amenability of the action on the boundary of the free factor graph}\label{sec:amenability}

In the present section, we will show that, under a condition on centralizers of $\calf$-peripheral elements, the $\Out(G,\calf^{(t)})$-action on the set of equivalence classes of arational $(G,\calf)$-trees (equivalently, on the boundary of the associated free factor graph) is Borel amenable (Proposition~\ref{prop:action-amenable} below). This will be derived as a consequence of the following theorem, established by Bestvina and the first two named authors in an earlier work -- the only remaining task for us being to pass from arational trees to equivalence classes of arational trees.

\begin{theo}[{\cite[Theorem~6.4]{BGH}}]\label{bgh}
Let $G_1,\dots,G_k$ be countable groups, let $F_N$ be a finitely generated free group, and let $$G:=G_1\ast\dots\ast G_k\ast F_N.$$ Let $\calf$ be the collection of all $G$-conjugacy classes of the groups $G_i$. Assume that for each $i\in\{1,\dots,k\}$, the centralizer in $G_i$ of every element of $G_i$ is amenable.

Then the $\Out(G,\calf^{(t)})$-action on $\mathbb{P}\AT$ is Borel amenable.   
\end{theo}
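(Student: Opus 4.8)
The plan is to argue by induction on the complexity $\xi(G,\calf)$. When $(G,\calf)$ is sporadic there are no arational $(G,\calf)$-trees, so $\PAT=\emptyset$ and the statement is vacuous; so I may assume $(G,\calf)$ nonsporadic. The heart of the inductive step will be to reduce Borel amenability of the $\Out(G,\calf^{(t)})$-action on $\PAT$ to Borel amenability of an action on a space of algebraic laminations, and then to tie the latter to the $G$-action on the boundary of a Grushko tree together with the peripheral factors $G_i$. Two inputs feed the induction: first, the structure of stabilizers of proper $(G,\calf)$-free factors in $\Out(G,\calf^{(t)})$ (they are, up to finite index, built via the restriction and extension maps of Section~\ref{sec:background} out of the groups $\Out(A,(\calf_{|A})^{(t)})$ for free factors $A$ of strictly smaller complexity, together with permutations of the factors), which act Borel amenably on the corresponding spaces of arational trees by induction; and second, the hypothesis that centralizers in each $G_i$ of elements of $G_i$ are amenable, which supplies the "bottom level": a leaf of an algebraic lamination ending at a vertex of infinite valence of a Grushko tree (i.e.\ at an $\calf$-peripheral point) is stabilized by such a centralizer, so the subgroups of $\Out(G,\calf^{(t)})$ that arise as fibre stabilizers in the reduction below are all amenable.

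\textbf{Reduction to a space of laminations.} I would pass from arational trees to their dual laminations. By Theorem~\ref{gh} (unique duality) and the duality results it rests on, an arational $(G,\calf)$-tree $T$ is recovered, up to the equivalence $\sim$, from $\Lambda^2(T)$, and $\Lambda^2(T)$ is minimal and is not carried by any proper $(G,\calf)$-free factor. This gives an $\Out(G,\calf^{(t)})$-equivariant Borel map
$$\Phi\colon\PAT\longrightarrow\mathcal{L},\qquad T\mapsto\Lambda^2(T),$$
where $\mathcal{L}\subseteq\calp(\partial^2(G,\calf))$ is the standard Borel space of minimal algebraic laminations not carried by a proper free factor. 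The fibres of $\Phi$ are the projectivized $\sim$-classes of arational trees with a fixed dual lamination; on such a fibre the stabilizer of the lamination acts through a finite-dimensional piecewise-linear action (permuting faces of a "simplex of transverse measures" and acting linearly on each face), hence amenably. By the permanence of amenability of measured groupoids under extensions, amenability of $\Out(G,\calf^{(t)})\ltimes\PAT$ then follows from amenability of $\Out(G,\calf^{(t)})\ltimes\mathcal{L}$ together with this fibrewise amenability; so it suffices to prove that the $\Out(G,\calf^{(t)})$-action on $\mathcal{L}$ is Borel amenable.

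\textbf{Amenability on the space of laminations.} Fixing a Grushko tree $R$, we have $\partial(G,\calf)=\partial_\infty R\sqcup V_\infty(R)$, and a lamination in $\mathcal{L}$ lives in $\partial(G,\calf)^{2}$. The vertices of infinite valence of $R$ are exactly the $\calf$-peripheral points; collapsing their contribution turns $R$ into a locally finite simplicial $G$-tree $R^{\flat}$ with $\calf$-peripheral vertex stabilizers, and $\partial_\infty R^{\flat}$ carries a Borel amenable $\Isom(R^{\flat})$-action by the classical horospherical argument for locally finite trees. To make $\Out(G,\calf^{(t)})$ — not merely $G$ — act, one works over all Grushko trees simultaneously, for instance on the space of pairs (Grushko tree, lamination), or one uses that a lamination $L\in\mathcal{L}$ determines a sequence of free splittings of $(G,\calf)$ converging (via the continuous extension of $\pi\colon\calo\to\FF$ to arational trees) to $\Phi^{-1}(L)$ in $\FF$. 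Averaging uniformly over the free splittings met along such a sequence up to bounded "time" $n$ produces Borel maps $\nu_n\colon\mathcal{L}\to\Prob(\Out(G,\calf^{(t)}))$; hyperbolicity of $\FF$ and a bounded geodesic image property force the sequences attached to $L$ and to $\phi\cdot L$ to fellow-travel after bounded delay, which yields $\|\nu_n^{\phi L}-\phi_\ast\nu_n^{L}\|_1\to 0$, i.e.\ asymptotic equivariance. The configurations in which these sequences escape every proper free factor yet the averaging localizes near a peripheral vertex are absorbed by the induction hypothesis together with amenability of centralizers in the $G_i$.

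\textbf{Main obstacle.} The crux — and the step I expect to require the real work — is the construction in the last paragraph: because $\FF$ and relative Outer space fail to be locally finite, the "converging sequence of free splittings" assigned to a lamination must be produced Borel-measurably and so that, at each finite stage, it ranges over a \emph{finite} (or at least locally finite) set of splittings, for otherwise the averages $\nu_n$ are not even defined; pinning down this finite set and verifying the fellow-travelling estimate that makes $(\nu_n)$ asymptotically equivariant is exactly the difficulty that the barycenter map of Section~\ref{sec-barycenter} is later designed to bypass for the rigidity application. By contrast, the bookkeeping of the equivalence $\sim$, of the projective rescaling on $\PAT$, and of the measurable selections involved is routine.
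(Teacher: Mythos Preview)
The paper does not prove this statement: Theorem~\ref{bgh} is quoted verbatim from \cite[Theorem~6.4]{BGH} and used as a black box. The only work done in the present paper around this theorem is the passage from $\PAT$ to $\PAT/{\sim}$ (Proposition~\ref{prop:action-amenable}), which is carried out by averaging over the finitely many ergometric representatives of each $\sim$-class. So there is no ``paper's own proof'' to compare your attempt against.

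That said, a word on your sketch. The reduction you propose---from $\PAT$ to a space of laminations via $T\mapsto\Lambda^2(T)$, with fibres the simplices of equivalent arational trees on which the stabilizer acts amenably---is in the right spirit and is indeed close to how \cite{BGH} is organized. But your ``amenability on the space of laminations'' paragraph is where the argument becomes hand-wavy: the horospherical argument for locally finite trees does not transfer directly, since $\Out(G,\calf^{(t)})$ does not act on a single Grushko tree, and the averaging-over-free-splittings idea you describe runs squarely into the non-local-finiteness problem you yourself flag. In \cite{BGH} this is handled not by averaging over free splittings in $\FF$, but by working with a certain space of \emph{decorated} arational trees (roughly, arational trees equipped with extra combinatorial data coming from a train-track structure) on which one can run an inductive scheme using the peripheral structure; the hypothesis on centralizers in the $G_i$ enters precisely to make the stabilizers arising at the bottom of that induction amenable. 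Your identification of the main obstacle is accurate, but the resolution you gesture at (via the barycenter map of Section~\ref{sec-barycenter}) is not how \cite{BGH} proceeds---the barycenter map in this paper serves a different purpose (geometric rigidity, not amenability).
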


We now aim to deduce the following consequence regarding the amenability of the action on the quotient $\mathbb{P}\AT/{\sim}$.

\begin{prop}\label{prop:action-amenable}
Let $G_1,\dots,G_k$ be countable groups, let $F_N$ be a finitely generated free group, and let $$G:=G_1\ast\dots\ast G_k\ast F_N.$$ Let $\calf$ be the collection of all $G$-conjugacy classes of the groups $G_i$. Assume that for each $i\in\{1,\dots,k\}$, the centralizer in $G_i$ of every element of $G_i$ is amenable.

Then the $\Out(G,\calf^{(t)})$-action on $\mathbb{P}\AT/{\sim}$ is Borel amenable.
\end{prop}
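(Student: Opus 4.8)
The plan is to transport the amenability witnesses from $\mathbb{P}\AT$ to its quotient, the only genuinely nontrivial point being that these witnesses must be made constant on $\sim$-classes. First I would recall that the description of the boundary of the free factor graph in \cite{GH} provides an $\Out(G,\calf^{(t)})$-equivariant Borel surjection $q:\mathbb{P}\AT\to\partial_\infty\FF$ whose fibres are exactly the $\sim$-classes, so that $\mathbb{P}\AT/{\sim}$ is a standard Borel space and is $\Out(G,\calf^{(t)})$-equivariantly identified with $\partial_\infty\FF$; alternatively, Theorem~\ref{gh} shows that $T\mapsto\Lambda^2(T)$ is constant on $\sim$-classes and separates them, which again exhibits $\mathbb{P}\AT/{\sim}$ as a standard Borel space. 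Since $q$ is Borel, $\sim$ is a Borel subset of $\mathbb{P}\AT\times\mathbb{P}\AT$, and for any $\sim$-saturated Borel set $A\subseteq\mathbb{P}\AT$ both $q(A)$ and $\partial_\infty\FF\setminus q(A)=q(\mathbb{P}\AT\setminus A)$ are analytic, hence $q(A)$ is Borel; consequently every $\sim$-invariant Borel map out of $\mathbb{P}\AT$ factors through a Borel map on $\mathbb{P}\AT/{\sim}$.

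By Theorem~\ref{bgh} (whose hypothesis on centralizers of $\calf$-peripheral elements is exactly the one we assume), the $\Out(G,\calf^{(t)})$-action on $\mathbb{P}\AT$ is Borel amenable, so there is an asymptotically $\Out(G,\calf^{(t)})$-equivariant sequence of Borel maps $\nu_n:\mathbb{P}\AT\to\Prob(\Out(G,\calf^{(t)}))$. The heart of the matter is to arrange that each $\nu_n$ is $\sim$-invariant. I would do this by observing that the averaging sequence produced in \cite{BGH} depends on an arational tree only through its position relative to the (hyperbolic) free factor graph $\FF$ --- equivalently through its image under $q$ --- so that running the construction of \cite{BGH} with $\mathbb{P}\AT$ replaced throughout by $\mathbb{P}\AT/{\sim}$ directly yields $\sim$-invariant maps $\nu_n$. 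If one prefers to treat Theorem~\ref{bgh} as a black box, one can instead precompose the $\nu_n$ with a Borel section $s:\partial_\infty\FF\to\mathbb{P}\AT$ of $q$ (available by measurable selection: by \cite{GH} the metric-forgetting map $\pi$ extends continuously over $\mathbb{P}\AT$, so the fibres of $q$ are closed in $\mathbb{P}\AT$) --- but one must then still check that $\sim$-equivalent arational trees receive the same measure, which is once more precisely the $\sim$-invariance statement. Either way, let $\bar\nu_n:\mathbb{P}\AT/{\sim}\to\Prob(\Out(G,\calf^{(t)}))$ be the Borel map with $\bar\nu_n\circ q=\nu_n$.

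It then remains to check asymptotic equivariance of $(\bar\nu_n)_{n\in\bbN}$. Given $\xi\in\mathbb{P}\AT/{\sim}$ and $\lambda\in\Out(G,\calf^{(t)})$, I would pick any $T\in q\m(\xi)$; equivariance of $q$ gives $q(\lambda T)=\lambda\xi$, hence
\[
\|\bar\nu_n^{\lambda\xi}-\lambda_*\bar\nu_n^{\xi}\|_1=\|\nu_n^{\lambda T}-\lambda_*\nu_n^{T}\|_1\to 0\quad\text{as }n\to\infty,
\]
by Borel amenability of the action on $\mathbb{P}\AT$. Thus the $\Out(G,\calf^{(t)})$-action on $\mathbb{P}\AT/{\sim}$ is Borel amenable.

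The main obstacle is exactly the $\sim$-invariance of the averaging sequence used in the second step: there is no formal reason for an arbitrary amenability witness on $\mathbb{P}\AT$ to be constant on $\sim$-classes, and $\sim$-classes need not be finite (arational trees are not always uniquely ergodic), so one genuinely has to either inspect the proof of \cite{BGH} to see that it already factors through $\FF$, or exhibit an $\Out(G,\calf^{(t)})$-equivariant Borel assignment of a probability measure to each $\sim$-class along which to average; everything else is bookkeeping with standard Borel spaces. (If one only wants universal amenability of the action on $\mathbb{P}\AT/{\sim}$ --- which is all that the geometric rigidity criterion of Definition~\ref{de-ast} requires --- there is a softer route sidestepping this point: by Proposition~\ref{prop:borel-universally} the action on $\mathbb{P}\AT$ is universally amenable, any quasi-invariant probability measure on $\mathbb{P}\AT/{\sim}$ lifts through a measurable section of $q$, after averaging over the countable group $\Out(G,\calf^{(t)})$, to a quasi-invariant measure on $\mathbb{P}\AT$ projecting to the same measure class, and Zimmer amenability passes to equivariant quotients.)
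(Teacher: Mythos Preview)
You correctly identify the architecture of the argument and the genuine obstacle: making the witnesses $\nu_n$ constant on $\sim$-classes. But neither of your two proposed resolutions works as stated. The claim that the construction in \cite{BGH} already factors through $q$ is unsubstantiated --- those averaging maps are built from the geometry of the specific arational tree, not merely from its image in $\partial_\infty\FF$, and the paper treats Theorem~\ref{bgh} as a black box. Your section approach fails for a different reason than the one you name: the composite $\nu_n\circ s\circ q$ is automatically $\sim$-invariant, but since a Borel section $s$ is not equivariant, asymptotic equivariance of $(\nu_n)$ does not pass to $(\nu_n\circ s)$; one must compare $\nu_n(s(\lambda\xi))$ with $\lambda_*\nu_n(s(\xi))$, and $s(\lambda\xi)$ and $\lambda s(\xi)$ are two generally distinct trees in the same $\sim$-class. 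Your last paragraph correctly names what is actually needed --- an equivariant Borel assignment of a probability measure to each $\sim$-class along which to average --- but you do not supply one, and you explicitly note that $\sim$-classes are infinite.

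The paper supplies exactly this missing ingredient via the simplex structure of $\sim$-classes. Each $\sim$-class in $\PAT$ is a finite-dimensional simplex (\cite[Corollary~5.4]{Gui00}, \cite[Proposition~13.5]{GH1}), and its finitely many extremal points are the \emph{ergometric} trees. The paper first characterizes ergometricity in terms of nonexistence of Lipschitz maps to non-homothetic trees (Lemma~\ref{lemma:ergometric}), uses this to show that $\PAT^{\erg}$ is Borel (Lemma~\ref{lemma:erg-borel}), and then applies a measurable selection theorem to produce Borel maps $f_n:\PAT\to\PAT^{\erg}$ whose images, as $n$ ranges over $\mathbb{N}$, exhaust the ergometric representatives of each class (Lemma~\ref{lemma:borel-selection-ue}). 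One then replaces $\nu_n(T)$ by the uniform average of $\nu_n$ over the finite set of ergometric trees equivalent to $T$. This average is Borel, $\sim$-invariant by construction (the ergometric set depends only on the class), and asymptotically equivariant because the ergometric set is permuted by the group action, so the discrepancy is bounded by a finite average of the original discrepancies $\|\nu_n(\lambda T')-\lambda_*\nu_n(T')\|_1$.

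Your parenthetical softer route to universal amenability via lifting measures through a section is plausible and would suffice for the application in Definition~\ref{de-ast}, but it does not prove the Borel amenability asserted in the proposition.
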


The set of all projective classes of arational trees that are equivalent to a given arational tree $T$ is a finite-dimensional simplex, see \cite[Corollary~5.4]{Gui00} or \cite[Proposition~13.5]{GH1}. Trees in $\mathcal{AT}$ that project in $\PAT$ to an extremal point of the simplex of an arational tree are called \emph{ergometric}. 

\begin{lemma}\label{lemma:ergometric}
Let $T\in\AT$. Then $T$ is ergometric if and only if for every tree $T'\in\baro$ that is not homothetic to $T$, there is no Lipschitz $G$-equivariant map from $T$ to $T'$.
\end{lemma}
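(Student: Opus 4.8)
The plan is to use the structure of the finite-dimensional simplex $\Sigma_T$ of projective classes equivalent to a given arational tree $T$, together with the behaviour of length functions under Lipschitz maps. I will prove both implications by contraposition, and the key input is the unique duality statement (Theorem~\ref{gh}): if $T$ is arational and there is a Lipschitz $G$-equivariant map $f:T\to T'$, then $\Lambda^2(T)\subseteq\Lambda^2(T')$ (a Lipschitz map sends short loops to short loops, so $\Lambda^2_\epsilon(T)\subseteq\Lambda^2_{L\epsilon}(T')$ where $L=\Lip(f)$, hence $\Lambda^2(T)\subseteq\Lambda^2(T')$), and since $T$ contains simple pairs in its dual lamination, Theorem~\ref{gh} forces $T'$ to be arational and $T'\sim T$. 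So the existence of a Lipschitz map $T\to T'$ with $T'\not\sim T$ is already impossible; the only way $T'$ can fail to be homothetic to $T$ while receiving a Lipschitz map from $T$ is if $T'$ lies in the simplex $\Sigma_T$ but is a \emph{different} tree in that simplex. Thus the statement reduces to: $T$ is ergometric (an extremal point of $\Sigma_T$) iff $T$ admits no Lipschitz $G$-equivariant map onto any other tree of $\Sigma_T$.

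\medskip

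First I would set up the linear-algebra picture of $\Sigma_T$. By \cite[Corollary~5.4]{Gui00} / \cite[Proposition~13.5]{GH1}, all trees equivalent to $T$ share the same underlying topological tree (with the observers' topology) and the same combinatorial/measured structure, and they differ only in the choice of invariant Lengths on the finitely many ``exotic'' (or edge) components; concretely the space of such trees is an open cone over a finite-dimensional simplex, and a tree is ergometric exactly when its length measure is an extremal (ergodic) one, i.e.\ cannot be written as a nontrivial convex combination. I would record the elementary fact that, within $\Sigma_T$, there is a Lipschitz $G$-equivariant map $T\to T'$ precisely when the length function $\|\cdot\|_{T'}$ is dominated by a multiple of $\|\cdot\|_T$ on all of $G$ — this is the standard equivalence between existence of Lipschitz maps and domination of translation length functions (a $1$-Lipschitz map multiplies lengths of axes by at most $1$; conversely one builds the map simplex-by-simplex). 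Up to rescaling, a Lipschitz map $T\to T'$ with both in $\Sigma_T$ therefore exists iff $\|\cdot\|_{T'}\le\|\cdot\|_{T}$.

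\medskip

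Now both directions follow. If $T$ is \emph{not} ergometric, write its projective class as $[\mu_T]=t[\mu_1]+(1-t)[\mu_2]$ with $0<t<1$ and $[\mu_1]\ne[\mu_2]$ two other points of $\Sigma_T$; choosing representatives so that $\mu_T=t\mu_1+(1-t)\mu_2$ as length functions, one gets $t\mu_1\le\mu_T$, i.e.\ (after rescaling) a tree $T_1\in\Sigma_T$ with $\|\cdot\|_{T_1}\le\|\cdot\|_{T}$ and $T_1$ not homothetic to $T$; hence a Lipschitz $G$-equivariant map $T\to T_1$, and $T_1\in\baro$ is not homothetic to $T$, so $T$ fails the right-hand condition. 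Conversely, suppose there is a Lipschitz $G$-equivariant map $f:T\to T'$ with $T'\in\baro$ not homothetic to $T$. By the duality argument in the first paragraph, $T'\sim T$, so $T'\in\Sigma_T$, and after rescaling $\|\cdot\|_{T'}\le\|\cdot\|_{T}$. Because $T'$ and $T$ have the same underlying tree structure and $T\neq T'$ as length measures, the measure $\mu_T-\mu_{T'}$ is a nonzero nonnegative invariant length measure on that tree; normalising, $\mu_T=s\,\mu_{T'}+(1-s)\,\mu''$ with $s\in(0,1)$ and $\mu''$ another length measure in the cone, and $[\mu_{T'}]\neq[\mu_T]$, exhibiting $[\mu_T]$ as a nontrivial convex combination of two points of $\Sigma_T$. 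Hence $T$ is not ergometric.

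\medskip

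I expect the main obstacle to be the careful bookkeeping in the middle step: making precise the claim that trees in $\Sigma_T$ are parametrised by a convex cone of invariant length measures on a fixed underlying tree, and that ``Lipschitz $G$-equivariant map'' between two such trees is exactly ``domination of length functions''. Both facts are essentially in \cite{Gui00,GH1} (and the Lipschitz$\leftrightarrow$domination equivalence is classical, going back to the $\bbR$-tree literature), but one must check that the convex-combination decomposition of the length measure can be realised by genuine trees in $\baro$ — this uses that $\baro$ is the space of \emph{very small} trees and that small perturbations of the lengths of the relevant components stay very small, which holds since $T\in\AT\subseteq\partial\calo$. Once that dictionary is in place, the extremality argument is purely formal.
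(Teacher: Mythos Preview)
Your proof is essentially correct and follows the same two-step architecture as the paper: first reduce to the case $T'\sim T$, then argue inside the finite-dimensional simplex of equivalent trees. The differences are in how each step is executed.

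For the reduction step, the paper is more direct: since $T\in\AT$ has dense orbits, any Lipschitz $G$-equivariant map $T\to T'$ is automatically alignment-preserving \cite[Lemma~2.2]{BGH}, and then \cite[Corollary~13.4]{GH1} immediately yields $T'\sim T$. Your route via $\Lambda^2(T)\subseteq\Lambda^2(T')$ and Theorem~\ref{gh} also works, but it relies on the assertion that $\Lambda^2(T)$ contains a simple pair, which you state without justification. This is true (combine \cite[Lemma~4.1, Proposition~4.19, Lemma~5.6]{GH1} once you know $\Lambda^1(T)\neq\emptyset$ for trees with dense orbits), but it is not free, and the alignment-preserving route sidesteps it entirely.

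For the simplex step, the paper simply cites \cite[Section~5.1]{Gui00}, whereas you spell out the convex-geometry argument. Your version is fine, with one caveat: in the converse direction you pass from domination of translation lengths $\|\cdot\|_{T'}\le\|\cdot\|_T$ to domination of length measures $\mu_{T'}\le\mu_T$. This inference is correct within the simplex but is not a tautology; it uses the precise parametrisation of equivalent trees by invariant length measures on a common underlying tree, which is exactly what \cite[Section~5.1]{Gui00} establishes. You correctly flag this bookkeeping as the main obstacle, and indeed it is the substance behind the paper's citation.
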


\begin{proof}
 Since $T\in\AT$, it has dense orbits, so every Lipschitz $G$-equivariant map $T\to T'$ is alignment-preserving, see e.g.\ \cite[Lemma~2.2]{BGH}. 
By \cite[Section~5.1]{Gui00} and the paragraph before Lemma 5.1 therein, 
trees $T'$ with a Lipschitz equivariant alignment preserving map $T\ra T'$ are in one-to-one correspondence with length measures on $T$ with bounded density with respect to Lebesgue measure.
All such measures are homothetic to the Lebesgue measure if and only $T$ is ergometric
by the second definition of \cite[p. 459]{Gui00}.
\end{proof}

We denote by $\mathbb{P}\AT^{\erg}$ the subset of $\mathbb{P}\AT$ made of all projective classes of ergometric arational trees. 

\begin{lemma}\label{lemma:erg-borel}
The set $\PAT^{\erg}$ is a Borel subset of $\PAT$.
\end{lemma}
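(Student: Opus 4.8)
The plan is to deduce this from Lemma~\ref{lemma:ergometric} by an Arzelà--Ascoli argument together with the compactness of $\mathbb{P}\baro$. First fix a continuous section $s\colon\mathbb{P}\baro\to\baro$ of the projectivization map; such a section exists by the standard construction of the compactification, for instance by normalizing translation lengths along a suitable finite subset $S\subseteq G$, i.e.\ dividing by the continuous degree-one homogeneous function $T\mapsto\max_{g\in S}\Vert g\Vert_T$. Then consider
$$
D:=\bigl\{([T],[T'])\in\mathbb{P}\baro\times\mathbb{P}\baro \bigm| \text{there is a Lipschitz }G\text{-equivariant map }s([T])\to s([T'])\bigr\}.
$$
Since pre- or post-composing a Lipschitz $G$-equivariant map with a homothety only rescales its Lipschitz constant, the existence of a Lipschitz $G$-equivariant map $T\to T'$ depends only on the homothety classes of $T$ and $T'$. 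Hence Lemma~\ref{lemma:ergometric} translates into the statement that a tree $T\in\AT$ is ergometric if and only if $([T],[T'])\notin D$ for every $[T']\in\mathbb{P}\baro$ with $[T']\neq[T]$. Writing $\pi_1\colon\mathbb{P}\baro\times\mathbb{P}\baro\to\mathbb{P}\baro$ for the first projection and $\Delta$ for the diagonal, this says exactly that
$$
\PAT^{\erg}=\PAT\setminus\pi_1\bigl(D\setminus\Delta\bigr).
$$
As $\PAT$ is a Borel subset of $\mathbb{P}\baro$, it therefore suffices to prove that $\pi_1(D\setminus\Delta)$ is Borel.

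For each integer $n\geq 1$, let $D_n\subseteq\mathbb{P}\baro\times\mathbb{P}\baro$ be the set of pairs $([T],[T'])$ admitting a $G$-equivariant map $s([T])\to s([T'])$ with Lipschitz constant at most $n$. The key point is that $D_n$ is closed: if $([T_k],[T_k'])\to([T],[T'])$ and $f_k\colon s([T_k])\to s([T_k'])$ is a $G$-equivariant $n$-Lipschitz map for each $k$, then, $s$ being continuous, $s([T_k])\to s([T])$ and $s([T_k'])\to s([T'])$ in $\baro$, and a standard Arzelà--Ascoli argument for the equivariant Gromov--Hausdorff topology on very small $(G,\calf)$-trees produces a $G$-equivariant $n$-Lipschitz limit map $s([T])\to s([T'])$, so that $([T],[T'])\in D_n$. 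Clearly $D=\bigcup_{n\geq 1}D_n$, hence $D\setminus\Delta=\bigcup_{n\geq 1}(D_n\cap\Delta^{c})$.

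Finally, since $\mathbb{P}\baro$ is compact metrizable, so is $\mathbb{P}\baro\times\mathbb{P}\baro$, so each $D_n$ is compact, and each $D_n\cap\Delta^{c}$, being a relatively open subset of the compact metrizable space $D_n$, is $\sigma$-compact. Its image $\pi_1(D_n\cap\Delta^{c})$ is therefore $\sigma$-compact, being a continuous image of a $\sigma$-compact space, hence an $F_\sigma$ and in particular a Borel subset of $\mathbb{P}\baro$. Consequently $\pi_1(D\setminus\Delta)=\bigcup_{n\geq 1}\pi_1(D_n\cap\Delta^{c})$ is Borel, which completes the argument. The only step that is not formal point-set topology is the closedness of the sets $D_n$, i.e.\ the Arzelà--Ascoli compactness statement for $G$-equivariant Lipschitz maps between trees in $\baro$; everything else relies just on the compactness of $\mathbb{P}\baro$ and on Lemma~\ref{lemma:ergometric}.
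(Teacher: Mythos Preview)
Your proof is correct and follows essentially the same route as the paper: fix a continuous section $\mathbb{P}\baro\to\baro$, use Lemma~\ref{lemma:ergometric} to identify $\PAT^{\erg}$ as the complement (inside $\PAT$) of a first projection of off-diagonal Lipschitz pairs, decompose by bounding the Lipschitz constant, and use closedness of the $n$-Lipschitz locus plus compactness of $\mathbb{P}\baro$ to conclude. The only cosmetic differences are that the paper exhausts the off-diagonal by compacta $K_n$ before bounding the Lipschitz constant (whereas you bound the Lipschitz constant first and then observe each $D_n\cap\Delta^c$ is $\sigma$-compact), and that where you invoke an Arzel\`a--Ascoli argument the paper cites \cite[Proposition~5.5]{Gui00} for the closedness of the $n$-Lipschitz locus.
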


\begin{proof}
We first choose a (non-equivariant) continuous section $\mathbb{P}\baro\to\baro$, constructed as follows: using Serre's lemma, choose a finite subset $\{a_1,\dots,a_n\}\subset G$ such that for any $T\in \baro$, at least one of the elements $a_i$ is hyperbolic in $T$; then send a projective class $[T]$ to the unique representative such that $||a_1||_T+\dots+||a_n||_T=1$. This allows us to identify $\mathbb{P}\baro$ with a closed subset of $\baro$. We let $\mathbb{P}\baro^{(2)}:=(\mathbb{P}\baro\times\mathbb{P}\baro)\setminus\Delta$, where $\Delta$ denotes the diagonal. 

 Since $\mathbb{P}\baro$ is a compact metrizable space,
there exists an increasing sequence of compact subsets $K_n$ of $\mathbb{P}\baro^{(2)}$ that exhausts $\mathbb{P}\baro^{(2)}$.
 
For every $n\in\mathbb{N}$, let $\Lip_n$ be the set of all pairs $(T,T')\in K_n$ such that there exists an $n$-Lipschitz $G$-equivariant map from $T$ to $T'$ (this makes sense using the above identification of $\mathbb{P}\baro$ as a subset of $\baro$). The set $\Lip_n$ is a closed subset of $K_n$ (see e.g.\ \cite[Proposition~5.5]{Gui00}), and therefore it is compact. We denote by $p_1(\Lip_n)$ the first projection of $\Lip_n$, which is a closed (whence Borel) subset of $\mathbb{P}\baro$.

Notice that the Borel set $\Pbaro\setminus\bigcup_{n\in\mathbb{N}}p_1(\Lip_n)$ is equal to the set of all trees $T\in\Pbaro$ such that for every $T'\in\Pbaro$ distinct from $T$, there is no Lipschitz $G$-equivariant map from $T$ to $T'$. By Lemma~\ref{lemma:ergometric}, the set $\PAT^{\erg}$ is equal to the intersection of this set with $\PAT$, so it is Borel. 
\end{proof}

\begin{lemma}\label{lemma:borel-selection-ue}
There exists a sequence of Borel maps $f_n:\PAT\to\PAT^{\erg}$ such that for every $T\in\PAT$, the set $\{f_n(T)\}_{n\in\mathbb{N}}$ is equal to the (finite) set of all projective classes of ergometric trees that are equivalent to $T$.
\end{lemma}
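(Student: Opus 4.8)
The plan is to produce the sequence $(f_n)$ by a measurable selection argument, exploiting that the simplex of an arational tree is finite-dimensional and that its extremal points are exactly the ergometric trees, which we already know form a Borel set $\PAT^{\erg}$ (Lemma~\ref{lemma:erg-borel}). First I would fix a bound $d$ on the dimension of the simplices (or rather, work with all dimensions at once by taking a countable union over the possible number $m+1$ of extremal points): for a fixed $m$, let $E_m\subseteq\PAT$ be the Borel set of trees whose equivalence class simplex has exactly $m+1$ ergometric vertices. The equivalence relation ${\sim}$ restricted to $\PAT$ is a Borel equivalence relation with finite classes intersected with $\PAT^{\erg}$, so by the Lusin--Novikov uniformization theorem (countable-to-one selection), there is a sequence of Borel maps $g_k:\PAT^{\erg}\to\PAT^{\erg}$ enumerating each ${\sim}$-class inside $\PAT^{\erg}$.

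The key point I would need is that the map sending $T\in\PAT$ to its (finite) set of ergometric representatives is itself Borel in an appropriate sense. Concretely, consider the Borel set
$$R:=\{(T,T')\in\PAT\times\PAT^{\erg}\mid T\sim T'\}.$$
That $R$ is Borel follows because $T\sim T'$ is equivalent to the existence of mutual alignment-preserving $G$-equivariant maps, or more simply because by \cite[Corollary~13.4]{GH1} one has $T\sim T'$ as soon as there is a Lipschitz $G$-equivariant map in one direction (using that $T,T'$ have dense orbits), and the set of such pairs is Borel by the argument already used in the proof of Lemma~\ref{lemma:erg-borel} (it is $\bigcup_n \Lip_n$ intersected with the relevant product). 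Since each section $R_T=\{T'\mid (T,T')\in R\}$ is finite and nonempty (the simplex of $T$ has finitely many, at least one, extremal points, by \cite[Corollary~5.4]{Gui00}), the Lusin--Novikov theorem \cite[Theorem~18.10]{Kec} applies to $R\subseteq\PAT\times\PAT^{\erg}$ and yields a sequence of Borel maps $f_n:\PAT\to\PAT^{\erg}$ whose graphs cover $R$; that is, for every $T$, the set $\{f_n(T)\}_{n\in\mathbb{N}}$ equals $R_T$, which is exactly the set of projective classes of ergometric trees equivalent to $T$. (One may assume each $f_n$ is total by redefining it on the Borel set where it is not yet defined to equal $f_0$.)

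I expect the only real subtlety to be checking that $R$ is Borel, i.e.\ that the relation ``$T'$ is an ergometric tree equivalent to $T$'' is Borel on $\PAT\times\PAT^{\erg}$. This is where I would lean on the machinery already set up: the identification of $\Pbaro$ with a Borel subset of $\baro$ via a continuous section, the fact that for each $n$ the set $\Lip_n$ of pairs admitting an $n$-Lipschitz $G$-equivariant map is closed (hence Borel) in $K_n$, and \cite[Lemma~2.2]{BGH} together with \cite[Corollary~13.4]{GH1} to translate the existence of such a map (for trees with dense orbits, which all arational trees have) into the equivalence $T\sim T'$. Granting this, everything else is a direct application of the Lusin--Novikov uniformization theorem, and the statement follows.
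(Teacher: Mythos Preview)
Your overall strategy---exhibit the relation $R=\{(T,T')\in\PAT\times\PAT^{\erg}\mid T\sim T'\}$ as a Borel set with finite nonempty sections and then apply a measurable uniformization theorem---is exactly the paper's approach. Your use of Lusin--Novikov in place of the paper's appeal to \cite{Sri} is fine and arguably cleaner, since the sections are finite.

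The gap is in your argument that $R$ is Borel. You propose identifying $R$ with (a symmetrization of) $\bigcup_n\Lip_n\cap(\PAT\times\PAT^{\erg})$, using that a Lipschitz $G$-equivariant map between arational trees forces equivalence. That implication is correct, but it only gives one inclusion. The reverse inclusion fails: take two \emph{distinct} ergometric trees $T,T'$ in the same simplex. Then $(T,T')\in R$, but by the very characterization of ergometric trees (Lemma~\ref{lemma:ergometric}) there is no Lipschitz $G$-equivariant map from $T$ to $T'$ nor from $T'$ to $T$. More generally, a tree on a proper face of the simplex admits no Lipschitz map to the ergometric vertices off that face. Your alternative suggestion of ``mutual alignment-preserving maps'' does characterize $\sim$ on arational trees, but alignment-preserving maps need not be Lipschitz, and you give no argument that the existence of such maps is a Borel condition.

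The paper repairs exactly this point by using \emph{compatibility} instead: two trees are compatible if and only if the sum of their length functions is again a length function of a tree \cite[Theorem~A.10]{GL-jsj}, and the space of length functions is closed in $\mathbb{R}^G$ \cite{CM}, so the set $\Comp$ of compatible pairs is closed, hence Borel. For arational trees, compatibility coincides with $\sim$ (any two trees in the same simplex have their sum again in the simplex, hence a tree; conversely a common refinement yields alignment-preserving maps to both). Thus $R=\Comp\cap(\PAT\times\PAT^{\erg})$ is Borel, and the rest of your argument goes through.
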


\begin{proof}
  We identify $\Pbaro$ to a Borel subset of $\baro$ by choosing a continuous section. Notice that the subset $\Comp\subseteq\Pbaro\times\mathbb{P}\baro$ made of pairs of trees that are compatible is 
  closed \cite[Corollary A.12]{GL-jsj}. Using Lemma~\ref{lemma:erg-borel}, we deduce that $\Comp\cap (\PAT\times\PAT^{\erg})$ is a Borel subset of $\Pbaro^{2}$, and it is made of all pairs $(T,T')\in\PAT\times\PAT^{\erg}$ with $T'\sim T$. The conclusion then follows by using a measurable selection theorem \cite{Sri}.
  More precisely, letting $C:=\{0,1\}^{\mathbb{N}}$ be the Cantor space, it follows from \cite[Theorem~1.2]{Sri} that there exists a map $f:\PAT\times (\mathbb{N}\times C)\to\PAT^{\erg}$ such that for every $T\in\PAT$, the map $f(T,\cdot)$ is continuous and its image equals the set of all ergometric trees that are equivalent to $T$, and for every $(n,c)\in\mathbb{N}\times C$, the map $f_{n,c}:=f(\cdot,(n,c))$ is Borel. Let $(n_i,c_i)_{i\in\mathbb{N}}$ be a dense sequence in $\mathbb{N}\times C$. Then the maps $f_{n_i,c_i}$ satisfy the required conclusion.
\end{proof}

\begin{proof}[Proof of Proposition~\ref{prop:action-amenable}]
By Theorem~\ref{bgh}, there exists a sequence of Borel maps $$\nu_n:\PAT\to\Prob(\Out(G,\calf^{(t)}))$$ which is asymptotically $\Out(G,\calf^{(t)})$-equivariant. We will explain how to modify the maps $\nu_n$ so that they become constant on every $\sim$-class.

Let $f_n:\PAT\to\PAT^{\erg}$ be a sequence of Borel maps provided by Lemma~\ref{lemma:borel-selection-ue}. For all $T\in\PAT$, we let $k(T)$ be the number of ergometric trees that are $\sim$-equivalent to $T$, and we let $n_1(T),\dots,n_{k(T)}(T)$ be integers such that the trees $f_{n_i(T)}(T)$ are pairwise distinct, and $\{f_{n_i(T)}(T)\}$ is the set of all ergometric trees that are $\sim$-equivalent to $T$, with $(n_1(T),\dots,n_{k(T)}(T))$ minimal for the lexicographic order among tuples satisfying the above properties. Note that the maps $T\mapsto k(T)$ and $T\mapsto n_i(T)$ are Borel. We then define a sequence of Borel maps $$\tilde{\nu}_n:\PAT\to\Prob(\Out(G,\calf^{(t)}))$$ by letting $$\tilde{\nu}_n(T):=\frac{1}{k(T)}\sum_{i=1}^{k(T)}\nu_n(f_{n_i(T)}(T)).$$ Then the sequence $\tilde{\nu}_n$ is asymptotically $\Out(G,\calf^{(t)})$-equivariant, and the maps $\tilde{\nu}_n$ are constant on every $\sim$-class. Therefore the $\tilde{\nu}_n$ induce a sequence of Borel maps $\PAT/{\sim}\to\Prob(\Out(G,\calf^{(t)}))$ which is asymptotically $\Out(G,\calf^{(t)})$-equivariant. This completes our proof.
\end{proof}

\section{A barycenter map for $\Out(G,\calf)$}\label{sec-barycenter}

Let $G_1,\dots,G_k$ be countable groups, let $F_N$ be a free group of rank $N$, and let $G:=G_1\ast\dots\ast G_k\ast F_N$. We denote by $\calf$ the finite collection made of the conjugacy classes in $G$ of the subgroups $G_i$. We assume throughout the section that $(G,\calf)$ is not sporadic. We recall that $\FS$ denotes the countable set of all free splittings of $(G,\calf)$, and that $\calp_{<\infty}(\FS)$ denotes the (countable) set of nonempty finite subsets of $\FS$.
The goal of the present section is to prove the following theorem.

\begin{theo}\label{barycenter}
There exists a Borel $\Out(G,\calf)$-equivariant map $$\bary:(\mathbb{P}\AT/{\sim})^{(3)}\to\calp_{<\infty}(\FS).$$
\end{theo}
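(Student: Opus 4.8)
The plan is to associate to a triple of pairwise inequivalent arational trees a bounded (but a priori infinite) region of the relative Outer space $\calo$, and then to extract from it canonically a finite subset of $\FS$. First I would fix a triple $([T_1],[T_2],[T_3])$ of pairwise distinct points in $\PAT/{\sim}$, choose ergometric representatives $T_1,T_2,T_3$ of the three $\sim$-classes (using the Borel selection of Lemma~\ref{lemma:borel-selection-ue} to ensure measurability), and consider the \emph{pencils of geodesics} in $\baro$ connecting the $T_i$ to one another, as studied by Bestvina--Reynolds in \cite{BR}. Hyperbolicity of $\FF$ together with the fact that $\pi:\calo\to\FF$ extends continuously to arational trees tells us that the union $Y$ of the coarse midpoints of these geodesics projects to a bounded region of $\FF$; the key additional input is Theorem~\ref{gh} (unique duality for arational trees), which forces the relevant Grushko trees appearing along these pencils to be genuinely constrained — in particular the pencil geodesics must be ``thin'' because two arational trees cannot share a simple leaf of their dual laminations unless they are equivalent, and the three $T_i$ are pairwise inequivalent.

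The core difficulty — and this is the main obstacle — is that the relative Outer space is \emph{not locally compact} (unlike Culler--Vogtmann Outer space in the classical free-group case), so the bounded region $Y\subseteq\calo$ may meet infinitely many open simplices, and one cannot simply take the (finite) set of simplices it touches. The trick I would use, following the strategy sketched in the introduction, is to restrict attention to the subset $Y_{\max}\subseteq Y$ consisting of those Grushko trees in $Y$ that have \emph{maximal covolume} and, among those, a \emph{minimal number of $G$-orbits of edges}; one then argues that $Y_{\max}$ meets only finitely many open simplices of $\calo$. This finiteness is where the real work lies: it should follow from a compactness-type argument combining the boundedness of the region in $\FF$ with the extremal choice of covolume and edge-count — the point being that increasing covolume forces edge lengths to stay away from $0$, while minimizing orbits of edges prevents the kind of degeneration that causes non-local-compactness, so the simplices meeting $Y_{\max}$ form a finite collection. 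Having produced a finite, non-empty, $\Out(G,\calf)$-equivariant assignment of simplices, I would take $\bary([T_1],[T_2],[T_3])$ to be the finite set of free splittings of $(G,\calf)$ obtained by collapsing (in all possible ways) the trees in these simplices, i.e.\ the free splittings underlying the simplicial structure; this is a finite subset of $\FS$.

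It remains to check \emph{Borel measurability} and \emph{equivariance} of the resulting map $\bary:(\PAT/{\sim})^{(3)}\to\calp_{<\infty}(\FS)$. Equivariance is automatic since every construction (pencils of geodesics, covolume, number of edge orbits, collapse maps) is natural for the $\Out(G,\calf)$-action and the choice of ergometric representatives is itself made equivariantly via Lemma~\ref{lemma:borel-selection-ue}. For measurability, I would argue as in Lemma~\ref{lemma:erg-borel} and Lemma~\ref{lemma:borel-selection-ue}: the set of geodesics in $\baro$, the Lipschitz-map conditions defining the pencils, covolume and orbit-count are all Borel (indeed, length functions vary continuously and the space of length functions is closed in $\bbR^G$ by \cite{CM}), and $\calp_{<\infty}(\FS)$ is a countable discrete target, so the map factors through a Borel partition of $(\PAT/{\sim})^{(3)}$ into pieces on which $\bary$ is locally constant. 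Finally I would verify that $\bary$ is everywhere defined on $(\PAT/{\sim})^{(3)}$, i.e.\ that $Y$ — hence $Y_{\max}$ — is non-empty for every pairwise-inequivalent triple; this again uses Theorem~\ref{gh} to rule out the degenerate configurations where the pencils would fail to meet $\calo$.
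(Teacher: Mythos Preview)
Your overall strategy matches the paper's: define a region of $\calo$ via pencils between the three arational trees, use hyperbolicity of $\FF$ together with the unique-duality statement to control this region, apply the maximal-covolume/minimal-edge-orbit trick to extract finitely many simplices, and then symmetrize over ergometric representatives. Two points deserve correction.

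First, your equivariance claim is wrong as stated. Lemma~\ref{lemma:borel-selection-ue} produces \emph{Borel} selections $f_n:\PAT\to\PAT^{\erg}$, not equivariant ones; there is no reason any single $f_n$ should commute with the $\Out(G,\calf)$-action, since the set of ergometric trees in a $\sim$-class has no canonical ordering. If you ``choose ergometric representatives $T_1,T_2,T_3$'' via one of these $f_n$, the resulting map will be Borel but typically not equivariant. The paper's fix (Section~8.5) is to first build the map $\beta$ on $\PAT^{((3))}$ itself and then set
\[
\widetilde\bary(T_1,T_2,T_3)=\bigcup_{T'_i\in[T_i]_{\erg}}\beta(T'_1,T'_2,T'_3),
\]
i.e.\ take the union over \emph{all} triples of ergometric representatives; this is manifestly constant on $\sim$-classes and equivariant, and Borelness then follows from Lemma~\ref{lemma:borel-selection-ue} by writing the union as $\bigcup_{n_1,n_2,n_3}\beta(f_{n_1}(T_1),f_{n_2}(T_2),f_{n_3}(T_3))$.

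Second, ``bounded region of $\calo$'' is not enough for the finiteness trick: you need $Y$ to be \emph{compact} in $\calo$, since upper semicontinuity of covolume only guarantees that the maximum is attained on a compact set, and the subsequent argument (showing that for large $n$ the trees $U_n$ and $U_\infty$ lie in the same cone) uses convergence in $\calo$. The paper secures this compactness via Lemma~\ref{u-turn} (the free-product analogue of Bestvina--Reynolds' pencil lemma), which in turn relies on the strengthened duality statement Lemma~\ref{gh1} rather than Theorem~\ref{gh} directly: one needs to handle sequences $g_n$ with $\|g_n\|_{T'_n}/|g_n|\to 0$, not just $\|g_n\|_{T'_n}\to 0$. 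Your sketch gestures at this (``compactness-type argument'') but does not isolate it; in practice this is where the work lies.
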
 
 
\begin{rk}
  The main point in the statement is the finiteness of the subset of $\FS$ associated to every triple of pairwise distinct equivalence classes of arational trees. 
By construction, one can check that there is a uniform bound of the diameter of this subset in the free factor graph $\FF$.
\end{rk}

Before proving Theorem~\ref{barycenter}, let us mention a consequence, which is the form in which it will be used in the sequel of the paper. 

\begin{cor}\label{cor:barycenter}
There exists a Borel $\Out(G,\calf)$-equivariant map which associates a nonempty finite set of proper $(G,\calf)$-free factors to every triple in $(\mathbb{P}\AT/{\sim})^{(3)}$.
\end{cor}

\begin{proof}
There is an $\Out(G,\calf)$-equivariant Borel map which associates to any free splitting $S$ a nonempty finite set $F(S)$ of conjugacy classes of proper $(G,\calf)$-free factors, namely $F(S)$ is the set of all conjugacy classes of proper $(G,\calf)$-free factors that are elliptic in some collapse of $S$. Corollary~\ref{cor:barycenter} is therefore a consequence of Theorem~\ref{barycenter}.
\end{proof}

\subsection{Unique duality for arational trees}

Throughout the section, we fix a Grushko tree $R$, and given $g\in G$, we write $|g|:=||g||_{R}$. The goal of the present section is to strengthen the following unique duality statement for arational trees obtained in \cite{CHR,GH1} (it essentially follows from \cite[Proposition~4.26 and Theorem~13.1]{GH1}): our stronger statement is Lemma~\ref{gh1} below.

Recall that an element $g\in G$ is \emph{simple} if it is contained in some proper $(G,\calf)$-free factor (equivalently, $g$ is elliptic
in some free splitting of $(G,\calf)$).

\begin{lemma}[{\cite[Proposition~4.26 and Theorem~13.1]{GH1}}]\label{lemma:unique-duality}
Let $T,T'\in\overline{\calo}$, with $T$ arational. Let $(T_n)_{n\in\mathbb{N}},(T'_n)_{n\in\mathbb{N}}\in\calo^{\mathbb{N}}$ be sequences that converge (non-projectively) to $T,T'$ respectively.

Assume that there exists a sequence of nonperipheral simple elements $(g_n)_{n\in\mathbb{N}}\in G^\mathbb{N}$ such that $||g_n||_{T_n}\to 0$ and $||g_n||_{T'_n}\to 0$. 

Then $T'$ is arational and $T'\sim T$.
\end{lemma}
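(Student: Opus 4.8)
The plan is to distill from the sequence $(g_n)_{n\in\mathbb{N}}$ a single \emph{simple pair} lying in $\Lambda^2(T)\cap\Lambda^2(T')$, and then to conclude directly from Theorem~\ref{gh}. So I would fix a Grushko tree $R$; since each $g_n$ is nonperipheral, it is hyperbolic in $R$ and its axis determines a pair $(g_n^{-\infty},g_n^{+\infty})\in\partial^2(G,\calf)$. As $\partial(G,\calf)$ is compact, the sequence of these pairs has, after passing to a subsequence, a limit $(\alpha,\omega)\in\partial(G,\calf)\times\partial(G,\calf)$.

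The heart of the argument is to show that $(\alpha,\omega)\in\Lambda^2(T)$ — in particular that $\alpha\neq\omega$, so that $(\alpha,\omega)$ genuinely lies in $\partial^2(G,\calf)$ — and likewise that $(\alpha,\omega)\in\Lambda^2(T')$. This is where \cite[Proposition~4.26]{GH1} enters: it asserts, roughly, that if $T_n\to T$ in $\baro$ and $g_n$ is a sequence of nonperipheral elements with $||g_n||_{T_n}\to 0$, then the axes of the $g_n$ accumulate only onto $\Lambda^2(T)$. Applying this to $T_n\to T$ gives $(\alpha,\omega)\in\Lambda^2(T)$, and applying it to $T'_n\to T'$ along the same subsequence gives $(\alpha,\omega)\in\Lambda^2(T')$; a further extraction in the second step does not change the already-determined limit $(\alpha,\omega)$.

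I expect this step to be the main obstacle if one wants to argue from scratch rather than quote \cite{GH1}: the convergence $T_n\to T$ is only pointwise on translation-length functions, so it controls a \emph{fixed} element, whereas here $g_n$ varies and could a priori run off to infinity. The clean way around this is to pass to geodesic currents: normalize the counting currents $\eta_{g_n}$ by their $R$-length $||g_n||_R$, which is bounded below away from $0$, and extract a weak-$*$ limit current $\eta$, which is nonzero since $\langle\eta,R\rangle=1$. Continuity of the intersection pairing between currents and trees gives $\langle\eta,T\rangle=\lim ||g_n||_{T_n}/||g_n||_R=0$ and similarly $\langle\eta,T'\rangle=0$, so $\mathrm{supp}(\eta)\subseteq\Lambda^2(T)\cap\Lambda^2(T')$; and since $(\alpha,\omega)$ is a limit of points of $\mathrm{supp}(\eta_{g_n})$, it lies in $\mathrm{supp}(\eta)$, which re-establishes the needed inclusion and the nondegeneracy $\alpha\neq\omega$ at once.

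Finally, $(\alpha,\omega)$ is a simple pair: by construction it is the limit of the pairs $(g_n^{-\infty},g_n^{+\infty})$ with every $g_n$ simple, which is precisely the definition of a simple pair. Since $T$ is arational and $(\alpha,\omega)$ is a simple pair in $\Lambda^2(T)\cap\Lambda^2(T')$, Theorem~\ref{gh} yields that $T'$ is arational and $T'\sim T$, as desired.
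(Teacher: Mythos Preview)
The paper does not give a proof of this lemma: it is quoted as a direct consequence of \cite[Proposition~4.26 and Theorem~13.1]{GH1}, and the section's goal is the stronger Lemma~\ref{gh1}. Your plan---produce a simple pair in $\Lambda^2(T)\cap\Lambda^2(T')$ and invoke Theorem~\ref{gh}---is exactly the intended mechanism, and matches how the paper proves the stronger statement.

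There is, however, a genuine gap in your execution. You take the limit of $(g_n^{-\infty},g_n^{+\infty})$ in $\partial(G,\calf)\times\partial(G,\calf)$ \emph{without translating}; the axes of the $g_n$ can escape to infinity in $R$, and both endpoints can collapse to a single boundary point, so the limit may well satisfy $\alpha=\omega$. \cite[Proposition~4.26]{GH1} controls accumulation points that land in $\partial^2(G,\calf)$, but it does not manufacture nondegeneracy. In the paper's proof of Lemma~\ref{gh1} this is handled by first translating by elements $h_n\in G$ so that every translated axis crosses a fixed edge $e$ of $R$; then the two half-axes subconverge to points $\alpha,\omega$ separated by $e$, forcing $\alpha\neq\omega$. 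The same device applies verbatim here (and is simpler, since both hypotheses are the stronger $\|g_n\|\to 0$).

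Your currents backup does not close the gap as written. From $\eta_{g_n}/\|g_n\|_R\rightharpoonup\eta$ one does \emph{not} get that a limit of points of $\mathrm{supp}(\eta_{g_n})$ lies in $\mathrm{supp}(\eta)$: the inclusion for supports under weak-$*$ limits goes the other way. What does work is to pick any $(\alpha,\omega)\in\mathrm{supp}(\eta)$ directly; since the set of simple pairs is closed and contains each $\mathrm{supp}(\eta_{g_n})$, it contains $\mathrm{supp}(\eta)$, and your pairing argument then gives $\mathrm{supp}(\eta)\subseteq\Lambda^2(T)\cap\Lambda^2(T')$. Two caveats: you are using joint continuity of the length pairing in \emph{both} variables (tree and current), and the very existence of a suitable space of currents, in the free product setting---neither is invoked anywhere in the paper, and you should point to a reference rather than assume them.
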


We recall that given a $G$-equivariant map $f:R\to T$ between two $(G,\calf)$-trees, the \emph{bounded backtracking constant} $\BBT(f)$ is defined as the smallest
 real number $K$ such that for all $x,y,z\in R$ aligned in this order, we have $d_{T}(f(y),[f(x),f(z)])\le K$. Notice in particular that if $g$ is a nonperipheral element, then for every point $x$ that belongs to the axis of $g$ in $R$, we have $d_T(f(x),\Char_T(g))\le K$ (where $\Char_T(g)$ denotes the characteristic set of $g$ in $T$, i.e.\ either its axis if $g$ acts hyperbolically on $T$, or else its fix point set).

\begin{lemma}\label{lemma:long-segments}
  Let $(S_n)_{n\in\mathbb{N}}\in\mathcal{O}^\mathbb{N}$, let $(g_n)_{n\in\mathbb{N}}\in G^\mathbb{N}$ be a sequence of nonperipheral elements, and let
  $B>0$.
  Assume that
\begin{enumerate}
\item for every $n\in\mathbb{N}$, there exists a $G$-equivariant map $f_n:R\to S_n$ with $\BBT(f_n)\le B$, and
\item we have $$\frac{||g_n||_{S_n}}{|g_n|}\to 0.$$
\end{enumerate}
Then up to passing to a subsequence, there exist segments $I_n\subseteq (g_n^{-\infty},g_n^{+\infty})_R$, with $|I_n|\to +\infty$, such that $\diam_{S_n}(f_n(I_n))\le 5B$. 
\end{lemma}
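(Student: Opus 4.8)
The plan is to localize everything to the axis of $g_n$ and exploit the hypothesis $\|g_n\|_{S_n}\ll|g_n|$: a long stretch of this axis has to be sent by $f_n$ into a thin neighbourhood of the axis of $g_n$ in $S_n$ while making almost no net progress along it. First I would fix notation. Since $R$ and the $S_n$ are Grushko $(G,\calf)$-trees and $g_n$ is nonperipheral, $g_n$ acts hyperbolically on both, with translation lengths $\ell_n:=|g_n|=\|g_n\|_R>0$ and $\ell'_n:=\|g_n\|_{S_n}>0$; set $\epsilon_n:=\ell'_n/\ell_n\to 0$. Let $A_n:=(g_n^{-\infty},g_n^{+\infty})_R$ be the axis of $g_n$ in $R$ and $C_n:=\Char_{S_n}(g_n)$ its axis in $S_n$. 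Parametrise $A_n$ by an arc-length coordinate $a_n\colon\mathbb{R}\to A_n$ with $g_n\cdot a_n(t)=a_n(t+\ell_n)$, and fix an arc-length coordinate on $C_n$; then the composite $h_n\colon\mathbb{R}\to\mathbb{R}$ sending $t$ to the coordinate of the nearest-point projection $\pi_{C_n}(f_n(a_n(t)))$ is continuous and, by $g_n$-equivariance of $\pi_{C_n}\circ f_n$ (after orienting $C_n$ suitably), satisfies $h_n(t+\ell_n)=h_n(t)+\ell'_n$. Finally, by the remark preceding the statement applied with $K=\BBT(f_n)\le B$, every point of $f_n(A_n)$ lies within distance $B$ of $C_n$.

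The key reduction is that it suffices to find, for each $n$, a subsegment $I_n=[p_n,q_n]\subseteq A_n$ with $q_n-p_n\to+\infty$ and $|h_n(p_n)-h_n(q_n)|\le B$. Indeed, two applications of the triangle inequality in the tree $S_n$, routed through $\pi_{C_n}(f_n(p_n))$ and $\pi_{C_n}(f_n(q_n))$, give $d_{S_n}(f_n(p_n),f_n(q_n))\le B+|h_n(p_n)-h_n(q_n)|+B\le 3B$; and since $\BBT(f_n)\le B$, the whole image $f_n(I_n)$ lies in the $B$-neighbourhood of the geodesic $[f_n(p_n),f_n(q_n)]$, which has length at most $3B$. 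Hence $\diam_{S_n}(f_n(I_n))\le 3B+2B=5B$, which is the desired conclusion.

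The real content, and the step I expect to be the main obstacle, is producing such $p_n,q_n$: one cannot simply take $I_n$ to be a union of fundamental domains of $A_n$, since that would require $\ell'_n\to 0$, which is strictly stronger than the hypothesis (e.g. $\ell_n$ could tend to infinity with $\ell'_n$ bounded below). Instead I would argue by contradiction, using only continuity of $h_n$ together with the relation $h_n(t+\ell_n)=h_n(t)+\ell'_n$ and $\epsilon_n\to 0$. Suppose there were $L>0$ and a subsequence along which no pair $p<q$ in $A_n$ with $q-p>L$ satisfies $|h_n(p)-h_n(q)|\le B$. Fix such an $n$ and a point $p$; the continuous function $q\mapsto h_n(q)-h_n(p)$ then avoids $[-B,B]$ on $(p+L,+\infty)$, and it tends to $+\infty$ there because $h_n(p+m\ell_n)=h_n(p)+m\ell'_n$ while $h_n$ is bounded on one fundamental domain, so $h_n(q)>h_n(p)+B$ for all $q>p+L$. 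Iterating this with $p=0,2L,4L,\dots$ yields $h_n(2kL)>h_n(0)+kB$ for every $k\ge 1$, whereas the same periodicity relation forces $h_n(2kL)\le h_n(0)+D_n+2kL\,\epsilon_n$, where $D_n:=\sup_{[0,\ell_n]}h_n-h_n(0)<\infty$. Choosing $n$ in the bad subsequence with $2L\,\epsilon_n<B/2$ gives $kB/2<D_n$ for all $k$, a contradiction. Therefore $d_n:=\sup\{\,q-p\ :\ p\le q\text{ in }A_n,\ |h_n(p)-h_n(q)|\le B\,\}\to+\infty$, and choosing for each $n$ endpoints realizing, say, $q_n-p_n\ge d_n-1$ produces the segments $I_n$ required by the reduction. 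The remaining points — continuity of $h_n$, its boundedness over a fundamental domain, the two triangle estimates, and the fact that $\BBT(f_n)\le B$ controls the image of a segment by that of its endpoints — are routine.
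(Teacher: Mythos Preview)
Your argument is correct (with the standing convention that the equivariant maps $f_n$ are continuous, as they are in the application to Lemma~\ref{gh1}), and it proceeds by a genuinely different route from the paper's. The paper argues constructively via a case split: when $\|g_n\|_{S_n}\le B$, it takes $I_n=[a_n,g_n^{m_n}a_n]$ with $m_n=\lfloor B/\|g_n\|_{S_n}\rfloor$ so that the endpoints map to points at distance $\le B$ on the axis; when $\|g_n\|_{S_n}>B$, it subdivides a single fundamental domain of $A_n$ into $N=\lceil \|g_n\|_{S_n}/B\rceil$ equal subsegments and uses a pigeonhole/backtracking dichotomy on the projections $p_i$ to find one with $d(q_i,q_{i+1})\le 3B$. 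Your approach instead treats both regimes uniformly: you encode the projection to the $S_n$-axis as a continuous quasi-periodic function $h_n$ with drift $\epsilon_n\to 0$, and derive a contradiction from the assumption that all long subintervals have large $h_n$-increment, via IVT and the bound $h_n(2kL)\le h_n(0)+D_n+2kL\epsilon_n$. Your argument is cleaner in that it avoids both the case split and the passage to a subsequence; the paper's is more explicit and does not appeal to continuity of $f_n$, since it only compares the images of finitely many subdivision points.
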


\begin{proof}
For every $n\in\mathbb{N}$, one has $\Axis_{S_n}(g_n)\subseteq f_n(\Axis_R(g_n))$. We thus choose $a_n\in\Axis_{R}(g_n)$ with $f_n(a_n)\in \Axis_{S_n}(g_n)$.
  
 First assume that for all $n\geq 0$, one has $||g_n||_{S_n}\leq B$.
  Let   $m_n=\lfloor\frac{B}{||g_n||_{S_n}}\rfloor$, let $b_n=g_n^{m_n}a_n$, and let
    $I_n=[a_n,b_n]\subseteq R$. Using the second assumption from the lemma, we see that $|I_n|=m_n |g_n|$ tends to $+\infty$.
On the other hand $d(f_n(a_n),f_n(b_n))=m_n ||g_n||_{S_n}\leq B$.
Since $\BBT(f_n)\le B$, the image $f_n(I_n)$ is contained in the $B$-neighborhood of $[f_n(a_n),f_n(b_n)]$
so $\diam(f_n(I_n))\le 3B$.

Up to taking a subsequence, we can therefore assume that for all $n\geq 0$, one has $||g_n||_{S_n}>B$.
Consider the fundamental domain $J_n=[a_n,c_n]$ of $\Axis_R(g_n)$ where $c_n=g_na_n$. The segment $K_n=[f_n(a_n),f_n(c_n)]$
is a fundamental domain of $\Axis_{S_n}(g_n)$.
Let $N=\lceil\frac{||g_n||_{S_n}}{B}\rceil$, and subdivide the segment $[a_n,c_n]$ into $t_0=a_n,t_1,\dots,t_N=c_n$ with
$d(t_i,t_{i+1})=\frac{|g_n|}{N}$ for all $i<N$.
We claim that we can take $I_n$ to be one of the intervals $[t_i,t_{i+1}]$.
Note that their length is at least $B\frac{N-1}{N}\frac{|g_n|}{||g_n||_{S_n}}$ which tends to infinity with $n$ by assumption.

For $i<N$, let $q_i=f_n(t_i)$ and $p_i$ be the projection of $q_i$ on $K_n$.
Since $\BBT(f_n)\le B$, one has $d(p_i,q_i)\leq B$.
Using again that $\BBT(f_n)\le B$, we see that $\diam(f_n([t_i,t_{i+1}]))\leq 2B+d(q_i,q_{i+1})$,
hence it suffices to find $i<N$ such that $d(q_i,q_{i+1})\leq 3B$.

We consider the natural ordering on $K_n=[p_0,p_N]$ such that $p_0<p_N$.
First assume that there exists $i<N$ such that $p_{i+1}<p_i$ (i.e.\ there is some backtracking among the $p_i$). As $\BBT(f_n)\le B$, this implies that $B\geq d(q_i,[p_0, q_{i+1}])=d(q_i,p_{i+1})\geq d(q_i,q_{i+1})-B$
and we are done.

We can therefore assume that $p_0\leq p_1\leq \dots \leq p_N$. Let $m=\min\{d(p_{i},p_{i+1})|i<N\}$. It suffices to prove that $m\leq B$.
If not, then
$$||g_n||_{S_n}=d(p_0,p_n)\geq Nm > \lceil\frac{||g_n||_{S_n}}{B}\rceil B\geq ||g_n||_{S_n},$$
a contradiction.
\end{proof}

\begin{lemma}\label{gh1}
Let $T,T'\in\overline{\calo}$, with $T$ arational. Let $(T_n)_{n\in\mathbb{N}},(T'_n)_{n\in\mathbb{N}}\in\calo^{\mathbb{N}}$ be sequences that converge (non-projectively) to $T,T'$ respectively.

Assume that there exists a sequence of nonperipheral simple elements $(g_n)_{n\in\mathbb{N}}\in G^\mathbb{N}$ such that $$||g_n||_{T_n}\to 0 \text{~~and~~~} \frac{||g_n||_{T'_n}}{|g_n|}\to 0.$$

Then $T'$ is arational and $T'\sim T$. 
\end{lemma}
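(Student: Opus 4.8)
The plan is to produce a simple pair $(\alpha,\omega)\in\partial^2(G,\calf)$ lying in $\Lambda^2(T)\cap\Lambda^2(T')$ and then invoke Theorem~\ref{gh}. This pair will be the endpoint pair of a bi-infinite geodesic line in the fixed Grushko tree $R$, obtained as a limit of longer and longer subsegments of the axes $\Axis_R(g_n)$.

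First I would choose, up to passing to a subsequence, $G$-equivariant Lipschitz maps $f_n:R\to T'_n$ and $f^T_n:R\to T_n$ whose bounded backtracking constants are uniformly bounded by a constant $B$ (possible because the convergent sequences $(T_n)$, $(T'_n)$ are relatively compact, and on a point of Outer space the optimal Lipschitz constant to a tree is the maximum of a length-ratio over a fixed finite set of candidates, hence depends continuously on the target). Since the $g_n$ are nonperipheral, $|g_n|$ is bounded below, so $\|g_n\|_{T'_n}/|g_n|\to 0$ and Lemma~\ref{lemma:long-segments} applies with $S_n=T'_n$: up to a further subsequence there are segments $I_n\subseteq(g_n^{-\infty},g_n^{+\infty})_R$ with $|I_n|\to+\infty$ and $\diam_{T'_n}(f_n(I_n))\le 5B$. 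Using in addition that $\|g_n\|_{T_n}\to 0$, a routine estimate shows that any subsegment $I'_n\subseteq I_n$ of length $L_n:=\min(|I_n|,\,B|g_n|/\|g_n\|_{T_n})$ also satisfies $\diam_{T_n}(f^T_n(I'_n))\le C$ for a uniform constant $C$, and $L_n\to+\infty$ since $|g_n|$ is bounded below and $\|g_n\|_{T_n}\to 0$; so I may replace $I_n$ by such an $I'_n$ and assume from now on that a single segment $I_n\subseteq\Axis_R(g_n)$ has length going to infinity and is collapsed, up to a uniform additive constant, by both $f_n$ and $f^T_n$. Finally, translating $I_n$ by a suitable element of $G$ — which amounts to replacing $g_n$ by a conjugate, preserving all hypotheses since translation lengths are conjugacy invariant — I can arrange that the midpoints of the $I_n$ stay in a fixed compact region of $R$ (there are finitely many $G$-orbits of edges), and then a further subsequence makes $I_n$ converge to a bi-infinite geodesic $\ell=(\alpha,\omega)_R$. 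As $\ell$ is a limit of subsegments of axes of the simple elements $g_n$, the pair $(\alpha,\omega)$ is simple: each $I_n$ lies on the axis of some simple nonperipheral element whose axis endpoints are close to those of $I_n$, and simple pairs form a closed set by definition.

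It remains to check that $(\alpha,\omega)\in\Lambda^2(T)\cap\Lambda^2(T')$, and this is where the real work lies. Passing to the limit in the maps ($f^T_n\to f^T:R\to T$ and $f_n\to f:R\to T'$, along the chosen subsequence, with basepoints in the fixed compact region), the collapse estimates pass to the limit and show that $f^T(\ell)$ and $f(\ell)$ are bounded subtrees of $T$ and $T'$ respectively. Upgrading this to genuine membership $(\alpha,\omega)\in\Lambda^2(T)$ and $(\alpha,\omega)\in\Lambda^2(T')$ — that is, that the two ends of $\ell$ have the same image under the $\mathcal Q$-map of the relevant tree, and not merely images lying in a common bounded set — requires the finer structure of dual laminations of $\baro$-trees from Coulbois--Hilion--Lustig \cite{CHL,CHL2} and its behaviour under limits; concretely one tracks the $g_n$-periodicity of $I_n\subseteq\Axis_R(g_n)$, exactly as in the proof of Lemma~\ref{lemma:unique-duality} (i.e.\ \cite[Proposition~4.26 and Theorem~13.1]{GH1}), the only difference being that the role played there by the hypothesis ``$\|g_n\|_{T'_n}\to 0$, so whole axes collapse in $T'_n$'' is now played by Lemma~\ref{lemma:long-segments}, which gives instead ``arbitrarily long subsegments of axes collapse in $T'_n$''. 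Granting this, Theorem~\ref{gh} applied to the simple pair $(\alpha,\omega)\in\Lambda^2(T)\cap\Lambda^2(T')$ yields that $T'$ is arational and $T'\sim T$. The main obstacle is thus precisely this last step: passing from the boundedly-collapsed long segments produced by Lemma~\ref{lemma:long-segments} to the statement that the limiting leaf genuinely lies in $\Lambda^2(T')$, which must be carried out by a careful limiting argument mirroring the corresponding delicate step of \cite{GH1}, since a soft ``bounded image'' argument does not by itself give duality.
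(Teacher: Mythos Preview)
Your overall strategy is the right one, and the use of Lemma~\ref{lemma:long-segments} for the $T'$-side is exactly what is needed. However, the proof as written has two genuine gaps, both of which you yourself flag but do not resolve, and the paper's argument sidesteps them in a way you have not found.

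First, you try to show $(\alpha,\omega)\in\Lambda^2(T')$ by passing the ``bounded image'' estimate to the limit. As you correctly note, a bounded image for $f'(\ell)$ does \emph{not} give membership in $\Lambda^2(T')$; it only gives $\alpha,\omega\in\Lambda^1(T')\cup V_\infty(G,\calf)$ (the one-sided lamination). The paper does not attempt to upgrade this. Instead, it treats the two trees asymmetrically: for $T$, the \emph{stronger} hypothesis $\|g_n\|_{T_n}\to 0$ together with density of orbits in $T$ allows a direct citation of \cite[Proposition~4.26]{GH1} to get $(\alpha,\omega)\in\Lambda^2(T)$; for $T'$, one is content with $\alpha\in\Lambda^1(T')$ from the bounded-image argument. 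Since $T$ is arational, not both of $\alpha,\omega$ can lie in $V_\infty$, so say $\alpha\in\Lambda^1(T)\cap\Lambda^1(T')$. Then \cite[Proposition~4.19]{GH1} gives $\Lambda^2(\alpha)\subseteq\Lambda^2(T)\cap\Lambda^2(T')$, where $\Lambda^2(\alpha)$ is the limit set, which is nonempty by \cite[Lemma~4.1]{GH1}.

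Second, your argument that $(\alpha,\omega)$ is simple is not correct as stated: the long segments $h_nI_n$ converge to $\ell$, but there is no reason the \emph{axis endpoints} $(h_ng_nh_n^{-1})^{\pm\infty}$ converge to $(\alpha,\omega)$ --- the axis may turn away outside $h_nI_n$. The paper avoids this entirely: rather than showing $(\alpha,\omega)$ itself is simple, it uses that every pair in the limit set $\Lambda^2(\alpha)$ is simple by \cite[Lemma~5.6]{GH1}, and feeds one of those into Theorem~\ref{gh}. So the simple pair actually used is not $(\alpha,\omega)$ but an auxiliary pair extracted from $\alpha$ alone.
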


\begin{proof}
We denote by $C$ the covolume of our fixed Grushko tree $R$, i.e.\ the sum of the lengths of all edges of the quotient graph $R/G$. Let $f':R\to T'$ be a $G$-equivariant Lipschitz map. Since $(T'_n)_{n\in\mathbb{N}}$ converges non-projectively to $T'$, we can find $G$-equivariant Lipschitz maps $f'_n:R\to T'_n$ converging to $f'$ in the Gromov--Hausdorff equivariant topology on the set of morphisms introduced in \cite{GL}. In particular, there exists $K\in\mathbb{N}$ such that for all $n\in\mathbb{N}$, the map $f'_n$ is $K$-Lipschitz. By \cite[Lemma~4.1]{BFH} (see \cite[Proposition~3.12]{Hor3} in the context of free products), the map $f'_n$ has BBT at most $KC$.

We claim that up to passing to a subsequence, there exists a sequence $(h_n)_{n\in\mathbb{N}}\in G^\mathbb{N}$ such that $h_n.(g_n^{-\infty},g_n^{+\infty})$ converges to $(\alpha,\omega)\in\partial^2(G,\calf)$, with $(\alpha,\omega)\in (\Lambda^1(T')\cup V_\infty(G,\calf))^2$. 

We now prove our claim. Applying Lemma~\ref{lemma:long-segments} to the trees $T'_n$ and the maps $f'_n$ whose bounded backtracking constant is bounded from above by $KC$, we obtain subsegments $I_n\subseteq (g_n^{-\infty},g_n^{+\infty})_R$ such that $|I_n|\to +\infty$ and $\diam_{T'_n}(f'_n(I_n))\le 5KC$. Let $e_n=[u_n,v_n]$ be an edge in $I_n$ such that both connected components of $I_n\setminus e_n$ have length diverging to $+\infty$ as $n$ goes to $+\infty$. Since the $G$-action on $R$ is cocompact, up to passing to a subsequence, we can assume that all edges $e_n$ lie in the $G$-orbit of a common edge $e=[u,v]\subseteq R$, and choose $h_n\in G$ such that $h_n.e_n=e$. The complementary components of the midpoint of $e$ determine two directions in $R\cup\partial R$, one containing $u$ (denoted by $d^-$) and one containing $v$ (denoted by $d^+$). For every $n\in\mathbb{N}$, we have $h_n\cdot g_n^{-\infty}\in d^-$ and $h_n\cdot g_n^{+\infty}\in d^+$. By compactness of $\partial (G,\calf)$, up to passing to a further subsequence, we can assume that $h_n.(g_n^{-\infty},g_n^{+\infty})$ converges to $(\alpha,\omega)$, and $\alpha\neq\omega$ because they are separated by the edge $e$ (indeed, by definition of the observers' topology, we have $\alpha\in d^-$ and $\omega\in d^+$). We are left proving that $\alpha,\omega\in \Lambda^1(T')\cup V_\infty(G,\calf)$. We prove it for $\alpha$, by symmetry the result also holds for $\omega$. Assume that $\alpha\notin V_\infty(G,\calf)$; we aim to show that $f'([u,\alpha)_R)$ has bounded diameter in $T'$. Let $[u,u']_R$ be an initial segment of $[u,\alpha)_R$. Then there exists $n\in\mathbb{N}$ such that $[u,u']_R\subseteq h_nI_n$. Therefore $f'_n([u,u']_R)$ has diameter at most $5KC$. Since $f'_n$ converges to $f'$, this implies that the diameter of $f'([u,u']_R)$ is uniformly bounded (with a bound not depending on the point $u'$). Therefore $f'([u,\alpha)_R)$ is bounded, which concludes the proof of our claim.

We now finish the proof of the lemma. Since $||g_n||_{T_n}$ converges to $0$ and $T_n$ converges to the tree $T$ which is arational (and therefore has dense $G$-orbits), it follows from \cite[Proposition~4.26]{GH1} that $(\alpha,\omega)\in \Lambda^2(T)$. 
We claim that $(\alpha,\omega)\notin V_\infty(G,\calf)^2$.
Indeed, assume by contradiction that  $\alpha$ and $\omega$ both lie in $V_\infty(G,\calf)$ ; then by Lemma \cite[Lemma 6.4]{GH1}, 
the group $\grp{G_\alpha,G_\omega}$ is elliptic in $T$, contradicting
that point stabilizers in the arational tree $T$ are cyclic or peripheral (see \cite[Section~4]{Hor}).

So either $\alpha$ or $\omega$ (say $\alpha$) belongs to $\Lambda^1(T)\cap \Lambda^1(T')$. By \cite[Proposition~4.19]{GH1}, we thus have $\Lambda^2(\alpha)\subseteq \Lambda^2(T)\cap \Lambda^2(T')$, where $\Lambda^2(\alpha)$ is the \emph{limit set} defined in \cite[Definition~4.9]{GH1}, i.e.\ $(\alpha',\omega')\in\Lambda^2(\alpha)$ if up to exchanging the roles of $\alpha'$ and $\omega'$, there exists a sequence $(k_n)_{n\in\mathbb{N}}\in G^{\mathbb{N}}$ converging to $\omega'$ such that $k_n\cdot \alpha$ converges to $\alpha'$. This limit set is nonempty \cite[Lemma~4.11]{GH1}, and it consists entirely of simple pairs \cite[Lemma~5.6]{GH1}. We thus have found a simple pair in $\Lambda^2(T)\cap \Lambda^2(T')$. Theorem~\ref{gh} thus implies that $T'$ is arational, and $T'\sim T$. 
\end{proof}

\subsection{Pencils between two arational trees}

The following lemma generalizes a theorem of Bestvina--Reynolds \cite[Theorem~6.6]{BR} to the context of free products.

\begin{lemma}\label{u-turn}
  Let $S,T\in\AT$. Let $(S_n)_{n\in\mathbb{N}}\in\calo^{\mathbb{N}}$ be a sequence which converges projectively to 
  $S$, and let $(T_n)_{n\in\mathbb{N}}\in\calo^{\mathbb{N}}$ be a sequence that converges non-projectively to $T$.

Assume that for every $n\in\mathbb{N}$, there exists an optimal morphism $f_n:S_n\to T_n$. Let $U\in\overline{\calo}$, and assume that there exists a sequence $(U_n)_{n\in\mathbb{N}}\in\calo^{\mathbb{N}}$ converging projectively to $U$, such that for every $n\in\mathbb{N}$, the morphism $f_n$ factors through optimal morphisms $S_n\to U_n$ and $U_n\to T_n$. 

Then either $U$ is equivalent to $S$, or $U$ is equivalent to $T$, or $U$ belongs to $\calo$ (as opposed to $\partial\calo$). If $U\in\calo$, then $(U_n)_{n\in\mathbb{N}}$ accumulates non-projectively to $\lambda U$ for some $\lambda\in\mathbb{R}_+^\ast$.
\end{lemma}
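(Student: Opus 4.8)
The statement is a scaling/rigidity dichotomy for an intermediate tree $U$ sitting in a factorization of morphisms between two arational trees. The plan is to argue by contradiction: assume $U\notin\calo$, i.e.\ $U\in\partial\calo$, and that $U$ is equivalent neither to $S$ nor to $T$, and derive a contradiction using the strengthened unique duality criterion of Lemma~\ref{gh1}. The main tool will be to locate, inside $U$, a short nonperipheral simple element that is also short in (a rescaling of) $T$ or of $S$, thereby forcing $U\sim T$ or $U\sim S$.

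First I would set up the quantitative bookkeeping of the morphisms. For each $n$, we have optimal morphisms $\phi_n:S_n\to U_n$ and $\psi_n:U_n\to T_n$ with $\psi_n\circ\phi_n=f_n$. Since $S_n$ converges projectively to $S\in\AT$, I normalize so that, say, the translation lengths on $S_n$ converge to those of $S$ (rescale $S_n$; this is where the projective hypothesis is used, and the rescaling factor propagates to $U_n$ via $\phi_n$ being a morphism — a morphism is $1$-Lipschitz on each subdivision piece and an isometry there, so it does not contract length, which controls the covolume of $U_n$ from below). Because $T_n$ converges \emph{non-projectively} to $T\in\AT$, the translation lengths on $T_n$ converge to the finite translation lengths on $T$; as $T$ has dense orbits, $\inf_{g\ \mathrm{simple}}\|g\|_{T}=0$. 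Following Bestvina--Reynolds \cite[Theorem~6.6]{BR}, I would look at the ``pencil'' structure: the covolume of $U_n$, normalized by that of $S_n$, is a bounded sequence (morphisms don't increase covolume in the appropriate sense — or rather, one controls $\mathrm{covol}(U_n)$ between $\mathrm{covol}(T_n)$ and $\mathrm{covol}(S_n)$ up to BBT-type constants), so up to a subsequence $U_n$ converges non-projectively (not just projectively) to some $\lambda U$, $\lambda>0$. If that limiting covolume is $0$ then $U\in\baro$ has dense orbits, i.e.\ $U\in\partial\calo$; if it is positive and $U_n$ does not degenerate, then $U\in\calo$ — this already gives the last clause of the statement and reduces us to the case $U\in\partial\calo$.

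Now assume $U\in\partial\calo$, $U\not\sim S$, $U\not\sim T$. Here I would use the optimality of the morphisms to produce a sequence of nonperipheral simple elements $(g_n)$ realizing ``illegal turns'' or short loops. Concretely: since $\phi_n:S_n\to U_n$ is an optimal morphism that is not an isometry (else $U\sim S$ in the limit, using that $S$ is arational and Lemma~\ref{lemma:unique-duality}/\cite[Corollary~13.4]{GH1}), there is a nonperipheral conjugacy class whose $S_n$-length is bounded below but whose $U_n$-length grows strictly slower — more precisely, an illegal turn in $\phi_n$ yields, after iterating the turn along an axis of length $\to\infty$, elements $g_n$ with $\|g_n\|_{U_n}/\|g_n\|_{S_n}\to 0$; and these $g_n$ can be taken simple by choosing the axis to cross an edge of a fixed free splitting refined by $S_n$ (exactly the mechanism of Lemma~\ref{lemma:long-segments}, applied to $\phi_n$ with BBT bounded by $K\cdot\mathrm{covol}$). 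Symmetrically on the $\psi_n$ side: if $\psi_n$ is not (asymptotically) an isometry, one gets simple $h_n$ with $\|h_n\|_{T_n}/\|h_n\|_{U_n}\to 0$, i.e.\ $\|h_n\|_{T_n}\to 0$ and $\|h_n\|_{U_n}/|h_n|\to 0$ after renormalizing, so Lemma~\ref{gh1} applied with the arational tree $T$ forces $U\sim T$, contradiction. Hence $\psi_n$ is asymptotically an isometry, so $U_n$ and $T_n$ have comparable covolumes; then the $g_n$ from the $\phi_n$ side satisfy $\|g_n\|_{S_n}\to$ something bounded below while $\|g_n\|_{U_n}\to 0$, and pushing through $\psi_n$ (asymptotically isometric) gives $\|g_n\|_{T_n}\to 0$ — now Lemma~\ref{gh1} with $S$ arational (swapping roles, using $\|g_n\|_{T_n}\to 0$ on the arational side and $\|g_n\|_{S_n}/|g_n|\to 0$ — which holds since $\mathrm{covol}(S_n)$ is normalized bounded) yields $U\sim S$, again a contradiction. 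Either way we contradict $U\not\sim S$ and $U\not\sim T$, completing the dichotomy.

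\textbf{Main obstacle.} The delicate point is not the unique-duality input (Lemma~\ref{gh1} does the heavy lifting) but the \emph{bookkeeping of normalizations and covolumes}: one must track how the projective normalization of $S_n$ (and of $U_n$) interacts with the non-projective convergence of $T_n$, ensure the covolume of $U_n$ stays in a bounded range so that a non-projective limit $\lambda U$ exists along a subsequence, and verify that the ``short simple element'' extracted from an illegal turn of $\phi_n$ really is \emph{simple} and has $|g_n|\to\infty$ in the fixed Grushko tree $R$ so that ratios like $\|g_n\|_{S_n}/|g_n|$ behave correctly. This is exactly the content of the Bestvina--Reynolds pencil argument \cite[\S6]{BR}, and transplanting it to the free-product setting requires replacing their local-finiteness/Outer-space-specific steps by the BBT estimates of \cite[Lemma~4.1]{BFH}, \cite[Proposition~3.12]{Hor3} and Lemma~\ref{lemma:long-segments} above.
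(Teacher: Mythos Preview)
Your approach differs substantially from the paper's and contains a genuine error in the final step. The paper does not argue via illegal turns or a dichotomy on whether $\psi_n$ is ``asymptotically isometric''. Instead, after rescaling so that $\tilde S_n:=S_n/|S_n|\to S$ and $\tilde U_n:=U_n/|U_n|\to U$, it splits on whether $|U_n|$ stays bounded. If it does, the $1$-Lipschitz morphisms $U_n\to T_n$ pass to a Lipschitz map $U\to T$; then either $U\in\calo$ (done, and this also gives the last clause), or $U$ has dense orbits and the map is alignment-preserving \cite[Lemmas~2.2--2.3]{BGH}, forcing $U\sim T$ by \cite[Corollary~13.4]{GH1}. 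If $|U_n|\to\infty$, the key idea is to use a \emph{legal} loop for the full optimal morphism $f_n$: this gives nonperipheral $g_n$ with $\|g_n\|_{S_n}=\|g_n\|_{U_n}=\|g_n\|_{T_n}$ and $\|g_n\|_{\tilde S_n}\le 2\,\mathrm{covol}(\tilde S_n)\to 0$. Non-projective convergence of $T_n$ gives $\|g_n\|_{T_n}\le C|g_n|$, hence $\|g_n\|_{\tilde U_n}/|g_n|=\|g_n\|_{T_n}/(|U_n|\,|g_n|)\le C/|U_n|\to 0$, and Lemma~\ref{gh1} (with $\tilde S_n\to S$ arational and $\tilde U_n\to U$) yields $U\sim S$.

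Your final application of Lemma~\ref{gh1} does not give what you claim. You produce $g_n$ with $\|g_n\|_{S_n}$ bounded below, $\|g_n\|_{U_n}\to 0$, and (via $\psi_n$ near-isometric) $\|g_n\|_{T_n}\to 0$, then assert that Lemma~\ref{gh1} ``with $S$ arational'' gives $U\sim S$. But the inputs you list --- $\|g_n\|_{T_n}\to 0$ on an arational side and $\|g_n\|_{S_n}/|g_n|\to 0$ --- involve only $S$ and $T$, not $U$; they would at best yield $S\sim T$. To conclude $U\sim S$ you would need $\|g_n\|_{\tilde S_n}\to 0$, which is precisely what ``$\|g_n\|_{S_n}$ bounded below'' rules out. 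The missing mechanism is the equality of translation lengths along a legal axis through the entire factorization $S_n\to U_n\to T_n$, which is what couples $U$ to $S$ and lets a single dichotomy on $|U_n|$ replace your two-sided illegal-turn analysis.
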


\begin{proof}
We can find sequences $(|S_n|),(|U_n|)\in\mathbb{R}^{\mathbb{N}}$  such that $\Tilde S_n:=S_n/|S_n|$ converges to $S$, and $\Tilde U_n:=U_n/|U_n|$ converges to $U$. Also $(T_n)_{n\in\mathbb{N}}$ converges to $T$ by assumption, without renormalizing. 

For all $n\in\mathbb{N}$, there are morphisms $S_n\to U_n\to T_n$. If $|U_n|$ is bounded, then there is a $G$-equivariant Lipschitz map $h:U\to T$. By \cite[Lemma~2.3]{BGH}, either $U$ belongs to $\calo$ (in which case we are done) or else $U$ is a tree with dense $G$-orbits. In the latter case, it follows from \cite[Lemma~2.2]{BGH} that $h$ is alignment-preserving, so $U$ is equivalent to $T$. 

We can therefore assume, up to passing to a subsequence, that $|U_n|$ diverges to $+\infty$. 
We aim to prove that $U$ is arational and equivalent to $S$; the lemma will follow from this fact. 

Since $\Tilde S_n$ converges to $S$ which has dense orbits, the covolume $\vol(\tilde S_n)$ of $\Tilde S_n$ tends to $0$.
For every $n\in\mathbb{N}$, as $f_n$ is optimal, we can find a nonperipheral element $g_n\in G$, whose axis is legal (i.e.\ $f_n$ is an isometry when restricted to the axis of $g_n$) and with $||g_n||_{\Tilde S_n}\leq 2 \vol(\tilde S_n)$, so $||g_n||_{\Tilde S_n}$ tends to $0$.
Since $g_n$ is legal, $||g_n||_{S_n}=||g_n||_{U_n}=||g_n||_{T_n}$. 

Since $T_n$ converges non-projectively to $T$, 
there exists a constant $C$ such that for every $n\in\mathbb{N}$, the Lipschitz constant of any optimal piecewise linear map from our fixed Grushko tree $R$ to $T_n$ is at most $C$. 
It follows that $||g_n||_{T_n}\leq C |g_n|$. 

We thus deduce that $$\frac{||g_n||_{\tilde U_n}}{|g_n|}=\frac{||g_n||_{U_n}}{|g_n|.|U_n|}=\frac{||g_n||_{T_n}}{|g_n|.|U_n|}\leq \frac{C}{|U_n|}\ra 0.$$
Since in addition $||g_n||_{\tilde{S}_n}\to 0$  
and $\tilde{S}_n$ converges to  $S$, Lemma~\ref{gh1} implies that $U$ is arational and equivalent to $S$.
\end{proof}

\subsection{Barycenter of a triple of pairwise inequivalent arational trees}\label{subsec_bary}

Given two inequivalent arational trees $S,T\in\AT$ and $U\in\calo$, we say that the triple $(S,U,T)$ \emph{has an aligned approximation} if there exist 
\begin{itemize}
\item a sequence $(S_n)_{n\in\mathbb{N}}\in\calo^\mathbb{N}$ converging projectively to a tree $S'\sim S$, 
\item a sequence $(U_n)_{n\in\mathbb{N}}\in\calo^\mathbb{N}$ converging non-projectively to $U$,
\item a sequence $(T_n)_{n\in\mathbb{N}}\in\calo^\mathbb{N}$ converging non-projectively to $T$, 
\end{itemize}
such that for every $n\in\mathbb{N}$, there exists an optimal morphism $S_n\to T_n$ which factors through optimal morphisms $S_n\to U_n$ and $U_n\to T_n$. 
In this case, we say that $((S_n)_{n\in\mathbb{N}},(U_n)_{n\in\mathbb{N}},(T_n)_{n\in\mathbb{N}})$ is an \emph{aligned approximation} of $(S,U,T)$. 

\begin{rk}\label{rk_proj}
If $(S,U,T)$ has an aligned approximation, then for any $\lambda,\lambda'\in\bbR^*_+$, so does
 $(\lambda' S,\lambda U,\lambda T)$.
\end{rk}

Let now $L\ge 1$ and $C\ge 0$ be such that the projection to $\FF$ of any optimal folding path between Grushko $(G,\calf)$-trees is an (unparametrized) $(L,C)$-quasigeodesic, see e.g.\ \cite{HM} or \cite[Proposition~2.11]{GH}. The hyperbolicity of $\FF$ allows us to choose an integer $\delta$ such that for every triple $(\xi_1,\xi_2,\xi_3)$ of pairwise distinct points of $\partial_\infty\FF$, there exist neighborhoods $V_1,V_2,V_3$ of $\xi_1,\xi_2,\xi_3$ in $\FF\cup\partial_\infty\FF$ with the following property: given any three $(L,C)$-quasigeodesics $\gamma_{1},\gamma_{2},\gamma_{3}$, where $\gamma_{i}$ has its origin in $V_i$ and its extremity in $V_{i+1}$ (with indices taken mod $3$), there exists a triple of points $(x_{1},x_{2},x_{3})$, with $x_{i}$ lying on the image of $\gamma_{i}$, such that for every $i,j\in\{1,2,3\}$, we have $d_{\FF}(x_{i},x_{j})\le\delta$.

Let $\pi:\calo\to\FF$ be the natural map consisting in forgetting the metric on the tree (recall that the vertex set of $\FF$ consists of non-metric free splittings).
Let $\calt$ be the set of all triples $(U_1,U_2,U_3)\in\calo^3$ such that for all $i,j\in\{1,2,3\}$, one has $d_{\FF}(\pi(U_i),\pi(U_j))\le\delta$.

Given a triple $(T_1,T_2,T_3)\in \AT^3$ of pairwise inequivalent 
arational trees, we next define $B(T_1,T_2,T_3)$ as the set of all trees $U_{12}\in\calo$ such that there exist $U_{23},U_{31}\in\calo$, and for every $i\in\{1,2,3\}$ (taken modulo $3$), an aligned approximation $(S_{i,i+1}^n,U_{i,i+1}^n,T_{i,i+1}^n)$ of $(T_i,U_{i,i+1},T_{i+1})$, such that for every $n\in\mathbb{N}$, we have $(U_{12}^n,U_{23}^n,U_{31}^n)\in\calt$.

We note that for all $\lambda_1,\lambda_2,\lambda_3\in\bbR_+^*$, one has $B(\lambda_1 T_1,\lambda_2 T_2,\lambda_3 T_3)=\lambda_2 B(T_1, T_2, T_3)$.

\begin{lemma}\label{lemma:boite-compacte}
For every triple $(T_1,T_2,T_3)\in \AT^3$ of pairwise inequivalent arational trees, the set $B(T_1,T_2,T_3)$ is a non-empty compact subset of $\calo$.
\end{lemma}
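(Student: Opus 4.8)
The plan is to establish non-emptiness and compactness separately, in both cases using Lemma~\ref{u-turn} to keep the relevant ``centers'' inside $\calo$ rather than letting them escape to $\partial\calo$.

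\emph{Non-emptiness.} For each $i\in\{1,2,3\}$ (indices mod $3$) I would pick, using the existence of optimal folding paths converging to arational trees (as in \cite{BR} and \cite{GH}), a sequence $S^n_{i,i+1}\in\calo$ converging projectively to a representative of the class of $T_i$, a sequence $T^n_{i,i+1}\in\calo$ converging non-projectively to $T_{i+1}$, and for each $n$ an optimal morphism $S^n_{i,i+1}\to T^n_{i,i+1}$ that is read along a folding path; its $\FF$-projection is an $(L,C)$-quasigeodesic. Since $\pi$ extends continuously to $\AT$ and sends $T_1,T_2,T_3$ to pairwise distinct points $\xi_1,\xi_2,\xi_3\in\partial_\infty\FF$, for $n$ large the two endpoints of the $i$-th projected path lie in the neighbourhoods $V_i,V_{i+1}$ from the definition of $\delta$; hence there is a triple $(x^n_1,x^n_2,x^n_3)$ of pairwise $\delta$-close points of $\FF$ with $x^n_i=\pi(U^n_{i,i+1})$ for some tree $U^n_{i,i+1}$ on the $i$-th path. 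Thus $(U^n_{12},U^n_{23},U^n_{31})\in\calt$ for $n$ large, hence (after reindexing) for all $n$. Passing to a subsequence, each $(U^n_{i,i+1})_n$ converges projectively to some $U_{i,i+1}\in\baro$. By Lemma~\ref{u-turn}, $U_{i,i+1}$ is equivalent to $T_i$, or to $T_{i+1}$, or lies in $\calo$. The first two are impossible: the points $x^n_i$ stay within $\delta$ of one another and lie on the three sides of the ideal triangle of $\FF$ with distinct vertices $\xi_1,\xi_2,\xi_3$, hence stay in a fixed bounded region of $\FF$, whereas if $U_{i,i+1}$ were arational and equivalent to $T_i$ (resp.\ $T_{i+1}$) then $\pi(U^n_{i,i+1})=x^n_i$ would converge to $\xi_i$ (resp.\ $\xi_{i+1}$) in $\FF\cup\partial_\infty\FF$. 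So $U_{i,i+1}\in\calo$, and by Lemma~\ref{u-turn} the sequence accumulates non-projectively to a rescaling of $U_{i,i+1}$; passing to a further subsequence and rescaling the $U_{i,i+1}$ (which does not affect their $\pi$-images, hence not membership in $\calt$) yields genuine aligned approximations witnessing $U_{12}\in B(T_1,T_2,T_3)$.

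\emph{$\FF$-boundedness.} The same argument shows that for \emph{any} $U_{12}\in B(T_1,T_2,T_3)$, with companions $U_{23},U_{31}$ and aligned approximations as in the definition, all of $\pi(U^n_{12}),\pi(U^n_{23}),\pi(U^n_{31})$ lie in a single bounded region $\Omega\subseteq\FF$ depending only on $\xi_1,\xi_2,\xi_3$ (the coarse center of the ideal triangle), and hence so does $\pi(U_{12})$ by continuity of $\pi$. This $\Omega$ is the bounded region of the free factor graph alluded to in the introduction; as noted there it is still ``infinite'' inside $\calo$, which is why compactness is not automatic.

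\emph{Compactness.} I would argue that $B:=B(T_1,T_2,T_3)$ is a closed subset of $\baro$ whose closure lies in $\calo$ and which is precompact in $\baro$. Precompactness: along the optimal folding paths used in the approximations the covolume is monotone, so together with the bound on the number of $G$-orbits of edges of a Grushko tree and with the $\FF$-boundedness above, $B$ is confined to a region of $\baro$ with covolume bounded above and below, which is precompact since $\Pbaro$ is compact. Closedness with limit in $\calo$: given a sequence in $B$ converging in $\baro$ to some $U^\infty$, a diagonal extraction over the approximating sequences (using compactness of $\Pbaro$) produces aligned-approximation data for $(T_i,U^\infty_{i,i+1},T_{i+1})$ together with triples lying in $\calt$ (a closed condition, since $d_\FF(\cdot,\cdot)\le\delta$ is closed and $\pi$ is continuous on the trees at issue); Lemma~\ref{u-turn} then applies to the limit exactly as in the non-emptiness argument, and since $\pi(U^\infty)\in\overline\Omega$ is bounded the two degenerate cases are excluded, so $U^\infty\in\calo$ and $U^\infty\in B$. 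A closed subset of a precompact subset of $\baro$ that is entirely contained in $\calo$ is a compact subset of $\calo$, concluding the proof.

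\emph{Main obstacle.} The crux — and the reason the easy picture for hyperbolic groups does not carry over verbatim — is the failure of local compactness of $\calo$: $\FF$-boundedness of $B$ by itself does not give precompactness, and one must separately rule out accumulation of $B$ on $\partial\calo$. This is precisely what Lemma~\ref{u-turn} buys us, by forcing the folding-path ``centers'' between two arational trees either to degenerate onto one of those arational trees (excluded by $\FF$-boundedness) or to stay in $\calo$.
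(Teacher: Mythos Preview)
Your non-emptiness argument and the $\FF$-boundedness observation are essentially the paper's, and they are correct.

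The gap is in your precompactness step. You assert that $B$ sits in a region of $\baro$ with covolume bounded above and below, invoking covolume monotonicity along folding paths, the bound on the number of orbits of edges, and $\FF$-boundedness; but none of these yields either bound. Along the path $S_n^{12}\to U_n^{12}\to T_n^{12}$ monotonicity gives $\vol(S_n^{12})\ge\vol(U_n^{12})\ge\vol(T_n^{12})$, yet $S_n^{12}$ converges only \emph{projectively} (so $\vol(S_n^{12})$ is uncontrolled) while $T_n^{12}\to T_2$, a tree with dense orbits and covolume zero (so $\vol(T_n^{12})\to 0$). The map $\pi$ forgets the metric entirely, so $\FF$-boundedness contributes nothing here, and the bound on the number of edge orbits is likewise irrelevant to covolume. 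Since precompactness is not established, your closedness argument --- which opens by assuming a sequence in $B$ converges \emph{in $\baro$} --- rests on an unproved hypothesis.

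The paper sidesteps this: pass to a subsequence of $(U(n))$ converging \emph{projectively} to $U(\infty)$ (using only compactness of $\Pbaro$), run the same diagonal extraction you describe, and let the last clause of Lemma~\ref{u-turn} do the work your precompactness step was meant to do. Once $U(\infty)\in\calo$ is forced (via $\FF$-boundedness, exactly as you argue), that clause guarantees the diagonal sequence --- hence $(U(n))$ itself --- accumulates \emph{non-projectively} to some $\lambda U(\infty)$, which one then checks lies in $B$. In short, the upgrade from projective to non-projective convergence is already built into Lemma~\ref{u-turn}; a separate precompactness argument is unnecessary, and yours, as written, is not justified.
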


\begin{proof}
We first prove that $B(T_1,T_2,T_3)$ is non-empty. For every $i\in\{1,2,3\}$, let $(T_{i}^n)_{n\in\mathbb{N}}\in\calo^{\mathbb{N}}$ be a sequence that converges nonprojectively to $T_i$. For every $i\in\{1,2,3\}$, we let $T_{i,i+1}^n:=T_{i+1}^n$. Consider $S_{i,i+1}^n$ obtained by changing edge lengths on $T_i^n$ (possibly setting some of them to $0$) so that
there exists an optimal morphism $S_{i,i+1}^n\to T_{i+1}^n$ (see Section \ref{sec_morphisms}). Then up to taking a subsequence, $(S_{i,i+1}^n)_{n\in\mathbb{N}}$ converges projectively to a tree $T'_i$ which is arational and equivalent to $T_i$. Let $\xi_1,\xi_2,\xi_3$ be the points in $\partial_\infty\FF$ corresponding to the arational trees $T_1,T_2,T_3$. Let $V_1,V_2,V_3$ be neighborhoods of $\xi_1,\xi_2,\xi_3$ in $\FF\cup\partial_\infty\FF$ coming from the definition of $\delta$. By \cite[Theorem~3]{GH}, for all sufficiently large $n\in\mathbb{N}$ and all $i\in\{1,2,3\}$, the projection $\pi(S_{i,i+1}^n)$ belongs to $V_i$, and $\pi(T_{i,i+1}^n)$ belongs to $V_{i+1}$. For every such $n$ and every $i\in\{1,2,3\}$, we then consider an optimal folding path from $S_{i,i+1}^n$ to $T_{i,i+1}^n$. Since optimal folding paths between Grushko $(G,\calf)$-trees project to unparametrized $(L,C)$-quasigeodesics in $\FF$, it follows from the definition of $\delta$ that we can find a triple $(U_{12}^n,U_{23}^n,U_{31}^n)\in \calo^3$, with $U_{i,i+1}^n$ on the folding path from $S_{i,i+1}^n$ to $T_{i,i+1}^n$, such that $(U_{12}^n,U_{23}^n,U_{31}^n)\in\calt$. As $\mathbb{P}\baro$ is compact, up to passing to a subsequence, we can assume that the trees $U_{12}^n$, $U_{23}^n$ and $U_{31}^n$ converge projectively to trees $U_{12}^\infty,U_{23}^\infty,U_{31}^\infty\in\mathbb{P}\baro$. Since the $\pi$-images of these trees lie in a bounded region of $\FF$, it follows from \cite[Theorem~3]{GH} that the limits $U_{i,i+1}^\infty$ are all non-arational, and in particular not equivalent to $T_1,T_2,T_3$. It then follows from Lemma~\ref{u-turn} that all trees $U_{i,i+1}^\infty$ belong to $\mathbb{P}\calo$ (as opposed to belonging to the boundary), and they have representatives $U_{12},U_{23},U_{31}\in\calo$ such that for every $i\in\{1,2,3\}$, the sequence $(U_{i,i+1}^n)_{n\in\mathbb{N}}$ accumulates nonprojectively to $U_{i,i+1}$. Therefore $U_{12}\in B(T_1,T_2,T_3)$ and $B(T_1,T_2,T_3)\neq \es$. 

We now prove that $B(T_1,T_2,T_3)$ is compact. Let $(U(n))_{n\in\mathbb{N}}$ be a sequence of points in $B(T_1,T_2,T_3)$. 
Since $\mathbb{P}\baro$ is compact, up to passing to a subsequence, we can assume that $(U(n))_{n\in\mathbb{N}}$ converges projectively to a tree $U(\infty)$. 
From the definition of $B(T_1,T_2,T_3)$, a diagonal argument together with Lemma~\ref{u-turn} shows that either $U(n)$  accumulates nonprojectively to a tree homothetic to $U(\infty)$ that lies in $B(T_1,T_2,T_3)$ or else $U(\infty)$ is equivalent to either $T_1$ or $T_2$. But all the approximations of $U(n)$ used in the definition project to a common bounded subset of $\FF$, so $U(\infty)$ is not an arational tree. Therefore $U(n)$ accumulates nonprojectively to a tree in $B(T_1,T_2,T_3)$, showing that $B(T_1,T_2,T_3)$ is a compact subset of $\calo$. 
\end{proof}

We have thus associated a compact set in an equivariant way to any triple of pairwise inequivalent arational trees.
We now explain how to associate a canonical finite set of simplices of $\bbP\calo$ from this compact subset of $\calo$ -- this is not obvious as vertices in Outer space can belong to infinitely many simplices in general. We will use the following fact.

\begin{lemma}\label{lem_lipschitz}
 If $(S_n)_{n\in\mathbb{N}}\in\calo^{\mathbb{N}}$ converges non-projectively to a tree $S\in\calo$, then for every $\varepsilon>0$ and every sufficiently large $n\in\mathbb{N}$, there exists a $(1+\varepsilon)$-Lipschitz $G$-equivariant piecewise linear map $S\to S_n$.
\end{lemma}

\begin{proof}
For $S,T\in \calo$, denote by $\mathrm{Lip}(S,T)$ the infimum of Lipschitz constants of
$G$-equivariant piecewise linear maps $S\ra T$.
By \cite[Theorem~3.7]{Hor3}, for every $S\in\calo$, there is a finite set of elements $g_i\in G$ (called candidates of $S$)
such that for every $T\in\calo$, one has $\mathrm{Lip}(S,T)=\max_{i} \frac{||g_i||_T}{||g_i||_S}$.
It follows that $\mathrm{Lip}(S,S_n)$ converges to $1$ as $n$ goes to infinity.
\end{proof}

Lemma~\ref{lem_lipschitz} implies that the covolume of a tree in $\calo$ varies upper semicontinuously. Therefore, given a nonempty compact subset $K$ of $\calo$, the set $K_{\max}$ of all trees in $K$ with maximal covolume is therefore nonempty and compact. 
We then let $K_{\max,\min}$ be the subset of $K_{\max}$ made of all trees having the minimal possible number of $G$-orbits of edges.

\begin{lemma}
Let $K$ be a compact subset of $\calo$. Then $\pi(K_{\max,\min})$ is finite.
\end{lemma}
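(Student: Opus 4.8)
The plan is to argue by contradiction. Assume $\pi(K_{\max,\min})$ is infinite, and pick a sequence $(U_n)_{n\in\mathbb{N}}$ in $K_{\max,\min}$ such that the free splittings $S_n:=\pi(U_n)$ are pairwise distinct. By compactness of $K$, after passing to a subsequence we may assume $U_n\to U_\infty$ for some $U_\infty\in K$. Writing $V:=\vol_{\max}(K)$, we have $\vol(U_n)=V$ for all $n$, and upper semicontinuity of the covolume yields $\vol(U_\infty)\ge\limsup_n\vol(U_n)=V$, whereas $\vol(U_\infty)\le V$ since $U_\infty\in K$. Hence $\vol(U_\infty)=V$, i.e.\ $U_\infty\in K_{\max}$. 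Let $m$ denote the number of $G$-orbits of edges of each $U_n$; by definition of $K_{\max,\min}$ this is the minimum of the number of $G$-orbits of edges among trees in $K_{\max}$, so $U_\infty$ has at least $m$ orbits of edges.

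Next I would use the local structure of relative Outer space: since $U_\infty\in\calo$, it lies in a unique open simplex $\sigma_\infty$, whose trees are exactly the metric representatives of the free splitting $S_\infty:=\pi(U_\infty)$, and by the description of the topology on $\calo$ (see \cite{GL}) a neighbourhood of $U_\infty$ in $\calo$ is contained in the open star of $\sigma_\infty$; concretely, a tree $U\in\calo$ sufficiently close to $U_\infty$ has the property that $S_\infty$ is obtained from $\pi(U)$ by collapsing a ($G$-invariant, possibly empty) forest. Applying this with $U=U_n$ for $n$ large, $S_\infty$ is obtained from $S_n$ by collapsing a forest $\Phi_n$. Collapsing $\Phi_n$ decreases the number of $G$-orbits of edges by exactly the number of $G$-orbits of edges of $\Phi_n$; since $S_\infty$ has at least $m$ orbits of edges while $S_n$ has exactly $m$, this difference is $0$, so $\Phi_n$ contains no edge, i.e.\ $S_\infty=S_n$. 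As this holds for all large $n$, it contradicts the fact that the $S_n$ are pairwise distinct, and the lemma follows.

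The routine parts are the compactness argument and the edge-orbit bookkeeping; the delicate input is the last one, namely that a sequence in $\calo$ converging to a point $U_\infty\in\calo$ eventually lies in the open star of the open simplex containing $U_\infty$ — equivalently, that the underlying splittings eventually collapse onto $S_\infty$. This is exactly where it matters that $K$ is a compact subset of $\calo$, and not merely of $\baro$, so that $U_\infty$ is a genuine Grushko tree located inside an open simplex rather than a degenerate very small tree in $\partial\calo$; one then invokes the description of the topology of $\calo$ from \cite{GL}. Note that this local-star property holds even though $\calo$ is not locally compact in general: the failure of local compactness is caused only by the star of a simplex possibly consisting of infinitely many simplices, which is irrelevant to the present argument.
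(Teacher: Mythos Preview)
Your argument is correct and follows a genuinely different route from the paper's. Both begin identically: pass to a convergent subsequence in $K$, use upper semicontinuity of covolume to place the limit $U_\infty$ in $K_{\max}$, and note that $U_\infty$ therefore has at least $m$ edge orbits. They diverge at the key step of showing $\pi(U_n)=\pi(U_\infty)$ for $n$ large.

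The paper does \emph{not} invoke the open-star property. Instead it produces a $(1+\epsilon)$-Lipschitz map $U_\infty\to U_n$ via the candidate-loop characterisation of the Lipschitz metric \cite[Theorem~3.7]{Hor3}, and analyses the resulting small fold directly: since $\vol(U_n)=\vol(U_\infty)$ and $\epsilon$ is small compared to the shortest edge of $U_\infty$, each edge is only partially folded, and minimality of the number of edge orbits of $U_n$ then rules out any nontrivial folding, forcing $U_n$ to lie in the same cone as $U_\infty$.

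Your approach is conceptually cleaner --- trading the folding analysis for the statement that a Gromov--Hausdorff neighbourhood of $U_\infty$ in $\calo$ lies in the open star of its simplex, and then finishing by edge-orbit bookkeeping. Be aware, however, that this open-star property is not entirely innocent in the relative, non-locally-finite setting (where the weak simplicial topology and the equivariant Gromov--Hausdorff topology genuinely differ), and it is not stated explicitly in \cite{GL}; you should either locate it in Guirardel--Levitt's companion work on deformation spaces, or note that the paper's folding argument is in effect a self-contained proof of the open-star property in the equal-covolume situation you need. The paper's route buys self-containedness at the cost of a slightly more hands-on computation; yours buys transparency at the cost of an external topological input.
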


In other words, 
 there are only finitely many trees in $K_{\max,\min}$ up to equivariant homeomorphism (i.e.~up to changing edge lengths, keeping them positive).

\begin{proof}
Let $(U_n)_{n\in\mathbb{N}}$ be a sequence of trees in $K_{\max,\min}$; we aim to show that the underlying simplicial trees of the trees $U_n$ take only finitely many values. Since $K$ is compact, up to passing to a subsequence, we can assume that $(U_n)_{n\in\mathbb{N}}$ converges to a tree $U_\infty\in K_{\max}$. We will now prove that for all sufficiently large $n\in\mathbb{N}$, 
$U_n$ is equivariantly homeomorphic to $U_\infty$. 

To prove this fact, let $\epsilon>0$ be very small compared to the shortest edge of $U_\infty$. By Lemma~\ref{lem_lipschitz}, for all $n$ large enough, there exists a $(1+\epsilon)$-Lipschitz piecewise linear $G$-equivariant map $f_n:U_\infty\ra U_n$. This map folds several edges together, and we can assume that $\epsilon$ has been chosen small enough so that every edge is only partially folded by this map. We observe that for every vertex $v$ in $U_\infty$, either $f_n$ is injective on a small neighborhood of $v$, or else all edges based at $v$ but one are folded together: otherwise, folding would create at least one new orbit of edges, which would imply that $U_n$ has strictly more orbits of edges than $U_\infty$, a contradiction. This implies that $U_n$ and $U_\infty$ are equivariantly homeomorphic. 
\end{proof}

Since $B(\lambda_1 T_1,\lambda_2 T_2,\lambda_3 T_3)=\lambda_2 B(T_1, T_2, T_3)$ for all $\lambda_1,\lambda_2,\lambda_3\in\bbR_+^*$, 
the finite set $\pi(B(T_1,T_2,T_2)_{\max,\min})$ only depends on the projective classes of $T_1$, $T_2$, and $T_3$.
The map $(T_1,T_2,T_3)\mapsto \pi(B(T_1,T_2,T_2)_{\max,\min})$ thus induces an $\Out(G,\calf)$-equivariant map assigning to every triple of pairwise inequivalent projective arational trees, a nonempty finite set of free splittings of $(G,\calf)$. 

In the next section, we will prove that this map is Borel. 

\subsection{Measurability considerations}
 
We denote by $\AT^{((3))}$ the space of triples of pairwise inequivalent arational trees, and by $\mathbb{P}\AT^{((3))}$ the space of triples of pairwise inequivalent projective arational trees. The goal of the present section is to prove the following proposition.

\begin{prop}\label{prop_borel}
The map 
\begin{displaymath}
\begin{array}{rcl}
 \AT^{((3))} & \to & \calp_{<\infty}(\FS)\\
(T_1,T_2,T_3) & \mapsto & \pi(B(T_1,T_2,T_3)_{\max,\min})
\end{array}
\end{displaymath}
is Borel, and therefore induces a Borel map $\PAT^{((3))}\to\calp_{<\infty}(\FS)$.
\end{prop}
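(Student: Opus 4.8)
The plan is to show that the various objects used in the construction of $B(T_1,T_2,T_3)$ and of $\pi(B(\cdot)_{\max,\min})$ are measurable, and to argue by repeated application of measurable selection and the fact that ``membership'' in the relevant sets is a Borel condition. The overall strategy is to encode everything in terms of length functions: a tree in $\baro$ is determined by its length function in $\mathbb{R}^G$, the topology on $\baro$ is the one induced by $\mathbb{R}^G$, and properties like ``there is an $L$-Lipschitz $G$-map from $T$ to $T'$'', ``$T$ and $T'$ are compatible'', ``$\vol(T)\le c$'' are all Borel (indeed often closed) conditions in these coordinates, as was already observed in the proofs of Lemmas~\ref{lemma:erg-borel} and~\ref{lemma:borel-selection-ue}. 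The value $\pi(B(T_1,T_2,T_3)_{\max,\min})$ is a finite subset of the countable set $\FS$, so to prove Borelness it suffices to show that for each fixed $S\in\FS$, the set of triples $(T_1,T_2,T_3)\in\AT^{((3))}$ with $S\in\pi(B(T_1,T_2,T_3)_{\max,\min})$ is Borel.

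First I would unwind the definition of having an aligned approximation. A triple $(S,U,T)$ with $S,T$ arational inequivalent and $U\in\calo$ has an aligned approximation if and only if there exist sequences $(S_n),(U_n),(T_n)\in\calo^{\mathbb{N}}$ with prescribed convergence behaviour admitting, for each $n$, optimal morphisms $S_n\to U_n\to T_n$. The existence of such sequences is an ``$\exists$ over a Polish space of sequences'' statement; to turn it into a Borel condition I would use the Jankov--von Neumann / measurable-selection machinery as in \cite{Sri}. Concretely, the set of quintuples $(S_n,U_n,T_n,m_1,m_2)$ where $m_1\colon S_n\to U_n$, $m_2\colon U_n\to T_n$ are optimal morphisms is a Borel subset of an appropriate Polish space of ``trees together with equivariant morphisms'' (the space of morphisms between $(G,\calf)$-trees is Polish and ``being an optimal morphism'' is a Borel condition; this is the same sort of space used in \cite{GL} and Lemma~\ref{lemma:borel-selection-ue}), and one projects along the morphism-coordinates. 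The convergence constraints ($S_n$ converges projectively to a tree equivalent to $T_1$, $U_n$ converges nonprojectively, etc.) are Borel constraints on the sequences because $\baro$ is Polish and the equivalence relation $\sim$ on $\AT$ is Borel (its graph being the Borel set of pairs $(T,T')$ such that there are $G$-equivariant alignment-preserving maps both ways, equivalently such that a simple pair lies in $\Lambda^2(T)\cap\Lambda^2(T')$, cf.\ Theorem~\ref{gh} and the discussion around Lemma~\ref{lemma:borel-selection-ue}).

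Next, the set $\calt\subseteq\calo^3$ of triples whose $\pi$-images are pairwise $\delta$-close in $\FF$ is Borel: $\pi$ is Borel (it forgets the metric, and the underlying simplicial type of a tree in $\calo$ varies in a Borel way, again via comparing candidate loop-lengths as in the proof of the last lemma of Section~\ref{subsec_bary}), and $\FF$ is a countable graph so the distance condition is a countable Boolean combination. Assembling these, the set
\begin{displaymath}
\{(T_1,T_2,T_3,U_{12})\in\AT^{((3))}\times\calo : U_{12}\in B(T_1,T_2,T_3)\}
\end{displaymath}
is the projection of a Borel set (quantifying over $U_{23},U_{31}$ and over the aligned-approximation data) along coordinates lying in a Polish space, hence is analytic; by \cite[Theorem~1.2]{Sri} or the large-section uniformization theorems, and using that $B(T_1,T_2,T_3)$ is a nonempty \emph{compact} subset of $\calo$ by Lemma~\ref{lemma:boite-compacte}, one gets that $(T_1,T_2,T_3)\mapsto B(T_1,T_2,T_3)$ is a Borel map into the standard Borel space of compact subsets of $\calo$ (the Effros/Vietoris structure), with a Borel countable dense selection $(b_k)$. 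From there $\vol_{\max}(K)=\sup_k\vol(b_k(K))$ is Borel by upper semicontinuity of $\vol$, $K_{\max}$ depends in a Borel way on $K$, the minimal number of edge-orbits over $K_{\max}$ is a Borel integer-valued function, $K_{\max,\min}$ is Borel, and finally, since $\pi(K_{\max,\min})$ is finite, the condition $S\in\pi(K_{\max,\min})$ amounts to: some Borel selection $b_k$ eventually lands, along a subsequence, in a tree of maximal covolume, minimal edge-orbit count, and underlying simplicial type $S$ — again a Borel condition.

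I expect the main obstacle to be the bookkeeping around the existential quantifier over \emph{sequences} of trees-with-morphisms in the definition of aligned approximations: one must set up a genuine standard Borel space parametrizing ``a $(G,\calf)$-tree in $\calo$ together with an optimal $G$-equivariant morphism to another such tree'', check that all the relevant sets (optimality, the prescribed projective/non-projective convergence of each sequence, membership in $\calt$ for all $n$) are Borel in this space, and then invoke a uniformization/selection theorem whose hypotheses (Borel graph, and either large sections or $\sigma$-compact sections) are actually met — here the compactness of $B(T_1,T_2,T_3)$ from Lemma~\ref{lemma:boite-compacte} is what rescues us, since it gives the required structure on the sections. The rest (semicontinuity of covolume, Borelness of $\pi$, finiteness giving the reduction to a per-$S$ condition) is routine once this framework is in place.
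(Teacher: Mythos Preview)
Your approach is different from the paper's and leans on heavier descriptive set theory where the paper gets away with a simple countability trick. The paper's key observation is Lemma~\ref{lemma:rationel}: aligned approximations can always be taken with $S_n,U_n,T_n$ in the \emph{countable} set $\calo_\bbQ$ of trees with rational edge lengths. Once all the existential quantifiers range over a countable set, projections of Borel sets are automatically Borel (they are countable unions), so no analytic sets, hyperspaces, or uniformization theorems are needed. Concretely, for each compact $E\subseteq\calo$ the paper defines $\tilde B_{E,\epsilon}\subseteq\AT^{((3))}\times\calo_\bbQ^{9}$ by writing out the aligned-approximation conditions at precision $\epsilon$, and observes that $B_E=\{\mathbf T:B(\mathbf T)\cap E\neq\varnothing\}=\bigcap_{n}\pi(\tilde B_{E,1/n})$ is Borel because $\calo_\bbQ^{9}$ is countable. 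The functions $\vol_{\max}$ and the sets $B^\sigma_{\max}$, $B^\sigma_{\max,\min}$ are then built by countable suprema and unions over rational levels and over a countable exhaustion of $\calo$ (or of each open simplex $\sigma$) by compacta.

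Your route is plausible in outline but has a gap precisely where you flag the difficulty. You only argue that the graph $\{(\mathbf T,U_{12}):U_{12}\in B(\mathbf T)\}$ is \emph{analytic} (as a projection of a Borel set along Polish coordinates), and then invoke compactness of the sections together with a selection theorem to conclude that $\mathbf T\mapsto B(\mathbf T)$ is Borel into the Effros hyperspace. But the standard results of this type (Arsenin--Kunugui, or Borel measurability of compact-valued multifunctions) require the graph to be \emph{Borel}; with an analytic graph and compact sections you get at best universal measurability, which is strictly weaker than the Borel conclusion the proposition asserts. You would need an independent argument that the graph is Borel, and it is not clear how to produce one without something like the rational-approximation reduction the paper uses.
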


We start with a technical lemma.
We denote by $\calo_{\bbQ}$ the subset of $\calo$ made of all trees whose edge lengths are all rational.
Aligned approximations were defined at the beginning of Section~\ref{subsec_bary}.

\begin{lemma}\label{lemma:rationel}
  If $(S,U,T)$ has an aligned approximation, then it has an aligned approximation $(S'_n,U'_n,T'_n)_{n\in \bbN}$ 
with $S'_n,U'_n,T'_n\in \calo_\bbQ$.
\end{lemma}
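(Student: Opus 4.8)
The plan is to take a given aligned approximation $(S_n, U_n, T_n)_{n\in\bbN}$ and perturb each tree slightly to make all edge lengths rational, while preserving the existence of the factorized optimal morphisms and the convergence behavior (projective for $S_n$, non-projective for $U_n$ and $T_n$). The key observation is that the combinatorial data — the underlying simplicial trees of $S_n$, $U_n$, $T_n$ and the simplicial maps realizing the morphisms $S_n \to U_n \to T_n$ (i.e., which edges of $S_n$ get folded, how edges map across) — is discrete, so it is stable under small metric perturbations, provided we perturb consistently.

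First I would recall the structure of an optimal morphism between simplicial trees in $\calo$: after subdividing, it is a simplicial map, and optimality is the condition that at each point at least two directions have distinct images (equivalently, the ``gates'' at each vertex have size at least $2$, or the train-track-like illegality condition). Given the factorization $S_n \xrightarrow{f_n} U_n \xrightarrow{g_n} T_n$ with $h_n = g_n \circ f_n$ optimal and each of $f_n, g_n$ optimal, I would first subdivide all three trees so that $f_n$ and $g_n$ become simplicial without folding within edges (every edge maps to an edge or a point), then observe that the lengths of edges in $T_n$ determine, via the requirement that $f_n$ and $g_n$ be isometric on edges that are not collapsed, compatible length functions on $U_n$ and $S_n$ up to the collapsed edges. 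Concretely: pick lengths on $T_n$ to be rational and close to the original; pull these back along $g_n$ to get rational lengths on the non-collapsed edges of $U_n$; assign small rational lengths to the edges of $U_n$ collapsed by $g_n$; then pull back along $f_n$ to $S_n$ and assign small rational lengths to the edges of $S_n$ collapsed by $f_n$. Optimality is an open condition on the metric (it only fails on a closed, measure-zero locus where some illegal turn becomes legal or some gate degenerates), and the combinatorial type is unchanged, so for perturbations small enough all three morphisms remain optimal.

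Next I would check the convergence. Since the original sequences converge (projectively for $S_n$, non-projectively for $U_n$ and $T_n$) in the equivariant Gromov--Hausdorff topology, and this topology is controlled by length functions (via \cite{CM} / the translation length embedding), it suffices to make the perturbation of $T_n$ small enough in the sup-metric on a finite generating set so that $\ell_{T'_n} \to \ell_T$ still holds; the pulled-back lengths on $U'_n$ and $S'_n$ then differ from those of $U_n$ and $S_n$ by an amount controlled by the Lipschitz constants of $g_n$ and $h_n$ on the relevant edges, which are bounded along the sequence (this uses that these are morphisms with uniformly bounded geometry — or one simply shrinks the perturbation scale $\epsilon_n \to 0$ so that $S'_n$ stays projectively, $U'_n$ and $T'_n$ non-projectively, within $2^{-n}$ of $S_n$, $U_n$, $T_n$ respectively). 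Hence $(S'_n, U'_n, T'_n)_{n\in\bbN}$ is again an aligned approximation of $(S, U, T)$, now with all trees in $\calo_\bbQ$.

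The main obstacle I anticipate is bookkeeping the interaction between the collapsed edges and the optimality condition: when we give the collapsed edges small but nonzero rational lengths, we must be sure this does not accidentally destroy optimality of $f_n$ or $g_n$ at the vertices incident to those edges (a short collapsed edge could in principle create a legal turn where there was an illegal one). The fix is to note that optimality concerns the \emph{directions} (germs of edges) at a point, not their lengths, so a collapsed edge of any small positive length still provides a direction that is folded with its neighbor, and the gate structure — hence optimality — depends only on the simplicial map, not the metric; thus optimality is genuinely preserved under all sufficiently small perturbations of this combinatorial form. With this point settled, the rest is the routine verification that rationalizing lengths is dense and that the convergence is preserved by choosing the perturbation scale to go to zero. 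I would therefore present the proof by (i) fixing the combinatorial type of an aligned approximation after subdivision, (ii) rationalizing $T_n$ and pulling back/assigning small rational lengths upward, (iii) invoking openness of optimality, and (iv) checking the length-function convergence.
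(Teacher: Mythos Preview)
Your approach is essentially the same as the paper's: subdivide so that $f_n$ and $g_n$ are simplicial (vertex-to-vertex), rationalize the edge lengths of $T_n$, and pull these lengths back through $g_n$ and then $f_n$ to obtain $U'_n$ and $S'_n$.

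One simplification: your ``main obstacle'' about collapsed edges does not arise. By definition a morphism is a piecewise isometry---every segment subdivides into subsegments on which the map is an isometric embedding---so no edge is sent to a point. After subdividing so that $f_n$ and $g_n$ send vertices to vertices, every edge of $S_n$ (resp.\ $U_n$) maps isometrically onto a single edge of $U_n$ (resp.\ $T_n$). Hence the pulled-back lengths are defined on \emph{all} edges, are automatically rational, and there is nothing extra to assign. Optimality is then, as you say, purely a statement about the gate structure at each vertex, which is combinatorial and unchanged by rescaling edge lengths; so no openness argument is needed either. With this in mind the proof collapses to the three lines the paper gives.
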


\begin{proof}
  Consider an aligned approximation of $(S,U,T)$ given by $S_n\xra{f_n} U_n\xra{g_n}T_n$   with $f_n,g_n$ and $g_n\circ f_n$ optimal morphisms.
Up to subdividing $U_n$ and $T_n$, one may assume that $f_n$ and $g_n$ map vertices to vertices.
Now let $T'_n\in \calo_\bbQ$ be obtained by approximating the edge lengths of $T_n$ by rational numbers.
Consider $U'_n$ obtained from $U_n$ by changing the metric on the edges so that the map $g_n:U'_n\ra T'_n$ is
isometric on edges. Construct $S'_n$ similarly so that $f_n:S'_n\ra U'_n$ is isometric on edges.
Then $S'_n,U'_n,T'_n$ are in $\calo_\bbQ$. Since one can choose the rational approximations so that $S'_n$, $U'_n$, and $T'_n$ converge to the same limits as $S_n$, $U_n$, and $T_n$, the lemma follows.
\end{proof}

\begin{proof}[Proof of Proposition \ref{prop_borel}]
Given a subset $E\subseteq \calo$, let $B_E$ be the set of all triples $(T_1,T_2,T_3)$ of pairwise inequivalent arational trees such that $B(T_1,T_2,T_3)$ intersects $E$. We will first prove that if $E$ is compact, then $B_E$ is a Borel subset of $\AT^{((3))}$.

Let $d_{\calo}, d_{\baro}, d_{\mathbb{P}\baro}$ be metrics that induce the Gromov--Hausdorff equivariant topology on the corresponding spaces. 
Given $\eps>0$, we define $\tilde{B}_{E,\epsilon}$ to be the subset of $\AT^{((3))}\times \calo_\bbQ^9$ made of all tuples $$(T_1,T_2,T_3,S_{12},S_{23},S_{31},U_{12},U_{23},U_{31},T_{12},T_{23},T_{31})$$ such that for all $i,j\in\{1,2,3\}$,  denoting by $[T_i]$ the set of all projective classes of arational trees that are equivalent to $T_i$, the following hold true:
\begin{itemize}
\item $d_{\bbP\baro}(S_{i,i+1},[T_i])<\eps$, 
\item $d_{\baro}(T_{i,i+1},T_{i+1})<\eps$,
\item there exists an optimal morphism $S_{i,i+1}\to T_{i,i+1}$ which factors through optimal morphisms $S_{i,i+1}\to U_{i,i+1}$ and $U_{i,i+1}\to T_{i,i+1}$, 
\item $(U_{12},U_{23},U_{13})\in\calt$,
\item $d_\calo(U_{12},E)<\epsilon$.
\end{itemize}
We then let $\pi:\AT^{((3))}\times\calo_\bbQ^9\to \AT^{((3))}$ be the projection to the first three coordinates. Since $\calo_\bbQ$ is countable, the $\pi$-image of any Borel subset is again Borel. Since, in view of Lemma~\ref{lemma:rationel}, we have $$B_E=\bigcap_{n>0}\pi(\tilde B_{E,\frac{1}{n}}),$$ we deduce that $B_E$ is a Borel subset of $\calo$.  
For all integers $i,q$ with $q\neq 0$, define $E^{(i,q)}$ as the subset of $E$ consisting of trees of covolume in the interval $[\frac{i}{q},\frac{i+1}{q}]$.
Let $$\vol_{\max}^E (T_1,T_2,T_3)=\sup_{q>0}\, \sup_{i\geq 0} \, \frac{i}{q}\bbun_{B_{E^{(i,q)}}}(T_1,T_2,T_3)$$
be the maximal 
covolume of a tree in $B(T_1,T_2,T_3)\cap E$. 
Write $\calo$ as a countable union of compact sets $E_n$. 
Then $$\vol_{\max} (T_1,T_2,T_3)=\sup_{n} \vol_{\max}^{E_n}(T_1,T_2,T_3)$$
is the maximal covolume of all trees in $B(T_1,T_2,T_3)$. 
Thus $\vol_{\max}^E (T_1,T_2,T_3)$ and $\vol_{\max} (T_1,T_2,T_3)$ are two Borel functions of $(T_1,T_2,T_3)$.

The Borel set
$$B^{E}_{\max}=\{(T_1,T_2,T_3)\in B_E | \vol_{\max}^E (T_1,T_2,T_3)= \vol_{\max} (T_1,T_2,T_3)\}$$
is the set of all triples $(T_1,T_2,T_3)$ such that $B(T_1,T_2,T_3)\cap E$ contains a tree 
of maximal covolume in $B(T_1,T_2,T_3)$.

Given a standard open cone $\sigma$ of $\calo$ (i.e.\ the set of all trees in $\calo$ obtained from a given tree $T_\sigma$ by varying edge lengths, keeping them positive), write it as a countable union of compact sets $E_{\sigma,n}$. The Borel set
$$B^{\sigma}_{\max}=\cup_n B^{E_{\sigma,n}}_{\max}$$ 
is the set of all triples $(T_1,T_2,T_3)$ such that $B(T_1,T_2,T_3)\cap \sigma$ contains a tree 
of maximal covolume in $B(T_1,T_2,T_3)$.

Finally, we define $a(\sigma)$ to be the number of orbits of edges of any tree in $\sigma$.
Then $\sigma$ intersects $B(T_1,T_2,T_3)_{\max,\min}$
if and only if $(T_1,T_2,T_3)$ lies in the set
$$B^\sigma_{\max,\min}=B^\sigma_{\max}\setminus \bigcup_{a(\sigma')>a(\sigma)} B^{\sigma'}_{\max}.$$
Since this set is Borel, this concludes the proof.
\end{proof}

\subsection{Conclusion}

\begin{proof}[Proof of Theorem~\ref{barycenter}]
By Proposition~\ref{prop_borel}, the map $$\beta:(T_1,T_2,T_3)\mapsto \pi(B(T_1,T_2,T_3)_{\max,\min})$$ can be viewed as a Borel $\Out(G,\calf)$-equivariant map from $\PAT^{((3))}$ to $\calp_{<\infty}(\FS)$. 
But this map might fail to pass to the quotient under equivalence of arational trees.
To solve this problem, for every projective arational tree $T\in\PAT$, define $[T]_{\erg}$ as the finite set of all projective classes of ergometric trees that are equivalent to $T$. For all $(T_1,T_2,T_3)\in\PAT^{(3)}$, we then define
 $$\widetilde\bary(T_1,T_2,T_3)= \bigcup_{T'_1\in [T_1]_\erg,\ T'_2\in [T_2]_\erg,\ T'_3\in [T_3]_\erg} \beta(T'_1,T'_2,T'_3),$$ which is a nonempty finite subset of $\FS$.
To prove that this map is Borel, consider the sequence of Borel maps $f_n:\PAT\ra \PAT^\erg$ given in Lemma~\ref{lemma:borel-selection-ue}. We then note that $$\widetilde\bary(T_1,T_2,T_3)=  \bigcup_{n_1,n_2,n_3} \beta(f_{n_1}(T_1),f_{n_2}(T_2),f_{n_3}(T_3)),$$ showing that $\widetilde\bary:\mathbb{P}\AT^{((3))}\to\calp_{<\infty}(\FS)$ is Borel. The map $\widetilde\bary$ then descends to 
a Borel $\Out(G,\calf)$-equivariant map 
$$\bary:(\mathbb{P}\AT/{\sim})^{(3)}\ra \calp_{<\infty}(\FS)$$ as required. 
\end{proof}

\section{Cocycle rigidity for outer automorphism groups}\label{sec:conclusion}

In this section, we complete the proofs of our main theorems. All the rigidity theorems are stated with a product of higher rank simple algebraic groups $G$ as the source of the cocycle, but in view of the induction statement (Proposition~\ref{induction}), one can also replace $G$ by a lattice in $G$.

\subsection{Cocycle rigidity for mapping class groups}

A \emph{surface of finite type} is a (possibly disconnected, possibly non-orientable) surface $\Sigma$ obtained from a boundaryless compact surface by possibly removing finitely many points and the interior of finitely many disks.
Its \emph{extended mapping class group} $\Mod^*(\Sigma)$ is the group of all isotopy classes of homeomorphisms of $\Sigma$ which restrict to the identity on every boundary component (where isotopies are required to fix the boundary at all times). The main result of the present section is the following theorem, compare with \cite[Section~9]{Ham}.

\begin{theo}\label{mcg}
Let $\Sigma$ be a surface of finite type. Let $G$ be a product of connected higher rank simple algebraic groups over local fields. Let $X$ be a standard probability space equipped with a measure-preserving ergodic $G$-action.

Then every cocycle $G\times X\to\Mod^*(\Sigma)$ is cohomologous to a cocycle that takes its values in a finite subgroup of $\Mod^*(\Sigma)$.
\end{theo}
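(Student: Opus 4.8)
The plan is to run an induction on the complexity of the surface, feeding the geometric rigidity of mapping class groups with respect to the curve complex (Proposition~\ref{prop:ast-mcg}) into the criterion Theorem~\ref{theo:abstract}, and then peeling off curve stabilizers with the stability results of Section~\ref{sec:stability}. First some reductions. Since $\prod_i\Mod^*(\Sigma_i)$ is a finite-index subgroup of $\Mod^*(\Sigma)$ when $\Sigma=\bigsqcup_i\Sigma_i$ (the quotient being the finite group of admissible permutations of components), Propositions~\ref{finite-index} and~\ref{extension} let us reduce to $\Sigma$ connected; and $\Mod(\Sigma)$ has index at most $2$ in $\Mod^*(\Sigma)$ for orientable $\Sigma$, so by Proposition~\ref{finite-index} we may work with $\Mod(\Sigma)$ there. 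The induction statement must be strengthened, however, to: \emph{for every finite-type surface $\Sigma$ with $\xi(\Sigma)\le n$ and every subgroup $H\le\Mod^*(\Sigma)$, the pair $(G,H)$ is cocycle-rigid}, where $\xi(\Sigma)$ is a complexity (e.g.\ the number of curves in a pants decomposition, summed over components) that strictly decreases on each component when one cuts along an essential simple closed curve. The strengthening is forced: stabilizers of curves are only \emph{commensurable} to extensions of lower-complexity mapping class groups, and Theorem~\ref{theo:abstract} only delivers a cocycle valued in a subgroup that \emph{virtually} fixes a curve.

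For the base case ($\xi(\Sigma)\le 1$), the group $\Mod^*(\Sigma)$ is virtually a direct product $F\times\mathbb{Z}^k$ of a finitely generated free group and a free abelian group (torus, once-holed torus, four-holed sphere, pairs of pants, and the like), so every subgroup of it is virtually $F'\times\mathbb{Z}^{k'}$ with $F'$ free of at most countable rank. Here $(G,\mathbb{Z}^{k'})$ is cocycle-rigid by Zimmer's theorem \cite[Theorem~9.1.1]{Zim} together with iterated use of Proposition~\ref{extension}, $(G,F')$ is cocycle-rigid by Adams \cite{Ada2} (equivalently, a free group is geometrically rigid with respect to itself via its Bass--Serre tree, so Theorem~\ref{theo:abstract} applies and point stabilizers are trivial), and one more application of Propositions~\ref{extension} and~\ref{finite-index} assembles the product and passes back from the finite-index subgroup.

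For the inductive step, suppose $\xi(\Sigma)\ge 2$ and let $H\le\Mod^*(\Sigma)$. The triple $(\bbD,K,\Delta)$ witnessing geometric rigidity of $\Mod^*(\Sigma)$ — with $\bbD$ the isotopy classes of essential simple closed curves, $K$ the Thurston compactification of Teichmüller space, $K_\infty$ the arational foliations, and $\Delta$ the Gromov boundary of the curve graph (Proposition~\ref{prop:ast-mcg}) — restricts to a geometrically rigid triple for $H$: conditions~$(1)$ and~$(2)$ of Definition~\ref{de-ast} only involve $\Mod^*(\Sigma)$-equivariant data, hence are automatically $H$-equivariant, and condition~$(3)$ holds since universal amenability of a group action passes to subgroups (via amenability of Borel subgroupoids). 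Theorem~\ref{theo:abstract} then shows that any cocycle $G\times X\to H$ is cohomologous to a cocycle valued in a subgroup $H_1\le H$ that virtually fixes an isotopy class of curve $d$; choosing a finite-index subgroup $H_1'\le H_1$ fixing $d$, we have $H_1'\le\Stab_{\Mod^*(\Sigma)}(d)$. Classical surface topology provides a finite-index subgroup $E\le\Stab_{\Mod^*(\Sigma)}(d)$ sitting in $1\to A\to E\to M\to 1$ with $A$ finitely generated free abelian (boundary/annulus Dehn twists) and $M$ a finite-index subgroup of $\Mod^*(\Sigma_d)$, where $\Sigma_d$ is $\Sigma$ cut along $d$ and each component of $\Sigma_d$ has complexity strictly below $\xi(\Sigma)$. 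Now $H_1'\cap E$ has finite index in $H_1'$ and, being a subgroup of $E$, is an extension of a subgroup of $\Mod^*(\Sigma_d)$ by a subgroup of $A$; applying the inductive hypothesis componentwise to $\Sigma_d$ (and Proposition~\ref{extension} to combine components), using cocycle-rigidity of $(G,\text{subgroup of }A)$, and then Propositions~\ref{extension} and~\ref{finite-index}, we conclude $(G,H_1')$ is cocycle-rigid, hence so is $(G,H_1)$, hence the original cocycle has finite image after a cohomology. The non-orientable case is identical once one uses the non-orientable analogues of the curve graph, Teichmüller space and boundary (whose hyperbolicity, compactness and amenability are known) and takes $\bbD$ to be the isotopy classes of \emph{two-sided} essential simple closed curves, noting that a subgroup virtually fixing a one-sided curve also virtually fixes the two-sided boundary of a Möbius neighborhood.

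The main obstacle I expect is getting the induction to close on the class of \emph{all} subgroups of mapping class groups rather than just the mapping class groups themselves: this is precisely what allows descent through curve stabilizers, which are commensurable to — but not equal to — products of smaller mapping class groups with free abelian factors, and it hinges on the (somewhat routine but necessary) facts that geometric rigidity — in particular universal amenability of the boundary action — restricts to subgroups, and that the finite-index and extension stability statements of Section~\ref{sec:stability} remain applicable when one passes to subgroups of extensions. A secondary, purely topological, point of care is pinning down the structure of $\Stab_{\Mod^*(\Sigma)}(d)$ (orientations of $d$, its two sides, the permutation of the new boundary components, and the boundary twists) so that it is genuinely commensurable to an extension of $\Mod^*(\Sigma_d)$ by a finitely generated abelian group.
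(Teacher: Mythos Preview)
Your argument is correct and follows the same overall route as the paper: reduce to connected surfaces, use geometric rigidity of $\Mod^*(\Sigma)$ with respect to curves (Proposition~\ref{prop:ast-mcg}), apply Theorem~\ref{theo:abstract} to land in a subgroup virtually fixing a curve, cut and recurse on complexity. The point where you diverge---strengthening the induction to \emph{all} subgroups $H\le\Mod^*(\Sigma)$---is unnecessary, because cocycle rigidity is automatically inherited by subgroups. Indeed, if $(G,\Lambda)$ is cocycle-rigid and $H\le\Lambda$, then given $c:G\times X\to H$, view it in $\Lambda$ and write $c(g,x)=f(gx)\,e(g,x)\,f(x)^{-1}$ with $f:X\to\Lambda$ Borel and $e$ valued in a finite $E\le\Lambda$; the map $x\mapsto Hf(x)E\in H\backslash\Lambda/E$ is $G$-invariant, hence a.e.\ constant equal to some $H\lambda_0 E$ by ergodicity, and choosing Borel $h(x)\in H$ with $f(x)\in h(x)\lambda_0 E$ one checks that $h(gx)^{-1}c(g,x)h(x)\in H\cap\lambda_0 E\lambda_0^{-1}$, a finite subgroup of $H$. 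With this lemma in hand (the paper also leans on it implicitly when passing from its subgroup $H^0$ to $\Mod^*(\Sigma^c)$), your base case no longer has to deal with arbitrary---possibly infinitely generated---subgroups of virtually free groups, and the induction closes exactly as in the paper. So the ``main obstacle'' you anticipate is not one.

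Two further differences worth flagging. First, the paper reduces to \emph{boundaryless} $\Sigma$ at the outset by capping each boundary component with a once-punctured disk (the kernel of $\Mod^*(\Sigma)\to\Mod^*(\widehat\Sigma)$ is abelian, so Proposition~\ref{extension} applies); you skip this and carry boundary twists through the induction, which is fine but slightly heavier. Second, for non-orientable $\Sigma$ the paper does not run the curve-graph machinery intrinsically: it passes to the orientation double cover via Fujiwara's lemmas \cite{Fuj}, thereby avoiding the need to cite non-orientable analogues of Klarreich's boundary map, Kida's barycenter, and Hamenst\"adt/Kida's amenability.
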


In order to prove Theorem~\ref{mcg}, we will apply the strategy developed in Section~\ref{sec:criterion}. The \emph{topological complexity} $\xi(\Sigma)$ of a connected, boundaryless, orientable surface $\Sigma$ of finite type is defined as $\xi(\Sigma)=3g(\Sigma)+s(\Sigma)-3$, where $g(\Sigma)$ denotes the genus of $\Sigma$ and $s(\Sigma)$ is the number of punctures. We first establish the following fact. 

\begin{prop}\label{prop:ast-mcg}
Let $\Sigma$ be a connected boundaryless orientable surface of finite type with $\xi(\Sigma)>0$. Then $\Mod^*(\Sigma)$ is geometrically rigid with respect to the countable set of all isotopy classes of essential simple closed curves on $\Sigma$.
\end{prop}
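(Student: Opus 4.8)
The plan is to verify the three conditions of Definition~\ref{de-ast} for the triple $(\bbD,K,\Delta)$, with $\bbD$ the set of isotopy classes of essential simple closed curves on $\Sigma$, with $K=\overline{\mathcal{T}(\Sigma)}=\mathcal{T}(\Sigma)\cup\PML(\Sigma)$ the Thurston compactification of Teichmüller space (a compact metrizable ball on which $\Mod^*(\Sigma)$ acts by homeomorphisms), and with $\Delta=\partial_\infty\mathcal{C}(\Sigma)$ the Gromov boundary of the curve graph, a Polish space on which $\Mod^*(\Sigma)$ acts by homeomorphisms. (When $\xi(\Sigma)=1$ one uses the standard Farey-type modification of $\mathcal{C}(\Sigma)$, which is still hyperbolic, and everything below goes through verbatim.) By Klarreich's theorem, $\partial_\infty\mathcal{C}(\Sigma)$ is $\Mod^*(\Sigma)$-equivariantly homeomorphic to the space $\mathcal{EL}(\Sigma)$ of ending laminations, i.e.\ topological types of minimal filling geodesic laminations. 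For the compatibility condition I would let $K_\infty\subseteq\PML(\Sigma)$ be the subset of (projective classes of) \emph{arational} measured foliations --- those whose support is both minimal and filling --- and $K_{\bdd}=K\setminus K_\infty$, which contains all of $\mathcal{T}(\Sigma)$ together with the non-arational projective measured foliations. Arationality is a countable conjunction of Borel conditions (positivity of the geometric intersection number with every curve of $\bbD$, and minimality of the support), so $K_\infty$ is a Borel $\Mod^*(\Sigma)$-invariant set. The map $\theta_\infty\colon K_\infty\to\Delta$ sends an arational foliation to the topological type of its support, which is an ending lamination; it is manifestly $\Mod^*(\Sigma)$-equivariant and is Borel, the latter being checked through measured-train-track coordinates on $\PML(\Sigma)$.

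The map $\theta_{\bdd}\colon K_{\bdd}\to\calp_{<\infty}(\bbD)$ I would build in two pieces. To a hyperbolic metric $x\in\mathcal{T}(\Sigma)$ I assign its set of systoles (shortest closed geodesics): on a boundaryless finite-type surface with $\xi(\Sigma)>0$ this is a nonempty finite subset of $\bbD$, since a shortest geodesic is always simple and essential, it is $\Mod^*(\Sigma)$-equivariant, and it depends Borel-measurably on $x$ because the systole function and each length function $\ell_\gamma$ are continuous. To a non-arational projective measured foliation $F$ I assign its canonical reducing curves: writing the support of $F$ as its finite disjoint union $L_1\sqcup\dots\sqcup L_m$ of minimal components, set $\theta_{\bdd}(F)=\bigcup_i\partial(\mathrm{supp}\,L_i)$, the union of the boundaries of the essential subsurfaces filled by the $L_i$ (a weighted simple closed curve being counted as itself). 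Since $F$ is not arational, no single $L_i$ fills $\Sigma$ --- if one did, the remaining components would be empty, forcing $F$ to be arational --- so $\theta_{\bdd}(F)$ is a nonempty finite set of curves; equivariance is clear, and measurability follows again from measurability of intersection numbers.

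For the barycenter map I would invoke an equivariant Borel ``coarse centre'' construction for triples of distinct points of $\partial_\infty\mathcal{C}(\Sigma)$, along the lines of the argument used for the Bowditch boundary in the proof of Theorem~\ref{theo:rhyp}: hyperbolicity of $\mathcal{C}(\Sigma)$ attaches to such a triple a bounded region of the graph, and the finer (fineness-type) properties of the curve graph together with Kida's measurability argument \cite[Section~4.1.2]{Kid} (see also Hamenstädt \cite{Ham}) cut this down to a canonical finite set of vertices, yielding a Borel $\Mod^*(\Sigma)$-equivariant map $\mathrm{bar}\colon\Delta^{(3)}\to\calp_{<\infty}(\bbD)$. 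For the amenability condition, the $\Mod^*(\Sigma)$-action on $\Delta=\partial_\infty\mathcal{C}(\Sigma)$ is topologically amenable by work of Hamenstädt \cite{Ham} (alternatively derivable from Kida's analysis \cite{Kid}), hence Borel amenable, hence universally amenable by Proposition~\ref{prop:borel-universally}. Assembling these pieces verifies Definition~\ref{de-ast}, so $\Mod^*(\Sigma)$ is geometrically rigid with respect to $\bbD$.

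The step I expect to be the main obstacle is the construction of the barycenter map: unlike in the hyperbolic-group case treated by Adams, the curve graph is not locally finite, so hyperbolicity alone only yields an \emph{infinite} bounded region and one genuinely needs the fine-geometric input to isolate a canonical finite set --- this is the very difficulty that makes the $\Out(F_N)$ analogue of Section~\ref{sec-barycenter} delicate. The remaining technical point, namely Borel measurability of $\theta_\infty$, $\theta_{\bdd}$ and of the Klarreich identification $\partial_\infty\mathcal{C}(\Sigma)\cong\mathcal{EL}(\Sigma)$, is routine once everything is phrased in terms of geometric intersection numbers and measured-train-track charts on $\PML(\Sigma)$.
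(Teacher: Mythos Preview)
Your proposal is correct and follows essentially the same route as the paper: both take $\Delta=\partial_\infty\mathcal{C}(\Sigma)$, identify $K_\infty$ with the arational (minimal filling) projective measured laminations, use Klarreich for $\theta_\infty$, cite Kida \cite[Section~4.1.2]{Kid} for the barycenter map, and cite Hamenst\"adt/Kida for universal amenability of the action on $\Delta$.

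The only genuine difference is your choice of $K$. The paper takes $K=\PML(\Sigma)$ itself (the Thurston \emph{boundary}), so that $K_{\bdd}$ consists only of non-arational projective laminations, and then simply cites \cite[Section~1.1]{KM} or \cite[Lemma~4.38]{Kid} for the map $\theta_{\bdd}\colon K_{\bdd}\to\calp_{<\infty}(\bbD)$. You instead take the full Thurston compactification $K=\mathcal{T}(\Sigma)\cup\PML(\Sigma)$, which forces you to supply a second piece of $\theta_{\bdd}$ on Teichm\"uller space; your systole map does this cleanly. Your explicit description of $\theta_{\bdd}$ on the non-arational part (boundaries of subsurfaces filled by minimal components) is exactly what the cited references construct. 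So your version is slightly more self-contained at the cost of handling one extra (easy) case, while the paper's version is terser; neither buys anything mathematically over the other.
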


\begin{proof}
Let $K:=\PML$ be the space of projective measured laminations on $\Sigma$, which is compact and metrizable (in fact homeomorphic to a finite-dimensional sphere). Let $K_\infty\subseteq \PML$ be the subspace made of minimal filling laminations: this is a Borel subset of $\PML$, because it is also equal to the subset of $\PML$ made of all laminations that have positive intersection number with every essential simple closed curve on $\Sigma$. Let $K_{\bdd}:=K\setminus K_\infty$. Let $\Delta$ be the Gromov boundary of the curve graph of $\Sigma$ (whose hyperbolicity was proved by Masur and Minsky in \cite{MM}). Being the Gromov boundary of a hyperbolic graph, the space $\Delta$ is completely metrizable (see e.g.\ \cite[Proposition~5.31]{Vai}); using the fact that the vertex set of the curve graph is countable, it is easy to see that $\Delta$ is also separable, whence a Polish space. We will denote by $\bbD$ the countable set of all isotopy classes of essential simple closed curves on $\Sigma$.

By a theorem of Klarreich \cite{Kla}, there exists a continuous (whence Borel) $\Mod^*(\Sigma)$-equivariant map $K_\infty\to\Delta$. By \cite[Section~1.1]{KM} or \cite[Lemma~4.38]{Kid}, there exists a Borel $\Mod^*(\Sigma)$-equivariant map $K\setminus K_\infty\to\calp_{<\infty}(\bbD)$. The existence of a Borel $\Mod^*(\Sigma)$-equivariant `barycenter map' $\Delta^{(3)}\to\calp_{<\infty}(\bbD)$ was proved by Kida in \cite[Section~4.1.2]{Kid}. Finally, universal amenability of the $\Mod^*(\Sigma)$-action on $\Delta$ was established by Hamenstädt in \cite[Corollary~2]{Ham} and independently by Kida in \cite[Theorem~3.19]{Kid}.
\end{proof}

\begin{proof}[Proof of Theorem~\ref{mcg}]
As $\Sigma$ is a surface of finite type, it has finitely many connected components. Therefore $\Mod^*(\Sigma)$ has a finite-index subgroup isomorphic to the direct product of finitely many extended mapping class groups of connected surfaces. Using the fact that cocycle-rigidity, as a property of the target group, is stable under extensions (Proposition~\ref{extension}), we can therefore assume that $\Sigma$ is connected. 

By attaching a once-puctured disk on every boundary component of $\Sigma$, we get a boundaryless surface $\widehat{\Sigma}$ of finite type, and the inclusion $\Sigma\hookrightarrow\widehat{\Sigma}$ induces a homomorphism $\Mod^*(\Sigma)\to\Mod^*(\widehat{\Sigma})$ onto a finite-index subgroup of $\Mod^*(\widehat{\Sigma})$, whose kernel is abelian (see \cite[Proposition~3.19]{FM-primer}). Since $(G,A)$ is cocycle-rigid for every abelian group $A$ by Zimmer's theorem \cite[Theorem~9.1.1]{Zim}, and cocycle-rigidity, as a property of the target group, is stable under group extensions and finite-index overgroups, we can therefore assume that $\Sigma$ is boundaryless. 

We first assume that $\Sigma$ is orientable. The proof is by induction on the topological complexity $\xi(\Sigma)$. If $\xi(\Sigma)\le 0$, then $\Mod^*(\Sigma)$ is finite if $\Sigma$ is a sphere with at most $3$ punctures, and isomorphic to $\mathrm{GL}(2,\mathbb{Z})$ whence virtually free if $\Sigma$ is a closed torus. In both cases $(G,\Mod^*(\Sigma))$ is cocycle-rigid. 

We now assume that $\xi(\Sigma)>0$. Let $c:G\times X\to\Mod^*(\Sigma)$ be a cocycle. By Proposition~\ref{prop:ast-mcg}, the group $\Mod^*(\Sigma)$ is geometrically rigid 
with respect to  the countable set $\bbD$ of all isotopy classes of essential simple closed curves on $\Sigma$. 
Theorem~\ref{theo:abstract} thus shows that $c$ is cohomologous to a cocycle with values in a subgroup $H$ of $\Mod^*(\Sigma)$ that virtually fixes the isotopy class of simple closed curve $c$ on $\Sigma$. Let $\Sigma^c$ be the (possibly disconnected) surface obtained by cutting $\Sigma$ along $c$. 

If $\Sigma^c$ is connected, then $H$ has a finite index subgroup $H^0$ (fixing $c$) for which there is a morphism $H^0\to\Mod^*(\Sigma^c)$, whose kernel is at most cyclic (generated by a twist about $c$). Since $g(\Sigma^c)=g(\Sigma)-1$ and $s(\Sigma^c)=s(\Sigma)+2$, we get $\xi(\Sigma^c)<\xi(\Sigma)$, so we know by induction that $(G,\Mod^*(\Sigma^c))$ is cocycle-rigid. Since $(G,\mathbb{Z})$ is cocycle-rigid \cite[Theorem~9.1.1]{Zim}, and cocycle-rigidity is stable under finite-index overgroups and group extensions (Propositions~\ref{finite-index} and~\ref{extension}), it follows that $(G,\Mod^*(\Sigma))$ is cocycle-rigid, which concludes our proof in this case.

If $\Sigma^c$ is disconnected, denoting by $\Sigma_1$ and $\Sigma_2$ its connected components, then $H$ has a finite index subgroup $H^0$ (fixing $c$ without reversing its orientation) for which there is a morphism $H^0\to\Mod^*(\Sigma_1)\times\Mod^*(\Sigma_2)$, whose kernel is at most cyclic (generated by a twist about $c$). The proof then goes by induction as in the previous case using Proposition \ref{extension} and the fact that $\xi(\Sigma_1),\xi(\Sigma_2)<\xi(\Sigma)$. 

We now assume that $\Sigma$ is closed and non-orientable. If $\Sigma$ is a projective plane or a Klein bottle then its mapping class group is finite. Otherwise, by \cite[Lemma~4]{Fuj}, denoting by $\hat{\Sigma}$ an orientable double cover of $\Sigma$, the group $\Mod^*(\Sigma)$ has a finite-index subgroup that embeds in $\Mod^*(\hat{\Sigma})$. The conclusion then follows from the orientable case. 

If $\Sigma$ is a non-orientable surface with punctures, 
one can use the Birman exact sequence given in \cite[Lemma~6]{Fuj} and apply Proposition \ref{extension} to reduce to the case of a closed surface.
Alternatively,  the mapping class group $\Mod^*(\Sigma)$ is also a subgroup of $\Out(F_N)$, so the conclusion also follows from Corollary~\ref{cor:free} in this case.
\end{proof}

\subsection{Cocycle rigidity for automorphisms of free products}

The main result of the present paper is the following theorem. Given a group $G$, we denote by $Z(G)$ the center of $G$.

\begin{theo}\label{free-product}
Let $H_1,\dots,H_p$ be countable groups, let $F_N$ be a free group of rank $N$, and let $$H:=H_1\ast\dots\ast H_p\ast F_N.$$
Let $\calf=\{[H_1],\dots,[H_p]\}$. Let $G$ be a product of connected higher rank simple algebraic groups over local fields.
 Assume that 
\begin{itemize}
\item for each $i\in\{1,\dots,k\}$, the pair $(G,H_i/Z(H_i))$ is cocycle-rigid, and
\item for each $i\in\{1,\dots,k\}$, the centralizer in $H_i$ of every nontrivial element is amenable.
\end{itemize}
Then $(G,\Out(H,\calf^{(t)}))$ is cocycle-rigid.
\end{theo}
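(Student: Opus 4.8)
The plan is to run an induction on the complexity $\xi(H,\{H_i\})$ (in the sense of Section~\ref{sec:background}), using the general criterion (Theorem~\ref{theo:abstract}) at each step together with the stability properties of cocycle rigidity established in Section~\ref{sec:stability}. The base cases are the sporadic ones: when $\xi(H,\{H_i\})\le (2,1)$ the group $\Out(H,\{H_i\}^{(t)})$ is either trivial, finite, isomorphic to $\bbZ$, or fits in an extension whose kernel and quotient are among $\{1\},\bbZ$, and the $H_i/Z(H_i)$; in each case cocycle rigidity follows from Zimmer's theorem \cite[Theorem~9.1.1]{Zim}, the hypothesis on the $H_i/Z(H_i)$, and stability under extensions and finite-index overgroups (Propositions~\ref{finite-index} and~\ref{extension}). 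One should check the sporadic list carefully, in particular the relation between $\Out(H,\{H_i\}^{(t)})$ and the $\Out(H_i,\{H_i\}^{(t)})=H_i/Z(H_i)$ factors.

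\textbf{Inductive step.} Assume $(H,\{H_i\})$ is nonsporadic. First I would verify that $\Lambda:=\Out(H,\{H_i\}^{(t)})$ is geometrically rigid with respect to $\bbD:=$ the countable set of conjugacy classes of proper $(H,\{H_i\})$-free factors (or, equivalently in the sense needed, the set $\FS$ of free splittings). Take $K$ to be the compactification $\Pbaro$ of projectivized relative Outer space, with $K_\infty:=\mathbb{P}\AT$ the arational trees and $K_\bdd$ its complement; the compatibility maps $\theta_\bdd:K_\bdd\to\calp_{<\infty}(\bbD)$ and $\theta_\infty:K_\infty\to\Delta:=\mathbb{P}\AT/{\sim}=\partial_\infty\FF$ come from the structure theory recalled in Section~\ref{sec:background} (nonarational trees carry a canonical reduction to a free factor system, and arational trees map to their equivalence class, i.e.\ the boundary point of $\FF$). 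The barycenter map $\Delta^{(3)}\to\calp_{<\infty}(\FS)$ is exactly Theorem~\ref{barycenter}, and universal amenability of the $\Lambda$-action on $\Delta$ is Proposition~\ref{prop:action-amenable} (whose hypothesis — amenability of centralizers of nontrivial elements of the $H_i$ — is among our assumptions). Hence $(\bbD,K,\Delta)$ is geometrically rigid.

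\textbf{Applying the criterion and descending.} Now feed $\Lambda$ into Theorem~\ref{theo:abstract}: every cocycle $G\times X\to\Lambda$ is cohomologous to one taking values in a subgroup $H'\le\Lambda$ that virtually fixes some conjugacy class of a proper free factor $A$ (or, in the $\FS$ picture, the conjugacy class of a free splitting). Up to replacing $H'$ by a finite-index subgroup (legitimate by Proposition~\ref{finite-index}), we may assume $H'$ fixes the conjugacy class of $A$, so there is a natural homomorphism $H'\to\Out(A,\{\text{peripheral and non-peripheral vertex groups}\}^{(t)})\times(\text{finitely many }\Out\text{ of complementary pieces})$, whose kernel is generated by "twists" about the splitting and is polycyclic-by-(product of the $H_i/Z(H_i)$ type groups) — one must be careful here and invoke the relative twist/Levitt-type description of the kernel of the "restriction to vertex groups" map for the free splitting stabilizer. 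Each target factor has strictly smaller complexity $\xi$ (the factor system of $A$, and the complementary pieces, are strictly simpler), and each still satisfies the two hypotheses of the theorem since centralizers in the $H_i$ are inherited and the $H_i/Z(H_i)$ are unchanged, so by induction each factor gives a cocycle-rigid pair with $G$; the kernel is cocycle-rigid by Zimmer's theorem \cite[Theorem~9.1.1]{Zim} applied to its abelian (twist) part together with the hypothesis on the $H_i/Z(H_i)$ and Proposition~\ref{extension}; then Proposition~\ref{extension} and Proposition~\ref{finite-index} assemble these into cocycle rigidity of $(G,H')$ hence of $(G,\Lambda)$.

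\textbf{Main obstacle.} I expect the genuine difficulty to be the bookkeeping in the inductive step: identifying precisely the structure of the stabilizer in $\Out(H,\{H_i\}^{(t)})$ of (the conjugacy class of) a free factor or a free splitting — namely expressing it as an extension of a product of smaller relative-$\Out$ groups by the group of twists — and checking that the relevant smaller groups genuinely have smaller complexity and still fall under the hypotheses (so that the induction is well-founded and the peripheral condition on centralizers propagates). The cocycle-superrigidity machinery (Theorem~\ref{theo:abstract}) and the stability lemmas are black boxes here; the content of this particular proof is entirely the reduction to simpler relative free products and the verification that twist subgroups are handled by Zimmer's abelian case.
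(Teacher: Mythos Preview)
Your overall strategy—induction on complexity, geometric rigidity via Proposition~\ref{prop:ast-out}, then Theorem~\ref{theo:abstract}—is exactly the paper's. The difference lies in the descent step, and there the paper's route is cleaner than yours.

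You propose to analyze the stabilizer of a free factor (or splitting) via a map to a product of relative $\Out$ groups of complementary pieces, with kernel the group of twists; you then plan to handle the twist kernel separately using Zimmer's abelian case and the hypothesis on the $H_i/Z(H_i)$. The paper instead uses a single short exact sequence
\[
1\to\Out(H,\widetilde{\calf}^{(t)})\to\Out(H,\calf^{(t)},A)\to\Out(A,\calf_{|A}^{(t)})\to 1,
\]
where $\widetilde{\calf}$ is the free factor system obtained by adjoining $[A]$ to $\calf$. The point is that the kernel here is \emph{itself} a relative outer automorphism group of a free product, with strictly smaller complexity $\xi(H,\widetilde{\calf})<\xi(H,\calf)$, so the induction applies directly to both kernel and quotient. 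No separate analysis of twists is needed: they are absorbed into $\Out(H,\widetilde{\calf}^{(t)})$ and handled by the same induction. This is the observation you are missing, and it is what makes the paper's inductive step a couple of lines rather than a delicate Levitt-style computation.

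Your approach is not wrong in principle, but the description of the twist kernel as ``polycyclic-by-(product of the $H_i/Z(H_i)$ type groups)'' would need to be made precise and justified, whereas the paper sidesteps this entirely. One minor slip: in your sporadic list, $\Out(\bbZ)\cong\bbZ/2\bbZ$ is finite, not $\bbZ$; and the sporadic cases $(2,0)$ and $(2,1)$ do require Levitt's explicit description of $\Out(H,\calf^{(t)})$ (cited in the paper as \cite{Lev}) to see the $H_i/Z(H_i)$ factors appear—so the hypothesis on $H_i/Z(H_i)$ is used only there, at the base of the induction.
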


\begin{rk}
If we assume in addition that $(G,\Out(H_i))$ is cocycle-rigid for all $i\in\{1,\dots,k\}$, then we can conclude that $(G,\Out(H,\calf))$ is cocycle-rigid. This follows from the short exact sequence $$1\to\Out(H,\calf^{(t)})\to\Out(H,\calf)\to\prod_{i=1}^k\Out(H_i)\to 1$$ and the fact that cocycle-rigidity is stable under extensions as a property of the target group (Proposition~\ref{extension}).
\end{rk}

\begin{rk}
As shown in \cite[Theorem~6.4 and Remark~6.5]{BGH}, the second assumption in Theorem~\ref{free-product} is exactly the required assumption to ensure that the action of $\Out(H,\calf^{(t)})$ on the space of all arational $(H,\calf)$-trees is universally amenable.
\end{rk}

An immediate consequence of Theorem~\ref{free-product} (applied with $p=0$) is the following.

\begin{cor}\label{cor:free}
Let $N\ge 2$. Let $G$ be a product of connected higher rank simple algebraic groups over local fields. Let $X$ be a standard probability space equipped with an ergodic measure-preserving $G$-action.

Then every cocycle $G\times X\to\Out(F_N)$ is cohomologous to a cocycle that takes its values in some finite subgroup of $\Out(F_N)$.
\qed 
\end{cor}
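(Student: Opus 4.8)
The plan is to read off this corollary as the degenerate case $p=0$ of Theorem~\ref{free-product}. Applying that theorem with no peripheral factors, one has $H=F_N$ and the collection $\{H_i\}$ empty, so that $\Out(H,\{H_i\}^{(t)})$ is simply $\Out(F_N)$: with an empty peripheral collection there is no conjugacy class to be preserved and no restriction-to-a-factor condition to impose. The two hypotheses of Theorem~\ref{free-product} — that $(G,H_i/Z(H_i))$ be cocycle-rigid for every $i$, and that the centralizer in $H_i$ of every nontrivial element be amenable — then hold vacuously, there being no index $i$. Moreover $N\ge 2$ places us in the nonsporadic regime (in the notation of Section~\ref{sec:background}, $\xi(F_N,\emptyset)=(N,N)>(2,1)$), which is precisely the setting in which the free factor graph is hyperbolic and the barycenter map of Section~\ref{sec-barycenter} is available.

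Theorem~\ref{free-product} then gives that the pair $(G,\Out(F_N))$ is cocycle-rigid. Unwinding the definition of a cocycle-rigid pair, this is exactly the assertion that for every standard probability space $X$ carrying an ergodic measure-preserving $G$-action, every cocycle $G\times X\to\Out(F_N)$ is cohomologous to a cocycle with values in a finite subgroup of $\Out(F_N)$, which is the statement of the corollary. (If one additionally wishes to let $G$ range over lattices in such products, this follows by combining the above with Proposition~\ref{induction}.)

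There is no genuine obstacle at the level of the corollary itself; all the substance lies upstream. Concretely, Theorem~\ref{free-product} is obtained by first showing (Proposition~\ref{prop:ast-out}) that $\Out(F_N)$ is geometrically rigid with respect to the countable set of conjugacy classes of proper free factors — taking $K$ to be the compactification of projectivized Outer space, $\Delta$ the Gromov boundary $\partial_\infty\FF$ of the free factor graph, using the amenability statement of Section~\ref{sec:amenability}, the barycenter map of Theorem~\ref{barycenter}, and the arational-trees description of $\partial_\infty\FF$ for the compatibility condition — then feeding this into the general criterion (Theorem~\ref{theo:abstract}), and finally running the induction over free-factor stabilizers that upgrades rigidity ``up to virtually fixing a free factor'' to rigidity with finite values. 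The only thing one checks directly here is the harmless bookkeeping that the $p=0$ specialization of Theorem~\ref{free-product} is literally the displayed statement, which it is.
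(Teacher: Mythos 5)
Your proposal is correct and coincides with the paper's own argument: Corollary~\ref{cor:free} is stated there precisely as the $p=0$ specialization of Theorem~\ref{free-product}, with the hypotheses holding vacuously and the conclusion being the definition of cocycle-rigidity of $(G,\Out(F_N))$ unwound. The remarks on the nonsporadic regime and on lattices via Proposition~\ref{induction} are harmless extras and do not change the argument.
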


Our proof of Theorem~\ref{free-product} will follow the strategy established in Section~\ref{sec:criterion}. We start with the following fact.

\begin{prop}\label{prop:ast-out}
Let $H_1,\dots,H_k$ be countable groups, let $F_N$ be a free group of rank $N$, and let $$H:=H_1\ast\dots\ast H_k\ast F_N.$$ 
Let $\calf=\{[H_1],\dots,[H_p]\}$. Assume that for each $i\in\{1,\dots,k\}$, the centralizer in $H_i$ of every nontrivial element is amenable.

Then $\Out(H,\calf^{(t)})$ is geometrically rigid with respect to the countable set $\bbD$ of all conjugacy classes of proper free factors of $(H,\calf)$.
\end{prop}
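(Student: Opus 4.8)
The plan is to verify the three conditions in the definition of geometric rigidity (Definition~\ref{de-ast}) for the triple $(\bbD,K,\Delta)$, where $\bbD$ is the countable set of conjugacy classes of proper $(H,\{H_i\})$-free factors, $K:=\mathbb{P}\baro$ is the compactification of the projectivized relative Outer space (compact and metrizable), and $\Delta:=\mathbb{P}\AT/{\sim}$ is the boundary of the relative free factor graph $\FF$ (described in \cite{GH} as equivalence classes of arational trees). We should first observe that when $(H,\{H_i\})$ is sporadic the statement degenerates: $\bbD$ may be empty and the relative free factor graph is bounded or has empty boundary, and one checks directly that the conclusion holds (or defers to the general stability arguments). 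So assume $(H,\{H_i\})$ is nonsporadic, where all the geometric machinery applies.

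For the \textbf{compatibility} condition (1), I would take $K_\infty\subseteq\mathbb{P}\baro$ to be the image of $\mathbb{P}\AT$ under the quotient by $\sim$, using that $\mathbb{P}\AT$ is a Borel subset of $\mathbb{P}\baro$ (\cite[Lemma~5.5]{Hor}) and that the equivalence classes are finite-dimensional simplices, so the quotient map is Borel with Borel image; and $K_{\bdd}:=K\setminus K_\infty$. The map $\theta_\infty:K_\infty\to\Delta$ is then essentially the identity (a tree in $\mathbb{P}\AT$ maps to its $\sim$-class, which is a point of $\partial_\infty\FF=\Delta$), continuous by the continuous extension of $\pi:\calo\to\FF$ to arational trees established in \cite{GH,BR,Ham2}. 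The harder map is $\theta_{\bdd}:K_{\bdd}\to\calp_{<\infty}(\bbD)$: a projective very small tree $T$ that is not arational is either in $\calo$, or fixes a proper free factor, or its restriction to some proper free factor fails to be Grushko; in all these cases one extracts a canonical finite set of conjugacy classes of proper free factors. This is exactly the kind of statement proved via the theory of reducing free factor systems for trees in $\partial\calo$ — I would invoke the relevant structural result from \cite{Hor} (or \cite{Rey} in the free group case, extended to free products) giving a canonical nonempty finite free factor system associated to any non-arational tree, together with its Borel measurability.

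For the \textbf{barycenter map} condition (2), this is precisely Theorem~\ref{barycenter}: there is a Borel $\Out(H,\{H_i\})$-equivariant map $\bary:(\mathbb{P}\AT/{\sim})^{(3)}=\Delta^{(3)}\to\calp_{<\infty}(\FS)$. Since every free splitting determines a (finite, nonempty) free factor system, postcomposing with the Borel equivariant map $\FS\to\calp_{<\infty}(\bbD)$ sending a splitting to the conjugacy classes of its nontrivial point stabilizers (discarding the improper and peripheral ones, keeping only proper free factors — which is nonempty in the nonsporadic case) yields the required map $\Delta^{(3)}\to\calp_{<\infty}(\bbD)$. Note also that Theorem~\ref{barycenter} and Proposition~\ref{prop:action-amenable} are only stated for $\Out(G,\calf)$ rather than $\Out(H,\{H_i\}^{(t)})$, but since $\Out(H,\{H_i\}^{(t)})$ is a subgroup of $\Out(H,\{H_i\})$, equivariance for the larger group restricts to equivariance for the subgroup. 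For the \textbf{amenability} condition (3), the $\Out(H,\{H_i\}^{(t)})$-action on $\Delta=\mathbb{P}\AT/{\sim}$ is Borel amenable by Proposition~\ref{prop:action-amenable} (which uses exactly the hypothesis that centralizers of nontrivial elements in each $H_i$ are amenable), and Borel amenability implies universal amenability by Proposition~\ref{prop:borel-universally}.

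The main obstacle I anticipate is condition (1), specifically constructing the Borel equivariant map $\theta_{\bdd}:K_{\bdd}\to\calp_{<\infty}(\bbD)$ and, relatedly, pinning down the correct Borel partition $K=K_{\bdd}\sqcup K_\infty$: one must show that the collection of projective arational trees is Borel, that the quotient by $\sim$ is well-behaved (the selection-of-ergometric-trees apparatus from Lemma~\ref{lemma:borel-selection-ue} is designed for this), and that non-arational very small trees carry a canonical finite free factor system depending Borel-measurably on the tree — this last point rests on the (nontrivial) structure theory of the boundary of relative Outer space from \cite{Hor} and \cite{GH1}, and checking measurability there is where the real work lies. Everything else is either quoting Theorem~\ref{barycenter}, Proposition~\ref{prop:action-amenable}, and Proposition~\ref{prop:borel-universally}, or routine bookkeeping about free splittings versus free factor systems.
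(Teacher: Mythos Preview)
Your approach is essentially the paper's own: same choices of $K=\mathbb{P}\baro$, $K_\infty=\mathbb{P}\AT$, $\Delta=\partial_\infty\FF=\mathbb{P}\AT/{\sim}$, with the compatibility map $\theta_\infty$ coming from \cite{GH}, $\theta_{\bdd}$ from \cite{Rey,Hor}, the barycenter from Theorem~\ref{barycenter}, and amenability from Proposition~\ref{prop:action-amenable} (plus Proposition~\ref{prop:borel-universally}). Two small points are worth tightening.

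First, your description of $K_\infty$ as ``the image of $\mathbb{P}\AT$ under the quotient by $\sim$'' is garbled: that quotient is $\Delta$, not a subset of $K$. You simply want $K_\infty:=\mathbb{P}\AT\subseteq\mathbb{P}\baro$ (Borel by \cite[Lemma~5.5]{Hor}), and $\theta_\infty$ is the quotient map $T\mapsto [T]_\sim$, continuous by \cite[Theorem~3]{GH}. This is what the paper does.

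Second, your conversion from $\calp_{<\infty}(\FS)$ to $\calp_{<\infty}(\bbD)$ by ``sending a splitting to the conjugacy classes of its nontrivial point stabilizers'' does not quite work as stated: a relatively free Grushko tree has only trivial or peripheral point stabilizers, so you would output the empty set. The fix is easy in the nonsporadic case---e.g.\ send $S$ to the finite set of proper free factors arising as vertex groups in the one-edge collapses of $S$, which is always nonempty when $(H,\calf)$ is nonsporadic. (The paper glosses over this conversion entirely, citing Theorem~\ref{barycenter} as if it already landed in $\calp_{<\infty}(\bbD)$; you are right to flag it.) With these two corrections your argument is the paper's argument.
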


\begin{proof}
Let $K:=\mathbb{P}\overline{\calo}(H,\calf)$ be the compactification of the corresponding Outer space, which is metrizable. Let $\Delta$ be the Gromov boundary of the free factor graph of $(H,\calf)$, which is a Polish space. 

Let $K_\infty\subseteq K$ be the subset made of all arational $(H,\calf)$-trees: this is a Borel subset of $K$ by \cite[Lemma~5.5]{Hor}. Let $K_{\bdd}:=K\setminus K_\infty$. By \cite[Theorem~3]{GH}, there exists a continuous (whence Borel) $\Out(H,\calf)$-equivariant map $K_\infty\to\Delta$. By \cite{Rey,Hor}, there exists an $\Out(H,\calf)$-equivariant Borel map $K_{\bdd}\to\calp_{<\infty}(\bbD)$. The existence of an $\Out(H,\calf)$-equivariant Borel map $\Delta^{(3)}\to\calp_{<\infty}(\bbD)$ was established in Theorem~\ref{barycenter}. Finally, universal amenability of the $\Out(H,\calf^{(t)})$-action on $\Delta$ was established in Proposition~\ref{prop:action-amenable}.  
\end{proof}

\begin{proof}[Proof of Theorem~\ref{free-product}]
The proof is by induction on the complexity $\xi(H,\calf)=(p+N,N)$ (complexities are ordered lexicographically). If $\xi(H,\calf)\le (2,1)$, then we are in one of the following four cases.

\indent 1. If $\xi(H,\calf)\le (1,0)$, then $H=\{1\}$ or $H=H_1$. In this case $\Out(H,\calf^{(t)})$ is trivial, so the conclusion is obvious.

\indent 2. If $\xi(H,\calf)=(1,1)$, then $H=\mathbb{Z}$, so $\Out(H,\calf^{(t)})=\mathbb{Z}/2\mathbb{Z}$, and again the conclusion is obvious.

\indent 3. If $\xi(H,\calf)=(2,0)$, then $H=H_1\ast H_2$, and $\Out(H,\calf^{(t)})$ is isomorphic to $H_1/Z(H_1)\times H_2/Z(H_2)$ (see \cite{Lev}). As $(G,H_i/Z(H_i))$ is cocycle-rigid for every $i\in\{1,2\}$, we deduce that $(G,\Out(H,\calf^{(t)}))$ is cocycle-rigid.

\indent 4. If $\xi(H,\calf)=(2,1)$, then $H=H_1\ast\mathbb{Z}$ and $\calf=\{[H_1]\}$, and $\Out(H,\calf^{(t)})$ has an index $2$ subgroup which is isomorphic to $(H_1\times H_1)/Z(H_1)$, where $Z(H_1)$ is diagonally embedded in $H_1\times H_1$, see \cite{Lev}. The group $(H_1\times H_1)/Z(H_1)$ maps onto $H_1/Z(H_1)\times H_1/Z(H_1)$ with abelian kernel. Since for every abelian group $A$, the pair $(G,A)$ is cocycle-rigid \cite[Theorem~9.1.1]{Zim}, we deduce (using stability under extensions and finite index overgroups) that $(G,\Out(H,\calf^{(t)}))$ is cocycle-rigid.

We now assume that $\xi(H,\calf)>(2,1)$, i.e.\ $(H,\calf)$ is nonsporadic. Let $X$ be a standard probability space equipped with a measure-preserving $G$-action, and let $$c:G\times X\to \Out(H,\calf^{(t)})$$ be a cocycle. By Proposition~\ref{prop:ast-out}, the group $\Out(H,\calf^{(t)})$ is geometrically rigid with respect to the collection of all conjugacy classes of proper $(H,\calf)$-free factors. Therefore, Theorem~\ref{theo:abstract} ensures that $c$ is cohomologous to a cocycle $c'$ that takes its values in a subgroup of $\Out(H,\calf^{(t)})$ that virtually fixes the conjugacy class of a proper $(H,\calf)$-free factor $A$. Let $\Out(H,\calf^{(t)},A)$ be the subgroup of $\Out(H,\calf^{(t)})$ made of all outer automorphisms that preserve the conjugacy class of $A$. It is enough to prove that $(G,\Out(H,\calf^{(t)},A))$ is cocycle-rigid. 

Recall that $\calf_{|A}$ denotes the free factor system of $A$ induced by $\calf$; we also denote by $\widetilde{\calf}$ the smallest $(H,\calf)$-free factor system that contains both the conjugacy classes in $\calf$ and the conjugacy class of $A$.
One has $\xi(A,\calf_{|A})<\xi(H,\calf)$ and $\xi(H,\widetilde{\calf})<\xi(H,\calf)$ and there is a short exact sequence $$1\to\Out(H,\widetilde{\calf}^{(t)})\to\Out(H,\calf^{(t)},A)\to\Out(A,\calf_{|A}^{(t)})\to 1$$
(see the proof of \cite[Theorem~6.3]{Hor}). By induction, the pairs $(G,\Out(A,\calf_{|A}^{(t)}))$ and $(G,\Out(H,\widetilde{\calf}^{(t)}))$ are cocycle-rigid, so using stability under extensions, we deduce that the pair $(G,\Out(H,\calf^{(t)},A))$ is cocycle-rigid, which concludes our proof.    
\end{proof}

\subsection{Cocycle rigidity for automorphisms of relatively hyperbolic groups}

We now give an extension of Theorem~\ref{free-product} to the context of torsion-free relatively hyperbolic groups.

\begin{theo}\label{theo:rel-hyp}
Let $H$ be a torsion-free group which is hyperbolic relative to a finite collection $\calp$ of finitely generated subgroups. Let $G$ be a product of connected higher rank simple algebraic groups over local fields. Assume that 
\begin{itemize}
\item for every $P\in\calp$, the pair $(G,P/Z(P))$ is cocycle-rigid, and
\item for every $P\in\calp$, the centralizer in $P$ of every nontrivial element is amenable.
\end{itemize}
Then $(G,\Out(H,\calp^{(t)}))$ is cocycle-rigid.
\end{theo}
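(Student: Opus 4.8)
The strategy is to reduce Theorem~\ref{theo:rel-hyp} to Theorem~\ref{free-product} by invoking the canonical splitting of a torsion-free relatively hyperbolic group over its peripheral structure, namely the \emph{Bowditch JSJ decomposition} (equivalently, the canonical peripheral splitting, see Bowditch's work on peripheral splittings of relatively hyperbolic groups). More precisely, if $H$ is one-ended relative to $\calp$, then Guirardel--Levitt's canonical elliptic JSJ decomposition over the family of (subgroups of) peripheral subgroups, or Bowditch's canonical peripheral splitting, gives a canonical tree on which $\Out(H,\calp)$ acts; the key geometric input we will use is that the peripheral subgroups of a torsion-free relatively hyperbolic group are again (either elementary, i.e.\ cyclic, or) themselves groups to which one can apply the relevant structure theory, and that the relative automorphism group of $(H,\calp)$ that acts trivially on each $P\in\calp$ up to conjugacy admits a composition series whose quotients are either automorphism groups of free products or automorphism groups of smaller relatively hyperbolic pieces.

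The first step is a reduction to the one-ended case. If $H$ splits as a free product relative to $\calp$, then by Grushko-type decomposition theorems for relatively hyperbolic groups we may write $H=H_1\ast\dots\ast H_p\ast F_N$ where each $H_i$ is either one-ended relative to its induced peripheral structure, or lies in $\calp$, or is cyclic; this is exactly the setting of Theorem~\ref{free-product} with the ``factors'' being the one-ended-relative-to-peripheral pieces and the peripheral subgroups. Theorem~\ref{free-product} then reduces cocycle-rigidity of $\Out(H,\calp^{(t)})$ to cocycle-rigidity of $(G,H_i/Z(H_i))$ for the sporadic peripheral factors (which is a hypothesis of the present theorem) and to cocycle-rigidity of $(G,\Out(H_i,\cdot))$ for the one-ended pieces --- provided we verify the two bulleted hypotheses of Theorem~\ref{free-product}, i.e.\ that centralizers of nontrivial elements in each factor $H_i$ are amenable (this follows for one-ended relatively hyperbolic pieces because their peripheral subgroups inherit the amenable-centralizer hypothesis, and elements not conjugate into a peripheral subgroup are hyperbolic hence have virtually cyclic centralizers) and that $(G,H_i/Z(H_i))$ is cocycle-rigid for the sporadic pieces.

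The second and main step is the one-ended case: if $H$ is one-ended relative to $\calp$ and not itself in $\calp$, we must show $(G,\Out(H,\calp^{(t)}))$ is cocycle-rigid. Here I would invoke the canonical peripheral JSJ splitting $T$ of $(H,\calp)$: its edge stabilizers are (subgroups of) peripheral subgroups, its vertex stabilizers are either peripheral or ``rigid'' (quasi-convex, and themselves torsion-free relatively hyperbolic with strictly smaller complexity) or ``flexible'' of surface type (handled via the mapping-class-group result, Theorem~\ref{mcg}, since their mapping class groups appear up to finite index and abelian kernels in the relevant automorphism subquotient). Because the splitting $T$ is canonical, $\Out(H,\calp^{(t)})$ acts on it, yielding a homomorphism to a product of outer automorphism groups of vertex groups (relative to incident edge groups) with kernel generated by twists --- and the twist subgroup is a finite extension of a product of (abelianizations of) pieces built from the peripheral subgroups and their centralizers, which is cocycle-rigid by Zimmer's theorem \cite[Theorem~9.1.1]{Zim} together with the hypothesis on $(G,P/Z(P))$. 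An induction on a suitable complexity (number of peripheral factors plus a measure of the JSJ, as in the proof of Theorem~\ref{free-product}) then closes the argument via stability of cocycle-rigidity under extensions, finite-index overgroups, and products (Propositions~\ref{finite-index} and~\ref{extension}).

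\textbf{The main obstacle.} The hard part is the geometric rigidity input in the one-ended case: one needs a geometrically rigid triple $(\bbD,K,\Delta)$ for $\Out(H,\calp^{(t)})$ when $H$ is one-ended relative to $\calp$, analogous to Proposition~\ref{prop:ast-out}. For torsion-free relatively hyperbolic groups this should come from the cyclic (or peripheral) JSJ deformation space and its associated hyperbolic complex (a relative analogue of the free factor / free splitting complex), with $\Delta$ its Gromov boundary, $K$ an appropriate compactification of the associated relative Outer space, and the barycenter map built exactly as in Section~\ref{sec-barycenter} --- the construction of the barycenter map (finiteness via maximal covolume and minimal edge-orbit count among trees in the relevant bounded region, Section~\ref{subsec_bary}) and the universal amenability of the boundary action (from \cite{BGH}-type results) are the technically delicate points, together with checking that the point stabilizers in $\bbD$ are indeed ``simpler'' relatively hyperbolic groups so that the induction is well-founded. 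If the canonical JSJ directly produces a finite invariant structure (e.g.\ when $H$ is rigid relative to $\calp$), this step collapses and one only needs the vertex-group analysis; the genuine difficulty is the flexible case where a nontrivial relative Outer space appears.
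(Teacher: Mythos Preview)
Your overall architecture is right and matches the paper's: reduce via the relative Grushko decomposition to one-ended pieces, handle the one-ended case via the canonical JSJ, and glue everything together using Theorem~\ref{free-product} and the stability properties (Propositions~\ref{finite-index} and~\ref{extension}). But two points need correction.

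\textbf{The ``main obstacle'' is illusory.} In the freely indecomposable case you do \emph{not} need a new geometrically rigid triple $(\bbD,K,\Delta)$ for $\Out(H,\calp^{(t)})$, nor any new barycenter/amenability machinery. The canonical elementary JSJ decomposition of Guirardel--Levitt (\cite[Theorem~4]{GL2}) gives a purely algebraic composition series: a finite-index subgroup of $\Out(H,\calp^{(t)})$ surjects onto a finite product of mapping class groups of finite-type surfaces, with kernel the twist group $\calt$; and $\calt$ maps with abelian kernel to a product of groups of the form $P/Z(P)$ (see \cite[Appendix, Lemma~5]{Hae}). The crucial point you are missing is that rigid vertex groups contribute only \emph{finitely} to the relative outer automorphism group, so there is no induction on rigid pieces and no relative Outer space for the one-ended case enters at all. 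With Theorem~\ref{mcg} for the mapping class quotients, the hypothesis on $(G,P/Z(P))$, Zimmer's theorem for the abelian kernels, and Propositions~\ref{finite-index} and~\ref{extension}, the one-ended case is immediate.

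\textbf{A genuine gap in your reduction step.} When applying Theorem~\ref{free-product} with factors the one-ended pieces $H_i$, you must verify that $(G,H_i/Z(H_i))$ is cocycle-rigid for \emph{every} $H_i$, not only for the peripheral ones. For a non-peripheral one-ended $H_i$ one has $Z(H_i)=\{1\}$, and $H_i$ is itself torsion-free and hyperbolic relative to $\calp_{|H_i}$; the paper closes this by invoking Theorem~\ref{theo:rhyp} (cocycle rigidity with target a relatively hyperbolic group), which reduces to cocycle rigidity of $(G,P)$ for each parabolic $P$, and this in turn follows from the hypothesis on $(G,P/Z(P))$ together with Zimmer's theorem for the abelian group $Z(P)$ and Proposition~\ref{extension}. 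You should insert this step explicitly.
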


In particular, by applying Theorem~\ref{theo:rel-hyp} to the case where $\calp=\emptyset$ (in which case $H$ is hyperbolic), we get the following extension of Corollary~\ref{cor:free}.

\begin{cor}
Let $G$ be a product of connected higher rank simple algebraic groups over local fields. Let $X$ be a standard probability space equipped with an ergodic measure-preserving $G$-action. Let $H$ be a torsion-free hyperbolic group.

Then every cocycle $G\times X\to\Out(H)$ is cohomologous to a cocycle that takes its values in some finite subgroup of $\Out(H)$.
\qed
\end{cor}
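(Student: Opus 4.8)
The plan is to obtain this statement as the special case $\calp=\emptyset$ of Theorem~\ref{theo:rel-hyp}. First I would record two elementary identifications. A group is hyperbolic relative to the empty collection of subgroups exactly when it is Gromov hyperbolic, so a torsion-free hyperbolic group $H$ is hyperbolic relative to $\calp:=\emptyset$, which is (vacuously) a finite collection of finitely generated subgroups. Second, $\Out(H,\calp^{(t)})=\Out(H)$ when $\calp=\emptyset$: by definition $\Out(H,\calp^{(t)})$ is the subgroup of $\Out(H)$ consisting of those outer automorphisms whose restriction to each $P\in\calp$ is inner, and when $\calp$ is empty this imposes no condition.

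With these identifications in hand, the two bullet hypotheses of Theorem~\ref{theo:rel-hyp} — cocycle-rigidity of $(G,P/Z(P))$ and amenability of centralizers of nontrivial elements in $P$, required for every $P\in\calp$ — hold vacuously. Hence Theorem~\ref{theo:rel-hyp} applies and yields that the pair $(G,\Out(H))$ is cocycle-rigid. Since $X$ is a standard probability space equipped with an ergodic measure-preserving $G$-action, unwinding the definition of a cocycle-rigid pair then gives precisely the desired conclusion: every cocycle $G\times X\to\Out(H)$ is cohomologous to a cocycle that takes its values in some finite subgroup of $\Out(H)$.

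There is essentially no obstacle: the only points to verify are the two definitional identifications above, and all the substance is carried by Theorem~\ref{theo:rel-hyp} — and, behind it, by Theorem~\ref{free-product}, the geometric rigidity criterion of Section~\ref{sec:criterion} (applied as in the proof of Theorem~\ref{theo:abstract}), the amenability input of Section~\ref{sec:amenability}, and the barycenter map of Section~\ref{sec-barycenter}.
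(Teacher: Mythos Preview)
Your proposal is correct and matches the paper's approach exactly: the corollary is stated immediately after the remark that it follows from Theorem~\ref{theo:rel-hyp} by taking $\calp=\emptyset$, and is given no separate proof (it carries a \qed). The two definitional identifications you spell out are precisely what makes that specialization work.
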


Deducing Theorem~\ref{theo:rel-hyp} from Theorem~\ref{theo:main} is done using an argument which is similar to the proof of \cite[Appendix, Corollary~3]{Hae}, which we now explain.

\begin{proof}[Proof of Theorem~\ref{theo:rel-hyp}]
We first assume that $H$ is freely indecomposable relative to $\calp$ and argue as in Case 1 of the proof of \cite[Appendix, Corollary~3]{Hae}. Let $S$ be the canonical elementary JSJ decomposition of $H$ relative to $\calp$ (see \cite[Theorem~4]{GL2}). Then $\Out(H,\calp^{(t)})$ has a finite-index subgroup which surjects onto a finite direct product of mapping class groups of surfaces of finite type, with kernel the group of twists $\calt$ of $S$. By \cite[Appendix, Lemma~5]{Hae}, the group $\calt$ maps with abelian kernel to a direct product of finitely many groups, each of which is isomorphic to $P/Z(P)$ for some $P\in\calp$. Using Theorem~\ref{mcg} for mapping class groups and the fact that cocycle-rigidity, as a property of the target group, is stable under finite index overgroups and extensions, we deduce that the pair $(G,\Out(H,\calp^{(t)}))$ is cocycle-rigid.

We now consider the general case, and let $$H=H_1\ast\dots\ast H_k\ast F_N$$ be a Grushko decomposition of $H$ relative to $\calp$. Let $\calf=\{[H_1],\dots,[H_k]\}$. We will now argue as in the general case of the proof of \cite[Appendix, Corollary~3]{Hae}. Each subgroup $H_i$ is torsion-free, hyperbolic relative to $\calp_{|H_i}$, and freely indecomposable relative to $\calp_{|H_i}$. Let $\Out^0(H,\calp^{(t)})$ be the finite-index subgroup of $\Out(H,\calp^{(t)})$ consisting of all automorphisms that preserve the conjugacy class of each subgroup $H_i$ (as opposed to permuting them). There is a morphism $\Out^0(H,\calp^{(t)})\to\prod_{i=1}^k\Out(H_i,\calp_{|H_i}^{(t)})$, whose kernel is equal to $\text{Out}(H,\calf^{(t)})$. By the previous case, for every $i\in\{1,\dots,k\}$, the pair $(G,\Out(H_i,\calp_{|H_i}^{(t)}))$ is cocycle-rigid. We will apply Theorem~\ref{free-product} to show that $(G,\Out(H,\calf^{(t)}))$ is cocycle-rigid, which will conclude our proof. 

By assumption on centralizers in parabolic subgroups, 
the centralizer in the relatively hyperbolic group $H$ of every nontrivial element is amenable. 
Therefore, in order to apply Theorem~\ref{free-product}, we only need to check that for every $i\leq k$, the pair $(G,H_i/Z(H_i))$ is cocycle-rigid. This holds by assumption if $H_i$ is equal to one of the subgroups in $\calp$. Otherwise $Z(H_i)$ is trivial,
and $H_i$ is hyperbolic relative to $\calp_{|H_i}$ so we apply cocycle rigidity for relatively hyperbolic groups (Theorem~\ref{theo:rhyp}), and for that it suffices to check that the pair $(G,P)$ is cocycle rigid for every parabolic group $P\in \calp$.
Using the stability of cocycle rigidity under extensions (Proposition \ref{extension}), this is indeed the case because  
$(G,P/Z(P))$ is cocycle-rigid by assumption and $(G,Z(P))$ is cocycle-rigid because $Z(P)$ is abelian. 
\end{proof}

\footnotesize

\bibliographystyle{alpha}
\bibliography{rigide-bib}

 \begin{flushleft}
 Vincent Guirardel\\
Univ Rennes, CNRS, IRMAR - UMR 6625, F-35000 Rennes, France\\
 \emph{e-mail:}\texttt{vincent.guirardel@univ-rennes1.fr}\\[8mm]
 \end{flushleft}

\begin{flushleft}
Camille Horbez\\
Universit\'e Paris-Saclay, CNRS,  Laboratoire de math\'ematiques d'Orsay, 91405, Orsay, France \\
\emph{e-mail:}\texttt{camille.horbez@universite-paris-saclay.fr}\\[8mm]
\end{flushleft}

\begin{flushleft}
Jean Lécureux\\
Universit\'e Paris-Saclay, CNRS,  Laboratoire de math\'ematiques d'Orsay, 91405, Orsay, France \\
\emph{e-mail:}\texttt{jean.lecureux@universite-paris-saclay.fr}\\[8mm]
\end{flushleft}

\end{document}